 \numberwithin{equation}{subsection}
\newtheorem{theorem}{Theorem}[subsection]
\newtheorem{lemma}[theorem]{Lemma}
\newtheorem{corollary}[theorem]{Corollary}
\newtheorem{proposition}[theorem]{Proposition}
\newtheorem{thm-intro}{Theorem}
\newtheorem{ap-theorem}{Theorem}
\newtheorem{ap-corollary}{Corollary}
\newtheorem{ap-lemma}{Lemma}
\newtheorem{ap-proposition}{Proposition}
\newtheorem{ap-definition}{Definition}
\theoremstyle{definition}
\newtheorem{remark}[theorem]{Remark}
\newtheorem{example}[theorem]{Example}
\newtheorem{notation}[theorem]{Notation}
\newtheorem{definition}[theorem]{Definition}
\newtheorem{construction}[theorem]{Construction}
\newtheorem{warning}[theorem]{Warning}
\newcommand{\cXysquare}[8]{
\[\cXymatrix{
#1 \ar@{#5}[r] \ar@{#6}[d] & #2 \ar@{#7}[d]\\
#3 \ar@{#8}[r] & #4
}\]
}
\def\lim{\mathrm{lim}}
\DeclareRobustCommand{\SkipTocEntry}[5]{}
\newcommand{\Map}{\mathrm{Map}}
\newcommand{\Hom}{\mathrm{Hom}}
\newcommand{\ad}{\mathrm{ad}}
\newcommand{\Ex}{\mathrm{Ex}}
\newcommand{\fCoh}{\mathfrak{Coh}^+}
\newcommand{\sfX}{\mathsf{X}}
\newcommand{\sfY}{\mathsf{Y}}
\newcommand{\sfZ}{\mathsf{Z}}
\newcommand{\sfW}{\mathsf{W}}
\newcommand{\Ok}{k^\circ}
\DeclareMathOperator{\Spec}{Spec}
\DeclareMathOperator{\Spf}{Spf}
\DeclareMathOperator{\Sp}{Sp}
\newcommand{\st}{\mathrm{st}}
\newcommand{\Der}{\mathrm{Der}}
\newcommand{\trun}{\mathrm{t}}
\newcommand{\spe}{\mathrm{sp}}
\newcommand{\bfA}{\mathfrak{A}_{\Ok}}
\newcommand{\anA}{\mathbf{A}_k}
\newcommand{\anB}{\mathbf{B}_k}
\newcommand{\Tdisc}{\mathcal{T}_{\mathrm{disc}}}
\newcommand{\Tet}{\mathcal{T}_{\text{\'et}}}
\newcommand{\Tetph}{\mathcal{T}_{\emph{\text{\'et}}}}
\newcommand{\Tan}{\mathcal{T}_{\an}(k)}
\newcommand{\Tad}{\mathcal{T}_{\ad}(\Ok)}
\newcommand{\CAlg}{\mathcal{C}\mathrm{Alg}}
\newcommand{\adCAlg}{\cC \mathrm{Alg}^{\ad}_{\Ok}}
\newcommand{\admCAlg}{\cC \mathrm{Alg}^{\mathrm{adm}}_{\Ok}}
\newcommand{\fCAlg}{\mathrm{f} \cC \mathrm{Alg}_{\Ok}}
\newcommand{\taft}{\mathrm{taft}}
\newcommand{\AnRing}{\mathrm{AnRing}_k}
\newcommand{\locStr}{\mathrm{Str}^{\mathrm{loc}}}
\newcommand{\adm}{\mathrm{adm}}
\newcommand{\Cat}{\mathcal{C}\mathrm{at}_\infty}
\newcommand{\Coh}{\mathrm{Coh}^+}
\newcommand{\bCoh}{\mathrm{Coh}^\mathrm{b}}
\newcommand{\cHom}{\mathcal{H} \mathrm{om}}
\newcommand{\op}{\mathrm{op}}
\newcommand{\der}{\mathrm{der}}
\newcommand{\Shv}{\mathrm{Shv}}
\newcommand{\ind}{\mathrm{Ind}}
\newcommand{\infcat}{$\infty$-category\xspace}
\newcommand{\infcats}{$\infty$-categories\xspace}
\newcommand{\et}{\text{\'et}}
\newcommand{\tcomp}{(-)^\wedge_t}
\newcommand{\functor}{(-)}
\newcommand{\rigg}{(-)^{\mathrm{rig}}}
\newcommand{\rig}{\mathrm{rig}}
\newcommand{\alg}{\mathrm{alg}}
\newcommand{\Fun}{\mathrm{Fun}}
\newcommand{\sh}{\mathrm{sh}}
\newcommand{\disc}{\functor^\mathrm{disc}}
\newcommand{\Top}{\tensor[^{\mathrm{R}}]{\cT \op}{}}
\newcommand{\adTop}{\tensor[^{\mathrm{R}}]{\cT \op}{}( \Tad)}
\newcommand{\anTop}{\tensor[^{\mathrm{R}}]{\cT \op}{}( \Tan)}
\newcommand{\etTop}{\tensor[^{\mathrm{R}}]{\cT \op}{} (\Tet(\Ok))}
\newcommand{\etphTop}{\tensor[^{\mathrm{R}}]{\cT \op}{} (\Tetph(\Ok))}
\newcommand{\discTop}{\tensor[^{\mathrm{R}}]{\cT \op}{}(\Tdisc(\Ok))}
\newcommand{\discTopn}{\tensor[^{\mathrm{R}}]{\cT \op}{} (\Tdisc(\Ok_n))}
\newcommand{\Mod}{\mathrm{Mod}}
\newcommand{\An}{\mathrm{An}_k}
\newcommand{\dAfd}{\mathrm{dAfd}_k}
\newcommand{\dAn}{\mathrm{dAn}_k}
\newcommand{\dfSch}{\mathrm{dfSch}_{\Ok}}
\newcommand{\fSch}{\mathrm{fSch}_{\Ok}}
\newcommand{\dfDM}{\mathrm{dfDM}}
\newcommand{\an}{\mathrm{an}}
\newcommand{\Ab}{\mathrm{Ab}}
\DeclareMathOperator*{\colim}{colim}
\newcommand{\bA}{\mathbb A}
\newcommand{\bE}{\mathbb E}
\newcommand{\bL}{\mathbb L}
\newcommand{\bZ}{\mathbb Z}
\newcommand{\ff}{\mathfrak f}
\newcommand{\fX}{\mathfrak X}
\newcommand{\cA}{\mathcal A}
\newcommand{\cB}{\mathcal B}
\newcommand{\cC}{\mathcal C}
\newcommand{\cD}{\mathcal D}
\newcommand{\cE}{\mathcal E}
\newcommand{\cF}{\mathcal{F}}
\newcommand{\cG}{\mathcal{G}}
\newcommand{\cH}{\mathcal{H}}
\newcommand{\cK}{\mathcal{K}}
\newcommand{\cO}{\mathcal{O}}
\newcommand{\cT}{\mathcal{T}}
\newcommand{\cX}{\mathcal X}
\newcommand{\cY}{\mathcal Y}
\newcommand{\cZ}{\mathcal Z}
\newcommand{\cS}{\mathcal S}
\author{Jorge Ant\'onio}
\date{}
\newcommand{\adress}{{
  \bigskip
  \footnotesize

\textsc{Jorge Ant\'onio,  IRMA, UMR 7501
 7 rue René-Descartes
 67084 Strasbourg Cedex}\par\nopagebreak
  \textit{E-mail address},  \texttt{jorgeantonio@unistra.fr}

}}
\title{Derived $\Ok$-adic geometry and derived Raynaud localization Theorem} 
\begin{document}

\maketitle

\begin{abstract}
The goal of the present text is to state and prove a generalization of Raynaud localization theorem in the setting of derived geometry. More explicitly, we show that the \infcat of quasi-paracompact and quasi-separated
derived $k$-analytic spaces can be realized as a localization of the \infcat of
admissible derived formal schemes. We construct a derived rigidification functor generalizing Raynaud's rigidification functor. In order to construct the latter we will need to formalize derived formal $\Ok$-adic formal geometry via a structured spaces approach. We prove that $\Ok$-adic Postnikov towers of derived $\Ok$-adic Deligne-Mumford stacks decompose
and we relate these
to
Postnikov towers of derived $k$-analytic spaces.  This is possible by a precise comparison between the $\Ok$-adic cotangent complex and the $k$-analytic cotangent complex.
\end{abstract}

\tableofcontents

\section{Introduction}

\subsection{Background material}
Let $k$ be a non-archimedean field of rank 1 valuation, $\Ok$ its ring of integers and let $t \in \Ok$ be a fixed pseudo-uniformizer of $k$.
Denote $\mathrm{fSch}_{\Ok}$ the category of quasi-paracompact and quasi-separated \emph{admissible} $\Ok$-adic formal schemes, i.e. formal schemes over $\Spf k^\circ$ topologically of finite presentation whose structure sheaf is $(t)$-torsion free. Let $\An$ denote the category of $k$-analytic spaces.
There exists a rigidification functor, also referred as \emph{Raynaud's generic fiber functor},
	\[
		\rigg \colon \mathrm{fSch}_{\Ok} \to \An,
	\]
given by the formula
	\[
		\sfX \in \mathrm{fSch}_{\Ok} \mapsto \sfX^\rig \in \An.
	\]
When $\sfX = \Spf (A)$, for some topologically of finite presentation $\Ok$-adic algebra $A$, we have that
	\[
		\Spf(A)^\rig \simeq \Sp(A \otimes_{\Ok} k) \in \An,
	\]
where the latter denotes the $k$-affinoid space associated to the $k$-affinoid algebra $A \otimes_{\Ok} k$.
Moreover, every quasi-paracompact and quasi-separated $k$-analytic space $X$ admits a formal model over $\Spf \Ok$. Concretely, there exists $\sfX \in 
\mathrm{fSch}_{\Ok}$ such that
	\[
		 \sfX^{\rig} \simeq X.
	\]
In principle, one is then able to understand the analytic structure on $X$ through the formal model $\sfX$. Furthermore one has the following crucial result, proved by Michel Raynaud in the quasi-compact case and by
Rachid Lamjoun, cf. \cite[Theorem 3.14]{lamjoun1999varieties},  in the quasi-paracompact case:

\begin{thm-intro}[Raynaud/Lamjoun, Theorem 8.4.3 in \cite{bosch2005lectures}] \label{Raynaud}
The functor $\rigg \colon \mathrm{fSch}_{\Ok} \to \An$ is a localization functor. More precisely, the latter factors through the localization of $ \mathrm{fSch}_{\Ok} $ at the class of admissible
blow ups, denoted $S$. Moreover, the induced functor
	\[
		\fSch[S^{-1}] \to \An^{\mathrm{qpcqs}}
	\]
is an equivalence of categories, where $\An^{\mathrm{qpcqs}} \subseteq \An$ denotes the full subcategory of quasi-paracompact quasi-separated $k$-analytic spaces.
\end{thm-intro}

\cref{Raynaud} has many applications in practice. Indeed, such result allows us to extrapolate methods of modern algebraic geometry into the rigid analytic setting.
For instance, \cref{Raynaud} is useful to study flatness and base change theorems in the setting of rigid $k$-analytic geometry. Another instance of this principle is the study of flat descent of coherent sheaves for rigid $k$-analytic geometry, proved in
great generality by Bosch-G\"ortz \cite{1998coherent}. \cref{Raynaud} has also found applications to the study of rigid $k$-analytic moduli spaces,
such as Drinfeld's half upper plane, the rigidification of moduli of abelian varieties equipped with level structures and more recently the moduli of $p$-adic local systems of
\'etale local systems on smooth varieties, $\mathrm{LocSys}_{p, n}(X)$, see \cite{antonio2017moduli}.

Raynaud's localization theorem allows us to bypass the intrinsic analytic difficulties by reducing the questions at hand to the formal level. Furthermore, techniques coming from algebraic geometry can be used effectively to study the geometry of formal schemes.

\subsection{Main results} The main goal of the current text is to prove an analogue of \cref{Raynaud} in the setting of derived $k$-analytic geometry.

Derived $k$-analytic geometry was introduced and studied at lenght by M. Porta and T. Yu Yue in \cite{porta2016derived, porta2017representability}. On the other hand, spectral formal algebraic geometry was developped in \cite[\S 8]{lurie2016spectral}
by J. Lurie. Unfortunately, no link between the theory of derived $\Ok$-adic schemes and that of derived $k$-analytic spaces has been established prior to this text.

In order to state a derived analogue of \cref{Raynaud} 
one needs a crucial ingredient, namely the existence of a \emph{derived rigidification functor}. Inspired by the construction of the derived analytification functor of \cite[\S 3]{porta2017representability}, we will give a construction of the derived rigidification
functor, see \cref{cor:derived_rig_functor}. Our construction requires the development of a structured spaces approach to derived formal $\Ok$-adic geometry.

This is achieved in \S 3: we develop a theory of derived $(t)$-adic formal geometry by means of $\Tad$-structured spaces. Here $\Tad$ denotes the $\Ok$-adic pregeometry introduced in \cref{defin:T-adic_structures}. More concretely, we will consider pairs $(\cX, \cO)$ where
$\cX$ is an $\infty$-topos and
	\[
		\cO \colon \Tad \to \cX
	\]
is a \emph{local $\Tad$-structure}, see \cref{def:pre}. We prove that the datum of such an object $\cO$ is equivalent to the datum of a sheaf of derived $\Ok$-adic algebras, $\cO^\ad$, such that $\pi_0(\cO^\ad)$ is equipped with a natural adic topology. The latter is further assumed to be compatible with the $(t)$-adic topology
on $\Ok$. This last statement can be interpreted as a rectification type statement and it is the content of \cref{rect}.

We will then give a definition of derived formal $\Ok$-adic Deligne-Mumford stacks over $\Ok$, via $\Tad$-structured spaces. Moreover, under certain mild finiteness conditions, this notion agrees with the simplicial analogue of the spectral notion introduced in \cite[\S 8]{lurie2016spectral}.
We then proceed to study \emph{$\Ok$-adic Postnikov tower
decompositions} and the \emph{$\Ok$-adic cotangent complex}. To the author's best knowledge, the study of the $\Ok$-adic cotangent complex and its role in the study of Postnikov towers of $\Ok$-adic Deligne-Mumford stacks
has never been addressed before in the literature. 

Using the machinery developed in \S 3,
we define a 
rigidification functor 
	\[
		\rigg \colon ^\mathrm{R} \mathcal{T} \mathrm{op} \left( \Tad \right) \to \ ^\mathrm{R} \mathcal{T} \mathrm{op} \left( \Tan \right)
	\]
which restricts to a well-defined functor 
	\[
		\rigg \colon  \mathrm{dfDM}_{\Ok} \to \dAn,
	\]
where $\mathrm{dfDM}_{\Ok} $ denotes the \infcat of derived $\Ok$-adic Deligne-Mumford stacks and $\dAn$ the \infcat of derived $k$-anlytic spaces. See \cref{defin:derived O_k adic DM stack} for the definition of the former and \cref{defin derived analytic spaces} for the definition of $\dAn$.
We prove that the derived rigidification functor $\rigg$ coincides with the usual rigidification functor when restricted to the category of ordinary formal schemes, cf. \cref{rig_cl}.

\begin{theorem}[\cref{main1}] \label{int:main1}
Let $Z \in \dAn$ be a quasi-paracompact and quasi-separated derived $k$-analytic space. There exists $\sfZ \in \dfDM$ such that one has an equivalence $
\sfZ^{\rig} \simeq Z$ in $\dAn$. In other words, $Z$ admits a formal model $\sfZ \in \mathrm{dfDM}_{\Ok}$.
\end{theorem}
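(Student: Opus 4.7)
The plan is to construct the formal model by climbing the Postnikov tower of $Z$, using the $\Ok$-adic Postnikov decomposition theorem of \S 3 together with the cotangent complex comparison to lift square-zero extensions from the $k$-analytic side to the $\Ok$-adic side one truncation degree at a time.

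First I would handle the base case. Put $Z_n := \trun_{\leq n} Z$ for each $n \geq 0$, so that $Z \simeq \lim_n Z_n$ in $\dAn$. The $0$-truncation $Z_0$ is an ordinary quasi-paracompact quasi-separated $k$-analytic space, and the classical Raynaud-Lamjoun theorem (\cref{Raynaud}) supplies a formal model $\sfZ_0 \in \fSch \subset \dfDM$. By the compatibility of the derived rigidification functor with its classical counterpart (\cref{rig_cl}) we have $\sfZ_0^{\rig} \simeq Z_0$ in $\dAn$.

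Next I would perform the inductive step. Assume $\sfZ_n \in \dfDM$ has been constructed with an equivalence $\sfZ_n^{\rig} \simeq Z_n$. The natural map $Z_{n+1} \to Z_n$ is a square-zero extension classified by a derivation $d_n \colon \bL_{Z_n}^{\an} \to \pi_{n+1}(\cO_Z^{\alg})[n+2]$ in the derived $k$-analytic sense. The homotopy sheaf $\pi_{n+1}(\cO_Z^{\alg})$ is coherent on the underlying ordinary $k$-analytic space, so Raynaud-Lamjoun combined with the Bosch-G\"ortz flat descent recalled in the introduction produces a coherent formal model $\mathcal{F}_n$ on the underlying ordinary formal scheme of $\sfZ_n$. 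The cotangent complex comparison of \S 3, roughly of the shape $\bL_{\sfZ_n}^{\ad} \otimes_{\Ok} k \simeq \bL_{\sfZ_n^{\rig}}^{\an}$, then permits me to lift $d_n$ to an $\Ok$-adic derivation $\tilde d_n \colon \bL_{\sfZ_n}^{\ad} \to \mathcal{F}_n[n+2]$. The corresponding square-zero extension in $\dfDM$ defines $\sfZ_{n+1}$, and because $\rigg$ sends $\Ok$-adic square-zero extensions to $k$-analytic ones by construction we obtain $\sfZ_{n+1}^{\rig} \simeq Z_{n+1}$.

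Finally I would set $\sfZ := \lim_n \sfZ_n$ in $\dfDM$. The $\Ok$-adic Postnikov decomposition theorem guarantees this limit represents a genuine derived $\Ok$-adic Deligne-Mumford stack. Since $\rigg$ is constructed in \cref{cor:derived_rig_functor} via an operation on structured $\infty$-topoi that commutes with Postnikov limits, one has $\sfZ^{\rig} \simeq \lim_n \sfZ_n^{\rig} \simeq \lim_n Z_n \simeq Z$, as required. The main obstacle will be the inductive step, and more precisely the simultaneous coherent lifting of both the sheaves $\pi_{n+1}(\cO_Z^{\alg})$ and of their classifying derivations $d_n$ in a way that is functorial along the tower; this compatibility is exactly what the cotangent complex comparison in \S 3 is designed to supply, but pinning it down requires careful transfer of data between the two pregeometries $\Tad$ and $\Tan$.
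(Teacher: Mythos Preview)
Your proposal is correct and follows the same strategy as the paper's proof of \cref{main1}: induct up the Postnikov tower, lifting the classifying derivation of each square-zero extension via the cotangent complex comparison $(\bL^{\ad}_{\sfZ_n})^{\rig} \simeq \bL^{\an}_{Z_n}$ together with the existence of formal models for coherent sheaves and for morphisms between them (the paper invokes \cref{O'Neill} from Appendix~A for this, not Bosch--G\"ortz descent). Two small corrections: the Postnikov tower in $\dAn$ and in $\dfDM$ is a \emph{colimit} $Z \simeq \colim_n Z_n$ (closed immersions $Z_n \hookrightarrow Z_{n+1}$), not an inverse limit; and in the assembly step the paper does not argue that $\rigg$ preserves this colimit directly---it is a right adjoint---but rather uses compatibility of $\rigg$ with truncations (\cref{tr}) to deduce $\trun_{\leq n}(\sfZ^{\rig}) \simeq \sfZ_n^{\rig} \simeq \trun_{\leq n}(Z)$ for every $n$, and concludes by hypercompleteness of the underlying $\infty$-topos.
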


Let $\mathrm{dfSch}_{\Ok}$ denote the full subcategory of $\mathrm{dfDM}_{\Ok}$ spanned by those $\sfX \in \mathrm{dfDM}_{\Ok}$ such that $\mathrm t_{0}\left( \sfX \right)$ is equivalent to an ordinary admissible quasi-paracompact and quasi-separated formal
scheme over $\Ok$. We say that a morphism $f \colon \sfX \to \sfY$ in $\dfDM_{\Ok}$ is \emph{rig-strong} if, for each $i>0$, the induced map
	\[
		\pi_i \left((f^\rig)^{-1}\cO_{\sfY^\rig} \right) \otimes_{\pi_0((f^\rig)^{-1}\cO_{\sfY^\rig})} \pi_0( \cO_{\sfX^\rig}) \to \pi_i \left( \cO_{\sfX^\rig} \right),
	\]
is an equivalence in the \infcat of the structure sheaf $\cO_{\sfX^\rig}$-modules, $\Mod_{\pi_0(\cO_{\sfX^\rig})}$.
Let $\dAn^{\mathrm{qpcqs}} \subseteq \dAn$ denote the full subcategory spanned by those $X \in \dAn$ such that its $0$-th truncation $\trun_{\leq 0}(X)$ is equivalent to a quasi-paracompact and quasi-separated ordinary $k$-analytic space.
The following is a direct generalization of Raynaud's localization theorem to the derived setting:

\begin{thm-intro}[\cref{main}] \label{loc}
Let $S$ denote the saturated class generated by rig-strong morphisms $ f\colon \sfX\to \sfY$ such that $\mathrm t_{0} \left(f \right)$ is an admissible blow up, in $\mathrm{dfSch}_{\Ok} $. Then
the rigidification functor
	\[
		(-)^{\mathrm{rig}}: \mathrm{dfSch}_{\Ok} \to \mathrm{dAn}^{\mathrm{qpcqs}}_k.
	\]
factors through the localization \infcat $\mathrm{dfSch}_{\Ok}[S^{-1}]$. Moreover, the induced functor
	\[
		\mathrm{dfSch}^{}_{\Ok}[S^{-1}] \to
		\mathrm{dAn}^{\mathrm{qpcqs}}_k.
	\]
is an equivalence of \infcats.
\end{thm-intro}

Let us briefly sketch the proof of \cref{main}. In order to prove the statement it suffices to prove that given $X \in \dAn$, as above, the comma category $\cC_X \coloneqq \big( \dfSch \big)_{X / }$ is contractible.
We will prove a slightly stronger
result, namely $\cC_X$ is a filtered \infcat. In order to illustrate the main ideas behind the proof it suffices to explain how to lift a morphism $f \colon X \to Y$ in $\dAn$ to a morphism $f^+ \colon \sfX \to \sfY$ in $\dfSch$, such that $(f^+)^\rig \simeq f$.

Our argument follows by induction on the Postnikov tower
of $X$. Suppose that $X \simeq \trun_{\leq 0}(X)$ in the \infcat $\dAn$. Notice that \cref{Raynaud} implies that we can lift $\trun_{\leq 0}(f)$ to a morphism $f^+_0 \colon \sfX_0 \to \sfY_0$ in the category $\fSch$.
As $X \to Y$ factors through the canonical morphism $\trun_{\leq 0} Y \to Y$ in $\dAn$,
we conclude by \cref{int:main1} together with \cref{Raynaud} that we can find a formal model for $f \colon X \to Y$, up to an admissible blow up of the $0$-th truncations.

Let $n \geq 0 $ be an integer. Assume further that we are giving a morphism 
	\[
		(f^+_n) \colon \sfX_n \to \sfY_n,
	\]
in $\dfSch$ such that 
	\[
		(f^+_n)^\rig \simeq \trun_{\leq n} f \colon \trun_{\leq n} X \to \trun_{\leq n} Y.
	\]
Consider the $(n+1)$-th step of the
Postnikov tower, namely the pushout diagram
	\[
	\begin{tikzcd}
		\trun_{\leq n} X[ \pi_{n+1} (\cO_{X} ) [n+2] ] \ar{r} \ar{d} & \trun_{\leq n} X \ar{d} \\
		\trun_{\leq n } X \ar{r} & \trun_{\leq n+1} X,
	\end{tikzcd}
	\]
in the \infcat $\dAn$. In order to proceed, we will need to prove:

\begin{proposition}[\cref{cor:adic_cotangent_complex_is_compatible_with_analytic_via_rig}]
Let $\sfX \in \dfSch$ and denote $X \coloneqq \sfX^\rig \in \dAn$. Then the rigidification functor induces a canonical equivalence
	\[
		\big( \bL^\ad_{\sfX} \big)^\rig \simeq \bL^\an_X,
	\]
in the \infcat $\Mod_{\cO_X}$.
\end{proposition}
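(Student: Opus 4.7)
The plan is to characterize both sides as corepresenting naturally equivalent derivation functors and invoke Yoneda. By the structured-spaces construction of the adic cotangent complex developed in \S 3, for $M \in \Mod_{\cO_\sfX}$ one has a natural equivalence
\[
\Map_{\Mod_{\cO_\sfX}}\bigl( \bL^\ad_\sfX, M \bigr) \simeq \Der^\ad(\cO_\sfX, M),
\]
where $\Der^\ad$ classifies $\Tad$-square-zero extensions of $\cO_\sfX$ by $M$. The analogous universal property holds for $\bL^\an_X$ in the derived $k$-analytic framework of \cite{porta2017representability}. The first step is to construct a natural comparison map: given an analytic derivation of $\cO_X$ with values in $M \in \Mod_{\cO_X}$, pulling back along the morphism of structured topoi underlying rigidification produces a $\Tad$-derivation of $\cO_\sfX$ with values in the adic pushforward of $M$. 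This assignment is functorial in $M$ and, by Yoneda, yields a canonical comparison morphism $\alpha \colon (\bL^\ad_\sfX)^\rig \to \bL^\an_X$ in $\Mod_{\cO_X}$.

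The second step is to reduce the equivalence of $\alpha$ to the affine case. Both sides are functorial under \'etale morphisms and compatible with Zariski localization: on the adic side by the structured-spaces construction in \S 3, and on the analytic side by \cite{porta2017representability}. Since rigidification itself is compatible with \'etale base change, it therefore suffices to prove the equivalence for $\sfX = \Spf A$ with $A \in \fCAlg^{\taft}$ a derived admissible topologically finitely presented $\Ok$-adic algebra. In this case $X = \Sp(A \otimes_{\Ok} k)$ and the statement becomes the equivalence
\[
\bL^\ad_A \otimes_A (A \otimes_{\Ok} k) \simeq \bL^\an_{A \otimes_{\Ok} k}
\]
of modules on $X$. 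The third step is to verify this affine equivalence by sifted-colimit resolution: since both $\bL^\ad$ and $\bL^\an$ commute with sifted colimits, and every such $A$ is a sifted colimit of free algebras $\Ok\{T_1,\dots,T_n\}$, it suffices to verify the equivalence on these, where both sides are free of the same rank generated by the symbols $dT_i$ and the comparison map is manifestly an equivalence.

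The main obstacle is the precise reconciliation of $\Tad$-derivations with $\Tan$-derivations used to construct $\alpha$. On the adic side, derivations must be continuous with respect to the $(t)$-adic topology; on the analytic side, they must be bounded with respect to Banach norms. Bridging these two worlds requires invoking the rectification statement \cref{rect}, which identifies $\Tad$-structures with sheaves of adic derived algebras, together with the analogous rectification in the analytic setting due to Porta and Yu Yue. Once the comparison is correctly set up at the level of structured sheaves, the identification of the two derivation functors reduces to the elementary fact that $(t)$-adically continuous derivations of $A$ agree with analytic derivations of $A \otimes_{\Ok} k$ after inverting the pseudo-uniformizer $t$, which is where the quasi-paracompactness and admissibility assumptions enter to guarantee compatibility with the topology on $\cO_X$.
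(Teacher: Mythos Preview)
Your approach is correct in outline but takes a considerably longer route than the paper. The paper's proof is essentially one line: it observes (Lemma~\ref{comm:ad:an}) that the square
\[
\begin{tikzcd}
\Mod_{\cO_\sfX} \ar{r}{\rigg} & \Mod_{\cO_X} \\
\fCAlg(\fX)_{/\cO_\sfX} \ar{u}{\Sigma^\infty_\ad} \ar{r}{\rigg} & \AnRing(\cX)_{/\cO_X} \ar{u}{\Sigma^\infty_\an}
\end{tikzcd}
\]
commutes, and since both cotangent complexes are by definition $\Sigma^\infty$ applied to the structure sheaf (viewed in the appropriate overcategory), the equivalence is immediate. The commutativity of this square is itself checked by passing to right adjoints, where it becomes the statement that $\functor^+$ (precomposition with the transformation of pregeometries $\Tad \to \Tan$) commutes with $\Omega^\infty$; this is a purely formal verification, since both are given by precomposition or evaluation.

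By contrast, you build a comparison map by hand via derivations, localize to affines, and then compute on free algebras via a sifted-colimit resolution. This is a legitimate strategy, but a few points deserve care. Your description of the comparison map in Step~1 has the adjunction running in a slightly awkward direction: the clean way to produce $\alpha$ is not to push analytic derivations down to adic ones, but simply to note that $\rigg$ is a composite of left adjoints on structures and hence intertwines the $\Sigma^\infty$'s --- which is exactly the paper's lemma. Your final paragraph invoking the rectification theorem, quasi-paracompactness, and admissibility is unnecessary: none of those hypotheses enter the statement or the proof. What your route buys is that it makes the content of the equivalence explicit (on affines it is ``invert $t$ in the adic cotangent complex''), whereas the paper's argument is fully formal and says nothing about what the map actually does; but the paper's route is dramatically shorter and avoids having to verify that all the pieces (rigidification of modules, both cotangent complexes) commute with the sifted colimits used in your resolution step.
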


The induction hypothesis combined with the universal property of the adic and analytic cotangent complexes and with refined results on the existence of formal models for almost perfect modules on $X$, proved in both \cite{antonio_Hilbert} and
Appendix A, imply that
we can extend the morphism $f^+_n \colon \sfX_n \to \sfY_n$ to a diagram
	\begin{equation} \label{diag_of_diag}
		 f^+_n \leftarrow f^+_n [ \pi_{n+1}(f)^+[n+2] ] \rightarrow \trun_{\leq n } f^+_n.
	\end{equation}
The latter is considered as an object in $\Fun \big( \Lambda^2_0, \dfSch^{\Delta^1} \big)$, where 
	\[
		\pi_{n+1}(f)^+ \in \Coh(\sfX_0)^{\Delta^1},
	\]
denotes a formal model for $\pi_{n+1}(f)$. By taking pushouts along $\Lambda^2_0$
we obtain the desired lifting
	\[
		f^+_{n+1} \colon \sfX_{n+1} \to \sfY_{n+1},
	\]
of $\trun_{\leq n+1}(f)$.

The main technical difficulty of the proof comes from the lifting of higher coherences involved in finite diagrams of derived $k$-analytic spaces to higher coherences of diagrams of formal models. This is needed in order to extend
\eqref{diag_of_diag} above in the case of more complex diagrams.

\subsection{Notations and conventions} Throughout the text, unless otherwise stated, $k$ denotes a non-archimedemean field of rank $1$ valuation and $\Ok = \{ x \in k : |x| \leq 1 \}$ its ring of integers. We denote $t \in \Ok$ a pseudo-uniformizer of $k$.
Given an integer $n \geq 1$, we will denote by $\Ok_n $ the reduction modulo $(t^n)$ of $\Ok$. 
We denote
$\mathrm{fSch}_{\Ok}$ the (classical) category of formal schemes (topologically) of finite presentation over $\Ok$.

Let $n \geq 0 $ be an integer, we define $\Ok \langle T_1, \dots , T_n \rangle $ as the sub-algebra of $\Ok [[T_1 , \dots, T_n ]]$ consisting of those
formal power series $f = \Sigma_I a_i T_I^{b_I}$, such that the coefficients $a_I \to 0 $, with $I \to \infty$, in $\Ok$. Denote by $\bfA^n \coloneqq \Spf \Ok \langle T_1, \dots, T_n \rangle$ the $\Ok$-adic affine $n$-space. 

We say that a morphism between two $(t)$-adic complete $\Ok$-algebras $A \to B$ is \emph{formally \'etale} if, for each $n \geq 0$, its mod $t^n$ reduction is
an \'etale homomorphism of $\Ok/ t^n$-algebras.

We shall further denote by $\anB^n \coloneqq \Sp \langle T_1, \dots, T_n \rangle$ the closed unit disk
and $\anA^n \coloneqq \big( \bA^n_k \big)^\an$ the $k$-analytic affine $n$-space.

Let $R$ be a commutative simplicial ring, we denote $\CAlg_R$ its \infcat of simplicial $R$-algebras.
We will refer to an object $A \in \CAlg_R$ as a \emph{derived $R$-algebra}. Similarly, if $R$ is a discrete ring, we denote $\CAlg^\heartsuit_R$ the category of discrete $R$-algebras. 

Given an object $B \in \CAlg_R$ we denote by $\pi_i \left(B \right)$ the $i$-th homotopy group of the underlying space associated to $B$. We will denote $\Mod_R$ the derived \infcat of $R$-modules.
Throughout the text we will employ homological convention. Thus given $M \in \Mod_R$ we denote by $\pi_i(M) \coloneqq H_i(M)$ its $i$-th homology group.

We denote by $\cS$ the \infcat of spaces and $^\mathrm{R} \mathcal{T} \mathrm{op}$ the \infcat of $\infty$-topoi together with geometric morphisms between these. 
In this paper we will extensively use the machinery of structure spaces, developped in \cite{lurie2011dag}. We will introduce the $\Ok$-adic pregeometry $\Tad$, spanned by (topologically) of finite presentation formally smooth $\Ok$-adic spaces.
Given an $\infty$-topos $\cX$ we will denote $\CAlg_R(\cX) \coloneqq \locStr_{\Tdisc(R)}(\cX)$, $\CAlg_R^{\sh}(\cX) \coloneqq \locStr_{\Tet(R)}(\cX)$, $\fCAlg(\cX) \coloneqq \locStr_{\Tad}(\cX)$ and $\AnRing(\cX) \coloneqq \locStr_{\Tan}(\cX)$.
We will often denote a general pregeometry by the letter $\cT$. If $R$ is discrete, we shall also denote by $\CAlg_R^\heartsuit(\cX)$ the category of discrete $R$-algebras on $\cX$.
Let $\cT$ be a pregeometry. In this paper, we always work with local structures, that is $\cO \in \locStr_\cT(\cX)$.

Throughout the present text we will freely cite \cite{lurie2016spectral}. However, we warn the reader that \cite{lurie2016spectral} deals with spectral algebraic geometry.
On the other hand, this paper is devoted to derived geometry and we never make use of $\bE_\infty$-ring spectra. Fortunately, the statements which we will need from \cite{lurie2016spectral} hold true in the simplicial setting. Moreover, the corresponding proofs apply mutadis mutandins
in the simplicial setting.
It is also possible to define spectral $\Ok$-adic geometrical analogues of the results proved in the current text. However, we do not explore this direction here.

\subsection{Acknowledgments} The author is deeply thankful to Mauro Porta for sharing many ideas, remarks and advices about the contents of the paper. The author is also thankful to Bertrand To\"{e}n and Marco Robalo for valuable discussions about the contents of the paper.

\section{Review on derived algebraic and analytic geometry}

\subsection{Derived algebraic geometry}
In \cite{lurie2011dag}, J. Lurie introduced the notion of a (spectral) scheme, and more generally (spectral) Deligne-Mumford stack via a structured spaces approach. We review some of the basic notions for the reader's convenience.
The reader is referred to \cite{lurie2011dag} and \cite{porta2016derived} for more details. 

\begin{definition}
A \emph{ringed $\infty$-topos} is a pair $(\cX, \cO)$ where $\cX$ denotes an $\infty$-topos and $\cO \in \CAlg(\cX)$ is a $\CAlg$-valued sheaf, on $\cX$. We say that a ringed $\infty$-topos is a \emph{locally ringed $\infty$-topos} if for each
geometric point
	\[
		x_* \colon \cX \rightleftarrows \cS \colon x^{-1},
	\]
the derived ring $x^{-1} \cO $, on $\cS$, can be identified with a derived local $k$-ring.
\end{definition}

\begin{remark}
Let $X$ a topological space. We form the associated $\infty$-topos $\cX := \Shv(X)$, of $\cS$-valued sheaves on $X$. To a classical locally ringed pair $(X, \cO)$  one (functorially) associates a locally ringed $\infty$-topos $(\cX, \cO)$. Indeed, $\cO$ can be naturally
promoted to a local $\CAlg$-valued sheaf on $\cX$.
\end{remark}

\begin{definition}
	Let $\cX$ be $\infty$-topos. Given $\cA \in \CAlg(\cX)$, we say that \emph{$\cA$ is almost of finite presentation} if $\pi_0(\cA)$ is a classical ring object on $\cX$, which is further assumed to be of finite presentation. We further require that, for every $i > 0$, $\pi_i(\cA)$ is a coherent module over $\pi_0(\cA)$.
\end{definition}

We now reformulate the notion of locally ringed $\infty$-topos in terms of pregeometries:

\begin{definition}
A \emph{pregeometry} consists of an $\infty$-category $\cT$ equipped with a class of \emph{admissible morphisms} and a Grothendieck topology. The latter is generated by admissible morphisms. Moreover we require the following conditions to hold:
\begin{enumerate}
\item $\cT$ admits finite products;
\item Pullbacks along admissible morphisms exist and are again admissible;
\item If $f$ and $g$ are morphisms in $\cT$ such that $g$ and $g \circ f$ are admissible, then so is $f$.
\item Retracts of admissible morphisms are admissible. 
\end{enumerate}
\end{definition}

We give a list of well known examples of pregeometries which will be useful later on.

\begin{example}
\begin{enumerate}
\item Let $\mathcal{T}_{\mathrm{disc}}(k)$ denote the pregeometry whose underlying category is the full subcategory of the category of affine $k$-schemes spanned by affine spaces $\{ \mathbb{A}^n_k \}_{n \ge 0}$. The family of admissible morphisms is the family of isomorphisms in $\Tdisc(k)$.
We further equip it with the discrete Grothendieck 
topology.

\item Let $\Tet(k)$ denote the pregeometry whose underlying category is the full subcategory of the category of affine schemes spanned by smooth $k$-schemes. A morphism in
$\Tet(k)$ is admissible if and only if it is an \'etale morphism of affine schemes. We equip $\Tet(k)$ with the \'etale topology.
\end{enumerate}
\end{example}

\begin{definition} \label{def:pre}
Let $\cT$ be a pregeometry and $\mathcal{X}$ an $\infty$-topos. A \emph{$\cT$-local structure on $\mathcal{X}$} is a functor between $\infty$-categories $\mathcal{O}: \cT \to \mathcal{X}$ satisfying the following conditions:
\begin{enumerate}
\item The functor $\mathcal{O}$ preserves finite products in $\cT$;
\item For a pullback square, in $\cT$
	\[
	\begin{tikzcd}
		U' \ar{r} \ar{d} & X' \ar{d}{f} \\
		U \ar{r} & X
	\end{tikzcd},
	\]
such that $f$ is admissible, the square
	\[
	\begin{tikzcd}
		\mathcal{O}(U') \ar[r] \ar[d] &  \mathcal{O}(X') \ar{d}{\mathcal{O}(f)} \\
		\mathcal{O}(U) \ar[r] & \mathcal{O}(X)
	\end{tikzcd}
	\]
is also a pullback square in $\mathcal{X}$.
\item Let $\{ f_\alpha \colon U_{\alpha} \to U\}$ denote a $\tau$-covering in $\cT$, such that the $f_\alpha$'s are admissible. Then the induced map
	\[ 
		\coprod \mathcal{O}( U_{\alpha}) \to \mathcal{O}(U),
	\]
is an effective epimorphism in $\mathcal{X}$.
\end{enumerate}
A morphism $\mathcal{O} \to \mathcal{O}'$ between $\cT$-local structures is said to be \emph{local} if it is a natural transformation satisfying the following additional condition: for every admissible morphism $U \to X$ in $\cT$, the resulting diagram
	\[
	\begin{tikzcd}
		\mathcal{O}(U) \ar[r] \ar[d] & \mathcal{O}'(U) \ar[d] \\
		\mathcal{O}(X) \ar[r] & \mathcal{O}'(X),
	\end{tikzcd}
	\] 
is a pullback square in $\mathcal{X}$. We denote $\locStr_{\cT} \left( \cX \right)$ the \infcat of local $\cT$-structures on $\cX$ together with local morphisms between these.
\end{definition}

\begin{construction}
\begin{enumerate}
	\item In virtue of \cite[Example 3.1.6, Remark 4.1.2]{lurie2011dag},
	we have an equivalence of \infcats 
		\[
			\locStr_{\Tdisc}( \cX) \simeq \Shv_{\CAlg}(\cX)
		,
		\]
	where the latter denotes the \infcat of $\CAlg$-valued sheaves on $\cX$.
	More explicitly, given a
	ringed $\infty$-topos $ ( \cX, \cO)$, we can promote it naturally to a $\Tdisc$-structured via the construction:
		\[
			\mathbb{A}^n_k \in \Tdisc \mapsto \left( \cO^{\times n} \in \Shv \left( \cX \right) \simeq \cX \right).
		\]
	\item Similarly, a $\Tet(k)$-local structure on $\cX$ corresponds to a $\CAlg_k$-valued sheaf on $\cX$ whose stalks are strictly Henselian. We refer the reader to \cite[Lemma 1.4.3.9]{lurie2016spectral} for a detailed proof of 
	this result. 
\end{enumerate}
\end{construction}

\begin{definition} \label{def_local_str_spaces}
A \emph{$\cT$-structured $\infty$-topos} is a pair $X :=( \mathcal{X}, \mathcal{O})$, where $\mathcal{X}$ denotes an $\infty$-topos and $\mathcal{O}$ is a $\cT$-local structure on $\mathcal{X}$. We denote by $\Top(\cT)$ the $\infty$-category of $\cT$-structured $\infty$-topoi, cf.
\cite[Definition 3.1.9]{lurie2011dag}. 
\end{definition}

\begin{definition}
A derived Deligne-Mumford stack is a $\Tet(k)$-structured $\infty$-topos $(\cX, \cO)$, verifying the following conditions:
	\begin{enumerate}
		\item The $0$-truncation $\mathrm t_{\leq 0 }  \left( \cX, \cO \right)  := \left( \cX, \pi_0 ( \cO^{\alg} )\right)$ is equivalent to an (ordinary) Deligne-Mumford stack;
		\item For each $i> 0 $, the higher homotopy sheaf $\pi_i \left( \cO^{\alg} \right) $ is a quasi-coherent sheaf on $\left( \cX, \cO \right)$.
	\end{enumerate}
\end{definition}

\subsection{Derived $k$-analytic geometry} Let $k$ denote a non-archimedean field of non-trivial valuation. Derived $k$-analytic geometry, developped in \cite{porta2016derived}, is a vast generalization of the classical theory of rigid analytic geometry. 
In this \S, we will review the basic definitions and we refer the reader to \cite{porta2016derived, porta2017representability} for a detailed
account of the foundational aspects of the theory. 

\begin{definition}
Let $\Tan$ denote the pregeometry whose underlying category consists of quasi-smooth $k$-analytic spaces and whose admissible morphisms correspond to \'etale maps between them. We equip $\Tan$ with the \'etale topology.
\end{definition}

\begin{construction} \label{const:an}
Let $X$ be an ordinary $k$-analytic space and denote $X_\et$ the small \'etale site associated to $X$. Let $\cX := \Shv_\et(X_\et)^{\wedge}$ denote the \emph{hypercompletion} of the $\infty$-topos of \'etale sheaves on $X$. We can define a natural $\Tan$-structure, on $\cX$, as follows: given $U \in \Tan$, we define the sheaf $\cO(U) \in \cX$ by the formula
	\[ 
		X_\et \ni V \mapsto \Hom_{\An} \left( V, U \right) \in \cS . 
	\]
As in the algebraic case, we can canonically identify $\cO(\mathbf{A}^1_k)$ with the usual sheaf of analytic functions on $X$.
\end{construction}

\begin{definition} \label{defin derived analytic spaces}
We say that $\Tan$-structured $\infty$-topos $(\mathcal{X}, \mathcal{O})$ is a \emph{derived $k$-analytic space} if the following conditions are satisfied:
\begin{enumerate}
\item $\mathcal{X}$ is hypercomplete and there exists an effective epimorphism $\coprod_i U_i \to 1_{\mathcal{X}}$ on $\cX$ verifying:
\item For each $i$, the pair $(\mathcal{X}_{ | U_i}, \pi_0( \mathcal{O}^{\mathrm{alg}}|  U_i )  )$ is equivalent, in $\anTop$, to an ordinary $k$-analytic space, via \cref{const:an}.
\item For each index $i$ and $j \geq 1$, $ \pi_j(\mathcal{O}^{\mathrm{alg}}|U_i)$ is a coherent sheaf over $\pi_0( \mathcal{O}^{\mathrm{alg}}|U_i)$.
\end{enumerate}
We denote by $\dAn$ the full subcategory of $\anTop$ spanned by derived $k$-analytic spaces.
\end{definition}

\begin{remark}
In \cite{porta2016derived} and \cite{porta2017representability} the authors prove three key statements concerning derived analytic geometry. Namely the unramifiedness of the pregeometry $\Tan$, cf. \cite[Corollary 3.11]{porta2016derived}, a ''gluing along closed immersions'' statement, cf. \cite[Theorem 6.5]{porta2017representability}, and the
existence of an analytic cotangent complex classifying analytic square-zero extensions, loc. cit. \cite[Proposition 5.18]{porta2017representability}. There is also a $k$-analytic analogue of the Artin-Lurie representability theorem, cf. \cite[Theorem 7.1]{porta2017representability}.
\end{remark}

\section{Derived $\Ok$-adic geometry}
In this section we introduce the \emph{$\Ok$-adic pregeometry}, denoted $\Tad$, and study the corresponding theory of $\Tad$-structured spaces. Our first goal is to give an alternative description of a $\Tad$-structured $\infty$-topos $(\cX, \cO)$. Indeed, we prove
that such $(\cX, \cO)$ can be alternatively described as a locally ringed $\infty$-topos $(\cX, \cO^\alg)$ whose $\pi_0 \left( \cO^{\alg} \right)$ is equipped with an adic topology compatible with the $(t)$-adic topology. We will prove such assertion in \S 3.1, under mild finiteness assumptions on $\cO$.

We will also extend the $\Spf$-construction introduced in \cite[\S 8.2]{lurie2016spectral} to the context of $\Tad$-structured spaces. We will then proceed to develop a theory of modules in the setting of derived $\Ok$-adic geometry, see \S 3.3.
Part \S 3.4 is devoted to the study of the \emph{$\Ok$-adic cotangent complex}.
We will establish unramifiedness of the pregeometry $\Tad$ in \S 3.5.
In \S 3.6, we show that Postnikov towers for $\Tad$-structured spaces do converge and they are fully controlled by the adic cotangent complex.

\subsection{Derived $\Ok$-adic spaces}

\begin{definition} \label{defin:T-adic_structures}
The \emph{$\Ok$-adic pregeometry}, denoted $\Tad$, is the full subcategory of the category of affine formal $\Ok$-adic schemes spanned by those formally smooth formal $\Ok$-schemes.
We define the class of admissible morphisms on $\Tad$ as
the one generated by \'etale morphisms. We further equip $\Tad$ with the \'etale topology.
\end{definition}

\begin{notation}
Denote by $\adTop$ the \infcat of $\Tad$-structured $\infty$-topoi. Given $\cX \in \Top$, we set
	\[
		\fCAlg (\cX) \coloneqq \locStr_{\Tad}(\cX),
	\]
the \infcat of \emph{local $\Tad$-structures on} $\cX$.
\end{notation}

We start by showing that ordinary formal $\Ok$-adic Deligne-Mumford stacks admit a natural description as $\Tad$-structured $\infty$-topoi:

\begin{notation}
Consider the transformation of pregeometries
	\[
		\tcomp \colon	\Tdisc(\Ok) \to \Tad,  \  \
		\tcomp \colon \Tet(\Ok) \to \Tad,
	\]
obtained by performing completion along the $(t)$-locus. Precomposition along these transformations induce functors
	\begin{align*}
		\functor ^\alg \colon \fCAlg(\cX) \to \CAlg_{\Ok}(\cX) ,\\
		 \functor ^\sh \colon \fCAlg(\cX) \to \CAlg^{\sh}_{\Ok}(\cX),
	\end{align*}
which we will refer as the \emph{underlying algebra functor} and the \emph{underlying $\Tetph(\Ok)$-structure functor}, respectively. The functor $\functor^\alg$ sends every $\Tad$-structure, on $\cX$, to its underlying algebra object. The latter is obtained by evaluation
on the formal affine line, $\mathfrak{A}^1_{\Ok}$.
Furthermore, the above construction induces canonical functors of $\infty$-categories of structured $\infty$-topoi:
	\begin{align*}
		\functor^\alg \colon \adTop \to \discTop, \\ \functor^\sh \colon \adTop \to \etTop,
	\end{align*}
which are determined by the associations
	\begin{align*}
		(\cX , \cO) \in \adTop \mapsto (\cX, \cO^\alg) \in \discTop \\
		(\cX , \cO) \in \adTop \mapsto (\cX, \cO^\sh) \in \etTop,
	\end{align*}
respectively.
\end{notation}

\begin{definition}
Let $\cX$ be an $\infty$-topos. We denote by $(\adCAlg)^\heartsuit(\cX)$ the usual $1$-category of discrete \emph{$\Ok$-adic algebras} on $\cX$, i.e. discrete $\Ok$-algebras equipped with an adic topology compatible with the $(t)$-adic topology. Morphisms in $(\adCAlg)^\heartsuit(\cX)$ correspond to continuous ring morphisms for the adic topologies.
\end{definition}

\begin{definition}
	Let $\cX$ denote an $\infty$-topos. We define the \infcat of \emph{derived $\Ok$-adic algebras on $\cX$}, denoted $\adCAlg(\cX)$, via the pullback diagram
		\[
		\begin{tikzcd}
			\adCAlg(\cX) \ar{r} \ar{d} & \CAlg_{\Ok}(\cX) \ar{d}{\pi_0} \\
			(\adCAlg)^\heartsuit(\cX) \ar{r} & \CAlg_{\Ok}^\heartsuit(\cX),
		\end{tikzcd}
		\]
	computed in $\Cat$.
\end{definition}

\begin{lemma} \label{rem:magic}
Let $\cX \in \Top$ be an $\infty$-topos. The underlying algebra functor 
	\[
		\functor^\alg \colon \fCAlg( \cX) \to \CAlg_{\Ok}(\cX),
	\]
can be upgraded to a well defined functor 
	\[
		\functor^\ad \colon \fCAlg(\cX) \to \adCAlg(\cX).
	\]
\end{lemma}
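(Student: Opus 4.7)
The plan is to use the defining pullback square of $\adCAlg(\cX)$: producing a lift $\functor^\ad$ of $\functor^\alg$ is equivalent, by the universal property of the pullback, to producing a functor $\fCAlg(\cX) \to (\adCAlg)^\heartsuit(\cX)$ together with a natural equivalence in $\CAlg_{\Ok}^\heartsuit(\cX)$ between its image under the forgetful functor and $\pi_0 \circ \functor^\alg$. The genuine content of the lemma therefore reduces to canonically equipping the sheaf $\pi_0(\cO^\alg)$ with an adic topology compatible with the $(t)$-adic topology on $\Ok$, functorially in $\cO$.

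The topology I would put is the $(t)$-adic one. The pregeometry transformation $\Tdisc(\Ok) \to \Tad$ endows $\cO^\alg$, and hence its $\pi_0$, with the structure of a sheaf of $\Ok$-algebras on $\cX$. The unit map $\Ok \to \Gamma(\cX, \pi_0(\cO^\alg))$ then produces a canonical global section, which I still denote by $t$, yielding a well-defined ideal sheaf $t \cdot \pi_0(\cO^\alg) \subseteq \pi_0(\cO^\alg)$. I would equip $\pi_0(\cO^\alg)$ with the adic topology having $\{t^n \cdot \pi_0(\cO^\alg)\}_{n \geq 0}$ as a fundamental system of open neighborhoods of $0$; by construction the comparison $\Ok \to \Gamma(\cX, \pi_0(\cO^\alg))$ is tautologically continuous, so the resulting topology lies in $(\adCAlg)^\heartsuit(\cX)$.

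Functoriality then comes for free: any local morphism $\cO \to \cO'$ in $\fCAlg(\cX)$ descends to a morphism of sheaves of $\Ok$-algebras $\pi_0(\cO^\alg) \to \pi_0((\cO')^\alg)$, which necessarily sends the canonical section $t$ on the source to the one on the target, hence carries $t^n \cdot \pi_0(\cO^\alg)$ into $t^n \cdot \pi_0((\cO')^\alg)$ and is therefore continuous for the two $(t)$-adic topologies. Packaging everything through the universal property of the pullback yields the desired functor $\functor^\ad$.

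The one subtlety worth flagging --- and I view it as a sanity check rather than a genuine obstacle --- is that the $(t)$-adic topology I imposed really is the topology intrinsically attached to $\cO$. The point is that every object of $\Tad$ is topologically of finite presentation over $\Ok$ with $(t)$ as an ideal of definition; combined with the preservation of finite products and of pullbacks along admissible morphisms built into the definition of a local $\Tad$-structure, this ensures that the topology we imposed agrees locally with the one naturally carried by the sections of $\cO^\alg$ on the formal affine spaces $\bfA^n$. Under the mild finiteness assumptions on $\cO$ discussed later in \S 3.1, this compatibility will be upgraded to a full rectification statement identifying $\Tad$-structures with locally ringed $\infty$-topoi whose $\pi_0$ carries a compatible adic topology, but no such assumption is required for the construction of $\functor^\ad$ itself.
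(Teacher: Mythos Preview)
Your proof is correct and follows essentially the same approach as the paper. Both arguments invoke the pullback description of $\adCAlg(\cX)$, equip $\pi_0(\cO^\alg)$ with the $(t)$-adic topology, and check that morphisms of $\Tad$-structures induce continuous maps on $\pi_0$; the only cosmetic difference is that the paper writes the fundamental system of ideals as $I_n \coloneqq \ker\bigl(\pi_0(\cA^\alg) \to \pi_0(\cA^\alg \otimes_{\Ok} \Ok_n)\bigr)$ and verifies continuity via the universal property of base change along $\Ok \to \Ok_n$, whereas you write it directly as $t^n \cdot \pi_0(\cO^\alg)$ and argue that an $\Ok$-algebra map sends $t$ to $t$ --- the two descriptions coincide since $\pi_0(\cA^\alg \otimes_{\Ok} \Ok_n) \simeq \pi_0(\cA^\alg)/t^n\pi_0(\cA^\alg)$.
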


\begin{proof}
We start by explicitly describe $\functor^\ad$ as follows: for each integer $n \geq 1$ and for
each $\cA \in \fCAlg(\cX)$, consider the canonical morphism 
	\[
		\cA^{\alg} \to \cA^\alg \otimes_{\Ok} \Ok_n \in \CAlg_{\Ok}(\cX).
	\]
Denote by $I_n \coloneqq \ker \left( \pi_0(\cA^\alg) \to \pi_0( \cA^\alg \otimes_{\Ok} \Ok_n ) \right)$. The sequence of ideals
$\{ I_n \}_{n \geq 1}$ defines an $I$-adic structure on $\cA^{\alg}$. The latter is further compatible with the $(t)$-adic topology on $\Ok$. Moreover, for every morphism $f \colon \cA \to \cB $ in $\fCAlg(\cX)$,
the underlying algebra morphism
	\[
		f^{\alg} \colon \cA^\alg \to \cB^\alg,
	\]
is compatible with the constructed adic topologies on both $\cA^{\alg}$ and $\cB^{\alg}$. Indeed, the latter can be checked directly at the level of $\pi_0$. In this case, the assertion follows from the fact that the composite
	\[
		\cA^\alg \to \cB^\alg \to \cB^\alg
		\otimes_{\Ok} \Ok_n,
	\]
induces a unique morphism $\cA^\alg \otimes_{\Ok} \Ok_n \to \cB^{\alg} \otimes_{\Ok} \Ok_n$, which is a consequence of the universal property of base change along $\Ok \to \Ok_n$.
By the construction of $\adCAlg(\cX)$, we conclude that the $\functor^\alg \colon \fCAlg(\cX) \to \CAlg_{\Ok}
(\cX)$ can be upgraded to a well-defined functor
	\[
		\functor^{\ad} \colon \fCAlg(\cX) \to \adCAlg (\cX),
	\]
as desired.
\end{proof}

\begin{proposition}
Both functors
	\begin{align*}
		\functor^\alg \colon \adTop \to \discTop \\
		 \functor^\sh \colon \adTop \to \etphTop,
	\end{align*}
admit right adjoints
	\begin{align*}
		L \colon \discTop \to \adTop \\
		L^{\sh} \colon \etphTop \to \adTop).
	\end{align*}
\end{proposition}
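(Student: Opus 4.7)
The plan is to realise both forgetful functors as induced by genuine transformations of pregeometries, and then invoke the general adjoint existence result of Lurie for structured $\infty$-topoi (see \cite[\S 2.3]{lurie2011dag}). First, I would verify that the $(t)$-adic completion assignments
    \[
        \tcomp \colon \Tdisc(\Ok) \longrightarrow \Tad, \qquad \tcomp \colon \Tet(\Ok) \longrightarrow \Tad
    \]
are transformations of pregeometries. Preservation of finite products is immediate; preservation of admissible morphisms reduces to the fact that étale morphisms of smooth $\Ok$-schemes complete to étale morphisms of admissible $\Ok$-adic formal schemes; preservation of pullbacks along admissible morphisms, as well as of étale covering families, is a standard application of formal étale descent. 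The forgetful functors $\functor^\alg$ and $\functor^\sh$ on local structures are then precisely precomposition along these transformations.

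Next, I would build the adjoints at the level of local structures. For a transformation of pregeometries $\phi \colon \cT \to \cT'$, the precomposition functor $\phi^* \colon \locStr_{\cT'}(\cX) \to \locStr_{\cT}(\cX)$ admits a left adjoint $\phi_!$, obtained by left Kan extension along $\phi$ followed by sheafification for the Grothendieck topology on $\cT'$. Verifying that $\phi_!\cB$ is still \emph{local} is the crux: one must check preservation of finite products, of admissible pullbacks, and of covering families by $\phi_!\cB$. The first follows from compatibility of left Kan extensions with finite products; the second is a consequence of the fact that $\phi$ sends admissible pullback squares in $\cT$ to admissible pullback squares in $\cT'$; the third is automatic after sheafification.

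To pass to structured $\infty$-topoi, I would set
    \[
        L(\cY, \cB) \coloneqq (\cY, \phi_!\cB), \qquad L^\sh(\cY, \cB) \coloneqq (\cY, \phi_!^\sh \cB),
    \]
and verify the universal property. Recall that a morphism $(\cX, \cO) \to (\cY, \phi_!\cB)$ in $\adTop$ is the datum of a geometric morphism $f_* \colon \cX \to \cY$ together with a local morphism $\phi_!\cB \to f_*\cO$ of $\Tad$-structures on $\cY$. Applying the pointwise adjunction $\phi_! \dashv \phi^*$ in $\cY$ identifies such data with a local morphism $\cB \to (f_*\cO)^\alg = f_*(\cO^\alg)$, i.e.\ with a morphism $(\cX, \cO^\alg) \to (\cY, \cB)$ in $\discTop$. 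The resulting equivalence
    \[
        \Map_{\adTop}\!\bigl((\cX, \cO), L(\cY, \cB)\bigr) \simeq \Map_{\discTop}\!\bigl((\cX, \cO^\alg), (\cY, \cB)\bigr)
    \]
is natural in both variables, exhibiting $L$ as right adjoint to $\functor^\alg$ in the $^R\cT\mathrm{op}$-convention (where the variance of geometric morphisms flips left into right). The argument for $L^\sh$ is formally identical.

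The main obstacle will be the compatibility of $\phi_!$ with change of $\infty$-topos: one must show that for a geometric morphism $f \colon \cX \to \cY$, the natural comparison $f^* \phi_! \cB \to \phi_!(f^* \cB)$ is an equivalence of local $\Tad$-structures on $\cX$. This reduces to the commutation of $f^*$ with left Kan extension (formal) and with étale sheafification, which in turn uses that étale coverings in $\Tad$ are stable under pullback along $f^*$ and that sheafification is computed stalk-wise in the same manner for both pregeometries. With this compatibility in hand, the naturality of the universal property above is automatic.
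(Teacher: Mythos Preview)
Your approach has a genuine gap: the formula $L(\cY,\cB)=(\cY,\phi_!\cB)$ is not the right adjoint. The right adjoint to precomposition along a transformation of pregeometries is a \emph{relative spectrum} functor, and it changes the underlying $\infty$-topos in general. For instance, take $\cY$ to be the point and $\cB=\Ok[T]$; then $L(\cY,\cB)$ must be the $\Tad$-spectrum of $\Ok[T]$, whose underlying $\infty$-topos is the (hypercomplete) \'etale topos of the special fibre of $\Spf\Ok\langle T\rangle$, not the point. Your own later paragraph about compatibility of $\phi_!$ with $f^*$ implicitly assumes the topos is unchanged, so the argument cannot be repaired just by tracking that comparison.

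The error originates upstream, in your ``adjunction at the level of local structures''. Left Kan extension along $\phi$ followed by sheafification gives a left adjoint $\phi_!$ to $\phi^*\colon\Str_{\Tad}(\cX)\to\Str_{\Tdisc}(\cX)$ in the category of $\cT$-structures with \emph{all} natural transformations. But $\locStr_{\Tad}(\cX)$ is a non-full subcategory: only \emph{local} morphisms are allowed. Since every morphism on the $\Tdisc$ side is automatically local (admissibles there are isomorphisms), your adjunction equivalence would force every transformation $\phi_!\cB\to\cO$ to be local, which is false. Lurie's construction fixes this precisely by enlarging the topos so that the resulting structure becomes initial among local ones; this is the content of the relative spectrum, and is what \cite[Theorem~2.1]{lurie2011dag} packages. (Incidentally, your description of a morphism in $\adTop$ via $\phi_!\cB\to f_*\cO$ is also off: in the paper's convention a morphism $(\cX,\cO)\to(\cY,\cO')$ is a geometric morphism together with a local map $f^{-1}\cO'\to\cO$ on $\cX$, not a map into $f_*\cO$ on $\cY$.)

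The paper's own proof is a one-line invocation of \cite[Theorem~2.1]{lurie2011dag}; once one knows that $\tcomp$ is a transformation of pregeometries, nothing further is needed.
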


\begin{proof}
This is an immediate consequence of \cite[Theorem 2.1]{lurie2011dag}.
\end{proof}

We now proceed to have a better understanding of the action of $L$ at the level of $\Tdisc(\Ok)$-structures:

\begin{construction} \label{const1}
Let $(\cX, \cO) \in \adTop$ be a $\Tad$-structured $\infty$-topos. Consider the comma \infcat $\fCAlg(\cX)_{/ \cO}$. The latter is a presentable \infcat thanks to \cite[Corollary 9.4]{porta2015derived}. The underlying algebra functor induces a well defined functor
	\[
		\functor^\alg \colon \fCAlg(\cX)_{/ \cO} \to \CAlg_{\Ok}(\cX)_{/ \cO^{\alg}}.
	\]
Thanks to \cite[Corollary 9.5]{porta2015derived} the above functor commutes with limits and sifted colimits. Furthermore the Adjoint functor theorem implies that
	\[
		\functor^\alg \colon \fCAlg(\cX)_{/ \cO} \to \CAlg_{\Ok}(\cX)_{/ \cO^{\alg}},
	\]
admits a left adjoint which
we shall denote by $\Psi_{\cX} \colon  \CAlg_{\Ok}(\cX)_{/ \cO^{\alg}} \to  \fCAlg(\cX)_{/ \cO}$. If the underlying $\infty$-topos $\cX$ is clear from the context, we shall denote $\Psi_\cX$ simply by $\Psi$.
\end{construction}

\begin{construction} \label{const:2}
Let $\cA \in \CAlg_{\Ok}(\cX)_{/ \cO^{\alg}}$ be a $\Tdisc(\Ok)$-structure on $\cX$. We define $\cA_n$ as the pushout of the diagram
	\begin{equation} \label{eq:construction_of_A_n_as_pushout}
	\begin{tikzcd}
		\cA[u] \ar{r}{u \mapsto t^n} \ar{d}  &\cA \ar{d} \\
		\cA \ar{r}{u \mapsto 0} & \cA_n,
	\end{tikzcd}
	\end{equation}
in the \infcat $\CAlg_{\Ok}(\cX)_{/ \cO^{\alg}_n}$. We have denoted $\cA[u]$ the (commutative) derived free algebra on one generator in degree $0$, over $\cA$. As $\Psi$ is a left adjoint, we obtain a pushout square
	\begin{equation} \label{1}
	\begin{tikzcd}
		\Psi(\cA[u] ) \ar{r}{u \mapsto t^n} \ar{d}  & \Psi( \cA ) \ar{d} \\
		\Psi( \cA ) \ar{r}{u \mapsto 0} & \Psi( \cA_n),
	\end{tikzcd}
	\end{equation}
in the \infcat $\fCAlg(\cX)_{/ \cO_n}$.
Every epimorphism is effective in an $\infty$-topos. Moreover, $\Psi$ is a left adjoint, thus it preserves epimorphisms. These facts combined imply that the top horizontal morphism displayed in \eqref{1} is an effective epimorphism, on $\cX$. The transformation of
pregeometries $\Tdisc(\Ok) \to \Tad$ is unramified. It thus follows from \cite[Proposition 10.3]{lurie_closed} that we have a pushout diagram
	\[
	\begin{tikzcd}
		\Psi(\cA[u] )^\alg \ar{r}{u \mapsto t^n} \ar{d}  & \Psi( \cA )^\alg \ar{d} \\
		\Psi( \cA )^\alg \ar{r}{u \mapsto 0} & \Psi( \cA_n)^\alg,
	\end{tikzcd}
	\]
in the \infcat $\CAlg_{\Ok}(\cX)_{/ \cO^{\alg}_n}$. Thus, for each integer $n \geq 1$, the unit of the adjunction $(\Psi, \functor^\alg)$ induces morphisms
	\[
		f_{\cA, n } \colon \cA_n \to \Psi( \cA)_n^{\alg}.
	\]
Since $\cA_n$ can be realized as pushout of the diagram \eqref{eq:construction_of_A_n_as_pushout} and $\Psi$ is a left adjoint we have a natural equivalence 
	\[\Psi(\cA_n) \simeq \Psi(\cA)_n.\]
Therefore, we can consider $f_{\cA, n}$ naturally as a morphism
	\[
		f_{\cA, n} \colon \cA_n \to \Psi(\cA_n)^{\alg}.	
	\]
Moreover, the ideals
	\[
		I_n \coloneqq \ker \left( \pi_0(\cA ) \to \pi_0(\cA_n) \right),
	\]
are mapped, under $f_{\cA, n}$, to the ideals
	\[
		J_n \coloneqq \ker \left( \pi_0 \left(  \Psi( \cA)^{\alg} \right) \to \pi_0 \left(   \Psi( \cA)^{\alg}_n \right) \right).
	\]
Therefore, the universal property of $(t)$-completion induces a
canonical morphism 
	\[
		f_{\cA} \colon \cA^\wedge_t \to \Psi( \cA)^\alg,
	\]
in the \infcat $\CAlg_{\Ok}(\cX)$. Moreover, the natural morphism
	\[
		f_{\cA} \colon \cA^{\wedge}_t \to \Psi(\cA)^\alg,
	\]
is continuous with respect to the $I$-adic and $J$-adic topologies on $\cA$ and $\Psi(\cA)^\alg$, respectively. For this reason, we can naturally consider the morphism $f_{\cA}$ as a morphism in the \infcat $\adCAlg
(\cX)$. This latter assertion is a consequence of \cref{rem:magic}.
\end{construction}

\begin{definition} \label{st_hen} Let $(\cX, \cO) \in \Top( \Tet)$.
Let $\cA \in \CAlg_{\Ok}(\cX)_{/ \cO}$, we say that $\cA$ is \emph{strictly Henselian} if it belongs to the essential image of the functor $\CAlg^\sh_{\Ok}(\cX)_{/ \cO} \to \CAlg_{\Ok}(\cX)_{/ \cO^\alg}$, given on objects by the formula
	\[
		\cA \in \CAlg^\sh_{\Ok}(\cX)_{/ \cO} \mapsto \cA^\alg \coloneqq \cA  \circ \iota \in \CAlg_{\Ok}(\cX)_{/ \cO^\alg}.
	\]
Here $\iota \colon \Tdisc(\Ok) \to \Tet(\Ok)$ denotes the canonical transformation of pregeometries.
\end{definition}

We wish to prove that $\Psi(\cA)^\alg$ identifies with the $(t)$-completion of $\cA$, via the morphism $f_\cA$, constructed in \cref{const:2}. In order to establish this result, we need a few preliminaries:

\begin{lemma} \label{mauro_result}
Let $(F, G) \colon \cC \to \cD$ be an adjunction of presentable $\infty$-categories. Suppose further that:
\begin{enumerate}
\item Any epimorphism in $\mathcal{C}$ is effective;
\item $G$ is conservative, preserves epimorphisms and sifted colimits;
\end{enumerate}
Then epimorphisms in $\mathcal{D}$ are also effective. Moreover, if $\{ X_{\alpha} \}$ is a family of compact generators for $\mathcal{C}$ then the family $\{ F(X_{\alpha}) \}$ generates $\mathcal{D}$, under sifted colimits.
\end{lemma}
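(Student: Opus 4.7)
The plan is to deduce both assertions by transport of structure along the conservative, sifted-colimit-preserving right adjoint $G$.

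For the effectivity statement, I would take any epimorphism $f \colon X \to Y$ in $\cD$, form its \v{C}ech nerve $\check{C}(f)_\bullet \in \cD^{\Delta^{\op}}$, and argue that the canonical augmentation $|\check{C}(f)_\bullet| \to Y$ is an equivalence (which is exactly the definition of $f$ being effective). Since $G$ is a right adjoint it preserves the finite limits defining each $\check{C}(f)_n$, so $G(\check{C}(f)_\bullet) \simeq \check{C}(G(f))_\bullet$ as simplicial objects in $\cC$. Geometric realization is a sifted colimit, so by hypothesis $G$ preserves it, giving
\[
G \bigl( |\check{C}(f)_\bullet| \bigr) \simeq |\check{C}(G(f))_\bullet|.
\]
By hypothesis $G$ preserves epimorphisms, so $G(f)$ is an epimorphism in $\cC$ and therefore effective by (1); the right-hand side is then canonically $G(Y)$. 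Thus $G$ applied to the augmentation is an equivalence, and conservativity of $G$ promotes this to an equivalence in $\cD$ itself.

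For the statement about generators, my plan is to invoke the Barr--Beck--Lurie monadicity theorem: since $G$ is conservative and preserves sifted colimits (in particular all $G$-split simplicial diagrams, which are sifted), the adjunction $F \dashv G$ is monadic. Consequently $\cD$ is equivalent to the \infcat $\mathrm{Alg}_T(\cC)$ of algebras over the monad $T \coloneqq GF$, and every $Y \in \cD$ is canonically the geometric realization of its bar resolution
\[
Y \simeq |B_\bullet(Y)|, \qquad B_n(Y) \coloneqq F\,(GF)^n\,G(Y),
\]
so each $B_n(Y)$ belongs to the essential image of $F$. A standard fact for compactly generated presentable \infcats asserts that every object of $\cC$ is a sifted colimit of finite coproducts of the $X_\alpha$'s; applying $F$, which preserves colimits, then expresses each $B_n(Y)$ as a sifted colimit of finite coproducts of $F(X_\alpha)$'s. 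Composing with the realization $Y \simeq |B_\bullet(Y)|$ and flattening the nested sifted diagrams into a single sifted colimit yields $Y$ itself as a sifted colimit of finite coproducts of the $F(X_\alpha)$'s, as desired.

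The main technical point to be careful about is the final flattening step: one must check that a sifted colimit indexed by $\Delta^{\op}$ of diagrams, each of which is itself a sifted colimit of finite coproducts, assembles into a single sifted colimit of finite coproducts of $F(X_\alpha)$'s. This is a standard but not entirely painless cofinality argument resting on the fact that the $\infty$-category of sifted small \infcats is itself closed under sifted colimits. A secondary subtlety is verifying the hypotheses of Barr--Beck--Lurie in full, but this is immediate as sifted colimits subsume both simplicial and filtered colimits.
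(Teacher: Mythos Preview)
Your argument for the effectivity of epimorphisms is essentially identical to the paper's: form the \v{C}ech nerve, push it through $G$ (which commutes with the limits defining the nerve and with the geometric realization), use effectivity in $\cC$, and conclude by conservativity.

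For the generation statement you take a genuinely different route. The paper does \emph{not} invoke Barr--Beck--Lurie; instead, for a given $Y\in\cD$ it writes $G(Y)$ as a filtered colimit $\colim_\alpha T_\alpha$ in $\cC$, forms the maps $\varphi_\alpha\colon F(T_\alpha)\to Y$ via the counit, and builds the diagram $\widetilde T\colon I\times\mathbf\Delta^{\op}\to\cD$, $(\alpha,n)\mapsto\check{C}(\varphi_\alpha)^n$. It then checks, again by transporting along $G$, that $\colim\widetilde T\simeq Y$. Your bar-resolution approach has a practical advantage over this: each $B_n(Y)=F\bigl((GF)^nG(Y)\bigr)$ lies manifestly in the essential image of $F$, so once one knows that objects of $\cC$ lie in the sifted-colimit closure of the $X_\alpha$ (and their finite coproducts), the same follows immediately for each $B_n(Y)$ and hence for $Y$. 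By contrast, the paper's \v{C}ech-nerve terms for $n\geq1$ are iterated fiber products $F(T_\alpha)\times_Y\cdots\times_Y F(T_\alpha)$, and it is not explained there why these lie in the sifted-colimit closure of the $F(X_\alpha)$'s; your approach sidesteps this issue entirely.

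Two small remarks. First, your flattening worry can be avoided by arguing via closure: let $\cD_0\subseteq\cD$ be the smallest full subcategory closed under sifted colimits and containing the finite coproducts of the $F(X_\alpha)$; since $F$ preserves colimits and every object of $\cC$ lies in the analogous closure there, the whole image of $F$ lies in $\cD_0$, hence each $B_n(Y)\in\cD_0$, hence $Y=|B_\bullet(Y)|\in\cD_0$. No explicit cofinality needed. Second, you are right to flag the finite coproducts: the literal statement of the lemma speaks only of sifted colimits, and your conclusion uses finite coproducts of the $F(X_\alpha)$ as building blocks. In the paper's application the generating family is $\{\Ok[T_1,\dots,T_m]\}_m$, which is closed under finite coproducts, so this is harmless there; but it is an honest caveat.
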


\begin{proof}
Let $g \colon V \to Y $ be an epimorphism in the \infcat $\cD$. We wan to show that it is effective, that is
the canonical morphism $g' \colon Y' \coloneqq \vert \check{\cC} (g) \vert  \to Y$, where $\check{\cC}(g)$ denotes the geometric realization of the 
Cech nerve of $g$, is an equivalence in $\cD$. By assumption, $G(g)$ is an epimorphism. Since $G$ is a right adjoint,
we have a canonical equivalence
	\[
		G \left( \check{\cC} (g) \right) \simeq \check{\cC} \left( G(g) \right).
	\]
As $G$ commutes with sifted colimits, we see that $G(Y') \simeq \vert \check{\cC} \left( G(g) \right) \vert \simeq G(Y)$, in $\cD$.
We thus conclude that $Y' \simeq Y$ using the conservativity of $G$. This finishes the proof of the first assertion.

Let $Y \in \cD$. We can find a filtered category $I$ and a diagram $T \colon I \to \cC$ such that
	\[
		\colim_{\alpha \in I} T_\alpha \simeq G(Y) \in \cC.
	\]
Consider the composition $F \circ T \colon I \to \cD$. For every $\alpha \in I$, we obtain a natural map
	\[
		\varphi_\alpha \colon F(T_\alpha) \to F(G(Y)) \to Y,
	\]
where the latter morphism is induced by the counit of the adjunction $(F, G)$. These maps $\varphi_\alpha$ can be arranged
into a cocone from $F \circ T $ to $Y$. For each $\alpha$, we can form the \v{C}ech nerve $\check{\cC}(\varphi_\alpha)$.
This produces a functor
	\[
		\widetilde{T} \colon I \times \mathbf \Delta^{\op} \to \cD,
	\]
informally defined by 
	\[
		(\alpha, n) \mapsto \check{\cC} (\varphi_\alpha)^n.
	\]
There is a natural cocone from $\widetilde{T}$ to $Y$, and we claim that the induced map
	\[
		\psi \colon \colim_{(\alpha, n ) \in I \times \mathbf \Delta^{\op}} \widetilde{T}(\alpha, n) \to Y
	\]
is an equivalence. We remark that
	\[
		\colim_{(\alpha, n) \in I \times \mathbf \Delta^\op} \widetilde{T}(\alpha, n) \simeq \colim_{n \in \mathbf \Delta^{\op}}
		 \colim_{\alpha \in I} \widetilde{T}(\alpha, n).
	\]
Since $G$ is conservative, it is enough to check that $G(\psi)$ is an equivalence. Observe that, since $G$ commutes with
sifted colimits, we have 
	\[
		G \left(  \colim_{n \in \mathbf \Delta^{\op}} \colim_{\alpha \in I} \widetilde{T}(\alpha, n) \right) \simeq
		 \colim_{n \in \mathbf \Delta^{\op}}
		 \colim_{\alpha \in I} G \left(  \widetilde{T}(\alpha, n) \right).
	\]
Since $I$ is a filtered category and $G$ is a right adjoint, we obtain:
	\[
		G \left( \colim_{\alpha \in I } \check{\cC} (\varphi_\alpha)^n \right) \simeq
		\check{\cC} \left( \colim_{\alpha \in I} G(F(T_\alpha)) \to G(Y) \right)^n.
	\]
The unit of the adjunction $(F, G)$ provide us with maps $\eta_\alpha \colon T_\alpha \to G(F(T_\alpha))$ such that the
induced composition
	\[
		\colim_{\alpha \in I} T_\alpha \to \colim_{\alpha \in I} G(F(T_\alpha)) \to G(Y)
	\]
is an equivalence. In particular, the map
	\[
		\colim_{\alpha \in I} G(F(T_\alpha)) \to G(Y)
	\]
is an effective epimorphism. Thus,
	\[
		\colim_{(\alpha, n) \in \mathbf \Delta^{\op}} G(\widetilde{T} (\alpha, n)) \simeq 
		\vert \check{\cC} ( \colim_{\alpha \in I} G(F(T_\alpha)) \to G(Y) \vert \simeq G(Y).
	\]
We deduce then that $G(\psi)$ is an equivalence. By conservativity of $G$ we conclude that $\psi$ was an equivalence to start with.
\end{proof}

\begin{remark} 
Notice that the functor $\CAlg^\sh_{\Ok}(\cX)_{/ \cO} \to \CAlg_{\Ok}(\cX)_{/ \cO}$ introduced in \cref{st_hen} is fully faithful. This follows from \cite[Proposition 4.3.19, Remark 2.5.13]{lurie2011dag} combined with \cite[Proposition 7.2.1.14]{lurie2009higher}
and the proof of \cite[Proposition 9.2]{porta2015derived}. Therefore, we will usually consider $\CAlg^\sh_{\Ok}(\cX)_{/ \cO}$ as a full subcategory of $ \CAlg_{\Ok}(\cX)_{/ \cO}$.
\end{remark}

We can now understand explicitly the composite $\functor^\alg \circ \Psi$:

\begin{proposition} \label{t_comp}
Let $(\cX, \cO) \in \discTop$. Suppose that the underlying $\infty$-topos $\cX$ has enough points.
Let $\cA \in \CAlg_{\Ok}(\cX)_{/ \cO}$ be an almost of finite presentation derived $\Ok$-algebra on $\cX$, which is further assumed to be strictly
Henselian. Then the canonical map
	\[
		f_{\cA} \colon  \cA^{\wedge}_t \to \Psi \left( \cA \right)^\alg ,
	\]
introduced in \cref{const:2}, is an equivalence in the \infcat $\adCAlg(\cX)_{/ \Psi( \cO )^{\alg}}$.
\end{proposition}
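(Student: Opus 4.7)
The strategy is to check the equivalence $f_\cA$ stalkwise — which is legitimate because $\cX$ has enough points — and then to reduce the local statement, via compact generation, to an explicit computation on \'etale extensions of $\mathbb{A}^n_{\Ok}$.

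\textbf{Reduction to stalks.} For a geometric morphism $x^{-1} \colon \cX \to \cS$, the functor $\Psi$ of \cref{const1} is a left adjoint and hence commutes with $x^{-1}$, while the $(t)$-adic completion $(-)^\wedge_t$ is assembled from the finite pushouts of \eqref{eq:construction_of_A_n_as_pushout} (which $x^{-1}$ preserves) followed by a sequential limit, which $x^{-1}$ also preserves thanks to the uniform bounds provided by the almost of finite presentation hypothesis on $\cA$. Hence it suffices to prove the statement when $\cX = \cS$.

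\textbf{Reduction to generators and computation.} The \infcat $\CAlg^{\sh}_{\Ok}(\cS)_{/\cO}$ is compactly generated by structure sheaves $\cO_U$ of \'etale extensions $U \to \mathbb{A}^n_{\Ok}$ coming from $\Tet(\Ok)$. Writing $\cA$ as a sifted colimit of such compact generators and using that both functors $\cA \mapsto \Psi(\cA)^{\alg}$ and $\cA \mapsto \cA^{\wedge}_t$ commute with such colimits on almost of finite presentation objects (the former because $\Psi$ is a left adjoint and $(-)^\alg$ preserves sifted colimits; the latter by the uniform homotopy bounds), we reduce to the case $\cA = \cO_U$. There, the transformation of pregeometries $\tcomp \colon \Tet(\Ok) \to \Tad$ identifies $\Psi(\cO_U)$ with the structure sheaf of the formal completion $\widehat{U}$; its underlying derived $\Ok$-algebra is exactly $(\cO_U)^{\wedge}_t$ — equal to $\Ok\langle T_1, \ldots, T_n\rangle$ when $U = \mathbb{A}^n_{\Ok}$, and following from formal \'etaleness on each $\Ok_n$-reduction in the general \'etale case.

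\textbf{Main obstacle.} The key technical point is ensuring that derived $(t)$-adic completion commutes with the sifted colimit presentation of $\cA$: this fails for arbitrary sheaves of derived $\Ok$-algebras, but the almost of finite presentation hypothesis bounds the homotopical complexity of each mod $t^n$ reduction uniformly, so that the $\lim_n$ defining $(-)^\wedge_t$ interacts well with the colimit. \cref{mauro_result}, applied to the adjunction $\Psi \colon \CAlg^{\sh}_{\Ok}(\cS)_{/\cO} \rightleftarrows \fCAlg(\cS)_{/\cO} \colon (-)^{\alg}$, provides an alternative packaging of the same idea, by producing sifted colimit generators of the target as images under $\Psi$ of the compact generators above, so that the two functors $\Psi(-)^{\alg}$ and $(-)^\wedge_t$ need only be compared on that class.
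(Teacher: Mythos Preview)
Your outline identifies the right skeleton (reduce to stalks, reduce to generators, compute on generators), but the step ``$\cA \mapsto \cA^{\wedge}_t$ commutes with such colimits on almost of finite presentation objects \ldots\ by the uniform homotopy bounds'' is a genuine gap. The $(t)$-completion functor does \emph{not} commute with sifted (or even filtered) colimits, and the phrase ``uniform homotopy bounds'' does not name an actual mechanism: the intermediate terms in a sifted-colimit presentation of an afp algebra need not be afp, nor truncated, nor have any uniform control on their mod-$t^n$ reductions. Indeed, the Warning immediately following the Proposition in the paper records exactly this failure. Your ``Main obstacle'' paragraph correctly flags the issue but then asserts it away rather than resolving it; invoking \cref{mauro_result} only tells you what generates $\fCAlg(\cS)_{/\cO}$ under sifted colimits, not that $(-)^\wedge_t$ respects the presentation.

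The paper's proof circumvents this entirely. Instead of trying to push $(-)^\wedge_t$ through a colimit, it writes $A$ as (a retract of) a sequential colimit of cell attachments $A_i \to A_{i+1}$ along $\Ok[S^n] \to \Ok[X]$, and argues on the \emph{other} side: unramifiedness of $\Tet(\Ok) \to \Tad$ implies $\Psi(-)^{\alg}$ preserves these specific pushouts (because the attaching maps become effective epimorphisms), so one can compute $\Psi(A_{i+1})^{\alg}$ as a pushout of things known by induction. One then checks \emph{directly} that $\Psi(A_{i+1})^{\alg}$ is $(t)$-complete (its $\pi_0$ is a quotient of a $(t)$-complete ring, and the higher homotopy groups are finitely presented modules over it), and finally verifies the universal property of $(t)$-completion by hand. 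The passage to the colimit $A = \colim A_i$ is handled by a truncation argument. In short, the paper never needs $(-)^\wedge_t$ to commute with any colimit; it instead shows $\Psi(-)^{\alg}$ lands in $(t)$-complete objects satisfying the right universal property at each finite stage.
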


\begin{proof}
We wish to show that the natural map 
	\[
		f_{\cA} \colon \cA^\wedge_t \to \Psi( \cA)^{\alg}	,
	\]
constructed in \cref{const:2}, is an equivalence whenever $\cA \in \CAlg_{\Ok}(\cX)_{/ \cO^\alg}$ is almost of finite presentation.

By hypothesis, $\cX$ has enough geometric points. Thus, in order to show that $f_{\mathcal{A}}$ is an equivalence it suffices to show that its inverse image under any geometric point 
	\[
		(x^{-1}, x_*) \colon \cX \rightleftarrows \cS,
	\]
denoted $x^{-1} f_{\mathcal{A}}$, is an equivalence in the \infcat $\CAlg_{\Ok}$. Set $ A \coloneqq x^{-1} \mathcal{A}$. Thanks to \cite[Theorem 1.12]{porta_square_zero}, we deduce that $\Psi_{\cS}(A)^{\mathrm{alg}} \simeq x^{-1} \Psi(\mathcal{A})^{\mathrm{alg}}$.
We
are thus
reduced to the case where $\cX = \cS$.

The \infcat $\left( \CAlg_{\Ok} \right)_{/ \cO^\alg}$ is generated under sifted colimits by free objects of the form $\{ \Ok [T_1, \dots, T_m ] \}_{m \geq 1}$. 
Thanks to \cref{mauro_result} we conclude that $\left( \fCAlg \right)_{/ \cO} \coloneqq \left( \fCAlg \right)_{/ \cO}(\cS)
$ is generated under sifted colimits by the family $\{ \Psi( \Ok [ T_1, \dots T_m]) \}_m$. As $A \in \left( \CAlg_{\Ok} \right)_{/ x^{-1} \cO^{\mathrm{alg}}}$ is almost of
finite presentation we conclude that it can be written as a retract of a filtered colimit of a diagram of the form
	\[
		A_0 \to A_1 \to A_2 \to \dots,
	\]
where $A_0$ is an ordinary commutative ring of finite presentation over $\Ok$ and $A_{i+1} $ can be obtained from $A_i$ as the following pushout
	\begin{equation} \label{eq:1}
	\begin{tikzcd}
		\Ok [S^n] \ar{r} \ar{d} & \Ok[X] \ar{d} \\
		A_i \ar{r}  & A_{i+1}.
	\end{tikzcd}
	\end{equation}
We have denoted $\Ok [S^n]$ the free simplicial $\Ok$-algebra generated in degree $n$ by a single generator. Notice that, since $A$ is almost of finite presentation we can choose the above diagram in such a way that, for $i > 0$, sufficiently large, we have surjections $
\pi_0(A_i) \to
\pi_0(A_{i+1})$.
As $\Psi$ is a left adjoint it commutes, in particular, with pushout diagrams. We conclude that the diagram
	\begin{equation} 
	\begin{tikzcd}
		\Psi(\Ok [S^n]) \ar{r} \ar{d} & \Psi(\Ok[X]) \ar{d} \\
		\Psi(A_i) \ar{r} & \Psi( A_{i+1}),
	\end{tikzcd}
	\end{equation} 
is a pushout diagram in the \infcat $\fCAlg(\cX)_{/ \cO}$. Moreover,
the morphism $\Psi( A_i) \to \Psi( A_{i+1})$ is an epimorphism on $\pi_0$. For each $n > 0$, the morphism $\Ok[S^n] \to \Ok[X]$ is an effective epimorphism. As $\Psi$ is a left adjoint, the morphism $\Psi( \Ok[S^n] ) \to \Psi(\Ok[X])$ is an
epimorphism in the (hypercomplete) $\infty$-topos $\cX$ and thus an effective epimorphism. Thanks to
\cite[Proposition 3.14]{porta2016derived} it follows that the morphism
	\[
		\Psi(\Ok[S^n])^{\alg} \to \Psi(\Ok[X])^\alg
	\]
is an effective epimorphism, as well. Moreover, the transformation of pregeometries $\theta \colon \Tet(\Ok) \to \Tad$ is unramified, see \cref{prop:unramified_of_und_alg}.
It follows, cf. \cite[Propositon 10.3]{lurie_closed}, that the diagram,
	\begin{equation} \label{eq_1_formal}
	\begin{tikzcd}
		\Psi(\Ok[S^{n} ])^{\mathrm{alg}} \ar[r] \ar[d] & \Psi(\Ok [X])^{\mathrm{alg}} \ar[d] \\
		\Psi(A_i)^{\mathrm{alg}} \ar[r] & \Psi(A_{i+1})^{\mathrm{alg}},
	\end{tikzcd}
	\end{equation}
is a pushout square in $\CAlg_{\Ok}$. By induction we might assume that $\Psi( A_i)^{\mathrm{alg}}$ is equivalent to $(A_i)^{\wedge}_t$.

The transformation of pregeometries $(-)^\wedge_t \colon \cT_{\et}(\Ok) \to \Tad$ is given by $(t)$-completion along the $(t)$-locus. Therefore, one has a canonical equivalence
	\[
		\Psi( \Ok [ X ] )^{\mathrm{alg}} \simeq	\Ok 
		\langle X \rangle^\sh,
	\]
where the latter denotes the $(t)$-completion of the strict Henselianization of $\Ok [X]$. 
We now claim that the natural map
	\[
		\Psi( \Ok [ S^n])^{\mathrm{alg}} \to \big( \Ok [S^n]^\sh \big)^{\wedge}_t
	\]
is an equivalence: notice that $\Ok[S^n]$ fits into a pushout diagram
	\[
	\begin{tikzcd}
		\Ok[S^{n-1} ] \ar[r] \ar[d] & \Ok [X] \ar[d] \\
		\Ok [X] \ar[r] & \Ok [S^n].
	\end{tikzcd}
	\]
The result then follows by induction on $n \geq0$ and the case $n=0$ was already treated.
Since
	\[
		\Psi( A_i)^{\mathrm{alg}} \to \Psi(A_{i+1})^{\mathrm{alg}}
	\]
is surjective on $\pi_0$, it follows that $\pi_0( \Psi(A_{i+1})^{\mathrm{alg}})$ is $(t)$-complete. For each $i \geq 0$ the $\pi_0 \big( \Psi( A_{i+1})^{\alg} \big)$-modules $\pi_n\big( \Psi ( \mathcal{A}_{i+1} )^\alg \big)$ are of finite
presentation, thus they are $(t)$-adic complete 
$\pi_0(\Psi(A_{i+1})^{\mathrm{alg}})$-modules.
It follows that $\Psi(A_{i+1})^{\mathrm{alg}}$ is $(t)$-complete by \cite[Theorem 7.3.4.1]{lurie2016spectral}. 

Let now $A_{i+1} \to B$ be a morphism in $\CAlg_{\Ok}$ whose target is strictly Henselian and $(t)$-complete. Thanks to \eqref{eq:1}, such morphism induces morphisms 
	\[
		A_i \to B, \quad \Ok[T] \to B
	,\]
compatible with both $\Ok[S^n] \to \Ok[T]
$ and $\Ok[S^n] \to A_i$, in the \infcat $\CAlg_{\Ok}$.
By induction, the effect of $\functor^\alg \circ \Psi$ on 
	\[
	A_i, \quad \Ok[S^n] \quad \textrm{and } \Ok[X]
	\]
agrees with the composite of strict henselianization followed by $(t)$-completion.
Since $B$ is both strictly Henselian and $(t)$-complete, it follows that the map $A_{i+1} \to B$ induces a well defined morphism from the diagram displayed in \eqref{eq_1_formal}
to $B$. 
It follows that $\Psi(A_{i+1})^{\mathrm{alg}}$ satisfies the universal property of the $(t)$-completion of the derived $\Ok$-algebra
$A_{i+1}$. As $\Psi \left( A_{i+1} \right)^{\alg}$ is $(t)$-complete
we conclude that the morphism
	\[
		f_{A_{i+1}}:  (A^\sh_{i+1})^{\wedge}_t \to \Psi( A_{i+1})^{\mathrm{alg}} ,
	\] 
where $A^\sh_{i+1}$ denotes the strict henselianization of $A_{i+1}$, is necessarily an equivalence.
Let now 
	\[
		A \coloneqq \colim_i A_i,
	\]
in the \infcat $\CAlg_{\Ok}$. Fix $i \geq 0$, then 
	\[
		\tau_{\leq i} (\Psi( A)^{\mathrm{alg}}) \simeq \tau_{ \leq i } ( \Psi( A_j)^{\mathrm{alg}}),
	\]
for $j$ sufficiently large. We conclude then that $
\pi_i(\Psi(A)^{\mathrm{alg}})$ is 
$(t)$-adic complete for $i \geq 0$. \cite[Theorem 7.3.4.1]{lurie2016spectral} implies that $
\Psi(A)^{\mathrm{alg}}$ is also $(t)$-complete. Reasoning as before we conclude that it satisfies the universal property of $(t)$-completion of $A$. It follows that
	\[
		f_A:   A^{\wedge}_t \to \Psi(A)^{\mathrm{alg}}
	\]
is an equivalence in the \infcat $\CAlg_{\Ok}$, as desired.
\end{proof}

\begin{warning}
The functor $(-)^{\mathrm{alg}} \circ \Psi $ is not in general equivalent to the $(t)$-completion functor $(-)^{\wedge}_t$. In fact, both $(-)^{\mathrm{alg}}$ and $\Psi$ commute with filtered colimits, thus so it does $(-)^{\alg} \circ \Psi$.
This is not the case of the $(t)$-completion functor, in general. 
\end{warning}

We will need also the following ingredient:

\begin{construction} \label{magic:const}
Denote by $\Ok_n$ the reduction of $\Ok$ modulo $(t^n)$. Reduction modulo $(t^n)$ induces a transformation of pregeometries
	\begin{align*}
		p_n \colon \Tad  & \to  \Tdisc(\Ok_n) \\
		\Spf R  & \mapsto  \Spec R_n,
	\end{align*}
where $R_n \coloneqq R \otimes_{\Ok} \Ok_n$. Given $\cX \in \Top$, precomposition along $p_n$ induces a functor 
	\[
		p_n^{-1} \colon \CAlg_{\Ok_n}(\cX) \to \fCAlg(\cX),
	\]
given on objects by the formula
	\[
		\cO \in \CAlg_{\Ok_n}(\cX) \mapsto p_n^{-1} \cO \coloneqq \cO \circ p_n \in \fCAlg(\cX).
	\]
Consequently, we have a well defined functor
	\[
		p_n^{-1} \colon \discTopn \to \adTop ,
	\]
given on objects by the formula
	\[
		(\cX, \cO ) \in \discTop \mapsto (\cX, \cO \circ p_n ) \in \adTop.
	\]
\end{construction}

\begin{remark}
We have a commutative triangle of transformations of pregeometries
	\[
	\begin{tikzcd}
		\Tdisc(\Ok) \ar{rr}{\tcomp} \ar{rrd}[swap]{ - \otimes_{\Ok} \Ok_n} &  & \Tad \ar{d}{p_n} \\
		& & \Tdisc(\Ok_n)
	\end{tikzcd}.
	\]
For this reason, for every $\cX \in \Top$, it follows that the composite 
	\[
		\functor^\alg \circ p_n^{-1}  \colon \CAlg_{\Ok_n}(\cX) \to \CAlg_{\Ok}(\cX),
	\]
coincides with the usual forgetful functor $\CAlg_{\Ok_n} (\cX) \to \CAlg_{\Ok}(\cX)$ along
the map $\Ok \to \Ok_n$. Notice that the latter functor admits a left adjoint which is given by extension of scalars along $\Ok \to \Ok_n$, i.e. it is given on objects by the formula
	\[
		\cO \in \CAlg_{\Ok}(\cX) \mapsto \cO \otimes_{\Ok} \Ok_n \in \CAlg_{\Ok}(\cX)
	\]
 \end{remark}

\begin{notation} \label{not:1}
We will denote by $(-)_n \colon \CAlg_{\Ok}(\cX) \to   \CAlg_{\Ok_n}(\cX)$ the base change functor
	\[
		\cO \in \CAlg_{\Ok}(\cX) \mapsto   \cO_n \coloneqq \cO \otimes_{\Ok} \Ok_n \in \CAlg_{\Ok}(\cX).
	\]
\end{notation}

It follows by \cite[Theorem 2.1]{lurie2011dag} that $p_n^{-1}$ admits a right adjoint $L_n \colon \adTop \to \discTopn$ which we can explicitly describe:

\begin{proposition} \label{magic:prop}
The functor $p_n^{-1} \colon \discTopn \to \adTop$ admits a right adjoint
	\[
		L_n \colon \adTop \to \discTopn
	\]
whose restriction to the full subcategory of $\ \adTop$, spanned by pairs $(\cX, \cO) $ whose underlying $\infty$-topos $\cX$ has enough points, is given by the formula
	\[
		(\cX, \cO ) \in \adTop \mapsto (\cX, \cO^{\alg}_n ) \in \discTopn.
	\]
\end{proposition}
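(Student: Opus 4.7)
Existence of the right adjoint $L_n$ is an immediate consequence of \cite[Theorem 2.1]{lurie2011dag}, applied to the transformation of pregeometries $p_n$ of \cref{magic:const}; this is exactly the mechanism used in the preceding proposition to produce $L$ as the right adjoint of $\functor^\alg$.

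The substance of the statement is the explicit formula on topoi with enough points. The plan is to realize the adjunction $p_n^{-1} \dashv L_n$ as a composite of two known adjunctions. The remark preceding \cref{not:1} identifies $\functor^\alg \circ p_n^{-1}$ with the forgetful functor $\mathrm{forget}_{\Ok \to \Ok_n} \colon \CAlg_{\Ok_n}(\cX) \to \CAlg_\Ok(\cX)$, whose left adjoint is the base change $(-)_n$ of \cref{not:1}. Combined with the $(\Psi, \functor^\alg)$ adjunction of \cref{const1}, this suggests the identification
\[
p_n^{-1} \simeq \Psi \circ \mathrm{forget}_{\Ok \to \Ok_n}
\]
on topoi with enough points. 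Granting this identification, the right adjoint $L_n$ decomposes as the composition of right adjoints in reverse order, yielding $L_n \simeq (-)_n \circ \functor^\alg$, which sends $\cO$ to $\cO^\alg_n$, as claimed.

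To establish the identification $p_n^{-1}\cO' \simeq \Psi(\mathrm{forget}(\cO'))$ on topoi with enough points, first construct a canonical comparison map: the equivalence $(p_n^{-1}\cO')^\alg \simeq \mathrm{forget}(\cO')$ from the remark preceding \cref{not:1} corresponds, under the $(\Psi, \functor^\alg)$ adjunction, to a natural morphism $\Psi(\mathrm{forget}(\cO')) \to p_n^{-1}\cO'$ in $\fCAlg(\cX)$. The hypothesis of enough points reduces the equivalence claim to a stalkwise verification on $\cS$. There, since $\cO' \in \CAlg_{\Ok_n}$ is $t^n$-torsion (hence automatically $(t)$-complete) and strictly Henselian as a local structure on $\cS$, \cref{t_comp} identifies $\Psi(\mathrm{forget}(\cO'))^\alg$ with $\mathrm{forget}(\cO')$ itself, matching $(p_n^{-1}\cO')^\alg$. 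The main obstacle is upgrading this equivalence of underlying $\Ok$-algebras to an equivalence of $\Tad$-structures: this requires evaluating both sides on arbitrary formally smooth $\Spf S \in \Tad$ and verifying, via the pushout decomposition of \cref{const:2} and the description of $\Tad$ as the $(t)$-completion of $\Tet(\Ok)$, that both yield $\cO'(\Spec S_n)$ functorially in $\Spf S$.
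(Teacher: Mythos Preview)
Your proposal contains a genuine error in the adjoint calculus. You claim that from the identification $p_n^{-1} \simeq \Psi \circ \mathrm{forget}_{\Ok \to \Ok_n}$ one obtains the right adjoint by ``taking right adjoints in reverse order'', yielding $(-)_n \circ \functor^\alg$. But $(-)_n$ is the \emph{left} adjoint of $\mathrm{forget}_{\Ok \to \Ok_n}$, not its right adjoint --- you say so yourself two lines earlier. Thus $\Psi \circ \mathrm{forget}$ is a composite (left adjoint)$\circ$(right adjoint), and its right adjoint, if it exists, is $(\text{right adjoint of forget}) \circ \functor^\alg$, i.e.\ Weil restriction followed by $\functor^\alg$, not base change followed by $\functor^\alg$. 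So even granting the identification $p_n^{-1} \simeq \Psi \circ \mathrm{forget}$, your deduction of the formula for $L_n$ does not follow. The decomposition you propose does not help compute the adjoint in the direction you need: $\Psi$ sits in the target slot of $\Map_{\fCAlg}(\cA, \Psi(\mathrm{forget}\,\cO))$, where the $(\Psi,\functor^\alg)$ adjunction gives no simplification.

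Two smaller points. First, your invocation of \cref{t_comp} on stalks omits the hypothesis that the algebra be almost of finite presentation; a general $\cO' \in \CAlg_{\Ok_n}$ need not satisfy this, so a further reduction (e.g.\ to free generators via sifted colimits, as in the proof of \cref{t_comp} itself) is required. Second, what you call ``the main obstacle'' --- upgrading an equivalence of underlying algebras to an equivalence of $\Tad$-structures --- is not an obstacle at all: $\functor^\alg$ is conservative (cf.\ \cite[Lemma~3.13]{porta2016derived}, used later in the proof of \cref{rect}), so a map of $\Tad$-structures which is an equivalence on $\functor^\alg$ is already an equivalence.

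The paper proceeds quite differently: it verifies the universal property of $(\cX,\cO) \mapsto (\cX,\cO^\alg_n)$ directly, by comparing mapping spaces in $\adTop$ and $\discTopn$ via their common fibration over $\Map_{\Top}(\cX,\cY)$. The fiberwise comparison reduces to showing
\[
\Map_{\fCAlg(\cX)}\bigl(f^{-1}\cO',\,p_n^{-1}\cO\bigr)\;\simeq\;\Map_{\CAlg_{\Ok_n}(\cX)}\bigl((f^{-1}\cO')^{\alg}_n,\,\cO\bigr),
\]
which is then checked on stalks and, using that both sides commute with filtered colimits in the first variable, on the generators $f^{-1}\cO' \simeq \Psi(\Ok[T_1,\dots,T_m])$, where the computation is immediate from \cref{t_comp}.
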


\begin{proof}
The existence of a left adjoint $L_n \colon \adTop \to \discTopn$ follows directly from \cite[Theorem 2.1]{lurie2011dag}. Let $(\cX, \cO) \in \discTopn$ and $(\cY, \cO') \in \adTop$. Assume that both $\cX, \ \cY \in \Top$ have enough points. Given any geometric
morphism $(f^{-1}, f_*) \colon \cX \to \cY$ we have a morphism of fiber sequences of the form
	\begin{equation} \label{mid}
	\begin{tikzcd}
		\Map_{\fCAlg(\cX) } \left( f^{-1} \cO', p_n^{-1} \cO \right) \ar{r} \ar{d}{q} &  \Map_{\adTop} \left( (\cX, p_n^{-1} \cO), (\cY, \cO') \right) \ar{d}{p} \ar{r} & \Map_{\Top} \left( \cX, \cY \right) \ar[equal]{d} \\
		\Map_{\CAlg_{\Ok_n}(\cX) } \left( ( f^{-1} \cO')^{\alg}_n, \cO \right) \ar{r} & \Map_{\discTopn} \left( (\cX, \cO), (\cY, (\cO')^{\alg}_n) \right) \ar{r} & \Map_{\Top} \left( \cX, \cY \right)
	\end{tikzcd}.
	\end{equation}
Moreover, the morphism $q \colon \Map_{\fCAlg(\cX) } \left( f^{-1} \cO', p_n^{-1}, \cO \right)  \to \Map_{\CAlg_{\Ok_n}(\cX) } \left( ( f^{-1} \cO')^{\alg}_n, \cO \right) $ coincides with the composite
	\[
	\begin{tikzcd}[column sep = small]
		\Map_{\fCAlg(\cX) } \left( f^{-1} \cO', p_n^{-1} \cO \right) \ar{r}{(\textrm{-})^\alg} & \Map_{\CAlg_{\Ok}(\cX)} \left( (f^{-1} \cO')^{\alg}, p_n^{-1}\cO^{\alg} \right) \ar{r}{}  & \Map_{\CAlg_{\Ok_n}(\cX)} \left( (f^{-1} \cO')_n^{\alg}, \cO \right).
	\end{tikzcd}
	\]
In order to prove the assertion of the proposition it suffices to show that the morphism $p$ displayed in \eqref{mid} is an equivalence of mapping spaces.
Thanks to the fact that the horizontal arrow diagrams in \eqref{mid} form fiber sequences we are reduced to prove that $q$ is an
equivalence
of mapping spaces. As both $\cX$ and $\cY$ have enough points we reduce ourselves to prove the statement of the Theorem at the level of stalks. For this reason we can assume from the start that $\cX = \cS = \cY$. Both target and source of $q$ commute with filtered colimits on
the first argument, thus we are reduced, as in the proof of \cref{t_comp} to prove that $q$ is an equivalence whenever $f^{-1} \cO' \simeq \Psi \left( \Ok [ T_1, \dots, T_n ]  \right)$. We have natural equivalences of mapping spaces
	\begin{align*}
		\Map_{\fCAlg} \left( \Psi( \Ok [T_1, \dots T_m ]) , p_n^{-1} \cO \right) & \simeq  \Map_{\CAlg_{\Ok}} \left( \Ok  [T_1, \dots T_m ],( p_n^{-1} \cO  )^{\alg} \right) \\
			 & \simeq  \Map_{\CAlg_{\Ok_n}} \left( \Ok  [T_1, \dots T_m ]_n, ( p_n^{-1} \cO )^{\alg} \right) \\
			 & \simeq  \Map_{\CAlg_{\Ok_n}} \left(   \Ok_n   [ T_1, \dots T_m ] , \cO \right) .
	\end{align*}
The result now follows from the observation that $\Psi \left( \Ok [T_1, \dots T_m ]  \right)^{\alg}_n \simeq \Ok_n [T_1, \dots, T_m]$ in the \infcat $\CAlg_{\Ok_n}$, which is a direct consequence \cref{t_comp}.
\end{proof}

\begin{corollary}
Let $\cX \in \Top$ be an $\infty$-topos. The functor $L_n \colon \adTop \to \discTopn$, introduced in \cref{magic:prop}, induces a well defined functor at the level of the corresponding \infcats of structures
	\[
		(-)^{\ad}_n \colon \fCAlg(\cX) \to \CAlg_{\Ok_n}(\cX),
	\]
given on objects by the formula
	\[
		\cO \in \fCAlg(\cX) \mapsto \cO^{\alg}_n \in \CAlg_{\Ok_n}(\cX).
	\]
Moreover, the functor $(-)^{\ad}_n$ is a left adjoint to the forgetful $p_n^{-1} \colon \CAlg_{\Ok_n} (\cX) \to \fCAlg(\cX)$.
\end{corollary}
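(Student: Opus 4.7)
The plan is to extract both assertions from \cref{magic:prop} by restricting the adjunction of structured topoi to a fixed underlying $\infty$-topos $\cX$.

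First, observe that $p_n^{-1}\colon \discTopn \to \adTop$ acts as the identity on underlying $\infty$-topoi; it only alters structures by precomposition with $p_n$. Hence the adjunction of \cref{magic:prop} lies over the identity of $\Top$, and its right adjoint $L_n$ likewise preserves underlying $\infty$-topoi. For each $\cX \in \Top$, $L_n$ therefore restricts to a functor $L_n|_\cX \colon \fCAlg(\cX) \to \CAlg_{\Ok_n}(\cX)$. Restricting the adjunction of \cref{magic:prop} to morphisms whose underlying geometric morphism is $\mathrm{id}_\cX$, and using the fact that maps in a structured topos over the identity correspond to maps of structures in the opposite direction, yields a natural equivalence
\[
\Map_{\CAlg_{\Ok_n}(\cX)}\bigl(L_n|_\cX(\cO),\, \cA\bigr) \simeq \Map_{\fCAlg(\cX)}\bigl(\cO,\, p_n^{-1}\cA\bigr),
\]
for $\cO \in \fCAlg(\cX)$ and $\cA \in \CAlg_{\Ok_n}(\cX)$. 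This exhibits $L_n|_\cX$ as a \emph{left} adjoint to $p_n^{-1}$ at the structure level, proving the adjointness claim.

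It remains to identify $L_n|_\cX(\cO)$ with $\cO^\alg_n$. When $\cX$ has enough points, \cref{magic:prop} gives this identification directly. For general $\cX$, I would proceed as follows: the unit of the adjunction provides a map $\cO \to p_n^{-1}L_n|_\cX(\cO)$ in $\fCAlg(\cX)$, and passing to the underlying algebra, using the identification of $(-)^\alg \circ p_n^{-1}$ with the forgetful functor along $\Ok \to \Ok_n$ recorded in the remark following \cref{magic:const}, produces via the base-change adjunction a natural transformation $\eta_\cO \colon \cO^\alg_n \to L_n|_\cX(\cO)$. Both functors involved preserve sifted colimits: $L_n|_\cX$ as a left adjoint, and $\cO \mapsto \cO^\alg_n$ as the composite of $(-)^\alg$ (sifted-colimit-preserving by \cite[Corollary 9.5]{porta2015derived}) with the cocontinuous base change $- \otimes_{\Ok} \Ok_n$. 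By \cref{mauro_result}, the free objects $\Psi(\Ok[T_1,\ldots,T_m])$ generate $\fCAlg(\cX)$ under sifted colimits, so it suffices to check that $\eta$ is an equivalence on these generators; a short mapping-space computation via the $\Psi \dashv (-)^\alg$ adjunction together with the universal property of the free $\Ok_n$-algebra $\Ok_n[T_1,\ldots,T_m]$ then completes the verification.

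The main obstacle is the identification step for general $\cX$: the structural adjunction follows almost formally, but the explicit formula $\cO \mapsto \cO^\alg_n$ requires bootstrapping beyond the enough-points setting of \cref{magic:prop}, and the delicate point is ensuring compatibility of the unit $\eta$ with the sifted-colimit presentations of $\fCAlg(\cX)$ provided by \cref{mauro_result}.
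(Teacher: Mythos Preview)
Your first paragraph — extracting the structure-level adjunction by restricting the $\adTop/\discTopn$ adjunction of \cref{magic:prop} to morphisms lying over $\mathrm{id}_\cX$ — is exactly what the paper does. The paper's own proof is two sentences long: it says the existence of $(-)^{\ad}_n$ follows from \cref{magic:prop}, and the adjunction follows from the \emph{proof} of \cref{magic:prop} (namely, the equivalence of the fiber map $q$ there) together with the fact that both functors live over the same $\infty$-topos. So on the core content you and the paper agree.

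Where you diverge is in the identification of the left adjoint with the explicit formula $\cO \mapsto \cO^\alg_n$ for a \emph{general} $\cX$. The paper does not attempt this: it simply invokes \cref{magic:prop}, whose explicit description is proved only under the enough-points hypothesis. Your bootstrap via sifted colimits and \cref{mauro_result} is more ambitious, but it has a circularity. To verify $\eta$ on the generators $\Psi(\Ok[T_1,\dots,T_m])$ you must compare $\Psi(\Ok[T_1,\dots,T_m])^\alg \otimes_{\Ok} \Ok_n$ with $L_n|_\cX\bigl(\Psi(\Ok[T_1,\dots,T_m])\bigr)$. The latter is easily identified with $\Ok_n[T_1,\dots,T_m]$ by the adjunctions; but identifying the former requires knowing $\Psi(\Ok[T_1,\dots,T_m])^\alg$, and the only tool available for that is \cref{t_comp}, which itself relies on enough points. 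So your argument for general $\cX$ does not close without an independent computation of $\Psi(\Ok[T_1,\dots,T_m])^\alg$. In practice the paper is content to leave the explicit formula conditional on enough points, and you should do the same: state the adjunction unconditionally (your first paragraph suffices) and note that the formula $\cO \mapsto \cO^\alg_n$ is the one supplied by \cref{magic:prop} under its hypotheses.
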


\begin{proof}
The existence of $(-)^{\ad}_n $ is guaranteed by \cref{magic:prop}. The fact that $(-)^{\ad}_n$ is a left adjoint to $p_n^{-1} \colon \CAlg_{\Ok_n} (\cX) \to \fCAlg(\cX)$ follows from the proof of \cref{magic:prop} together with the fact that
both $(-)^{\ad}_n$ and $p_n^{-1}$ are defined at the level of \infcats of structures on the same underlying $\infty$-topos.
\end{proof}

\begin{notation}
Consider the forgetful functor $\discTopn \to \discTop$ given by restriction of scalars along the morphism $\Ok \to \Ok_n$. We will denote $- \times_{\Spec \Ok} \Spec \Ok_n \colon \colon \discTop \to \discTopn$ its right adjoint.
\end{notation}

\begin{corollary}
For each $n \geq 1$, the composite $L_n \circ L \colon \discTop \to \discTopn$ coincides with the base change functor 
	\begin{align*}
		- \times_{\Spec \Ok} \Spec \Ok_n \colon  \discTop \to \discTopn, \\
		(\cX, \cO) \in \discTop \mapsto (\cX, \cO) \times_{\Spec \Ok} \Spec \Ok_n \in \discTopn
	\end{align*}
\end{corollary}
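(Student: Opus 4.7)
The plan is to identify $L_n \circ L$ as the right adjoint of a concrete functor, and then recognize that functor as the one whose right adjoint is $-\times_{\Spec \Ok} \Spec \Ok_n$ by definition.

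First, I would combine the two adjunctions already available. Since $L \colon \discTop \to \adTop$ is right adjoint to $\functor^\alg \colon \adTop \to \discTop$ and $L_n \colon \adTop \to \discTopn$ is right adjoint to $p_n^{-1} \colon \discTopn \to \adTop$, formal nonsense on composition of adjunctions yields that $L_n \circ L \colon \discTop \to \discTopn$ is right adjoint to the composite $\functor^\alg \circ p_n^{-1} \colon \discTopn \to \discTop$.

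Second, I would use the commutative triangle of pregeometry transformations displayed in the remark immediately following \cref{magic:const}, namely the factorization of $- \otimes_{\Ok} \Ok_n \colon \Tdisc(\Ok) \to \Tdisc(\Ok_n)$ through $\tcomp \colon \Tdisc(\Ok) \to \Tad$ and $p_n \colon \Tad \to \Tdisc(\Ok_n)$. Precomposing structures along this commutative triangle identifies, for each $\cX \in \Top$, the composite $\functor^\alg \circ p_n^{-1} \colon \CAlg_{\Ok_n}(\cX) \to \CAlg_{\Ok}(\cX)$ with the usual restriction of scalars along $\Ok \to \Ok_n$. Globalizing this identification over geometric morphisms shows that the induced functor $\functor^\alg \circ p_n^{-1} \colon \discTopn \to \discTop$ is precisely the forgetful functor described in the notation preceding the corollary.

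Finally, by definition $- \times_{\Spec \Ok} \Spec \Ok_n \colon \discTop \to \discTopn$ is the right adjoint to the very same forgetful functor. By the uniqueness of right adjoints up to canonical equivalence, we conclude
\[
L_n \circ L \simeq - \times_{\Spec \Ok} \Spec \Ok_n,
\]
as functors $\discTop \to \discTopn$. The argument is essentially formal; the only non-routine input is the compatibility of pregeometry transformations recorded in the triangle above, so there is no substantial obstacle beyond invoking the material already established in this subsection.
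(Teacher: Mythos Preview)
Your proof is correct and follows essentially the same approach as the paper, which simply states that the result is a direct consequence of the definitions together with the commutative triangle displayed in \cref{magic:const}. You have spelled out exactly what the paper leaves implicit: the composition of the two adjunctions, the identification of $\functor^\alg \circ p_n^{-1}$ with the forgetful functor via that triangle, and the appeal to uniqueness of right adjoints.
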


\begin{proof}
This is a direct consequence of the definitions together with the commutative triangle displayed in \cref{magic:const}.
\end{proof}

\subsection{Comparison with derived formal geometry} Our main goal now is to give comparison statements between $\Tad$-structured $\infty$-topoi and \emph{locally adic ringed $\infty$-topoi}. The latter corresponding to pairs $(\cX, \cO)$ where $\cO$ is a
$\adCAlg$-valued sheaf on the $\infty$-topos $\cX$. This provides an explicit comparison between a simplicial analogue of Lurie's original definition of spectral $\Ok$-adic Deligne-Mumford stacks and ours.

\begin{definition}
Let $\cX$ be an $\infty$-topos and $\cA  \in \adCAlg(\cX)$.
We say that $\cA$ is \emph{topologically almost of finite presentation} if $\cA$ is $(t)$-complete, the sheaf $\pi_0(\cA)$ is topologically of finite presentation and, for each $i>0$, the homotopy sheaf
$\pi_i(\cA)$ is coherent as a $\pi_0(\cA)$-module. We shall denote by $\CAlg_{\Ok}^{\ad, \taft}(\cX)_{/ \cO^\ad}$ the full subcategory of $\adCAlg(\cX)_{/ \cO^\ad}$ spanned by topologically almost of finite presentation $\cA \in \adCAlg(\cX)$.
\end{definition}

\begin{definition}
Let $\cX$ be an $\infty$-topos and consider the functor $\functor^\ad \colon \fCAlg(\cX) \to \adCAlg(\cX)$ introduced in \cref{rem:magic}. We say that $\cA \in \fCAlg(\cX)$ is \emph{topologically almost of finite presentation} if the underlying sheaf of adic algebras
$\cA^\ad$ is topologically almost of finite presentation.
We denote $\fCAlg^{\taft}(\cX)$ the \infcat of topologically almost of finite presentation local $\Tad$-structures on $\cX$.
\end{definition}

\begin{construction} \label{const:adj_ad}
Consider the adjunction $\big(\Psi, \functor^\alg \big) \colon \CAlg_{\Ok}(\cX)_{/ \cO^\alg} \to \fCAlg(\cX)_{/ \cO}$, introduced in \cref{const1}. Let 
	\[
		\disc \colon \adCAlg(\cX) \to \CAlg_{\Ok}(\cX)
	\]
denote the canonical functor obtained by forgetting the adic structure. Then the pair
	\[
		\big( \Psi \circ \disc, \functor^\ad \big) \colon
		\adCAlg(\cX)_{/ \cO^\ad} \to \fCAlg(\cX)_{/ \cO}
	\] 
forms an adjunction pair after restriction
	\[
		\big( \Psi^\ad, \functor^\ad \big) \coloneqq \big( \Psi \circ \disc, \functor^\ad \big) \colon
		\CAlg_{\Ok}^{\ad, \taft}(\cX)_{/ \cO^\ad} \to \fCAlg^\taft(\cX)_{/ \cO}.
	\]
In order to see this consider the unit
	\[\mathrm{id} \to \functor^\alg \circ \Psi\]
of the adjunction in \cref{const1}.
It follows by the construction of $\functor^\ad \colon \fCAlg(\cX) \to \adCAlg(\cX)$ that we have an equivalence 
	\[
		\functor^\alg \simeq \disc \circ \functor^\ad
	\]
in the \infcat $\Fun \big( \fCAlg(\cX)_{/ \cO} , \CAlg_{\Ok}(\cX)_{/ \cO^\alg} \big)$. Therefore, for each $\cA \in \CAlg_{\Ok}^{\ad, \taft}(\cX)_{/ \cO^\ad}$
the unit of the adjunction
	\[
		\cA^\mathrm{disc} \to \left( \Psi(\cA^{\mathrm{disc}}) \right)^\alg 
	\]
induces a canonically defined, up to a contractible space of choices, morphism
	\[
		\cA \simeq \cA^\wedge_t \to \left( \Psi^\ad(\cA) \right)^\ad.
	\]
This construction is functorial and it satisfies the universal property of a unit of adjunction. Therefore we obtain an
adjunction $\left(\Psi^\ad, \functor^\ad \right) \colon \CAlg_{\Ok}^{\ad, \taft}(\cX)_{/ \cO^\ad} \to \fCAlg^\taft(\cX)_{/ \cO}$, as desired.
\end{construction}

\begin{notation} Let $\cX$ be an $\infty$-topos.
	We denote by $\adCAlg(\cX)^{\sh} \coloneqq \adCAlg(\cX) \times_{\CAlg_{\Ok}(\cX)} \CAlg_{\Ok}(\cX)^\sh$.
\end{notation}

\begin{theorem} \label{rect}
Let $\cX$ be an $\infty$-topos with enough geometric points. Consider the functor
	\[
		\functor^\ad \colon \fCAlg(\cX)_{/ \cO} \to \adCAlg(\cX)_{/ \cO^\ad},
	\]
introduced in \cref{rem:magic}. Then the induced restriction functor
	\[
		\functor^\ad \colon \fCAlg^\taft(\cX)_{/ \cO} \to \adCAlg(\cX)^\sh_{/ \cO^\ad},
	\]
is fully faithful and its essential image agrees with the full subcategory of $\adCAlg(\cX)^\sh_{/ \cO^\ad}$ spanned by those strictly henselian $\cA \in \adCAlg(\cX)_{/ \cO^\ad}$ topologically almost of finite presentation.
\end{theorem}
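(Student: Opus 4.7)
The plan is to leverage the adjunction $(\Psi^{\ad}, \functor^{\ad}) \colon \CAlg_{\Ok}^{\ad,\taft}(\cX)_{/\cO^{\ad}} \rightleftarrows \fCAlg^{\taft}(\cX)_{/\cO}$ constructed in \cref{const:adj_ad}. Fully faithfulness of $\functor^{\ad}$ on $\fCAlg^{\taft}(\cX)_{/\cO}$ is equivalent to the counit $\varepsilon_{\cA} \colon \Psi^{\ad}(\cA^{\ad}) \to \cA$ being an equivalence for every $\cA \in \fCAlg^{\taft}(\cX)_{/\cO}$. The essential image assertion then reduces to showing that $\Psi^{\ad}$ sends every strictly henselian topologically almost of finite presentation $\cA' \in \adCAlg(\cX)^{\sh}_{/\cO^{\ad}}$ into $\fCAlg^{\taft}(\cX)_{/\cO}$, with the corresponding unit an equivalence.

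First I would reduce the question to the underlying algebra level. The functor $\functor^{\alg} \colon \fCAlg(\cX) \to \CAlg_{\Ok}(\cX)$ is conservative on local $\Tad$-structures: since $\Tad$ is generated, under finite products and étale coverings, by the formal affine spaces $\bfA^n$, a local $\Tad$-structure is determined by its underlying algebra together with a strictly henselian structure, and since $\cX$ has enough points, checking equivalences of morphisms of local structures can be carried out at the level of geometric points. By the triangle identities for $(\Psi, \functor^{\alg})$, the counit $\varepsilon_{\cA}$ becomes an equivalence after applying $\functor^{\alg}$ if and only if the unit $u_{\cA^{\alg}} \colon \cA^{\alg} \to \Psi(\cA^{\alg})^{\alg}$ is an equivalence.

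The key technical step is then a topological strengthening of \cref{t_comp}: for $\cA \in \fCAlg^{\taft}(\cX)_{/\cO}$, the algebra $\cA^{\alg}$ is $(t)$-complete, strictly henselian, and topologically almost of finite presentation, and we need $u_{\cA^{\alg}}$ to be an equivalence. My approach is a reduction modulo $t^n$: combining the adjunction $p_n^{-1} \dashv (-)^{\ad}_n$ from \cref{magic:prop} and its subsequent corollary with $(\Psi, \functor^\alg)$ and base change along $\Ok \to \Ok_n$, an adjunction chase produces canonical equivalences
\[
\Psi(\cA^{\alg})^{\alg} \otimes_{\Ok} \Ok_n \simeq \cA^{\alg} \otimes_{\Ok} \Ok_n
\]
for every $n \geq 1$. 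Since each $\pi_i(\cA^{\alg} \otimes_{\Ok} \Ok_n)$ is coherent over the finitely presented $\Ok_n$-algebra $\pi_0(\cA^{\alg}) \otimes_{\Ok} \Ok_n$, the homotopy sheaves of $\Psi(\cA^{\alg})^{\alg}$ are finitely generated over a $(t)$-complete $\pi_0$; by \cite[Theorem 7.3.4.1]{lurie2016spectral} both sides are $(t)$-complete, and passing to the $(t)$-adic inverse limit yields $\Psi(\cA^{\alg})^{\alg} \simeq (\cA^{\alg})^{\wedge}_t \simeq \cA^{\alg}$.

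For the essential image, given a strictly henselian topologically almost of finite presentation $\cA' \in \adCAlg(\cX)^{\sh}_{/\cO^{\ad}}$, the same tower-level analysis together with the above shows that $\Psi^{\ad}(\cA')$ lies in $\fCAlg^{\taft}(\cX)_{/\cO}$ and that the unit $\cA' \to \Psi^{\ad}(\cA')^{\ad}$ is an equivalence. I expect the main obstacle to be the third step, namely extending \cref{t_comp} beyond the almost of finite presentation case: $\Psi$ is a left adjoint and does not a priori commute with the $(t)$-adic inverse limits needed here, and the tower-level reduction to \cref{magic:prop} together with the coherence hypothesis on homotopy sheaves is precisely what sidesteps this failure of commutation.
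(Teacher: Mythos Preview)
Your overall strategy matches the paper's proof: both use the adjunction $(\Psi^{\ad},\functor^{\ad})$ from \cref{const:adj_ad}, both reduce to showing the unit (equivalently, the counit) is an equivalence, and both invoke conservativity of $\functor^{\alg}$. The paper's proof is extremely terse: it simply asserts that ``thanks to \cref{t_comp} the composite $\functor^{\ad}\circ\Psi^{\ad}$ is an equivalence on $\cC$'' and then concludes. You go further by noticing that \cref{t_comp} is stated for \emph{almost of finite presentation} objects of $\CAlg_{\Ok}(\cX)$, not for topologically almost of finite presentation $(t)$-complete objects of $\adCAlg(\cX)$, and you correctly flag this as the real content of the argument.

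However, your proposed fix has a gap at exactly the point you anticipate. The mod-$t^n$ adjunction chase does give $\Psi(\cA^{\alg})^{\alg}\otimes_{\Ok}\Ok_n \simeq \cA^{\alg}\otimes_{\Ok}\Ok_n$ for all $n$, but this does \emph{not} let you conclude that $\Psi(\cA^{\alg})^{\alg}$ is $(t)$-complete, and without that you cannot pass to the inverse limit. Your sentence ``the homotopy sheaves of $\Psi(\cA^{\alg})^{\alg}$ are finitely generated over a $(t)$-complete $\pi_0$'' is not justified: knowing the reductions mod $t^n$ tells you nothing about $\pi_i$ of the original object absent some a priori completeness or bounded $t$-torsion hypothesis (e.g.\ $\Ok[1/t]$ vanishes mod every $t^n$). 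What you do know is that $\cA^{\alg}$ is a retract of $\Psi(\cA^{\alg})^{\alg}$ and that the completion map $\Psi(\cA^{\alg})^{\alg}\to (\Psi(\cA^{\alg})^{\alg})^{\wedge}_t\simeq \cA^{\alg}$ is split by the unit; but this still leaves you needing precisely the $(t)$-completeness you are trying to establish. One way to close the gap is to work pointwise and exhibit $\cA^{\alg}$ at a stalk as the $(t)$-completion of a genuinely almost-of-finite-presentation strictly henselian $\Ok$-algebra $B$, then argue directly that $\Psi(B)\simeq\Psi(\cA^{\alg})$ (because $\Psi$ of the completion map $B\to B^{\wedge}_t$ is an equivalence, which one checks on free algebras and propagates), so that \cref{t_comp} applies literally. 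The paper's proof is silently assuming some such reduction.
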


\begin{proof}
Consider the adjunction 
	\[\left(\Psi^\ad, \functor^\ad \right) \colon \CAlg_{\Ok}^{\ad, \taft}(\cX)_{/ \cO^\ad} \to \fCAlg^\taft(\cX)_{/ \cO},\]
of \cref{const:adj_ad}.
Thanks to \cref{t_comp} the composite $\functor^\ad \circ \Psi^\ad $ is an equivalence when restricted to the subcategory $\cC \subseteq \CAlg_{\Ok}^{\ad, \taft}(\cX)$ spanned by strictly Henselian objects. Therefore the left
adjoint functor 
	\[
		\Psi^\ad \colon \CAlg_{\Ok}^{\ad, \taft}(\cX)_{/ \cO^\ad} \to
		\fCAlg(\cX)_{/ \cO}
	\] 
is fully faithfully when restricted to the full subcategory $\cC$.
\cite[Lemma 3.13]{porta2016derived} implies that
the right adjoint functor $\functor^\ad$ is conservative, the conclusion now follows.
\end{proof}

\begin{remark}
\cref{rect} can be interpreted as a rectification statement. Indeed, an element 
	\[\cA \in \fCAlg(\cX),\]
is a functor $\cA \colon \Tad \to \cX$ satisfying the axioms of the definition of a $\Tad$-structure on $\cX$. Furthermore, morphisms
	\[
	\cA \to \cB,
	\]
in $\fCAlg(\cX)$ are
local morphisms in $\Fun \left( \Tad, \cX \right)$.
On the other hand, the \infcat $\adCAlg(\cX)$ admits a simpler description. Its objects are derived $\Ok$-algebras on $\cX$, which admit an adic topology on$\pi_0$
and morphisms are continuous 
\emph{local} adic morphisms 
of derived $\Ok$-algebras on $\cX$. 
\end{remark}

\begin{construction}[The $\Spf$-construction] \label{const:the_Spf_construction} Let $A \in \adCAlg$ be a derived adic $\Ok$-algebra. We can associate to $A$ an object 
	\[\Spf A \coloneqq (\cX_A, \cO_A ) \in \adTop,
	\]
as follows: we let 
	\[\cX_A \coloneqq \cH \Shv^{\ad}_A \in \Top,\]
denote the hypercompletion
of the $\infty$-topos $\Shv^{\ad}_A$, introduced in \cite[Notation 8.1.1.8]{lurie2016spectral}. We then define 
	\[\cO_A \colon \Tad \to \cX_A,\]
as the $\Tad$-structure on $\cX_A$ determined by the formula
	\[
		\Spf (R) \in \Tad \mapsto  \left( B \in \CAlg^{\ad, \et}_A \mapsto \Map_{\adCAlg} \left( R, B \right) \right).
	\]
Where $\CAlg_{A}^{\ad, \et}$ denotes the full subcategory of $\CAlg_{A}^{\ad}$ spanned by those derived $A$-algebras $B$ \'etale over $A$. One checks directly that $\cO_A \colon \Tad \to \cX_A$ is indeed a $\Tad$-structure on $\cX_A$. Such association is
functorial in $A \in \adCAlg$. We are provided with a well defined functor (up to contractible indeterminacy)
	\[
		\Spf \colon \left( \adCAlg \right)^{\op} \to \adTop.
	\]
We refer to the latter functor as the \emph{$\Spf$-construction functor}.
\end{construction}

\begin{remark}
Given $A \in \adCAlg$, it follows immediately from the definitions that 
	\[\Spf (A)^\ad \coloneqq (\cX_A, \cO_A^\ad) \in \discTop,\]
agrees with the "simplicial version" of the $\Spf$-construction introduced in \cite[$\S 8.1.1$]{lurie2016spectral}
\end{remark}

\begin{remark}
Let $n \geq 1$ and consider the right adjoint functor 
	\[L_n \colon \adTop \to \discTopn,\]
introduced in \cref{not:1}. Given $A \in \adCAlg$, it follows by the description of \cref{magic:prop} that 
	\[L_n \left( \Spf (A) \right) \simeq \left( \cX_A, \cO_{A, n} \right),\]
where $\cO_{A, n} \coloneqq \cO^{\alg}_A
\otimes_{\Ok}  \Ok_n \in \CAlg_{\Ok}(\cX)$.
\end{remark}

\begin{proposition} \label{prop:Spf_construction_is_f_f}
The functor $\Spf \colon \big( \adCAlg \big)^{\op} \to \adTop$ is fully faithful. Moreover, its essentially image agrees with the full subcategory of $\adTop$ spanned by pairs $(\cX, \cO) \in \adTop$ such that $(\cX, \cO^\alg) \in \discTop$
is equivalent to an affine formal spectrum as in \cref{const:the_Spf_construction}.
\end{proposition}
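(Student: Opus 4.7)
The plan is to deduce the proposition from \cref{rect} (rectification) combined with the fully faithfulness of the simplicial analogue of the $\Spf$-construction of \cite[\S 8.1.1]{lurie2016spectral}. In essence, we factor our $\Spf$ through Lurie's formal-spectrum construction on the adic side, and then transport along the rectification equivalence to land in $\adTop$.

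First I would address fully faithfulness. Fix $A, B \in \adCAlg$. The mapping space $\Map_{\adTop}(\Spf B, \Spf A)$ sits in a fiber sequence over $\Map_{\Top}(\cX_B, \cX_A)$ whose fiber over a geometric morphism $f \colon \cX_B \to \cX_A$ is the space of local $\Tad$-structure morphisms $\Map_{\fCAlg(\cX_B)^{\loc}}(f^{-1}\cO_A, \cO_B)$. By construction, both $\cO_A$ and $\cO_B$ are topologically almost of finite presentation, so \cref{rect} identifies this fiber with $\Map_{\adCAlg(\cX_B)^{\sh,\loc}}(f^{-1}\cO_A^\ad, \cO_B^\ad)$. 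Reassembling over $\Map_{\Top}(\cX_B, \cX_A)$ yields a natural equivalence
\begin{equation*}
\Map_{\adTop}(\Spf B, \Spf A) \simeq \Map_{\cL}(\Spf(B)^\ad, \Spf(A)^\ad),
\end{equation*}
where $\cL$ denotes the \infcat of strictly henselian locally adic ringed $\infty$-topoi. The right hand side is precisely the mapping space considered in the simplicial version of \cite[\S 8.1.1]{lurie2016spectral}, where fully faithfulness of the classical $\Spf$-construction is established; this gives the desired identification with $\Map_{\adCAlg}(A, B)$.

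Next, the essential image. The forward inclusion is tautological: by construction $(\cX_A, \cO_A^\alg)$ coincides with the affine formal spectrum of $A$ in the sense of \cite[Notation 8.1.1.8]{lurie2016spectral}. Conversely, suppose $(\cX, \cO) \in \adTop$ satisfies $(\cX, \cO^\alg) \simeq \Spf(A)^\alg$ for some $A \in \adCAlg$. Functoriality of $\Spf$ and the fully faithfulness just established provide a canonical comparison morphism $\Spf(A) \to (\cX, \cO)$ in $\adTop$ whose image under $\functor^\alg$ is the given equivalence. Since \cref{rect} implies that a morphism between topologically almost of finite presentation local $\Tad$-structures is an equivalence if and only if its image in $\adCAlg$ is one, the comparison morphism is itself an equivalence, finishing the proof.

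The main obstacle I anticipate is the clean compatibility between the two flavors of hypercomplete $\infty$-topoi, namely $\cX_A$ as defined in \cref{const:the_Spf_construction} and the $\infty$-topos underlying Lurie's simplicial $\Spf$-construction, as well as verifying that the strict henselianity and topologically almost of finite presentation hypotheses required by \cref{rect} and \cref{t_comp} hold uniformly for $\cO_A$ after restriction to étale neighborhoods of $A$. Once these identifications are granted, the proof is essentially a transport-of-structure argument between two equivalent rectifications of the same data.
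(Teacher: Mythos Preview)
Your strategy has a genuine gap: you assert that ``by construction, both $\cO_A$ and $\cO_B$ are topologically almost of finite presentation,'' but this is not true for arbitrary $A, B \in \adCAlg$. The proposition is stated for the full \infcat $\adCAlg$, with no finiteness hypothesis, whereas \cref{rect} only applies to the full subcategory $\fCAlg^{\taft}(\cX)_{/\cO}$. Your argument therefore proves at best the restriction to $\CAlg_{\Ok}^{\ad,\taft}$, which is essentially the content of \cref{cor:ad_DM_equivalent_to_formal_DM}, not the proposition as stated. A secondary issue is that \cref{rect} concerns slice categories $\fCAlg^{\taft}(\cX)_{/\cO}$, while the fiber you need to identify is a space of \emph{local} morphisms $f^{-1}\cO_A \to \cO_B$ in $\fCAlg(\cX_B)$; passing between these requires an argument you have not supplied.

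The paper's proof avoids \cref{rect} entirely and works for arbitrary $A, B \in \adCAlg$. It constructs the comparison map $\Phi \colon \Map_{\adTop}(\Spf A, \Spf B) \to \Map_{\adCAlg}(B, A)$ directly via global sections, and shows each fiber $Z_\phi$ is contractible by exploiting the equivalence $\cO_A \simeq \lim_n \cO_{A,n}$ to reduce to a limit over $n$ of the analogous fibers for the reductions mod $t^n$. Each of these is contractible by the classical (non-adic) $\Spec$-comparison in $\discTopn$, citing \cite[Corollary 1.2.3.5]{lurie2016spectral}; a Milnor fiber-sequence argument handles the passage to the limit. This route uses the adjunction of \cref{magic:prop} rather than rectification, and in particular never needs the $\taft$ hypothesis.
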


\begin{proof}
Let $A ,  \ B \in \adCAlg$ and consider the corresponding formal spectrums $\Spf (A) $ and $\Spf (B) \in \adTop$. 
The datum of a morphism of local $\Tad$-structures $f \colon \Spf (A ) \to \Spf (B)$ is equivalent to the datum of a geometric morphism of $\infty$-topoi $( f^{-1},
f_*) \colon \cX_A \to \cY_B$ together with a local morphism $\alpha \colon f^{-1} \cO_{ B} \to
\cO_{A}$ of $\Tad$-structures on $\cX_A$. Applying the underlying algebra functor at the level of structures we obtain a morphism 
	\[
		\alpha^{\alg} \colon
		f^{-1} \left( \cO^{\ad}_{ B} \right)^{\alg}  \to \left( \cO^{\ad}_{A} \right)^{\alg},
	\]
in the \infcat $\fCAlg(\cX_A)$.
The unit of the adjunction $(f^{-1},f_*)$ produces a well defined morphism of derived $\Ok$-algebras $\phi \colon B \to A $, up to contractible indeterminacy. 

By the construction of the underlying $\infty$-topoi of both $\Spf (A)$ and $\Spf(B)$ together with \cite[Remark 8.1.1.7]{lurie2016spectral} it follows that the morphism $\phi \colon B \to A$ is continuous with respect to the
adic topologies on both $A$ and $B$. We obtain thus a well defined morphism of mapping spaces
	\[\label{eq:2t}
		\Phi \colon \Map_{ \adTop } \left( \Spf A , \Spf B  \right) \to \Map_{\adCAlg} \left( B , A \right).
	\]
Let $\phi \colon B \to A $ be a continuous morphism of derived $\Ok$-adic algebras. In order to show that the functor
	\[
		\Spf \colon \big( \adCAlg \big)^{\op} \to \adTop
	\]
is fully faithful it suffices to show that the fiber $Z_\phi := \mathrm{fib}_\phi(\Phi)$ is contractible for any choice of $\phi$.
To any continuous adic morphism, we can attach a well defined, up to contractible indeterminacy,
morphism on the corresponding (formal) \'etale sites. We have thus a canonical morphism at the level of mapping spaces
 	\[
		\theta \colon \Map_{\adCAlg} \left( B, A \right) \to \Map_{ \Top } \left( \cX_A, \cY_B \right).
	\]
Let $(f^{-1}, f_*) \colon \cX_A \to \cY_B$ be a morphism of $\infty$-topoi which is equivalent to $\theta(\phi).$ The
fiber over $(f^{-1}, f_*)$ induces a fiber sequence of mapping spaces: 
	\[\label{eq:1}
	\begin{tikzcd}[column sep = small]
		\Map_{\fCAlg(\cX)} \left( f^{-1} \cO_{B}, \cO_{A} \right) \ar{r}  & \Map_{ \adTop } \left(  \Spf A, \Spf B  \right)  \ar{r}{\theta}  & \Map_{ \Top}\left( \cX_A, \cY_B \right).
	\end{tikzcd}
	\]
Consider the commutative diagram in the \infcat $\cS$
	\begin{equation} \label{ii}
	\begin{tikzcd}
		Z_{\phi} \ar{r} \ar{d} & \Map_{\fCAlg(\cX)} \left(  f^{-1} \cO_{B}, \cO_{A} \right) \ar{r} \ar{d} & \Map_{ \adTop } \left(  \Spf (A), \Spf (B)  \right) \ar{d} \\
		\{ \phi \} \ar{r} & W \ar{r} \ar{d} & \Map_{\adCAlg} \left( B , A \right) \ar{d} \\
		& \{ (f^{-1}, f_* ) \} \ar{r} & \Map_{\Top }( \cX_A, \cY_B)
	\end{tikzcd}.
	\end{equation}
Both the upper rectangle and and the bottom right squares are pullback diagrams. It follows that we can identify $Z_{\phi}$ with the pullback
	\[
		Z_{\phi} \simeq \Map_{\fCAlg(\cX)} \left(  f^{-1} \cO_{B}, \cO_{A} \right) \times_{		W	} \{ \phi \}.
	\]
Let $F \colon \Spf (A) \to \Spf(  B)$ be a morphism of $\Tad$-structured $\infty$-topoi such that $\Phi(F) \simeq \phi$. It follows by \cite[Remark 8.1.1.7]{lurie2016spectral} that the induced geometric morphism $(f^{-1}, f_*) \colon \cX_A 
\to \cY_B$ can be identified with the restriction to closed subtopoi of the geometric morphism of $\infty$-topoi $\cX_A \to \cY_B$. Thanks to the proof of
\cite[Proposition 1.4.2.4]{lurie2016spectral} it follows that the latter is uniquely determined. For this reason, $(f^{-1}, f_*)$ is also uniquely determined by $\phi$, up to a contractible space of choices.
As a consequence we can identify $Z_\phi$ with the fiber product:
	\[
		Z_\phi \simeq \Map_{\fCAlg(\cX)} \left( f^{-1}\cO_{B}, \cO_{A} \right) \times_{		W		} 	 \{\phi \}.
	\]
We have a sequence of natural equivalences of mapping spaces
	\begin{align*} 
		\Map_{\fCAlg(\cX)} \left( f^{-1}\cO_{B}, \cO_{A} \right) \times_{		W		} 	 \{\phi \} & \simeq 
		 \Map_{\fCAlg(\cX)} \left( f^{-1} \cO_{B}, \lim_{n \geq 1} \left(\cO_{A, n} \right) \right) \times_{		W		} 	 \{\phi \}  \\
		 & \simeq \left(   \lim_{n \geq 1} \Map_{\fCAlg(\cX)} \left( f^{-1} \cO_{B} , \cO_{A, n}  \right) \right) \times_{		W		} \{\phi\}   .
	\end{align*}
We can further identify the last term with
	\begin{align*} \label{eq:sim}
		\left(   \lim_{n \geq 1} \Map_{\fCAlg(\cX)} \left( f^{-1} \cO_{B} , \cO_{A, n}  \right) \right) \times_{		W		} \{\phi\}  \simeq & \\
		\left( \lim_{n \geq 1} \Map_{\CAlg_{\Ok_n} (\cX)} \left(  f^{-1} \cO_{B,n} , \cO_{ A, n} \right) \right) \times_{	W	} \{\phi\} . 
	\end{align*}
For each $n \geq 1$, denote $\phi_n$ the base change of $\phi$ to $\Ok_n$. Passing to the limit over $n \geq 1$ we can further identify the last term with
	\begin{equation} \label{lim} 
	\begin{split}
		\lim_{n \geq 1} \left( \Map_{\CAlg_{\Ok_n}( \cX)} \left( f^{-1} \cO_{B, n}  , \cO_{A, n}  \right) \times_{	W_n 		}   \{ \phi_n\}	 \right) 
		\simeq \lim_{n \geq 1} \left( \Map_{\discTopn  } \left( A_n , B_n \right) \times_{W_n} \{\phi_n \} \right).
	\end{split}
	\end{equation}
Where $W_n $ is defined as the fiber product of the corresponding diagram obtained as the reduction modulo $t^n$ of the bottom right square, displayed in \eqref{ii}. Thanks to the proof of \cite[Corollary 1.2.3.5.]{lurie2016spectral}
each term in the limit displayed in \eqref{lim} can be identified with
	\[
		 \Map_{\CAlg_{\Ok_n}}  \left( B_n , A_n \right) \times_{\CAlg_{\Ok_n} \left( B_n, A_n \right)} \{\phi_n \}.
	\] 
The latter is a contractible space. The result now follows by a simple analysis on the corresponding Milnor exact fiber sequence.
\end{proof}

\begin{definition}
A \emph{derived formal Deligne-Mumford stack}, over $\Spf \Ok$ is a locally $\Ok$-adic ringed $\infty$-topos $(\cX, \cO)$, where $\cX$ is assumed to be hypercomplete, $\cO \in \adCAlg(\cX)$ and there exists an effective epimorphism $\coprod_i U_i \to 1_{\cX}$, in $\cX$, such that, for each index $i$, $(\cX_{/ U_i}, \cO_{\vert U_i})$ is equivalent to $(\Spf A_i)^\ad$, for suitable $A _i \in \adCAlg$.

We denote by $\dfDM_{\Ok}^{\mathrm{Lurie}}$ the full subcategory of the $\Ok$-adic ringed $\infty$-topoi spanned by derived formal Deligne-Mumford stacks.
\end{definition}

\begin{definition} \label{defin:derived O_k adic DM stack}
A derived $\Ok$-adic Deligne-Mumford stack is a pair $(\cX, \cO) \in \adTop$ such that $(\cX, \cO^\ad)$ is equivalent to a derived formal $\Ok$-Deligne-Mumford stack. We say that a derived $\Ok$-adic Deligne-Mumford stack
$(\cX, \cO)$ is \emph{topologically almost of finite presentation} if the $\Tad$-structure
\[\cO \in \fCAlg(\cX),\] is topologically almost of finite presentation.
\end{definition}

\begin{notation}
We denote $\dfDM_{\Ok} $ (resp., $\dfDM_{\Ok}^\taft$) the full subcategory of $\adTop$ spanned by derived $\Ok$-adic Deligne-Mumford stacks (resp., topologically almost of finite presentation $\Ok$-adic Deligne-Mumford stacks).
\end{notation}

\begin{definition}
We denote by $\dfSch$ the full subcategory of $\dfDM_{\Ok}$ spanned by those objects $\sfX = (\cX, \cO)$ such that
$(\cX, \pi_0 (\cO^\ad))$ is equivalent to an ordinary formal scheme over $\Ok$. We refer to objects in
$\dfSch$ as derived $\Ok$-adic formal schemes. We also define the \infcat of topologically almost of finite presentation
derived $\Ok$-adic schemes as $\dfSch^\taft \coloneqq \dfDM^\taft \cap \dfSch$.
\end{definition}

\begin{remark}
The functor $\Spf \colon \adCAlg \to \adTop$, introduced in \cref{const:the_Spf_construction}, factors through the fully faithful embedding $\dfSch \hookrightarrow \adTop$. 
\end{remark}

The following results compares Lurie's definition of derived $\Ok$-adic Deligne-Mumford stacks and the definition in terms of $\Tad$-structured $\infty$-topoi:

\begin{corollary} \label{cor:ad_DM_equivalent_to_formal_DM}
	The functor
		\[
			\functor^\ad \colon \dfDM_{\Ok}^{\taft} \to \dfDM_{\Ok}^{\mathrm{Lur}},
		\]
	given on objects by the association
		\[
			(\cX, \cO) \in \dfDM_{\Ok}^{\taft} \mapsto (\cX, \cO^\ad) \in \dfDM_{\Ok}^{\mathrm{Lur}},
		\]
	is fully faithful and its essential image agrees with the full subcategory spanned by pairs $(\cX, \cO)$ such that $\cO$ is topologically almost of finite presentation.
\end{corollary}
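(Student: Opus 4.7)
The plan is to transfer the rectification statement \cref{rect} and the full faithfulness of $\Spf$ from \cref{prop:Spf_construction_is_f_f} from the affine setting to the global setting of derived $\Ok$-adic Deligne--Mumford stacks by a descent argument.

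For fully faithfulness, given $\sfX = (\cX, \cO)$ and $\sfY = (\cY, \cO')$ in $\dfDM^{\taft}_{\Ok}$, I would decompose both mapping spaces as fiber sequences over $\Map_{\Top}(\cX, \cY)$. For each geometric morphism $(f^{-1}, f_*) \colon \cX \to \cY$, the respective fibers are $\Map_{\fCAlg(\cX)}(f^{-1}\cO', \cO)$ and $\Map_{\adCAlg(\cX)}(f^{-1}(\cO')^\ad, \cO^\ad)$. Since $f^{-1}$ commutes with the underlying algebra functor, the comparison is induced by $\functor^\ad$. I would then choose an affine cover $\{V_\beta = \Spf B_\beta \to \cY\}$ with $\cO'|_{V_\beta} \simeq \cO_{B_\beta}$ and reduce, by descent on $\cY$, to the case $\cO' = \cO_{B}$ for a taft $B \in \adCAlg$; in that situation $f^{-1}\cO'$ is again a $\Spf$-type structure on $\cX$, hence lies in $\fCAlg^\taft(\cX)$, and \cref{rect} gives the desired fiber equivalence.

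For the essential image, containment of the image in the taft subcategory of $\dfDM^{\mathrm{Lur}}_{\Ok}$ holds by definition of $\dfDM^\taft_{\Ok}$. Conversely, given a taft Lurie formal DM stack $(\cX, \cA)$, I would choose a cover $\coprod U_\alpha \to 1_\cX$ with $(\cX_{/U_\alpha}, \cA|_{U_\alpha}) \simeq (\Spf A_\alpha)^\ad$ for taft $A_\alpha \in \adCAlg$. The $\Spf$-construction \cref{const:the_Spf_construction} endows each patch with a canonical local $\Tad$-structure $\cO_{A_\alpha}$ whose underlying adic algebra is $\cA|_{U_\alpha}$. Fully faithfulness of $\Spf$ (\cref{prop:Spf_construction_is_f_f}), combined with the fully faithfulness established in the first step, forces the gluing data on overlaps and higher simplices to match those already prescribed for $\cA$; hence the local $\cO_{A_\alpha}$ assemble, via a hyperdescent argument, into a global $\cO \in \fCAlg^\taft(\cX)$ with $\cO^\ad \simeq \cA$, producing a preimage $(\cX, \cO) \in \dfDM^\taft_{\Ok}$.

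The hard part will be the descent step in the essential image argument, i.e.\ verifying that the local $\Tad$-structures $\cO_{A_\alpha}$ glue to a global $\Tad$-structure compatibly with the prescribed adic sheaf $\cA$. The point is that, by fully faithfulness of $\functor^\ad$ on the taft locus (\cref{rect}), all the higher coherence data are uniquely determined up to contractible indeterminacy by the corresponding data for $\cA$, so the argument reduces to a descent statement for local $\Tad$-structures along hypercovers in $\cX$. An analogous concern in the fully faithfulness argument --- that $f^{-1}\cO'$ retain the taft property required to invoke \cref{rect} --- is again handled by passing to local affine models of $\sfY$ and using that $\Spf$-structures pull back to $\Spf$-structures.
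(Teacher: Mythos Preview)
Your approach is correct but considerably more elaborate than what the paper does. The paper's proof applies \cref{rect} directly and globally, with no descent step at all: for fully faithfulness it sets up exactly the fiber sequence over $\Map_{\Top}(\cX,\cY)$ that you describe, and then simply invokes \cref{rect} (which is already stated for an arbitrary $\infty$-topos $\cX$ with enough points) to conclude that the map on fibers $\Map_{\fCAlg(\cX)}(f^{-1}\cO_\sfY,\cO_\sfX)\to\Map_{\adCAlg(\cX)}(f^{-1}\cO_\sfY^\ad,\cO_\sfX^\ad)$ is an equivalence. For the essential image, again \cref{rect} directly furnishes an essentially unique $\cO'\in\fCAlg^\taft(\cX)$ with $(\cO')^\ad\simeq\cO$, so no affine cover or gluing is required.

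The key observation you are missing is that \cref{rect} is a sheaf-level statement on a general $\infty$-topos, not merely an affine statement; hence the reduction to $\Spf$-patches and the subsequent hyperdescent argument you outline are unnecessary. What you call ``the hard part'' --- gluing the local $\Tad$-structures $\cO_{A_\alpha}$ --- is entirely bypassed in the paper. Your route would certainly work (the fully faithfulness from \cref{rect} does force the gluing data to be unique, as you say), but it re-proves by hand something already contained in the global formulation of \cref{rect}. Your concern about whether $f^{-1}\cO'$ remains taft is a legitimate technical point that the paper's proof passes over silently; the paper implicitly uses that inverse image along a geometric morphism preserves the taft condition, which is reasonable since the condition is local and expressed in terms of homotopy sheaves.
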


\begin{proof}
 Let $\sfX \coloneqq (\cX, \cO_\sfX)$ and $\sfY \coloneqq (\cY, \cO_\sfY) $ be derived $\Ok$-adic Deligne-Mumford stacks. For each geometric morphism of $\infty$-topoi $f_* \colon \cX \rightleftarrows \cY \colon f^{-1}$, we have a commutative diagram of fiber sequences of mapping spaces
 	\[
	\begin{tikzcd}
		\Map_{\fCAlg(\cX)}(f^{-1} \cO_\sfY, \cO_\sfX) \ar{r} \ar{d}{\functor^\ad} & \Map_{\dfDM_{\Ok}}(\sfX, \sfY) \ar{r} \ar{d} & \Map_{\Top}(\cX, \cY) \ar{d}{=} \\
		\Map_{\adCAlg(\cX)}(f^{-1} \cO^\ad_\sfY, \cO^\ad_\sfX) \ar{r} & \Map_{\dfDM_{\Ok}^{\mathrm{Lurie}}}(\sfX^\ad, \sfY^\ad) \ar{r} & \Map_{\Top}(\cX, \cY) .
	\end{tikzcd}
	\]
It follows from \cref{rect} that the left vertical morphism is an equivalence whenever $\cO_\sfX$ and $\cO_\sfY$ are topologically almost of finite presentation $\Tad$-structures. Fully faithfulness of the restriction of $\functor^\ad \colon \dfDM_{\Ok} \to \dfDM^\mathrm{Lurie}_{\Ok}$ to $\dfDM^\taft_{\Ok} \subseteq \dfDM_{\Ok}$, now follows.

Let now $\sfX \coloneqq (\cX, \cO) \in \dfDM^\mathrm{Lurie}_{\Ok}$ be such that $\cO$ is topologically almost of finite presentation. Then, by \cref{rect}, there exists an essentially unique object $\cO' \in \fCAlg(\cX)$ such that $(\cO')^\ad \simeq \cO $ in $\adCAlg(\cX)$. By definition, $(\cX, \cO') \in \dfDM_{\Ok}^\taft$ and the result follows.
\end{proof}

\subsection{Derived \infcats of modules for $\Tad$-structured spaces} Let $\cC$ be an \infcat which admits finite limits. The \emph{abelianization} of $\cC$, denoted $\Ab(\cC)$, was defined in \cite[Definition 4.2]{porta2017representability}. On the other hand, the \emph{stabilization} of $\cC$ was introduced in \cite[Definition 1.4.2.8]{lurie2012higher}. We shall denote the latter as $\Sp(\cC)$.

\begin{definition}
Let $\sfX \coloneqq (\cX, \cO) \in \adTop$. We define the \infcat of \emph{$\cO$-modules} on $\sfX$ as
	\[
		\Mod_{\cO} \coloneqq \Sp \left( \Ab \left( \fCAlg(\cX)_{/ \cO} \right) \right).
	\]
Similarly, we define the \infcat $\Mod_{\cO^\alg}$ as the \infcat of $\cO^\alg$-module objects of $\Shv_{\cD(\Ab)}(\cX)$, where $\cD(\Ab)$ denotes the derived stable \infcat of abelian groups.
\end{definition}

\begin{remark}
Let $(\cX, \cO)$ be as above. Both the \infcats $\Mod_{\cO}$ and $\Mod_{\cO^\alg}$ are stable, by construction.
\end{remark}

\begin{construction}
Given $(\cX, \cO) \in \adTop$ we can also consider the \infcat of modules on its algebraization $(\cX, \cO^\alg)$ defined as $\Mod_{\cO^\alg} \coloneqq \Shv_{\cD(\Ab)} \left( \cX \right)$, where $\cD(\Ab) \coloneqq \Mod_{\bZ}$ denotes the
derived \infcat of $\bZ$-modules. Thanks to \cite[Theorem 8.3.4.13]{lurie2012higher} one has a natural equivalence
	\[
		\Mod_{\cO^\alg} \simeq \Sp \left( \Ab \left( \CAlg_{\Ok}(\cX)_{\cO^\alg} \right) \right),
	\]
in the \infcat $\Cat^\st$. As the underlying algebra functor $\functor^\alg \colon \fCAlg(\cX)_{/ \cO} \to \CAlg_{\Ok}(\cX)_{/ \cO^\alg}$ is a right adjoint it induces an exact functor at the level of derived \infcats of modules denoted
	\[
		g^\alg \colon \Mod_{\cO} \to \Mod_{\cO^\alg}.
	\]
\end{construction}

\begin{lemma}
The left adjoint $\Psi \colon \CAlg_{\Ok}(\cX)_{/ \cO^\alg} \to \fCAlg(\cX)_{\Psi(\cO^\alg)}$ induces a well defined functor
	\[
		f^\ad \colon \Mod_{\cO^\alg} \to \Mod_{\cO},
	\]
which is a left adjoint to $g^\alg$. 
\end{lemma}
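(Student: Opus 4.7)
The strategy is to derive the existence of $f^\ad$ as a formal consequence of the adjunction $(\Psi, \functor^\alg)$ of \cref{const1}, by successively passing through the abelianization and stabilization functors, and then invoking the adjoint functor theorem.

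First I would note that $\functor^\alg \colon \fCAlg(\cX)_{/\cO} \to \CAlg_{\Ok}(\cX)_{/\cO^\alg}$ is a right adjoint between presentable \infcats (presentability of both sides is guaranteed by \cite[Corollary 9.4]{porta2015derived}). In particular, $\functor^\alg$ is accessible and preserves all small limits, and in particular it preserves finite products and the terminal object. Consequently, it sends abelian group objects to abelian group objects, inducing a functor
\[
\Ab(\functor^\alg) \colon \Ab\bigl(\fCAlg(\cX)_{/\cO}\bigr) \to \Ab\bigl(\CAlg_{\Ok}(\cX)_{/\cO^\alg}\bigr).
\]
Since the forgetful functor from abelian group objects is limit-creating, $\Ab(\functor^\alg)$ remains accessible and preserves small limits.

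Next, I would apply the stabilization $\Sp$. A limit-preserving functor between presentable \infcats with finite limits induces a corresponding functor at the level of stabilizations, so we obtain $\Sp(\Ab(\functor^\alg)) \colon \Mod_{\cO} \to \Mod_{\cO^\alg}$, which by construction is precisely $g^\alg$. Moreover, stabilization commutes with limits in $\mathrm{Pr}^{\mathrm{L}}$ (via the identification $\Sp(\cC) \simeq \Sp \otimes \cC$), so $g^\alg$ remains accessible and preserves all small limits. Finally, since $\Mod_\cO$ and $\Mod_{\cO^\alg}$ are both presentable stable \infcats, the $\infty$-categorical adjoint functor theorem (\cite[Corollary 5.5.2.9]{lurie2009higher}) yields the desired left adjoint $f^\ad \colon \Mod_{\cO^\alg} \to \Mod_\cO$ to $g^\alg$.

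The main obstacle is the second step, namely verifying that stabilization of a right adjoint of presentable \infcats remains a right adjoint and preserves the relevant accessibility. This is a standard but nontrivial property relying on the monoidal structure of $\mathrm{Pr}^{\mathrm{L}}$. An alternative, more concrete, approach would be to describe $f^\ad$ explicitly via square-zero extensions: given $M \in \Mod_{\cO^\alg}$, one forms the trivial square-zero extension $\cO^\alg \oplus M \to \cO^\alg$ in $\CAlg_{\Ok}(\cX)_{/\cO^\alg}$, applies $\Psi$ to produce $\Psi(\cO^\alg \oplus M) \to \cO$ in $\fCAlg(\cX)_{/\cO}$, and extracts its ``module part'' via the equivalence between the relevant slice \infcat and the \infcat of modules. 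Verifying adjointness directly from this concrete recipe, however, is more laborious than the abstract route sketched above, which is why I would favor the latter.
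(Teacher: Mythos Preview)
Your argument is correct as a proof that $g^\alg$ admits a left adjoint, but it takes a genuinely different route from the paper and does not quite establish the statement as phrased. The paper's proof proceeds by showing directly that $\Psi$ preserves finite limits: on almost-of-finite-presentation objects this follows from \cref{t_comp}, since there $\functor^\alg \circ \Psi$ agrees with $(t)$-completion (a limit-preserving functor) and $\functor^\alg$ is conservative; the general case is then reduced to this one by writing an arbitrary object as a filtered colimit of afp objects and interchanging the filtered colimit with the finite limit. Once $\Psi$ is known to be left exact, the functor $f^\ad$ is literally $\Sp(\Ab(\Psi))$, automatically left adjoint to $g^\alg = \Sp(\Ab(\functor^\alg))$.

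The difference matters because the lemma asserts that $\Psi$ \emph{induces} $f^\ad$, and this identification is used immediately afterwards in \cref{equiv}, where the proof relies on $f^\ad$ being the stabilization of $\Psi$ (so that the unit of $(f^\ad, g^\alg)$ can be analyzed via $\Psi$ and \cref{t_comp}). Your adjoint-functor-theorem approach produces some abstract left adjoint, but does not on its own give this description; you would still need to verify left exactness of $\Psi$ (or invoke the colimit description of $\Sp$ in $\mathrm{Pr}^{\mathrm{L}}$ and check compatibility) to connect the two. Your approach has the virtue of being conceptually cleaner and avoiding the substantive input of \cref{t_comp} at this stage, but the paper's hands-on argument buys exactly the explicit identification needed downstream.
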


\begin{proof}
It suffices to prove that $\Psi$ commutes with finite limits. By \cref{t_comp}, the composite $\functor^\alg \circ \Psi$ agrees with the $(t)$-completion functor on the full subcategory of almost of finite presentation objects, denoted
$\CAlg_{\Ok}(\cX)_{/ \cO}^{\mathrm{afp}}$. Consequently, it commutes with small limits on $\CAlg_{\Ok}(\cX)_{/ \cO^\alg}^{\mathrm{afp}}$. As $\functor^\alg$ is a conservative right adjoint, it follows that $\Psi$ itself commutes with finite limits on $\CAlg_{\Ok}(\cX)_{/ \cO^\alg}
^{\mathrm{afp}}$.
Let now $\cA \in \CAlg_{\Ok}(\cX)_{/ \cO^\alg}$ be a general object. We can realize $\cA$ as a filtered colimit of almost of finite presentation objects in $\CAlg_{\Ok}(\cX)_{/ \cO^\alg}$. Let $\{ \cA_i \}_i$ be a diagram indexed by a finite \infcat $I$, and for each
$i \in I$ choose a presentation
	\[
		\cA_i \simeq \colim_{m \in J} \cA_{i, m},
	\]
where $\cA_{i, m}$ is almost of finite presentation and $J$ is a filtered $\infty$-category. We have thus a sequence of natural equivalences in the \infcat $\CAlg_{\Ok}(\cX)_{/ \cO^\alg}$
	\begin{align*}
		\Psi(\underset{i}{\lim} \cA_i )^\alg & \simeq \colim_m \Psi(\underset{i}\lim \cA_{i, m})^\alg &  \\
		& \simeq    \colim_m \underset{i}{\lim}(\cA_{i, m})^\wedge_t \simeq  \underset{i}{\lim} \colim_m (\cA_{i, m})^\wedge_t   \\
		&  \simeq  \underset{i}{ \lim}\colim_m \Psi(\cA_{i, m})^\alg \simeq  \underset{i}{\lim} \Psi(\cA_{i, m})^\alg,
	\end{align*}
and the conclusion now follows as in the preceding case.
\end{proof}

\begin{proposition} \label{equiv}
Suppose $\cX$ has enough points and $\Psi(\cO^\alg)^\alg \simeq \cO^\alg$. Then the functor 
	\[
		g^\alg \colon \Mod_{\cO} \to \Mod_{\cO^\alg}
	\]
is an equivalence of stable \infcats.
\end{proposition}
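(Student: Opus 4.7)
The plan is to show that $g^\alg$ is an equivalence by verifying: (a) $g^\alg$ is conservative, and (b) the unit $\eta \colon \mathrm{id}_{\Mod_{\cO^\alg}} \to g^\alg \circ f^\ad$ of the adjunction from the preceding lemma is an equivalence. A standard triangle-identity argument then yields that $(f^\ad, g^\alg)$ is an adjoint equivalence: fully faithfulness of $f^\ad$ follows from (b), and conservativity of $g^\alg$ combined with the triangle identities forces the counit to be an equivalence as well.

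For (a), the underlying algebra functor $\functor^\alg \colon \fCAlg(\cX)_{/\cO} \to \CAlg_{\Ok}(\cX)_{/\cO^\alg}$ is conservative by \cite[Lemma 3.13]{porta2016derived}, and this conservativity is inherited by $g^\alg$ through the passage to stabilized abelianizations in the slice.

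For (b), I would reduce in stages. First, using that $\cX$ has enough points together with the compatibility of the adjunction $(\Psi, \functor^\alg)$ — and hence $(f^\ad, g^\alg)$ — with pullback along geometric morphisms (exactly as exploited in the proof of \cref{t_comp}), one reduces to the case $\cX = \cS$. In that setting the stable $\infty$-category $\Mod_{\cO^\alg}$ is compactly generated by the single object $\cO^\alg$ under shifts and colimits, and both composites $\mathrm{id}$ and $g^\alg \circ f^\ad$ preserve all colimits: $f^\ad$ as a left adjoint preserves arbitrary colimits, while $g^\alg$ preserves finite colimits by stability (fiber sequences are cofiber sequences) and sifted colimits by the same argument as in \cite[Corollary 9.5]{porta2015derived}. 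Consequently, verifying that $\eta$ is an equivalence reduces to checking $\eta_{\cO^\alg}$.

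In the final step, the generator $\cO^\alg \in \Mod_{\cO^\alg}$ corresponds, under the equivalence $\Mod_{\cO^\alg} \simeq \Sp \bigl( \Ab(\CAlg_{\Ok}(\cS)_{/\cO^\alg}) \bigr)$ given by trivial square-zero extensions, to the abelian group object $\cO^\alg \oplus \cO^\alg$ over $\cO^\alg$. The functor $f^\ad$ sends this to $\Psi(\cO^\alg \oplus \cO^\alg)$; since $\Psi$ preserves finite limits by the preceding lemma and since the hypothesis $\Psi(\cO^\alg)^\alg \simeq \cO^\alg$ combined with conservativity of $\functor^\alg$ and the triangle identities gives $\Psi(\cO^\alg) \simeq \cO$ in $\fCAlg(\cS)_{/\cO}$, one identifies $\Psi(\cO^\alg \oplus \cO^\alg)$ with the abelian group object $\cO \oplus \cO$ over $\cO$. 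Applying $g^\alg = \functor^\alg$ then recovers $\cO^\alg \oplus \cO^\alg$, and $\eta_{\cO^\alg}$ is identified with an equivalence. The subtlest point will be the bookkeeping in this last step, namely translating the unit on modules into the unit of the original adjunction $(\Psi, \functor^\alg)$ on the trivial square-zero extension; the finite-limit preservation of $\Psi$ established in the preceding lemma is precisely what makes the tangent $\infty$-category construction functorial and the generator-level verification transparent.
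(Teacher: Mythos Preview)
Your proof is correct and follows essentially the same strategy as the paper: conservativity of $g^\alg$ inherited from $\functor^\alg$, reduction to $\cX=\cS$ via enough points, reduction to the compact generator $\cO^\alg$ using that $g^\alg$ preserves all colimits, and finally invoking the hypothesis $\Psi(\cO^\alg)^\alg\simeq\cO^\alg$. Your last step, unpacking the unit on the generator via the trivial square-zero extension $\cO^\alg\oplus\cO^\alg$ and using that $\Psi$ preserves finite limits, is more explicit than the paper's terse appeal to the hypothesis and \cref{t_comp}, but the content is the same.
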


\begin{proof}
Let $f^\ad \colon \Mod_{\cO^\alg} \to \Mod_{\cO}$ denote a left adjoint to $g^\alg$. Thanks to the previous proposition $f^\ad$ corresponds to the \emph{stabilization} of functor $\Psi$, introduced in \cref{const1}. We want to show that $f^\ad$ is an inverse to $g^\alg$, as functors. The functor $g^\alg$ is conservative
as $\functor^\alg$ was already conservative. Therefore, we are reduced to show that $f^\ad$ is a fully faithfully functor. It suffices to show that the unit $\eta$ of the adjunction $(f^\ad, g^\alg)$ is an equivalence. 

As $\cX$ has enough geometric points we reduce
ourselves to proof the last assertion at the level of stalks. 
We are thus reduced to the case $\cX = \cS$. In this case, the \infcat $\Mod_{\cO^\alg}$ is compactly generated by $\cO^\alg \in \Mod_{\cO^\alg}$.
The underlying algebra $\functor^\alg$ commutes with filtered colimits (even sifted
colimits). It follows that $g^\alg$ also commutes with filtered colimits. As $g^\alg$ is an exact functor between stable \infcats we conclude that it commutes with all colimits.
Therefore, the unit $\eta $ commutes with colimits. We are thus reduced to check that $\eta$ is an equivalence on the compact generator
$\cO^\alg \in \Mod_{\cO^\alg}$. By our assumption on $\cO^\alg$ the result follows thanks to \cref{t_comp}.
\end{proof}

\begin{definition}
The equivalence of stable \infcats provided in \cref{equiv} allows us to define a t-structure on the \infcat $\Mod_{\cO}$ by
means of the functor $g^\alg$: the \infcat $\Mod_{\cO^\alg}$ admits a natural t-structure, \cite[Proposition 1.7]{Lurie_Spectral_Schemes}. Therefore, we set $\cF \in \Mod_{\cO}$ to be connective if and only if $g^\alg(\cF) \in \Mod_{\cO^\alg}$ is connective.
\end{definition}

Thanks to \cite[Corollary 6, page 165]{bosch2005lectures}, every topologically almost of finite presentation (discrete) $k^\circ$-algebra is coherent. Therefore, the following definition is reasonable:

\begin{definition}
Let $\sfX = (\cX, \cO) \in \adTop$ be a topologically almost of finite presentation $\cT_{\ad}$-structured $\infty$-topos. We define the \infcat of \emph{coherent $\cO$-modules on $\sfX$}, $\Coh(\sfX)$, as the full subcategory $\Mod_{\cO}$ spanned by those $\cF$ such that, for each integer $i  ,$ the homotopy sheaves $\pi_i(\cF)$ are 
coherent $
\pi_0(\cO^\alg)$-modules which vanish for sufficiently small $i $.
\end{definition}

\subsection{$\Ok$-adic cotangent complex} In this \S, we will introduce the notion of formal cotangent complex, which will prove to be of fundamental importance for us: we have a projection functor
	\[
		\Omega^\infty_{\ad} \colon \Mod_{\cO} \to \fCAlg(\cX)_{/ \cO},
	\]
which is given by evaluation on the object $(S^0, * ) \in \cS^{\mathrm{fin}}_* \times \cT_{\Ab}$. The functor $\Omega^\infty_\ad$ admits a left adjoint
	\[
		\Sigma^\infty_\ad \colon \fCAlg(\cX)_{/ \cO} \to \Mod_{\cO}.
	\]
We refer the reader to \cite[\S 5.1]{porta2017representability} and \cite[\S 7.5]{lurie2012higher} for more details about these constructions.

\begin{definition}
Let $M \in \Mod_{\cO}$, we say that $\cO \oplus M \coloneqq \Omega^\infty_\ad (M)$ is the \emph{trivial adic square-zero extension of $\cO$ by the module $M$}.
\end{definition}

\begin{definition}
Let $\sfX \coloneqq (\cX, \cO) \in \adTop$ and let $\cA \in \fCAlg(\cX)_{/ \cO}$ be a $\Tad$-structure on $\cX$. Given $M \in \Mod_{\cO}$, we define the space of $\cA$-\emph{linear adic derivations} of $\cO$ with values in $M$ as
	\[
		\Der^\ad_{\cA} \left( \cO, M \right) \coloneqq \Map_{\fCAlg(\cX)_{ \cA / / \cO} } \left( \cO, \cO \oplus M \right) \in \cS.
	\]	
\end{definition}

\begin{proposition}
The functor 
	\[
		\Der^\ad_{\cA} \left( \cO, - \right) \colon \Mod_{\cO } \to \cS,
	\]
is corepresentable by an object 
	\[
		\bL^\ad_{\cO/ \cA} \in \Mod_{\cO},
	\]
which we refer to as the \emph{adic cotangent complex relative to} $\cO \to \cA$.
\end{proposition}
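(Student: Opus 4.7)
The plan is to apply the adjoint functor theorem for presentable \infcats. Writing $F(M) \coloneqq \Der^\ad_\cA(\cO, M)$, I would verify that $F$ preserves small limits and is accessible; since $\Mod_\cO$ is a stable presentable \infcat by construction, Yoneda together with the adjoint functor theorem then produces a corepresenting object $\bL^\ad_{\cO/\cA} \in \Mod_\cO$.

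First I would record the presentability data: $\fCAlg(\cX)$ is presentable by \cite[Corollary 9.4]{porta2015derived}, and so are its slice $\fCAlg(\cX)_{/\cO}$ and the over-under \infcat $\fCAlg(\cX)_{\cA//\cO}$. Consequently $\Mod_\cO = \Sp(\Ab(\fCAlg(\cX)_{/\cO}))$ is stable and presentable, and the trivial adic square-zero extension functor $\Omega^\infty_\ad \colon \Mod_\cO \to \fCAlg(\cX)_{/\cO}$ is, by definition, a right adjoint to $\Sigma^\infty_\ad$.

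Next, I would decompose $F$ as
$$F(M) \simeq \Map_{\fCAlg(\cX)_{\cA//\cO}}\bigl(\cO, \widetilde{\Omega^\infty_\ad}(M)\bigr),$$
where $\widetilde{\Omega^\infty_\ad}(M) \coloneqq \cO \oplus M$ is naturally promoted to an object of $\fCAlg(\cX)_{\cA//\cO}$ via the composite $\cA \to \cO \hookrightarrow \cO \oplus M$ and the projection $\cO \oplus M \to \cO$. The functor $\widetilde{\Omega^\infty_\ad}$ preserves limits, since limits in $\fCAlg(\cX)_{\cA//\cO}$ are computed in $\fCAlg(\cX)_{/\cO}$ (the forgetful functor creates them), in which $\Omega^\infty_\ad$ is a right adjoint. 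The mapping space functor $\Map(\cO, -)$ preserves limits in its second argument, so $F$ preserves small limits. For accessibility, $\widetilde{\Omega^\infty_\ad}$ is a limit-preserving functor between presentable \infcats (hence accessible), while $\cO$ is $\kappa$-compact in $\fCAlg(\cX)_{\cA//\cO}$ for some regular cardinal $\kappa$, so $\Map(\cO, -)$ is accessible as well; the composition $F$ is therefore accessible.

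Finally, since $F \colon \Mod_\cO \to \cS$ preserves small limits and is accessible, and $\Mod_\cO$ is presentable, the adjoint functor theorem combined with Yoneda yields the desired corepresenting object $\bL^\ad_{\cO/\cA} \in \Mod_\cO$. The main technical point deserving care is checking that the lift $\widetilde{\Omega^\infty_\ad}$ is well-defined and preserves the limits computed in the over-under slice; this follows from standard slice-\infcat yoga, since the forgetful functor $\fCAlg(\cX)_{\cA//\cO} \to \fCAlg(\cX)_{/\cO}$ creates limits and $\Omega^\infty_\ad$ already preserves them.
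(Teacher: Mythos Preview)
Your proposal is correct, but the paper takes a more direct and constructive route. Rather than invoking the adjoint functor theorem abstractly, the paper simply \emph{defines} $\bL^\ad_{\cO/\cA} \coloneqq \Sigma^\infty_\ad(\cO \otimes_{\cA} \cO)$ and then verifies corepresentability by a short chain of adjunctions:
\[
\Der^\ad_\cA(\cO,M) \simeq \Map_{\fCAlg(\cX)_{\cA//\cO}}(\cO,\Omega^\infty_\ad M) \simeq \Map_{\fCAlg(\cX)_{\cO//\cO}}(\cO\otimes_\cA\cO,\Omega^\infty_\ad M) \simeq \Map_{\Mod_\cO}(\Sigma^\infty_\ad(\cO\otimes_\cA\cO),M),
\]
using first the base-change adjunction along $\cA \to \cO$ on over-under slices, and then the $(\Sigma^\infty_\ad,\Omega^\infty_\ad)$ adjunction. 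Your argument establishes existence of a corepresenting object but does not immediately identify it; the paper's approach both proves the proposition and supplies the explicit formula $\bL^\ad_{\cO/\cA} \simeq \Sigma^\infty_\ad(\cO \otimes_\cA \cO)$, which is used later (for example in the comparison with the algebraic cotangent complex and in the analytic analogue). The trade-off is that your argument is more robust to situations where the base-change step might be less transparent, while the paper's is shorter and strictly more informative here.
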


\begin{proof}
The proof is a direct consequence of the existence of a left adjoint $\Sigma^\infty_\ad \colon \fCAlg(\cX)_{/ \cO} \to \Mod_{\cO}$. Set $\bL^\ad_{\cO / \cA} \coloneqq \Sigma^\infty_{\ad} \left( \cO \otimes_{\cA} \cO \right)$. For every $M \in \Mod_{\cO}$ we have
a sequence of natural equivalences of mapping spaces
	\begin{align*}
		\Der^\ad_\cA(\cO, M) 
		&\simeq  \Map_{\fCAlg(\cX)_{\cA // \cO}} \left( \cO, \cO \oplus M \right) \\
		& \simeq  \Map_{\fCAlg(\cX)_{\cA // \cO}} \left( \cO, \Omega^\infty_{\ad} ( M ) \right) \\
		& \simeq  \Map_{\fCAlg(\cX)_{\cO // \cO}} \left( \cO \otimes_{\cA} \cO , \Omega^\infty_{\ad} ( M) \right) \\
		& \simeq  \Map_{\Mod_{\cO}} \left( \Sigma^\infty_\ad ( \cO \otimes_{\cA} \cO) , \cO \right) \\
		& \simeq \Map_{\Mod_{\cO}} \left( \bL^\ad_{\cO / \cA} , M \right).
	\end{align*}
The proof is now finished.
\end{proof}

\begin{proposition} \label{compact}
Let $\cA \to \cB$ be a morphism in $\fCAlg(\cX)_{/ \cO}$, topologically almost of finite presentation. Then $\bL^\ad_{\cB/ \cA}$
is a compact object in the \infcat $\Mod_{\cO}$.
\end{proposition}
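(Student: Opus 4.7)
The plan is to reduce the statement to a model computation via the transitivity cofiber sequence for the adic cotangent complex. First, I would establish the standard distinguished triangle
\[
\bL^\ad_{\cA} \otimes_{\cA} \cB \longrightarrow \bL^\ad_{\cB} \longrightarrow \bL^\ad_{\cB/\cA}
\]
in $\Mod_\cO$, the absolute cotangent complexes being those relative to the initial $\Tad$-structure. Since $\Mod_\cO$ is stable, its compact objects are closed under cofibers, and the base change functor $- \otimes_\cA \cB$ preserves compactness (its right adjoint, the forgetful functor, commutes with filtered colimits). Hence it suffices to treat the absolute case and show that $\bL^\ad_\cC$ is compact for any topologically almost of finite presentation $\cC \in \fCAlg(\cX)$.

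Next, I would transfer the problem across the equivalence $g^\alg \colon \Mod_\cO \xrightarrow{\simeq} \Mod_{\cO^\alg}$ of \cref{equiv}. Using the adjunction $(\Psi^\ad, (-)^\ad)$ of \cref{const:adj_ad} together with the comparison of \cref{t_comp}, one checks that $g^\alg(\bL^\ad_\cC)$ is equivalent to the $(t)$-completion of the usual algebraic cotangent complex of the underlying derived $\Ok$-algebra $\cC^\alg$. This reduces compactness in $\Mod_\cO$ to compactness of the $(t)$-completed cotangent complex in $\Mod_{\cO^\alg}$.

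Finally, one concludes via a cell-decomposition argument. Using that $\cC$ is topologically almost of finite presentation, I would present it as a retract of a cellular object whose $0$-th stage is a finite pushout of free generators $\Psi^\ad(\Ok\langle T_1, \ldots, T_m\rangle)$, and whose subsequent cells are attached in strictly increasing degrees. For such a free generator the cotangent complex is manifestly the free $\cO$-module of rank $m$, hence compact; each further cell-attachment contributes a compact summand in the cotangent complex, as the stable functor $\Sigma^\infty_\ad$ preserves pushouts. The main technical obstacle is to control the resulting infinite colimit: since the topologically almost of finite presentation condition allows cells in every positive degree, one only sees almost perfection a priori. To upgrade this to genuine compactness in $\Mod_{\cO^\alg}$, one exploits the coherence of $\pi_0(\cO^\alg)$ together with the $(t)$-completeness of $\cO^\alg$, in the spirit of the finiteness arguments for the cotangent complex in \cite[\S 7.2]{lurie2016spectral}.
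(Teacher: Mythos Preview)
The paper disposes of this in one line by citing \cite[Proposition 4.1.2.1]{lurie2016spectral}. Your proposal instead tries to spell out a direct argument, and the first three moves --- transitivity to reduce to the absolute case, transfer along the equivalence $g^\alg$ of \cref{equiv}, and a cell decomposition computing the cotangent complex on free generators --- are sound and standard.

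The gap is in your last paragraph. You correctly observe that for a topologically \emph{almost} of finite presentation object the cell decomposition only yields an almost perfect module a priori, and then assert that ``coherence of $\pi_0(\cO^\alg)$ together with $(t)$-completeness'' upgrades this to genuine compactness. This is not an argument, and taken at face value it is false: over any coherent ring, almost perfect (bounded below with coherent homotopy) is strictly weaker than perfect ($=$ compact in $\Mod$); already $\Ok/t$ is almost perfect but not compact over $\Ok$. Neither coherence of $\pi_0$ nor $(t)$-completeness bridges this gap. Whatever content the cited Lurie proposition carries is exactly what is needed here, and the gesture toward ``the spirit of \S 7.2'' does not supply it. (As an aside: when the paper later uses the cotangent complex in the proof of \cref{main}, it only invokes that $\bL^\ad$ is \emph{coherent}, via \cite[Lemma 5.35, Corollary 5.38]{porta2017representability}, so the weaker conclusion of almost perfection may in fact be all that is really required.)

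A secondary issue: your reduction in the first step assumes both $\bL^\ad_\cA$ and $\bL^\ad_\cB$ are compact, which needs $\cA$ itself to be topologically almost of finite presentation over the base. The hypothesis of the proposition is only on the morphism $\cA \to \cB$, so this should be flagged or the reduction rephrased.
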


\begin{proof}
The proof of \cite[Proposition 4.1.2.1]{lurie2016spectral} applies.
\end{proof}

\begin{construction} \label{com:} Thanks to \cite[Lemma 5.15]{porta2017representability}
we have a commutative diagram of \infcats
	\[
	\begin{tikzcd}
		\Mod_{\cO} \ar{r}{g^\alg} \ar{d}{\Omega^\infty_\ad} & \Mod_{\cO^\alg} \ar{d}{\Omega^\infty} \\
		\fCAlg(\cX)_{/ \cO} \ar{r}{\functor^\alg} & \CAlg_{\Ok}(\cX)_{/ \cO^\alg},
	\end{tikzcd}
	\]
therefore passing to left adjoints we obtain a commutative diagram
	\begin{equation} \label{com:cot}
	\begin{tikzcd}
		\Mod_{\cO}  & \Mod_{\cO^\alg} \ar{l}{f^\ad} \\
		\fCAlg(\cX)_{/ \cO} \ar{u}{\Sigma^\infty_\ad} & \CAlg_{\Ok}(\cX)_{/ \cO} \ar{u}{\Sigma^\infty} \ar{l}{\Psi}
	\end{tikzcd},
	\end{equation}
in the \infcat $\Cat$. The commutativity of \eqref{com:cot} provide us with a natural map
	\[
		f^\ad \left( \bL_{\cB^\alg/ \cA^\alg} \right) \to \bL_{\cB / \cA}^\ad
	\]
in the \infcat $\Mod_{\cO}$.
\end{construction}

\begin{proposition} 
Let $\cA \to \cB$ be a morphism in $\fCAlg(\cX)_{/ \cO}$. Consider the algebraic cotangent complex $\bL_{\cB^\alg / 
\cA^\alg}$ associated to the morphism $\cA^\alg \to \cB^\alg$. Then the natural map introduced in \cref{com:}
	\[
		f^\ad \left( \bL_{\cB^\alg/ \cA^\alg} \right) \simeq \bL^\ad_{\cB / \cA}
	\]
is an equivalence in the \infcat $\Mod_{\cO}$.
\end{proposition}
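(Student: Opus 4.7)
The strategy is to verify the equivalence by comparing the universal properties of the two cotangent complexes. Fix $M \in \Mod_{\cO}$. By the adjunction $(f^\ad, g^\alg)$ established earlier and the defining universal property of $\bL_{\cB^\alg / \cA^\alg}$, we have
\[
	\Map_{\Mod_{\cO}}\bigl( f^\ad(\bL_{\cB^\alg/\cA^\alg}),\ M\bigr) \simeq \Map_{\Mod_{\cO^\alg}}\bigl( \bL_{\cB^\alg/\cA^\alg},\ g^\alg(M) \bigr) \simeq \Der_{\cA^\alg}\bigl(\cB^\alg,\ g^\alg(M)\bigr).
\]
On the other hand, the defining universal property of the adic cotangent complex gives
\[
	\Map_{\Mod_\cO}\bigl(\bL^\ad_{\cB/\cA},\ M\bigr) \simeq \Der^\ad_\cA(\cB, M).
\]
It is therefore enough to exhibit a natural equivalence $\Der^\ad_\cA(\cB, M) \xrightarrow{\sim} \Der_{\cA^\alg}(\cB^\alg, g^\alg(M))$, and to verify that this equivalence matches the canonical comparison map induced by \cref{com:}.

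By the commutative diagram of right adjoints in \cref{com:} we have $\functor^\alg \circ \Omega^\infty_\ad \simeq \Omega^\infty \circ g^\alg$. Applying this to $M$ identifies the underlying algebra of the trivial adic square-zero extension $\cB \oplus M = \Omega^\infty_\ad(M)$ with the trivial algebraic square-zero extension $\cB^\alg \oplus g^\alg(M) = \Omega^\infty(g^\alg M)$. Consequently, the underlying-algebra functor $\functor^\alg$ induces a natural map
\[
	\Phi \colon \Map_{\fCAlg(\cX)_{\cA//\cB}}\bigl(\cB,\ \cB \oplus M\bigr) \longrightarrow \Map_{\CAlg_{\Ok}(\cX)_{\cA^\alg//\cB^\alg}}\bigl(\cB^\alg,\ \cB^\alg \oplus g^\alg(M)\bigr),
\]
and the task is to prove that $\Phi$ is an equivalence.

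To verify this, I would combine two facts. First, the adjunction $(\Psi, \functor^\alg)$ of \cref{const1} rewrites the target of $\Phi$ as a mapping space from $\Psi(\cB^\alg)$ into $\cB \oplus M$ in $\fCAlg(\cX)$ (with augmentations adjusted by the counits $\Psi(\cA^\alg) \to \cA$ and $\Psi(\cB^\alg) \to \cB$). Second, the commutative square of left adjoints \eqref{com:cot} gives an equivalence $\Sigma^\infty_\ad \circ \Psi \simeq f^\ad \circ \Sigma^\infty$, so that
\[
	f^\ad\bigl(\bL_{\cB^\alg/\cA^\alg}\bigr) \simeq \Sigma^\infty_\ad \bigl(\Psi(\cB^\alg) \otimes_{\Psi(\cA^\alg)} \Psi(\cB^\alg)\bigr)
\]
(using that $\Psi$ preserves the relevant pushouts as a left adjoint). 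Comparing with $\bL^\ad_{\cB/\cA} = \Sigma^\infty_\ad(\cB \otimes_\cA \cB)$, the counit maps induce the comparison morphism, which $\Phi$ incarnates at the level of derivations.

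The main obstacle is thus to show that this comparison map, while generally non-invertible at the level of $\Tad$-structures, becomes an equivalence after applying the stabilization $\Sigma^\infty_\ad$ (equivalently, that $\Phi$ is an equivalence of derivation spaces). This should follow from the pregeometry-theoretic observation that a $\Tad$-structure section $\cB \to \cB \oplus M$ is completely determined by its underlying algebra map $\cB^\alg \to \cB^\alg \oplus g^\alg(M)$ together with compatibility against the projection: by evaluating on the generators $\mathfrak{A}^n_{\Ok} \in \Tad$ and invoking the admissibility conditions defining $\Tad$-structures, every datum of such a section reduces to algebraic data already visible in $\CAlg_{\Ok}(\cX)$. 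Making this reduction precise — and checking that the inverse to $\Phi$ so obtained is indeed natural in $M$ — is the technical heart of the proof.
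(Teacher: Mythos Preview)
Your setup through the third paragraph is exactly the paper's approach: pass to mapping spaces, use the adjunction $(f^\ad, g^\alg)$, identify $(\cB \oplus M)^\alg \simeq \cB^\alg \oplus g^\alg(M)$, and then use the adjunction $(\Psi, \functor^\alg)$ to rewrite the algebraic derivation space as
\[
	\Map_{\fCAlg(\cX)_{\cA//\cB}}\bigl(\Psi(\cB^\alg),\ \cB \oplus M\bigr).
\]
At this point the question is precisely whether the counit $\Psi(\cB^\alg) \to \cB$ induces an equivalence on these mapping spaces, and here your argument stops short.

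Your proposed ``evaluate on the generators $\mathfrak{A}^n_{\Ok}$ and use the admissibility axioms'' idea is not a proof as written, and it is not clear it can be made into one directly: a $\Tad$-structure records values on \emph{all} formally smooth affine formal $\Ok$-schemes together with the \'etale locality conditions, and asserting that a section $\cB \to \cB \oplus M$ is determined by its underlying algebra map is essentially the content of the rectification theorem (\cref{rect}) rather than a consequence of elementary bookkeeping on $\mathfrak{A}^n_{\Ok}$. The paper closes the gap differently and more simply. First, it observes that both $\bL^\ad_{\cB/\cA}$ and $f^\ad(\bL_{\cB^\alg/\cA^\alg})$ are built from left adjoints ($\Sigma^\infty_\ad$, $\Psi$, $f^\ad$) and hence commute with filtered colimits in $\cA$ and $\cB$; this reduces the statement to the case where $\cA$ and $\cB$ are topologically almost of finite presentation and $(t)$-complete. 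Second, in that case \cref{t_comp} gives $\Psi(\cB^\alg)^\alg \simeq (\cB^\alg)^\wedge_t \simeq \cB^\alg$, so by conservativity of $\functor^\alg$ the counit $\Psi(\cB^\alg) \to \cB$ is an equivalence of $\Tad$-structures, and the mapping-space comparison follows immediately. You should insert this two-step reduction in place of your final paragraph.
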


\begin{proof} Let $\cA \to \cB$ be a morphism in $\fCAlg(\cX)_{/ \cO}$ and let $\bL_{\cB^\alg/ \cA^\alg} \in \Mod_{\cO^\alg}$ denote the
algebraic cotangent complex associated to $\cA^\alg \to \cB^\alg$.

The construction of the adic cotangent complex, via the left adjoint $\Sigma^\infty_\ad$, commutes with filtered colimits of local $\Tad$-structures.
Therefore we can suppose from the beginning that $\cA$ is topologically almost of finite presentation and $(t)$-complete. Applying the same reasoning, we might as well assume that $\cB$ is topologically almost of finite presentation $\cA$-algebra and
moreover $(t)$-complete. 
By applying the functor
	\[
		f^\ad \colon \Mod_{\cB^\alg} \to \Mod_{\cB},
	\]
we obtain the following sequence of natural equivalences of mapping spaces	
	\begin{align*}
		\Map_{\Mod_{\cB}} \left( f^{\ad}(\bL_{\cB^\alg / \cA^\alg}), M \right) & \simeq  \Map_{\Mod_{\cO^\alg}}
		\left( \bL_{\cB^\alg/ \cA^\alg}, g^\alg(M) \right) \\
		& \simeq  \Map_{\CAlg_{\Ok}(\cX)_{\cA^\alg / / \cB^\alg}} \left( \cB^\alg, \cB^\alg \oplus g^\alg(M) \right) \\
		& \simeq  \Map_{\CAlg_{\Ok}(\cX)_{\cA^\alg / / \cB^\alg}} \left( \cB^\alg, (\cB \oplus M )^\alg \right) \\
		& \simeq  \Map_{\fCAlg(\cX)_{\cA / / \cB}} \left( \Psi(\cB^{\alg}) , \cB \oplus M \right) \\
		& \simeq \Map_{\fCAlg(\cX)_{\cA / / \cB}} \left(\cB, \cB \oplus M \right).
	\end{align*}
The latter equivalence follows from the fact that $\cB$ is assumed to be topologically almost of finite presentation and $(t)$-complete combined with \cref{t_comp}.
\end{proof}

\begin{proposition}
Let $f \colon \cA \to \cB$ and $g \colon \cB \to \cC$ be morphisms in the \infcat
$\fCAlg(\cX)_{/ \cO}$. Then one has a fiber sequence
	\[
		\bL^\ad_{\cB / \cA} \otimes_{\cB} \cC \to \bL^\ad_{\cC/ \cA} \to \bL^\ad_{\cC/
		\cB},
	\]
in $\Mod_{\cO}$.
\end{proposition}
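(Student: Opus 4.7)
The plan is to reduce this to the classical fiber sequence for the algebraic cotangent complex, using the previous proposition. Recall that for any morphism $\cA \to \cB$ in $\fCAlg(\cX)_{/\cO}$, we have a canonical equivalence
\[
\bL^\ad_{\cB/\cA} \simeq f^\ad\!\left(\bL_{\cB^\alg/\cA^\alg}\right)
\]
in $\Mod_{\cO}$, where $f^\ad$ is the left adjoint of $g^\alg$. Starting from the standard transitivity fiber sequence in derived algebraic geometry (cf. \cite[Proposition 7.3.3.7]{lurie2012higher}), applied to the chain $\cA^\alg \to \cB^\alg \to \cC^\alg$ in $\CAlg_{\Ok}(\cX)$, we have a cofiber sequence
\[
\bL_{\cB^\alg/\cA^\alg} \otimes_{\cB^\alg} \cC^\alg \to \bL_{\cC^\alg/\cA^\alg} \to \bL_{\cC^\alg/\cB^\alg}
\]
in $\Mod_{\cC^\alg}$. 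Applying the functor $f^\ad$, which is a left adjoint between stable $\infty$-categories and hence preserves cofiber sequences, yields the desired sequence once we identify $f^\ad$ applied to each term with the corresponding adic cotangent complex.

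The only nontrivial point is compatibility of $f^\ad$ with the extension of scalars: we need to check that the natural map
\[
f^\ad\!\left(\bL_{\cB^\alg/\cA^\alg} \otimes_{\cB^\alg} \cC^\alg\right) \to f^\ad\!\left(\bL_{\cB^\alg/\cA^\alg}\right) \otimes_{\cB} \cC \simeq \bL^\ad_{\cB/\cA} \otimes_{\cB} \cC
\]
is an equivalence. This is the main obstacle. I would argue as follows: both sides corepresent the same functor on $\Mod_\cC$. Indeed, using the commutative square \eqref{com:cot} together with the $(f^\ad, g^\alg)$-adjunction, for any $M \in \Mod_\cC$ we have
\[
\Map_{\Mod_\cC}\!\left(f^\ad(\bL_{\cB^\alg/\cA^\alg} \otimes_{\cB^\alg} \cC^\alg), M\right) \simeq \Map_{\Mod_{\cC^\alg}}\!\left(\bL_{\cB^\alg/\cA^\alg} \otimes_{\cB^\alg} \cC^\alg, g^\alg(M)\right),
\]
which by the tensor-hom adjunction and the universal property of the algebraic cotangent complex reduces to classifying $\cA^\alg$-linear derivations of $\cB^\alg$ with values in the $\cB^\alg$-module $g^\alg(M)$. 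On the other hand, $\Map_{\Mod_\cC}(\bL^\ad_{\cB/\cA} \otimes_\cB \cC, M)$ reduces via tensor-hom in $\Mod_{\cB}$, the same adjunction, and the previous proposition, to the same space of derivations. One verifies that the comparison map between the two identifications agrees, up to the chain of canonical equivalences, with the natural map above; hence it is an equivalence.

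With this compatibility in hand, applying $f^\ad$ to the algebraic transitivity sequence produces exactly the fiber sequence
\[
\bL^\ad_{\cB/\cA} \otimes_\cB \cC \to \bL^\ad_{\cC/\cA} \to \bL^\ad_{\cC/\cB}
\]
in $\Mod_{\cO}$, concluding the proof. The whole argument is essentially a naturality check, with the delicate point being the base-change compatibility of $f^\ad$; here the previous proposition, which identifies $f^\ad$ with $(t)$-completion on topologically almost of finite presentation objects, is what makes everything compatible.
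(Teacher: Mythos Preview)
Your argument is essentially correct, but it takes a considerably more roundabout path than the paper. The paper's proof is a one-line citation to \cite[Proposition 5.10]{porta2017representability}: that result establishes the transitivity fiber sequence for the cotangent complex in the abstract setting of any pregeometry, directly from the definition $\bL^{\ad}_{\cB/\cA} = \Sigma^\infty_{\ad}(\cB \otimes_{\cA} \cB)$ and formal properties of $\Sigma^\infty$. No comparison with the algebraic cotangent complex is needed; the sequence is intrinsic to the $\Tad$-structured world.

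Your approach---transport the algebraic transitivity sequence through the equivalence $f^{\ad}(\bL_{\cB^\alg/\cA^\alg}) \simeq \bL^{\ad}_{\cB/\cA}$ of the preceding proposition---works, but it front-loads the base-change compatibility of $f^{\ad}$ as the main technical step, which you then verify by a Yoneda argument. This is valid, though your sketch leaves implicit that $g^{\alg}$ intertwines restriction of scalars along $\cB \to \cC$ with restriction along $\cB^{\alg} \to \cC^{\alg}$ (needed for the two derivation spaces to literally coincide), and that the ``natural map'' you write down really is the one induced by the chain of adjunctions. These are routine but should be said. The upshot: your route is more hands-on and uses the algebraic comparison as a crutch; the paper's route is cleaner because it invokes the general machinery once and never leaves the adic setting.
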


\begin{proof}
This is a direct consequence of \cite[Proposition 5.10]{porta2017representability}.
\end{proof}

\begin{proposition}
Suppose we are given a pushout diagram 
	\[
	\begin{tikzcd}
		\cA \ar{r} \ar{d} & \cB \ar{d} \\
		\cC \ar{r} & \cD
	\end{tikzcd}
	\]
in the \infcat $\fCAlg(\cX)_{/ \cO}$. Then the natural morphism
	\[
		\bL^\ad_{\cB / \cA} \otimes_{\cB} \cD \to  \bL^\ad_{\cD / \cC}
	\]
is an equivalence in the \infcat $\Mod_{\cO}$.
\end{proposition}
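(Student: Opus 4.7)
The plan is to establish the equivalence by checking the universal property via the $\infty$-categorical Yoneda lemma, comparing mapping spaces into an arbitrary $M \in \Mod_{\cD}$. For the left-hand side, using the standard tensor-hom adjunction together with the corepresentability of $\Der^\ad_{\cA}(\cB, -)$ by $\bL^\ad_{\cB / \cA}$, I obtain natural equivalences
\[
\Map_{\Mod_{\cD}}\bigl( \bL^\ad_{\cB / \cA} \otimes_{\cB} \cD , M \bigr) \simeq \Map_{\Mod_{\cB}} \bigl( \bL^\ad_{\cB / \cA}, M \bigr) \simeq \Map_{\fCAlg(\cX)_{\cA // \cB}} \bigl( \cB, \cB \oplus M \bigr),
\]
where $M$ is regarded as a $\cB$-module via the structure map $\cB \to \cD$. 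For the right-hand side, the same principle gives
\[
\Map_{\Mod_{\cD}} \bigl( \bL^\ad_{\cD / \cC}, M \bigr) \simeq \Map_{\fCAlg(\cX)_{\cC // \cD}} \bigl( \cD , \cD \oplus M \bigr).
\]

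The main step is to identify these two spaces using the universal property of the pushout $\cD \simeq \cB \otimes_{\cA} \cC$ inside the \infcat $\fCAlg(\cX)_{/ \cO}$. A morphism $\cD \to \cD \oplus M$ in $\fCAlg(\cX)_{\cC // \cD}$ amounts, by the pushout property, to a compatible pair of morphisms out of $\cB$ and out of $\cC$ into $\cD \oplus M$; the morphism from $\cC$ is forced to be the composite $\cC \to \cD \to \cD \oplus M$, so the only free datum is a morphism $\cB \to \cD \oplus M$ in $\fCAlg(\cX)_{\cA //}$ sitting over the given map $\cB \to \cD$. This provides a natural equivalence
\[
\Map_{\fCAlg(\cX)_{\cC // \cD}}(\cD, \cD \oplus M) \simeq \Map_{\fCAlg(\cX)_{\cA // \cD}}(\cB, \cD \oplus M),
\]
and the latter coincides with $\Map_{\fCAlg(\cX)_{\cA // \cB}}(\cB, \cB \oplus M)$ since the trivial adic square-zero extension $\cD \oplus M$ restricts along $\cB \to \cD$ to $\cB \oplus M$, as $M$ is regarded as a $\cB$-module via $\cB \to \cD$.

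The main obstacle will be checking that the chain of equivalences above is natural in $M$ and that it is induced by the map in the statement, so that Yoneda can actually be invoked. This relies on the functoriality of the trivial adic square-zero extension $\Omega^\infty_\ad$ and the compatibility of $\Sigma^\infty_\ad$ with base change along $\cB \to \cD$, both of which follow formally from the adjunctions $(\Sigma^\infty_\ad, \Omega^\infty_\ad)$ and from the commutative square of \cref{com:} relating the adic and algebraic constructions. Once naturality is established, Yoneda applied inside the stable \infcat $\Mod_{\cD}$ yields the desired equivalence.
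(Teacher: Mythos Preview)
Your argument is correct and is essentially the standard Yoneda proof of base change for the cotangent complex. The paper's own proof consists of a single sentence citing \cite[Proposition 5.12]{porta2017representability}, which establishes this base-change statement in the general framework of pregeometries; your write-up is precisely the unwinding of that proof in the case $\cT = \Tad$.

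One small correction: the reference to \cref{com:} at the end is misplaced. That construction concerns the comparison between the adic and the \emph{algebraic} cotangent complexes via $f^{\ad}$ and $g^{\alg}$, which is irrelevant here. What you actually need is the purely formal fact that for $M \in \Mod_{\cD}$, the pullback of $\Omega^\infty_{\ad}(M) = \cD \oplus M$ along $\cB \to \cD$ in $\fCAlg(\cX)_{/\cD}$ is $\cB \oplus M$, i.e.\ $\Omega^\infty_{\ad}$ intertwines restriction of scalars with base change of the slice. This is a general property of the tangent-bundle/stabilization formalism (cf.\ \cite[\S 7.3]{lurie2012higher} or the setup of \cite[\S 5]{porta2017representability}) and does not require anything specific to the $\Ok$-adic situation. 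With that adjustment, the naturality in $M$ is immediate and your Yoneda argument goes through.
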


\begin{proof}
The assertion is a particular case of \cite[Proposition 5.12]{porta2017representability}.
\end{proof}

\subsection{Unramifiedness of $\Tad$}
In this \S, we prove that both the $\Ok$-adic pregeometry $\Tad$ and the transformation of pregeometries $\tcomp \colon \Tet (\Ok) \to \Tad$ are unramified. Unramifiedness of both $\Tad$ and $\tcomp$ was used in a crucial way in the proof of \cref{t_comp}. It will further play an important role when discussing the \emph{derived rigidification functor}, in \S 4.

\begin{definition}
Let $\cT$ be a pregeometry. We say that $\cT $ is \emph{unramified} if for every morphism $f \colon X \to Y$ in $\cT$ and every object $Z \in \cT$, the diagram
	\[
	\begin{tikzcd}
		X \times Z \ar{r} \ar{d} & X \times Y \times Z \ar{d} \\
		X \ar{r} & X \times Y
	\end{tikzcd}
	\]
induces a pullback diagram
	\[
	\begin{tikzcd}
		\cX_{ X \times Z } \ar{r} \ar{d} & \cX_{X \times Y \times Z} \ar{d} \\
		\cX_{X} \ar{r} & \cX_{X \times Y}
	\end{tikzcd}
	\]
in $\Top$. We have denoted $\cX_{ X \times Z }, \ \cX_{X \times Y \times Z}, \ \cX_{X}$ and $ \cX_{X \times Y}$ the underlying $\infty$-topoi associated to the absolute spectrum construction, introduced in \cite[\S 2.2]{lurie2011dag}.
\end{definition}

\begin{remark}
Both the pregeometries $\Tet(k)$ and $\Tan$ are unramified, see \cite[Proposition 4.1]{lurie_closed} and \cite[Corollary 3.11]{porta2016derived}, respectively.
\end{remark}

\begin{proposition} \label{unr:ad}
The pregeometry $\Tad$ is unramified.
\end{proposition}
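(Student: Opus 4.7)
The plan is to reduce unramifiedness of $\Tad$ to the already-known unramifiedness of $\Tet(\Ok/t)$, by exploiting the fact that the underlying $\infty$-topos of the absolute spectrum of $X \in \Tad$ depends only on the reduction $X_1 \coloneqq X \times_{\Spf \Ok} \Spec(\Ok/t)$.

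First I will identify the underlying $\infty$-topos. For $X = \Spf R \in \Tad$, with $R$ formally smooth and topologically of finite presentation, the underlying $\infty$-topos $\cX_X$ of the absolute spectrum is, by \cref{const:the_Spf_construction}, the hypercompletion of the small formal-étale site of $\Spf R$. By topological invariance of the étale site, this is canonically equivalent to the hypercompletion of the small étale site of the ordinary affine scheme $X_1 = \Spec(R \otimes_\Ok \Ok/t)$, which is smooth and of finite type over $\Ok/t$. This identification is functorial in $X$, and under it the underlying $\infty$-topos of the absolute spectrum of $X$ in $\Tad$ coincides with the underlying $\infty$-topos of the absolute spectrum of $X_1$ in $\Tet(\Ok/t)$.

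Next, I observe that the assignment $r\colon X \mapsto X_1$ is a transformation of pregeometries $\Tad \to \Tet(\Ok/t)$ which preserves finite products. The product in $\Tad$ of $\Spf R$ and $\Spf S$ over $\Spf \Ok$ is $\Spf(R \,\hat{\otimes}_\Ok\, S)$, and its reduction mod $t$ is $(R/t) \otimes_{\Ok/t} (S/t)$, which is the product of the reductions in $\Tet(\Ok/t)$. Therefore, given $f \colon X \to Y$ and $Z$ in $\Tad$, the square
\[
\begin{tikzcd}
\cX_{X \times Z} \ar{r} \ar{d} & \cX_{X \times Y \times Z} \ar{d}\\
\cX_X \ar{r} & \cX_{X \times Y}
\end{tikzcd}
\]
in $\Top$ is, via the identification of Step~1, equivalent to the analogous square built from $r(f) \colon X_1 \to Y_1$ and $Z_1$ inside $\Tet(\Ok/t)$. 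Since $\Tet(\Ok/t)$ is unramified (\cite[Proposition 4.1]{lurie_closed}), this latter square is a pullback in $\Top$, and hence so is the original one. This proves that $\Tad$ is unramified.

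The main obstacle is the careful verification in the first step that the equivalence $\cX_X \simeq \cX_{X_1}$ is natural with respect to formation of pullbacks along admissible morphisms and of finite products in $\Tad$; once this bookkeeping is in place, the reduction to $\Tet(\Ok/t)$ and to Lurie's result is formal. (Alternatively, the same argument goes through replacing $\Ok/t$ by $\Ok/t^n$ for any $n \geq 1$ via the transformation $p_n$ of \cref{magic:const}, since the underlying étale topoi are insensitive to the choice of $n$; this provides a convenient cross-check.)
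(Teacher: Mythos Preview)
Your proof is correct and follows essentially the same approach as the paper: identify the underlying $\infty$-topos of $\Spec^{\Tad}(X)$ with that of the special fiber $X_1$, observe that reduction mod $t$ preserves the relevant product diagrams, and invoke unramifiedness of $\Tet$ over the residue ring via \cite[Proposition 4.1]{lurie_closed}. The paper's proof is terser (it simply says ``pullback diagrams are preserved by taking special fibers'' and cites $\Tet(\Ok)$ rather than $\Tet(\Ok/t)$), but your more careful bookkeeping of products and functoriality is a welcome expansion of the same argument.
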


\begin{proof}
Let $Z \in \Tad$ and denote $\cX_{Z}$ denote the underlying $\infty$-topos of the corresponding absolute spectrum $\Spec^{\Tad}(Z)$. The $\infty$-topos $\cX_Z$ is equivalent to the hypercompletion of the \'etale $\infty$-topos on the special fiber of $Z$. As
pullback diagrams are preserved by taking special fibers the result follows by unramifiedness of $\Tet(\Ok)$, cf. \cite[Proposition 4.1]{lurie_closed}. 
\end{proof}

There is also a notion of relative unramifiedness:

\begin{definition} \label{def_rel_unr}
Let $\varphi \colon \cT \to \cT'$ be a transformation of pregeometries, and let $\Phi \colon \Top(\cT') \to  \Top(\cT)$ the induced functor given on objects by the formula
	\[
		(\cX, \cO ) \in \Top(\cT') \mapsto (\cX, \cO \circ \varphi ) \in \Top(\cT).
	\]
We say that the transformation $f$ is \emph{unramified} if the following conditions are satisfied:
\begin{enumerate}
\item Both $\cT$ and $\cT'$ are unramified;
\item For every morphism $f \colon X \in Y$ and every object $Z \in \cT$, we have a pullback diagram
	\[
	\begin{tikzcd}
		\Phi \big( \Spec^{\cT'} \big( X \times Z \big) \big) \ar{r} \ar{d} & \Phi \big( \Spec^{\cT'} \big( X \times Y \times Z \big) \big) \ar{d} \\
		\Phi \big( \Spec^{\cT'} \big( Z \big) \big) \ar{r} & \Phi \big( \Spec^{\cT'} \big( X \times Y \big) \big)
	\end{tikzcd}
	\]
in the \infcat $\Top(\cT)$.
\end{enumerate}
\end{definition}

\begin{proposition}  \label{prop:unramified_of_und_alg}
The transformation of pregeometries $\tcomp \colon \cT_{\emph{\textrm{\'et}}}(\Ok) \to \Tad$ is unramified.
\end{proposition}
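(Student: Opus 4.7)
The proof reduces to verifying the two conditions of \cref{def_rel_unr}. Condition (1) is immediate: $\Tet(\Ok)$ is unramified by \cite[Proposition 4.1]{lurie_closed}, as recalled in the Remark above, and $\Tad$ is unramified by the preceding \cref{unr:ad}. Thus it remains only to verify condition (2).

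My strategy is to apply \cref{unr:ad} to the images under $\tcomp$ of the given data, and then transport the resulting pullback along the functor $\Phi$. Given $f \colon X \to Y$ and $Z$ in $\Tet(\Ok)$, the completion functor $\tcomp$ preserves finite products (since $(t)$-adic completion commutes with the relevant tensor products for topologically almost of finite presentation algebras). \cref{unr:ad} applied to $\tcomp(f)$ and $\tcomp(Z)$ then yields a pullback square in $\adTop$ whose vertices are $\Spec^{\Tad}$ of various products of $\tcomp(X)$, $\tcomp(Y)$, $\tcomp(Z)$. Applying $\Phi$ produces precisely the diagram appearing in condition (2), so it suffices to check that $\Phi$ preserves this particular pullback.

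For this I would establish the canonical identification
\[
\Phi \circ \Spec^{\Tad} \circ \tcomp \simeq \Spec^{\Tet(\Ok)} \circ (-)_0,
\]
as functors $\Tet(\Ok) \to \etTop$, where $(-)_0$ denotes base change along $\Ok \to \Ok/(t)$. At the level of underlying $\infty$-topoi this is precisely the identification used in the proof of \cref{unr:ad}: the topos of $\Spec^{\Tad}(\tcomp(W))$ is the hypercompletion of the \'etale topos on $W_0$. At the level of structure sheaves, precomposition with $\tcomp$ turns the universal $\Tad$-structure — classifying formally \'etale $\tcomp(W)$-maps out of $\tcomp(V)$ — into the $\Tet(\Ok)$-structure classifying \'etale $W_0$-maps out of $V_0$, by reduction modulo $(t)$.

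Granting this identification, the $\Phi$-image of the pullback from \cref{unr:ad} becomes the image under $\Spec^{\Tet(\Ok)}$ of the base change to $\Ok/(t)$ of the original square in $\Tet(\Ok)$. Since base change along $\Ok \to \Ok/(t)$ preserves finite limits and admissibility, unramifiedness of $\Tet(\Ok)$ applied to the base-changed square produces the desired pullback in $\etTop$. The principal obstacle is the structure-sheaf half of the key identification displayed above; the underlying-topos half follows immediately from the argument in the proof of \cref{unr:ad}.
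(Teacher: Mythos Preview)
Your handling of condition (1) is fine. The problem is the key identification you propose for condition (2):
\[
\Phi \circ \Spec^{\Tad} \circ \tcomp \;\simeq\; \Spec^{\Tet(\Ok)} \circ (-)_0.
\]
This is correct at the level of underlying $\infty$-topoi (both sides give the \'etale topos of the special fiber), but it fails at the level of $\Tet(\Ok)$-structures. The $\Tet(\Ok)$-structure on $\Phi\big(\Spec^{\Tad}(\tcomp(W))\big)$ is obtained by precomposing the universal $\Tad$-structure with $\tcomp$; its underlying algebra is the $(t)$-completed structure sheaf of $W$, not the reduction modulo $t$. Concretely, for $W = \bA^1_{\Ok}$ the underlying algebra on the left is (a strict henselization of) $\Ok\langle T\rangle$, whereas on the right it is (a strict henselization of) $(\Ok/t)[T]$. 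These are not equivalent $\Ok$-algebras, so the two $\Tet(\Ok)$-structured topoi are not equivalent, and your reduction to unramifiedness of $\Tet(\Ok)$ applied to the special fiber does not go through.

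The paper's argument is different and avoids this issue. Using \cref{prop:Spf_construction_is_f_f}, one identifies $\Phi\big(\Spec^{\Tad}(V)\big)$, for $V = \Spf A \in \Tad$, with the $\Tet(\Ok)$-spectrum of the $(t)$-complete algebra $A$ itself (an ``ind-\'etale spectrum''), rather than with the spectrum of its reduction. Since the \'etale spectrum functor is a right adjoint, it commutes with finite limits, and in particular sends the relevant product diagram in $\Tad$ to a pullback in $\etTop$. This gives condition (2) directly. If you want to repair your approach, you should replace $(-)_0$ by the identification of $\Phi\circ\Spec^{\Tad}$ with an \'etale spectrum of the completed ring and then invoke limit-preservation of that spectrum functor, which is essentially the paper's route.
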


\begin{proof}
It suffices to prove condition (ii) in \cref{def_rel_unr}. \cref{prop:Spf_construction_is_f_f} implies that $\Phi \big( \Spec^{\Tad} (-) \big)$ is an ind-\'etale spectrum and the latter commutes with finite limits.
\end{proof}

\subsection{Postnikov towers of $\Ok$-adic spaces}

\begin{definition}
Let $\sfX = (\cX, \cO) \in \adTop$ and $M \in \left( \Mod_{\cO} \right)_{\geq 1}$ be an $\cO$-module concentrated in
homological degrees $\geq 1$. A \emph{$\Ok$-adic square zero extension of $\sfX$ by $M$} consists of a $\Tad$-adic
structured $\infty$-topos $\sfX' = (\cX, \cO')$ equipped with a morphism $f \colon \sfX \to \sfX'$ satisfying:
	\begin{enumerate}
		\item The underlying geometric morphism of $f$ is equivalent to the identity of $\cX$;
		\item There exists a $\Ok$-adic derivation
			\[
				d \colon \bL^\ad_{\sfX} \to M[1] \in \Mod_{\cO},
			\]
		such that we have a pullback diagram in the \infcat $\fCAlg(\cX)_{/ \cO}$
			\[
			\begin{tikzcd}
				\cO' \ar{r} \ar{d} & \cO \ar{d}{d} \\
				\cO \ar{r}{d_0} & \cO \oplus M [1]
			\end{tikzcd},
			\]
		where $d_0$ denotes the trivial $\Ok$-adic derivation.
	\end{enumerate}
\end{definition}

\begin{definition}
Let $\cT$ be a pregeometry and let $n \geq -1$ be an integer. We say that $\cT$ is \emph{compatible with $n$-truncations} if for every $\infty$-topos $\cX$, every $\cT$-structure $\cO \colon \cT \to \cX$ and every admissible morphism $U \to V$ in $\cT$, the
induced square
	\[
	\begin{tikzcd}
		\cO(U) \ar{r} \ar{d} & \tau_{\leq n} \cO(U) \ar{d} \\
		\cO(V) \ar{r} & \tau_{\leq n} \cO(U)
	\end{tikzcd},
	\]
is a pullback diagram in $\cX$.
\end{definition}

\begin{remark}
The above definition is equivalent to require that given a pair $(\cX, \cO) \in \Top(\cT)$, the truncation $(\cX, \tau_{\leq n} \cO) $ is again an object of the \infcat $\Top(\cT)$.
In other terms, $\tau_{\leq n} \cO\colon \cT \to \cX$ is still a $\cT$-structure on $\cX$. Here $\tau_{\leq n} \colon \cX \to \cX$ denotes the $n$-truncation functor on $\cX$.
\end{remark}

\begin{notation} \label{trun:not}
Let $\cT$ be a preogeometry compatible with $n$-truncations. We will denote $\Top(\cT)^{\leq n} \subseteq \Top(\cT)$ the full subcategory spanned by those pairs $(\cX, \cO)$ such that the $\cT$-structure $\cO \colon \cT \to \cX$ is $n$-truncated. 
\end{notation}

\begin{remark}
The inclusion functor $\Top(\cT)^{\leq n} \subseteq \Top(\cT)$ admits a right adjoint $\trun_{\leq n} \colon \Top ( \cT) \to \Top(\cT)^{\leq n}$ which is given on objects by the formula
	\[
		(\cX, \cO) \in \Top(\cT) \mapsto (\cX, \tau_{\leq n} \cO) \in \Top(\cT)^{\leq n}.
	\]
\end{remark}

\begin{lemma} \label{trun:lem}
The pregeometry $\Tad$ is compatible with $n$-truncations.
\end{lemma}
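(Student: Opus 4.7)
We aim to verify that for every $\infty$-topos $\cX$, every local $\Tad$-structure $\cO \colon \Tad \to \cX$, and every admissible morphism $f\colon U \to V$ in $\Tad$, the canonical square
\[
\begin{tikzcd}
\cO(U) \ar{r} \ar{d} & \tau_{\leq n}\cO(U) \ar{d} \\
\cO(V) \ar{r} & \tau_{\leq n}\cO(V)
\end{tikzcd}
\]
is cartesian in $\cX$. The strategy is to reduce this to a purely algebraic assertion about derived adic algebras, and then to invoke the vanishing of the adic cotangent complex for \'etale morphisms.

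First, I would reduce to the affine, topologically almost of finite presentation case. Being a pullback square is local on $\cX$, and every admissible morphism in $\Tad$ is locally of the form $\Spf B \to \Spf A$, where $A \to B$ is a formally \'etale map of topologically finitely presented, formally smooth adic $\Ok$-algebras. Under \cref{rect} the $\Tad$-structure $\cO$ is encoded by a sheaf of derived adic algebras $\cO^\ad \in \adCAlg(\cX)$, and for any admissible affine $\Spf C \in \Tad$ the value $\cO(\Spf C)$ is computed as $\Map_{\adCAlg(\cX)}(C, \cO^\ad)$, and similarly $\tau_{\leq n}\cO(\Spf C) \simeq \Map_{\adCAlg(\cX)}(C, \tau_{\leq n}\cO^\ad)$. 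The desired pullback square thus translates into the cartesian property of
\[
\begin{tikzcd}
\Map_{\adCAlg(\cX)}(B, \cO^\ad) \ar{r} \ar{d} & \Map_{\adCAlg(\cX)}(B, \tau_{\leq n}\cO^\ad) \ar{d} \\
\Map_{\adCAlg(\cX)}(A, \cO^\ad) \ar{r} & \Map_{\adCAlg(\cX)}(A, \tau_{\leq n}\cO^\ad).
\end{tikzcd}
\]

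The key algebraic input is the vanishing of the adic cotangent complex $\bL^{\ad}_{B/A} \simeq 0$ for formally \'etale $A \to B$ in $\Tad$. This follows by combining the comparison isomorphism $f^{\ad}(\bL_{B^{\alg}/A^{\alg}}) \simeq \bL^{\ad}_{B/A}$ from \S 3.4 with the classical vanishing $\bL_{B^{\alg}/A^{\alg}} \simeq 0$ for \'etale maps of derived $\Ok$-algebras. The vanishing of $\bL^{\ad}_{B/A}$ implies, via the universal property of the adic cotangent complex together with obstruction theory for adic square-zero extensions (applied to the tower of square-zero extensions connecting $\cO^\ad$ to $\tau_{\leq n}\cO^\ad$), that the displayed square of mapping spaces is cartesian. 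Unwinding through \cref{rect} yields the desired pullback in $\cX$. Finally, one removes the topologically almost of finite presentation hypothesis on $\cO$ by expressing an arbitrary $\Tad$-structure as a filtered colimit of topologically almost of finite presentation ones (at which both the mapping spaces in question and the truncation functor commute with colimits).

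The principal technical obstacle is to ensure that the rectification comparison and the mapping-space identification behave correctly after applying $\tau_{\leq n}$ in the \emph{adic} setting; a priori, $(t)$-completion does not commute with truncation, so one must lean on the topologically almost of finite presentation hypothesis and the fine control over $(t)$-completion afforded by \cref{t_comp} to exclude pathological behavior.
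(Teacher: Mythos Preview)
Your proposal takes a different route from the paper and contains a genuine gap at its key step.

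The paper's proof does not use \cref{rect} or any mapping-space description of $\cO(\Spf C)$. Instead, after reducing to $\cX = \cS$, it \emph{decompletes} the admissible morphism: given $\Spf A \to \Spf B$ formally \'etale in $\Tad$, one finds (via an algebraization lemma from the Stacks Project in the discretely valued case, extended by hand to the non-noetherian case using the adic cotangent complex) an \'etale morphism $B' \to A'$ of smooth finite-type $\Ok$-algebras whose $(t)$-completion is $B \to A$. Since the transformation $\tcomp \colon \Tet(\Ok) \to \Tad$ sends $\Spec A'$ to $\Spf A$, one has literally $\cO(\Spf A) = \cO^\sh(\Spec A')$, and the pullback square for $\cO$ becomes the analogous square for the $\Tet(\Ok)$-structure $\cO^\sh$, which is \cite[Proposition~4.3.28]{lurie2011dag}.

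The gap in your argument is the identification $\cO(\Spf C) \simeq \Map_{\adCAlg(\cX)}(C, \cO^\ad)$. This is not what \cref{rect} provides: that theorem is a comparison of \emph{slice} categories $\fCAlg^\taft(\cX)_{/\cO} \hookrightarrow \adCAlg(\cX)^\sh_{/\cO^\ad}$; it controls morphisms between $\Tad$-structures, not the values of a single structure on objects of $\Tad$. For $C = \Ok\langle T_1,\dots,T_m\rangle$ your formula follows from the product axiom together with $(t)$-completeness of $\cO^\ad$, but for $C$ \'etale over such a ring there is no direct argument. In fact, one natural way to establish your formula for general $C$ is precisely to decomplete $C$ to an \'etale algebra over a polynomial ring and invoke the known description of $\Tet(\Ok)$-structures on smooth affines---which is the paper's method. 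Your obstruction-theory step (via $\bL^\ad_{B/A} \simeq 0$) is fine once the mapping-space identification is in hand, but that identification is carrying the entire load. There is also a circularity: you apply the same formula to $\tau_{\leq n}\cO$, but this object is not known to be a $\Tad$-structure until the lemma is proved, so even if the formula held for $\Tad$-structures it would not yet apply here.
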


\begin{proof} Assume first that the valuation on $k$ is discretely valued.
We follow closely \cite[Proposition 4.3.28]{lurie2011dag}. Reasoning as in the proof of the cited reference or as in the proof of \cref{t_comp}, it suffices to prove the following assertion:
let $U \to V$ be an admissible morphism in $\Tad$ and $\cO \in \fCAlg(\cS)$. Then the commutative square
	\begin{equation} \label{n:trun}
	\begin{tikzcd}
		\cO(U) \ar{r} \ar{d} &\tau_{\leq 0} \cO(U) \ar{d} \\
		\cO(U) \ar{r} & \tau_{\leq 0}(V)
	\end{tikzcd}
	\end{equation}
is a pullback square in the $\infty$-topos $\cS$. By the definition of $\Tad$, there are $(t)$-complete ordinary $\Ok$-algebras $A$ and $B$ such that $U \cong \Spf A$ and $V \cong \Spf B$. Moreover, by construction, $B$ is \'etale over some ring of the form
$\Ok \langle T_1, \dots T_m \rangle$. \cite[Tag 0AR1, Lemma 84.10.3]{destacks} implies that there exists an \'etale $\Ok [T_1, \dots T_m]$-algebra $B'$ such that $B \cong (B')^\wedge_t$. The morphism $U \to V$ being admissible in $\Tad$ implies that the induced morphism $B \to A$ is formally \'etale. \cite[Tag 0AR1, Lemma 84.10.3]{destacks} implies that the morphism $B \to A$ can be realized
as the $(t)$-completion of $\Ok$-algebras $B' \to A'$, where $A'$ is an \'etale $\Ok [T_1, \dots, T_n]$ itself. Therefore, the morphism of spaces 
	\[
		\cO(U) \to \cO(V),
	\]
can be identified with a morphism
	\[
		\cO^\sh( \Spec A' ) \to \cO^\sh(\Spec B').
	\]
The same conclusion holds for the morphism $\tau_{\leq 0}( \cO(U) )\to \tau_{\leq 0 }( \cO(V))$. Therefore we can identify the diagram \eqref{n:trun} with the diagram
	\[
	\begin{tikzcd}
		\cO^\sh (\Spec A' ) \ar{r} \ar{d} & \tau_{\leq 0} \cO^\sh(\Spec A') \ar{d} \\
		\cO^\sh(\Spec B' ) \ar{r} & \tau_{\leq 0} \cO^\sh(\Spec B')
	\end{tikzcd}
	\]
in the \infcat $\cS$. The result now follows thanks to \cite[Proposition 4.3.28]{lurie2011dag}. 

Suppose now that $k$ is a rank $1$ valuation non-archimedean field. The proof follows along the same lines. Indeed the main input in the previous argument was
\cite[Tag 0AR1, Lemma 84.10.3]{destacks}. The latter result is implied by \cite[Lemma 84.7.4.]{destacks}, which is proved under noetherian assumptions. 

We claim that the proof of \cite[Tag 0AR1, Lemma 84.7.4]{destacks} holds over the ring of integers $\Ok$,
under the assumption that the morphism we start with is \'etale. We will employ the same notations as in \cite[TAG 0AR1, Lemma 84.7.4]{destacks}. In this case, the $\Ok$-adic cotangent complex $\bL^\ad_{A/ B}$ vanishes, by (formal) \'etaleness of $B \to A$.
Thus one verifies directly that the statement of \cite[TAG 0AR1, Lemma 84.4.1.]{destacks} holds in our setting. We can thus find $C$ is a finitely generated $\Ok$-algebra together with a surjection
	\[
		\varphi \colon C^\wedge \to A,
	\]
such that $C$ is \'etale over $B$. The latter assertion is a consequence of \cite[Lemma 84.7.4, (4) (ii)]{destacks}. 
For this reason, the morphism $\varphi$ is necessarily (formally) \'etale too. Consequently, $A$ is a retract of
$C^\wedge$, and thus can be obtained as the $(t)$-adic completion of $B \times_{C^\wedge} C$ which is itself a retract of $C$ over $B $. This last assertion follows from the fact that (classical) $(t)$-adic completion preserves finite limits of $B$-algebras.
\end{proof}

\begin{definition}
Let $\sfX = (\cX, \cO) \in \adTop$. We define its \textit{$n$-th truncation} as
	\[\mathrm t_{\leq n}(\sfX) \coloneqq (\cX, \tau_{\leq n} \cO)
\in \adTop.\]
\end{definition}

\begin{proposition}
Let $\sfX = (\cX, \cO) \in \adTop$. Then for each integer $n \geq 0$, the $(n+1)$-th
truncation $\mathrm t_{\leq n+1} (\sfX)$ is a square zero extension of $\mathrm t_{\leq n }(\sfX)$. In particular,
given a derived $\Ok$-adic Deligne-Mumford stack, $\sfX$, its $n$-th truncation
$\trun_{\leq n}(\sfX)$ is again a derived $\Ok$-adic Deligne-Mumford stack.
\end{proposition}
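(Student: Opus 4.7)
The plan is to transport the Postnikov square-zero decomposition, which is classical for $\CAlg_{\Ok}(\cX)$-valued sheaves, to the $\Tad$-setting by exploiting the comparison between the adic and algebraic cotangent complexes established in \S 3.4 together with the compatibility of $\Tad$ with truncations.

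First, I invoke \cref{trun:lem}: since $\Tad$ is compatible with $n$-truncations, both $\trun_{\leq n}(\sfX) = (\cX, \tau_{\leq n}\cO)$ and $\trun_{\leq n+1}(\sfX) = (\cX, \tau_{\leq n+1}\cO)$ belong to $\adTop$, and the canonical map is the identity on underlying $\infty$-topoi. Setting $\cO_n \coloneqq \tau_{\leq n}\cO$ and $M \coloneqq \pi_{n+1}(\cO^\alg)[n+1]$, the sheaf $M$ is a module over $\pi_0(\cO^\alg_n)$ and hence an object of $\Mod_{\cO^\alg_n}$; applying $f^\ad$ I obtain an adic module $\widetilde{M} \in \Mod_{\cO_n}$ with $g^\alg(\widetilde{M}) \simeq M$.

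Second, in the algebraic world the analogous decomposition is classical: the standard Postnikov argument (as in \cite[Corollary 7.4.1.28]{lurie2012higher}, applicable since the underlying sheaf of simplicial $\Ok$-algebras is a sheaf of $\mathbb{E}_\infty$-algebras in the simplicial sense) produces a derivation $d_\alg \colon \bL_{\cO^\alg_n} \to M[1]$ and a pullback square
\[
\begin{tikzcd}
\cO^\alg_{n+1} \ar{r}\ar{d} & \cO^\alg_n \ar{d}{d_\alg} \\
\cO^\alg_n \ar{r}{d_0} & \cO^\alg_n \oplus M[1]
\end{tikzcd}
\]
in $\CAlg_{\Ok}(\cX)_{/\cO^\alg_n}$. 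By the adjunction $(f^\ad, g^\alg)$ and the identification $f^\ad(\bL_{\cO^\alg_n}) \simeq \bL^\ad_{\cO_n}$ from \S 3.4, the derivation $d_\alg$ transposes to an adic derivation $d \colon \bL^\ad_{\cO_n} \to \widetilde{M}[1]$ in $\Mod_{\cO_n}$. This $d$ defines a candidate pullback square in $\fCAlg(\cX)_{/\cO_n}$; call its apex $\cO'$. By the commutativity of diagram \eqref{com:cot} and the fact that $\functor^\alg$ is a right adjoint (hence preserves the pullback), $(\cO')^\alg$ solves the same universal property as $\cO^\alg_{n+1}$ in $\CAlg_{\Ok}(\cX)$, so $(\cO')^\alg \simeq \cO^\alg_{n+1}$. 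Conservativity of $\functor^\alg$, combined with the truncation assertion (the apex is automatically $(n+1)$-truncated because its $\pi_\ast$ are controlled by those of the trivial extension), then forces $\cO' \simeq \cO_{n+1}$, which is precisely the square-zero extension assertion.

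The main obstacle I anticipate is verifying that the algebraic pullback does lift to a pullback in $\fCAlg(\cX)_{/\cO_n}$, i.e.\ that the trivial adic square-zero extension $\cO_n \oplus_{\ad} \widetilde{M}[1] \coloneqq \Omega^\infty_\ad(\widetilde{M}[1])$ is the correct object under $\functor^\alg$; this is guaranteed by \cref{com:} which asserts $\functor^\alg \circ \Omega^\infty_\ad \simeq \Omega^\infty \circ g^\alg$. For the ``in particular'' statement, once the square-zero structure is in place it follows by induction on $n$: $\trun_{\leq 0}(\sfX)$ is an ordinary formal $\Ok$-adic DM stack by the definition of a derived $\Ok$-adic DM stack, and a square-zero extension of an $\Ok$-adic DM stack by a module concentrated in strictly positive degree is again an $\Ok$-adic DM stack, since the underlying $\infty$-topos and the $0$-truncation $\pi_0(\cO^\alg)$ are preserved and the higher homotopy sheaves appearing in $\tau_{\leq n+1}\cO$ are coherent over $\pi_0(\cO^\alg)$ by construction.
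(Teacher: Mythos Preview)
Your overall strategy matches the paper's: start from the algebraic Postnikov square, transport the derivation to the adic side via $f^{\ad}$ and the identification $f^{\ad}(\bL_{\cO_n^{\alg}})\simeq \bL^{\ad}_{\cO_n}$, form the adic pullback $\cO'$, and then argue that $\cO'\simeq \tau_{\le n+1}\cO$. The point where your argument diverges from the paper, and where it presently has a gap, is the last step.

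You write that conservativity of $\functor^{\alg}$ ``forces $\cO'\simeq \cO_{n+1}$'' once you know $(\cO')^{\alg}\simeq \cO_{n+1}^{\alg}$. Conservativity is a statement about \emph{morphisms}: it lets you conclude that a given arrow in $\fCAlg(\cX)$ is an equivalence once its image under $\functor^{\alg}$ is. It does not let you promote an abstract equivalence of underlying algebras to an equivalence of $\Tad$-structures. For that you must first produce a comparison map $\cO_{n+1}\to \cO'$ in $\fCAlg(\cX)_{/\cO_n}$, i.e.\ exhibit $\cO_{n+1}$ as a cone over the adic cospan $\cO_n \xrightarrow{d} \cO_n\oplus_{\ad}\widetilde{M}[1] \xleftarrow{d_0} \cO_n$. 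This amounts to lifting to $\fCAlg$ the algebraic homotopy between the two composites $\cO_{n+1}^{\alg}\to \cO_n^{\alg}\to \cO_n^{\alg}\oplus M[1]$, and you have not explained why such a lift exists.

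The paper closes this gap in a different way. It first reduces to the case where $\cO$ (hence $\cA=\tau_{\le n}\cO$ and $\cB=\tau_{\le n+1}\cO$) is topologically almost of finite presentation, using that filtered colimits commute with both the truncation and the pullback construction. In that range \cref{t_comp} applies, so $\Psi$ applied to the \emph{entire} algebraic square \eqref{pull} returns, after $\functor^{\alg}$, the same square; by conservativity and limit-preservation of $\functor^{\alg}$ (which together \emph{reflect} pullbacks), the $\Psi$-image is a pullback in $\fCAlg(\cX)_{/\cA}$. The rectification theorem (\cref{rect}) then identifies this $\Psi$-image square with the adic square \eqref{eq:pull} corner by corner and arrow by arrow, yielding $\cB'\simeq \Psi(\cB^{\alg})\simeq \cB$. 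In effect, the paper manufactures the missing comparison map by going through $\Psi$ and invoking full faithfulness of $\functor^{\ad}$ on $\taft$ objects, rather than by a bare conservativity argument.

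If you want to keep your line of argument, the cleanest fix is: reduce to the $\taft$ case as the paper does, and then replace your appeal to conservativity by an appeal to \cref{rect}, which genuinely upgrades the equivalence $(\cO')^{\ad}\simeq \cO_{n+1}^{\ad}$ in $\adCAlg(\cX)_{/\cO_n^{\ad}}$ to one in $\fCAlg(\cX)_{/\cO_n}$. Your treatment of the ``in particular'' clause is fine.
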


\begin{proof}
We have a canonical morphism $\trun_{\leq n}(\sfX) \hookrightarrow \trun_{\leq n+1} (\sfX)$ induced by the identity functor on the
underlying $\infty$-topos $\cX$ and the natural map $\tau_{\leq n+1} \cO \to \tau_{\leq n} \cO$ at the level of structures.
Let $\cB \coloneqq \tau_{\leq n+1} \cO$ and $\cA \coloneqq \tau_{\leq n} \cO$. Thanks to
\cite[Corollary 7.4.1.28]{lurie2012higher} we deduce that the underlying algebra morphism
	\[
		\cB^\alg \to \cA^\alg
	\]
is a square zero extension. Thus we can identify $\cB^\alg$ with the pullback of the diagram
	\begin{equation} \label{pull}
	\begin{tikzcd}
		\cB^\alg \ar{r} \ar{d} & \cA^\alg \ar{d}{d} \\
		\cA^\alg \ar{r}{d_0} & \cA^\alg \oplus \bL_{\cB^\alg / \cA^\alg}
	\end{tikzcd}
	\end{equation}
in the \infcat $\CAlg_{\Ok}(\cX)_{/ \tau_{\leq n} \cO}$. Consider the induced $\Ok$-adic derivation
	\[
		f^\ad (d) \colon \bL^\ad_{\cA} \to \bL^\ad_{\cB/ \cA},
	\]
and form the pullback diagram	
	\begin{equation} \label{eq:pull}
	\begin{tikzcd}
		\cB' \ar{r} \ar{d} & \cA \ar{d} \\
		\cA \ar{r}{d_0} & \cA \oplus \bL^\ad_{ \cB / \cA}
	\end{tikzcd}
	\end{equation}
in the \infcat $\fCAlg(\cX)_{/ \cA}$. In this way the canonical morphism $\cB' \to \cA$ is a $\Ok$-adic square zero extension
and there exists a morphism $\cB \to \cB'$. As filtered colimits commute with finite limits we reduce ourselves to the case
that $\cO$, and therefore both $\cA$ and $\cB$, are topologically almost of finite presentation. Thanks to \cref{t_comp}, the
functor $\Psi$ applied to the pullback diagram \eqref{pull} is the identity. Thus by conservativity of $\functor^\alg$ it follows
that the diagram 
	\begin{equation} \label{ab}
	\begin{tikzcd}
		\Psi(\cB^\alg) \ar{r} \ar{d} &\Psi( \cA^\alg) \ar{d}{d} \\
		\Psi(\cA^\alg) \ar{r}{d_0} & \Psi( \cA^\alg \oplus \bL_{\cB^\alg / \cA^\alg})
	\end{tikzcd}
	\end{equation}
is a pullback diagram in the \infcat $\fCAlg(\cX)_{/ \cA}$. Thanks to \cref{rect}, one concludes that the diagram \eqref{ab}
is equivalent to the pullback diagram \eqref{eq:pull}. Therefore, the canonical map $\cB' \to \cB$ is an equivalence in the
\infcat $\fCAlg(\cX)_{/ \cO}$, as desired.
\end{proof}

\section{Derived rigidification functor}
\subsection{Construction of the rigidification functor}Raynaud's generic fiber construction \cite[$\S8$]{bosch2005lectures}, induces a transformation of pregeometries 
	\[
		\rigg \colon \Tad \to \Tan.
	\]

\begin{proposition} \label{prop:rig_functor}
Precomposition along the transformation of pregeometries $\rigg \colon \Tad \to \Tan$ induces a functor
	\[
		\functor^+ \colon \anTop \to \adTop,
	\]
which admits a right adjoint denoted
	\[
		\rigg \colon \adTop \to \anTop.
	\]
\end{proposition}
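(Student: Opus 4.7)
The plan is to verify that Raynaud's generic fibre construction defines a transformation of pregeometries $\rigg \colon \Tad \to \Tan$, and then invoke \cite[Theorem 2.1]{lurie2011dag} exactly as was done earlier for the transformations $\tcomp \colon \Tdisc(\Ok) \to \Tad$ and $\tcomp \colon \Tet(\Ok) \to \Tad$. Once the transformation of pregeometries is established, the existence of the induced precomposition functor $\functor^+$ and its right adjoint $\rigg$ is automatic from the general machinery of structured $\infty$-topoi.

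First I would check the four axioms of a transformation of pregeometries. Preservation of finite products follows from the equivalence $\Spf \Ok \langle T_1, \dots, T_n \rangle^\rig \simeq \anB^n$ together with the observation that $\anB^n$ is the $n$-fold product of $\anB^1$ in $\Tan$. The admissible morphisms in $\Tad$ are generated by (formally) \'etale maps between $(t)$-complete formally smooth $\Ok$-algebras, and it is classical that such morphisms rigidify to \'etale morphisms of quasi-smooth $k$-analytic spaces, which are by definition admissible in $\Tan$. Pullbacks along admissible morphisms in $\Tad$ are computed by $(t)$-adically completed tensor products; after inverting $t$, these match the fibered product of $k$-affinoid spaces along an \'etale morphism, using that $\functor \otimes_{\Ok} k$ commutes with finite limits of topologically finitely presented $\Ok$-algebras. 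Finally, compatibility with the Grothendieck topologies reduces to the statement that an \'etale cover $\{ \Spf A_\alpha \to \Spf A \}$ in $\Tad$ rigidifies to an \'etale cover $\{ \Sp(A_\alpha \otimes_{\Ok} k) \to \Sp(A \otimes_{\Ok} k) \}$ in $\Tan$, which is a well known consequence of the surjectivity of the specialization map, cf. \cite{bosch2005lectures}.

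With these verifications in place, \cite[Theorem 2.1]{lurie2011dag} applied to $\rigg \colon \Tad \to \Tan$ produces the precomposition functor $\functor^+ \colon \anTop \to \adTop$, sending $(\cX, \cO) \mapsto (\cX, \cO \circ \rigg)$, together with a right adjoint $\rigg \colon \adTop \to \anTop$, exactly parallel to the construction of $L$ and $L^\sh$ after \cref{rem:magic}. The only non-formal step is the compatibility with Grothendieck topologies, which relies on the classical surjectivity of the specialization map and is already implicit in \cref{Raynaud} at the underived level; all other conditions are direct verifications on affine building blocks.
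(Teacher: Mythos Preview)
Your proposal is correct and takes essentially the same approach as the paper: the paper's own proof is the single sentence ``It is a direct consequence of \cite[Theorem 2.1]{lurie2011dag},'' relying on the assertion (stated just before the proposition) that Raynaud's generic fibre construction is a transformation of pregeometries. Your write-up simply spells out the verification of that assertion in more detail than the paper does, but the logical structure is identical.
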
	

\begin{proof}
It is a direct consequence of \cite[Theorem 2.1]{lurie2011dag}.
\end{proof}

\begin{definition}
	We refer to the functor introduced in \cref{prop:rig_functor} as the \emph{rigidification functor}.
\end{definition}

\begin{lemma} \label{rig:trun}
For each integer $n \geq 0$, we have a commutative diagram
	\[
	\begin{tikzcd}
		\anTop \ar{d} \ar{r}{\functor^+}  & \adTop \ar{d} \\
		\anTop^{\leq n} \ar{r}{\functor^+} & \adTop^{\leq n}
	\end{tikzcd},
	\]
of \infcats.
\end{lemma}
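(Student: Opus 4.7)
The plan is to verify the commutativity of the diagram essentially by unraveling the definitions of the truncation and rigidification functors, after first ensuring the truncation functors on both sides are well-defined.

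First, I would note that for both $\Tan$ and $\Tad$ the truncation functor $\trun_{\leq n}$ on the corresponding \infcat of structured $\infty$-topoi is given by object-wise $n$-truncation on the underlying $\infty$-topos: explicitly, $\trun_{\leq n}(\cX, \cO) = (\cX, \tau_{\leq n} \circ \cO)$, where $\tau_{\leq n} \colon \cX \to \cX$ denotes the $n$-truncation functor on $\cX$ (and the underlying geometric morphism is the identity). For this prescription to land in the correct \infcat, one needs each pregeometry to be compatible with $n$-truncations in the sense of the preceding definition. For $\Tad$ this is exactly the content of Lemma \ref{trun:lem}. For $\Tan$, the analogous statement is already known; alternatively, it follows by the very same argument as \cref{trun:lem}, since admissible morphisms in $\Tan$ are \'etale maps between quasi-smooth $k$-analytic spaces, and the crucial input (the pullback square of mapping spaces at the level of étale local rings) is precisely \cite[Proposition 4.3.28]{lurie2011dag} transposed to the $k$-analytic setting.

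Next, I would unfold the definition of $\functor^+$. By \cref{prop:rig_functor}, this functor is obtained by precomposition along the transformation of pregeometries $\rigg \colon \Tad \to \Tan$; concretely, $\functor^+(\cX, \cO) = (\cX, \cO \circ \rigg)$, with identity geometric morphism, and similarly on morphisms.

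Given both descriptions, commutativity of the square is essentially formal: precomposition by $\rigg$ on the source commutes with postcomposition by $\tau_{\leq n}$ on the target. Indeed, for every $(\cX, \cO) \in \anTop$,
\[
\functor^+\bigl(\trun_{\leq n}(\cX, \cO)\bigr) = \functor^+(\cX, \tau_{\leq n} \circ \cO) = (\cX, \tau_{\leq n} \circ \cO \circ \rigg),
\]
\[
\trun_{\leq n}\bigl(\functor^+(\cX, \cO)\bigr) = \trun_{\leq n}(\cX, \cO \circ \rigg) = (\cX, \tau_{\leq n} \circ \cO \circ \rigg),
\]
and these agree on the nose. The same computation applies functorially to morphisms, producing the required natural equivalence $\functor^+ \circ \trun_{\leq n} \simeq \trun_{\leq n} \circ \functor^+$.

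The only genuine technical point is the compatibility of $\Tan$ with $n$-truncations, which justifies that the lower horizontal arrow makes sense as a functor between $\infty$-subcategories of $n$-truncated structures; once this is in place, the argument reduces to the elementary observation that pre- and post-composition of functors commute.
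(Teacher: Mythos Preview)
Your proof is correct and follows the same approach as the paper: the paper's proof simply invokes compatibility of $\Tan$ and $\Tad$ with $n$-truncations (citing \cite[Theorem 3.23]{porta2016derived} and \cref{trun:lem}), and you have unpacked precisely why this suffices, namely that $\functor^+$ acts by precomposition with $\rigg$ while $\trun_{\leq n}$ acts by postcomposition with $\tau_{\leq n}$, so the two manifestly commute.
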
	

\begin{proof}
It follows immediately from the fact that both preogemetries $\Tan $ and $\Tad$ are compatible with $n$-truncations, cf. \cite[Theorem 3.23]{porta2016derived} and \cref{trun:lem}, in the $k$-analytic and $\Ok$-adic settings, respectively.
\end{proof}

\begin{corollary} \label{tr}
Let $n \geq -1$ be an integer. The diagram
	\[
	\begin{tikzcd}
		\adTop \ar{r}{\rigg} \ar{d}{\trun_{\leq n}}  & \anTop \ar{d}{\trun_{\leq n}} \\
		\adTop^{\leq n} \ar{r}{\rigg} & \anTop 
	\end{tikzcd}
	\]
is commutative. 
\end{corollary}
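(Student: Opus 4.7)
The plan is to deduce the commutativity from \cref{rig:trun} together with the adjunction $\functor^+ \dashv \rigg$ of \cref{prop:rig_functor} and the adjunctions $i \dashv \trun_{\leq n}$ on both $\adTop$ and $\anTop$ that come from the compatibility of the two pregeometries with $n$-truncations.

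The first step is to observe that $\rigg$, being a right adjoint, automatically preserves $n$-truncated objects: for any $Y \in \adTop^{\leq n}$ and any $Z \in \anTop$, the mapping space
\[
\Map_{\anTop}(Z, \rigg(Y)) \simeq \Map_{\adTop}(\functor^+(Z), Y)
\]
is $n$-truncated as a space, because $n$-truncated objects have $n$-truncated mapping spaces from every source. Hence $\rigg$ restricts to a functor $\rigg \colon \adTop^{\leq n} \to \anTop^{\leq n}$, so that both $\rigg(\trun_{\leq n} Y)$ and $\trun_{\leq n}(\rigg\, Y)$ are naturally objects of $\anTop^{\leq n}$.

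Next, I would prove the equivalence $\rigg(\trun_{\leq n} Y) \simeq \trun_{\leq n}(\rigg\, Y)$ by a Yoneda argument in $\anTop^{\leq n}$. For $Z \in \anTop^{\leq n}$, I would compute both $\Map_{\anTop^{\leq n}}(Z, \trun_{\leq n}(\rigg\, Y))$ and $\Map_{\anTop^{\leq n}}(Z, \rigg(\trun_{\leq n} Y))$ by unwinding the three adjunctions. On the one hand, $i \dashv \trun_{\leq n}$ in $\anTop$ followed by $\functor^+ \dashv \rigg$ identifies the first with $\Map_{\adTop}(\functor^+ Z, Y)$. On the other hand, $\functor^+ \dashv \rigg$ identifies the second with $\Map_{\adTop}(\functor^+ Z, \trun_{\leq n} Y)$, and \cref{rig:trun} gives $\functor^+ Z \in \adTop^{\leq n}$, so $i \dashv \trun_{\leq n}$ in $\adTop$ identifies this again with $\Map_{\adTop}(\functor^+ Z, Y)$. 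Naturality in $Z$ and Yoneda conclude.

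I do not expect any serious obstacle: the whole argument is formal adjunction bookkeeping, and the only substantive input is \cref{rig:trun}, itself a consequence of the compatibility of $\Tad$ and $\Tan$ with $n$-truncations established in \cref{trun:lem}.
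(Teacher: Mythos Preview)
Your overall strategy is exactly the paper's: its one-line proof ``take right adjoints in the diagram of \cref{rig:trun}'' unwinds to the adjunction bookkeeping you carry out in your second step.

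There is, however, a genuine error in your first step. You argue that for $Y \in \adTop^{\leq n}$ the mapping space $\Map_{\adTop}(\functor^+ Z, Y)$ is $n$-truncated because ``$n$-truncated objects have $n$-truncated mapping spaces from every source.'' But by \cref{trun:not} the condition $(\cY,\cO) \in \adTop^{\leq n}$ means that the \emph{structure} $\cO \colon \Tad \to \cY$ takes values in $n$-truncated objects of the $\infty$-topos $\cY$; it does \emph{not} mean that $(\cY,\cO)$ is an $n$-truncated object of the \infcat $\adTop$ in the categorical sense of having $n$-truncated mapping spaces. These two notions are unrelated: even when $\cO$ is discrete, $\Map_{\adTop}((\cX,\cA),(\cY,\cO))$ fibers over the space of geometric morphisms $\Map_{\Top}(\cX,\cY)$, which is typically not discrete. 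Conversely, even if every mapping space into $\rigg(Y)$ were $n$-truncated, that would only say $\rigg(Y)$ is an $n$-truncated object of $\anTop$, not that its $\Tan$-structure is $n$-truncated. So neither the premise nor the conclusion of your mapping-space argument is the relevant one. Since your Yoneda computation in step~2 is carried out in $\anTop^{\leq n}$ and therefore presupposes $\rigg(\trun_{\leq n}Y) \in \anTop^{\leq n}$, this gap propagates to the rest of the argument.
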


\begin{proof}
It follows by taking right adjoints in the diagram displayed in \cref{rig:trun}.
\end{proof}

These considerations imply the following useful result:

\begin{corollary} \label{rig_cl}
Let $\sfX = (\cX, \cO)$ be a $\Tad$-structured space which is equivalent to an ordinary $\Ok$-adic formal scheme
topologically of finite presentation. Then $\sfX^\rig$ is equivalent to an ordinary $k$-analytic space which agrees with the
usual generic fiber of $\sfX$.
\end{corollary}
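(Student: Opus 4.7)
The plan is to split the assertion into two claims: first, that $\sfX^\rig$ is $0$-truncated and therefore arises from an ordinary $k$-analytic space; second, that this ordinary space coincides with the classical Raynaud generic fiber of $\sfX$.

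For the first claim, the hypothesis on $\sfX$ gives $\sfX \simeq \trun_{\leq 0}(\sfX)$ in $\adTop$. Applying \cref{tr} with $n = 0$, we obtain $\sfX^\rig \simeq \trun_{\leq 0}(\sfX^\rig)$ in $\anTop$. By the characterization of $0$-truncated objects of $\anTop$ as ordinary $k$-analytic spaces (see \cite[Theorem 3.23]{porta2016derived}), this shows that $\sfX^\rig$ is itself an ordinary $k$-analytic space.

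For the identification with the classical Raynaud generic fiber, I would argue locally. Choose an \'etale affine cover $\{\Spf A_i \to \sfX\}$ where each $A_i$ is topologically of finite presentation over $\Ok$. Because the functor $\functor^+ \colon \anTop \to \adTop$ is induced by a pregeometry morphism $\rigg \colon \Tad \to \Tan$ that preserves the \'etale topology, its right adjoint $\rigg$ commutes with formation of \'etale atlases, so it suffices to verify $(\Spf A)^\rig \simeq \Sp(A \otimes_{\Ok} k)$ for each such $A$, and then glue. When $A$ is additionally formally smooth, $\Spf A$ lies in $\Tad$ and the transformation $\rigg \colon \Tad \to \Tan$ sends it exactly to $\Sp(A \otimes_{\Ok} k)$; combined with the universal property characterizing the rigidification right adjoint together with its compatibility with the absolute spectrum constructions in each pregeometry, this yields the desired local equivalence. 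For a general $A$, choose a presentation $A \simeq \Ok \langle T_1, \dots, T_n \rangle / I$ which realizes $\Spf A$ as a closed subspace of $\bfA^n$; both sides then produce the closed subspace of $\anB^n$ cut out by the image of $I$, namely $\Sp(A \otimes_{\Ok} k)$.

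The main obstacle is the reduction to the formally smooth case, since $\Tad$ contains only formally smooth objects as objects, whereas the structured space $\sfX$ need not be built out of such. This is handled by showing that closed immersions in $\adTop$ correspond under rigidification to closed immersions in $\anTop$, which in turn follows from the right-adjoint characterization of $\rigg$ together with the compatibility of pregeometry transformations with pullback diagrams embedded in the notion of (relative) unramifiedness established in \cref{prop:unramified_of_und_alg}. Gluing these local identifications along the chosen \'etale cover of $\sfX$ then produces the global equivalence between $\sfX^\rig$ and the classical Raynaud generic fiber.
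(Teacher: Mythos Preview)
Your proof has the right overall shape but contains a genuine gap in the reduction to the formally smooth case, and the paper's argument fills this gap with a trick you are missing.

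First, a minor point: in your first claim, knowing that $\sfX^\rig$ is $0$-truncated in $\anTop$ does \emph{not} by itself imply it is an ordinary $k$-analytic space; an arbitrary $0$-truncated $\Tan$-structured $\infty$-topos need not satisfy the local conditions of \cref{defin derived analytic spaces}. The paper does not separate the two claims this way: it identifies $\sfX^\rig$ with the classical generic fiber directly, and only then concludes it is a $k$-analytic space.

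The substantive gap is in your treatment of a general $A$. You embed $\Spf A$ as a closed subspace of $\bfA^n$ and assert that ``both sides then produce the closed subspace of $\anB^n$ cut out by the image of $I$''. But $\Spf A$, as an \emph{underived} fiber product, is not a limit in $\adTop$, so the fact that $\rigg$ is a right adjoint does not directly tell you what $(\Spf A)^\rig$ is. Your appeal to \cref{prop:unramified_of_und_alg} does not help either: that result concerns the transformation $\Tet(\Ok) \to \Tad$, not $\Tad \to \Tan$, and in any case unramifiedness controls \emph{derived} pullbacks, not underived ones. Invoking preservation of closed immersions is circular here, since in the paper that statement (\cref{cl:lem}) is proved \emph{using} \cref{rig_cl}.

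The paper's device is to pass through the \emph{derived} fiber product $Z$ of the presentation diagram $\Spf(\Ok) \to \bfA^n \leftarrow \bfA^m$: this $Z$ \emph{is} a limit that $\rigg$ preserves, its truncation recovers $\Spf A$, and then \cref{tr} lets you commute $\rigg$ past $\trun_{\leq 0}$ to land on the underived pullback on the analytic side, which is the classical generic fiber. This derived-then-truncate maneuver is the missing ingredient in your argument.
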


\begin{proof}
The question is local on $\sfX$. We can thus assume that $\sfX \simeq \Spf (A)$, where $A \in \adCAlg$ is
a topologically of finite presentation ordinary $\Ok$-adic algebra. Therefore, choosing generators and relations for $A$
we can find an (underived) pullback diagram of the form
	\begin{equation}
	\begin{tikzcd} \label{und:pull}
	 	\Spf(A) \ar{r} \ar{d} & \bfA^m \ar{d} \\
		\Spf ( \Ok ) \ar{r} & \bfA^n
	\end{tikzcd},
	\end{equation}
of ordinary $\Ok$-adic formal schemes. Let $Z$ denote the (derived) pullback of the diagram \eqref{und:pull}, computed in the \infcat
$\dfSch$. Its existence is guaranteed by \cite[Proposition 8.1.6.1]{lurie2016spectral}. It follows that $\trun_{\leq 0}
(Z) \simeq \Spf (A)$. Notice that both $\bfA^m$, $\bfA^n$ and $\Spf (\Ok)$ are objects of the pregeometry $\Tad$. Moreover, $\rigg$ is induced
by the usual generic fiber functor. Thus it follows that
	\[
		\Spf (\Ok)^\rig \simeq    \Sp k, \quad
		\left( \bfA^{m} \right)^\rig \simeq   \anA^m, \quad
		\left( \bfA^n \right)^\rig \simeq  \anA^n.
	\]
As $\rigg$ is a right adjoint, it commutes with pullback diagrams. We thus have a pullback diagram in the \infcat $\anTop$
	\[
	\begin{tikzcd}
		Z^\rig \ar{r} \ar{d} & \anA^m \ar{d} \\
		\Sp (k) \ar{r} & \anA^n 
	\end{tikzcd}.
	\]
\cref{tr} implies that $\trun_{\leq 0} \left( Z^\rig \right) \simeq \trun_{\leq 0 }(Z)^\rig$. As $\trun_{\leq 0} (Z) \simeq \Spf(A)$ we
deduce that $\left( \Spf(A) \right)^\rig$ is equivalent to the (underived) pullback diagram
	\[
	\begin{tikzcd}
		\Spf (A)^\rig \ar{r} \ar{d} & \anA^m \ar{d} \\
		\Sp(k) \ar{r} & \anA^n
	\end{tikzcd}
	\]
computed in the category of rigid $k$-analytic spaces. This agrees with the usual rigidification construction applied to $\Spf
A$.
\end{proof}

\begin{lemma} \label{cl:lem}
Let $f \colon \sfZ \to \sfX$ be a closed immersion of derived $\Ok$-adic Deligne-Mumford stacks topologically almost of finite
presentation. Then $f^\rig$ is a closed immersion in the \infcat $\dAn$.
\end{lemma}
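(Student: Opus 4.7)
The plan is to verify the two defining properties of a closed immersion in $\dAn$: that $\trun_{\leq 0}(f^\rig)$ is a closed immersion of ordinary $k$-analytic spaces, and that $f^\rig$ is strong, in the sense that each higher homotopy sheaf of $\cO_{\sfZ^\rig}$ is the base change of the corresponding homotopy sheaf of $(f^\rig)^{-1}\cO_{\sfX^\rig}$ along $\pi_0((f^\rig)^{-1}\cO_{\sfX^\rig}) \to \pi_0(\cO_{\sfZ^\rig})$. The assertion is local on $\sfX$, so I reduce to the case $\sfX \simeq \Spf A$ with $A$ topologically almost of finite presentation in $\adCAlg$ and, since $f$ is a closed immersion in $\dfDM^\taft_{\Ok}$, to $\sfZ \simeq \Spf B$ with $A \to B$ a (strong) effective epimorphism in $\adCAlg$.

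For the truncation condition, \cref{tr} produces an equivalence $\trun_{\leq 0}(f^\rig) \simeq (\trun_{\leq 0} f)^\rig$. The morphism $\trun_{\leq 0}(f)$ is a closed immersion of ordinary formal $\Ok$-adic schemes topologically of finite presentation, so \cref{rig_cl} together with the classical Raynaud theory identifies its rigidification with the usual generic fiber of that closed immersion, hence with a closed immersion in $\An$.

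For the strong condition, the essential input is that on stalks the rigidification acts at the module level by the exact localization $-\otimes_{\Ok} k$. This should follow from \cref{t_comp}, together with the description of $\Spf$ in \cref{const:the_Spf_construction} and the universal property characterizing $\rigg$ as right adjoint to $\functor^+$. Granting this, the strongness of $\sfZ \to \sfX$ gives equivalences $\pi_i(A) \otimes_{\pi_0(A)} \pi_0(B) \simeq \pi_i(B)$; tensoring with $k$ over $\Ok$ and using that localization is exact and compatible with tensor products yields the analogous equivalences for $A \otimes_{\Ok} k \to B \otimes_{\Ok} k$, which is precisely the strong condition for $f^\rig$.

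The main obstacle will be pinning down the first sentence of the previous paragraph, namely establishing that the module-level action of $\rigg$ is indeed given by the exact functor $-\otimes_{\Ok} k$ and that this description globalises correctly beyond the affine case. Once this compatibility is in hand, the rest of the argument is formal, since exactness of localization transports effective epimorphisms of topologically almost of finite presentation adic $\Ok$-algebras to effective epimorphisms of $k$-affinoid derived algebras.
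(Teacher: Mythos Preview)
Your proof of the truncation condition is correct and matches the paper's argument exactly: use \cref{tr} to commute $\trun_{\leq 0}$ past $\rigg$, and then invoke \cref{rig_cl} to reduce to the classical fact that rigidification of a closed immersion of ordinary formal $\Ok$-schemes is a closed immersion of $k$-analytic spaces. That is the entire content of the paper's proof.

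The second half of your argument rests on a mistaken premise. In the Porta--Yu Yue framework (and in Lurie's), a closed immersion in $\dAn$ is \emph{defined} purely by the condition that $\trun_{\leq 0}$ of the morphism is a closed immersion of ordinary analytic spaces; there is no ``strong'' requirement. So the strongness verification is simply not needed, and the paper's one-line proof is complete as stated. Worse, your attempted verification of strongness assumes that the given closed immersion $f \colon \sfZ \to \sfX$ in $\dfDM_{\Ok}$ is itself strong, which is false in general: closed immersions in the derived formal setting are also detected at $\pi_0$, so there is no reason for $\pi_i(A)\otimes_{\pi_0(A)}\pi_0(B) \to \pi_i(B)$ to be an equivalence. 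Had strongness actually been part of the definition, your argument would break at this step. Fortunately it is not, so you can discard the entire second paragraph and the discussion of the ``main obstacle''; the proof is just the first part.
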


\begin{proof}
It suffices to show that the truncation $\trun_{\leq 0} (f^\rig) \colon \trun_{\leq 0} (\sfZ^\rig) \to \trun_{\leq 0} (\sfX^\rig)$ is
a closed immersion. The latter assertion is a consequence of \cref{rig_cl}.
\end{proof}

\begin{proposition} \label{prop:rig_sends_formal_DMs_to_Ansp}
Let $\sfX$ be a topologically almost of finite presentation derived $\Ok$-derived Deligne-Mumford stack. Then $\sfX^\rig$
is a derived $k$-analytic space.
\end{proposition}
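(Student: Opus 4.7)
The conditions defining a derived $k$-analytic space in \cref{defin derived analytic spaces} are local on the underlying $\infty$-topos, so the plan is to first reduce to the affine case. By \cref{cor:ad_DM_equivalent_to_formal_DM}, $\sfX$ admits an effective epimorphism cover $\coprod_i \Spf (A_i) \to 1_{\cX}$ with each $A_i \in \adCAlg$ topologically almost of finite presentation. Since $\rigg$ is a right adjoint (\cref{prop:rig_functor}) and the transformation of pregeometries $\rigg \colon \Tad \to \Tan$ carries \'etale admissible morphisms to \'etale admissible morphisms, rigidification commutes with \'etale localizations; it therefore suffices to prove the statement when $\sfX = \Spf A$.

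For the underlying $\infty$-topos and condition (ii) of \cref{defin derived analytic spaces}, I would apply \cref{tr} to obtain $\trun_{\leq 0}((\Spf A)^\rig) \simeq (\trun_{\leq 0} \Spf A)^\rig$. The right hand side is the classical rigidification of the ordinary topologically of finite presentation formal scheme $\Spf \pi_0(A)$, and by \cref{rig_cl} it is equivalent to the $k$-affinoid space $\Sp(\pi_0(A) \otimes_{\Ok} k)$. In particular, the underlying $\infty$-topos of $(\Spf A)^\rig$ agrees with the hypercompletion of the small \'etale site of this affinoid, which verifies hypercompleteness and condition (ii).

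For condition (iii), namely coherence of $\pi_j((\cO_{\sfX^\rig})^{\alg})$ over $\pi_0((\cO_{\sfX^\rig})^{\alg})$, I would proceed by induction on the Postnikov tower of $\sfX$. By the preceding proposition, each truncation $\trun_{\leq n+1}(\sfX)$ arises as an $\Ok$-adic square-zero extension of $\trun_{\leq n}(\sfX)$ by $\pi_{n+1}(\cO^{\alg})[n+1]$, which is coherent because $\sfX$ is topologically almost of finite presentation. Since $\rigg$ preserves pullback squares, applying it to the square classifying the extension produces a pullback whose upper map I would identify with a $k$-analytic square-zero extension of $\trun_{\leq n}(\sfX^\rig)$ by the rigidification of the coherent module. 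Combined with the classical fact (Bosch--G\"ortz) that rigidification sends coherent modules on $\Spf \pi_0(A)$ to coherent modules on $\Sp(\pi_0(A) \otimes_{\Ok} k)$, the induction yields coherence of each $\pi_j((\cO_{\sfX^\rig})^{\alg})$.

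The main technical obstacle is the last identification: matching the rigidification of an $\Ok$-adic square-zero extension with a genuine $k$-analytic square-zero extension and recognising the defining module as the rigidification of the original coherent sheaf. This requires a compatibility between the $\Ok$-adic cotangent complex $\bL^\ad_{\sfX}$ and the $k$-analytic cotangent complex $\bL^\an_{\sfX^\rig}$ under rigidification, a statement the paper develops as a separate ingredient; without it the inductive step cannot be closed, since coherence of $\pi_j((\cO_{\sfX^\rig})^{\alg})$ is precisely what the cotangent comparison must transport across $\rigg$.
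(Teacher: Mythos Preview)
Your reduction to the affine case and the verification of condition (ii) via \cref{tr} and \cref{rig_cl} are essentially the same as in the paper. Where you diverge is in the treatment of condition (iii): you propose a Postnikov--tower induction using $\Ok$-adic square-zero extensions and the compatibility of $\bL^\ad$ with $\bL^\an$ under $\rigg$, whereas the paper runs a closure argument. Concretely, the paper introduces the class $\cC$ of affine $\Spf(A)$ with $\Spf(A)^\rig$ derived $k$-affinoid, checks that $\cC$ contains the image of $\Tad$, is closed under pullbacks along closed immersions (using unramifiedness of $\Tad$ and \cref{cl:lem}), hence under all finite limits, and under retracts. Since $\tau_{\leq n}(A)$ is compact in $(\adCAlg)^{\leq n}$, it is a retract of the $(t)$-completion of a finite colimit of free $\Ok$-algebras, so $\Spf(\tau_{\leq n}A)\in\cC$; coherence of each $\pi_j$ then follows.

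The gap you identify in your own argument is real and is exactly why the paper does not take your route here: the cotangent-complex comparison $(\bL^\ad_\sfX)^\rig\simeq\bL^\an_{\sfX^\rig}$ is only established in \S4.3, \emph{after} \cref{prop:rig_sends_formal_DMs_to_Ansp}, and the fact that $\rigg$ at the level of structures preserves finite limits (needed to carry the square-zero pullback square to an analytic one) rests on \cref{loc_t}, also proved later. These later results do not seem to logically depend on \cref{prop:rig_sends_formal_DMs_to_Ansp}, so your argument is not circular, but it inverts the paper's order of dependencies. The paper's closure-under-limits-and-retracts proof has the advantage of being entirely self-contained at this point in the exposition, requiring only unramifiedness and \cref{rig_cl}; your approach, once the missing ingredients are supplied, is the same mechanism the paper later uses to prove \cref{main1}, so it is a legitimate alternative but a heavier one for this particular statement.
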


\begin{proof}
Our proof is inspired by \cite[Proposition 3.7]{porta2017representability}. The question is \'etale local, by
\cite[Lemma 2.1.3]{lurie2011dag}. We can thus reduce ourselves to the case $\sfX = \Spf (A)$, where $A \in \adCAlg$
is a $(t)$-complete topologically almost of finite presentation derived $\Ok$-adic algebra. We wish to prove that $\Spf (A)^\rig$ is
a derived $k$-affinoid space. Let $\cC$ denote the full subcategory of $\dfDM_{\Ok}$ spanned by those affine derived
$\Ok$-adic Deligne-Mumford stacks $\Spf(A)$ such that $\Spf(A)^\rig $ is equivalent to a derived $k$-affinoid space.
We have:
\begin{enumerate}
	\item The \infcat $\cC$ contains the essential image of $\Tad$, cf. \cite[Proposition 2.3.18]{lurie2011dag}.
	\item $\cC$ is closed under pullbacks along closed immersions. Indeed, let
		\begin{equation} \label{pull:unr}
		\begin{tikzcd}
			\sfW \ar{r} \ar{d} & \sfZ \ar{d} \\
			\sfY \ar{r}{f} & \sfX,
		\end{tikzcd}
		\end{equation}
	be a pullback diagram in the \infcat $\dfDM_{\Ok}$, where $\sfX, \ \sfY $ and $\sfZ \in \cC$. Assume further that
	$f \colon \sfY \to \sfX $ is a closed immersion. By
	unramifiedness of the pregeometry $\Tad$, \cref{{unr:ad}}, the diagram \eqref{pull:unr} is also a pullback diagram
	in the \infcat $\adTop$. As $\rigg$ is a right adjoint the diagram
		\begin{equation} \label{pull:an}
		\begin{tikzcd}
			\sfW^\rig \ar{d} \ar{r} & \sfZ^\rig \ar{d} \\
			\sfY^\rig \ar{r} & \sfX^\rig
		\end{tikzcd}
		\end{equation}
	is a pullback diagram in the \infcat $\anTop$. The \infcat $\dAn$ is closed under pullbacks along closed immersions
	thanks to \cite[Proposition 6.2]{porta2016derived}.
	\cref{cl:lem} then implies that the diagram \eqref{pull:an} is a pullback square in the \infcat $\dAn$. Thus
	$\sfW \in \cC$, as desired.
	\item The \infcat $\cC$ is closed under finite limits. It suffices to prove that $\cC$ is closed under finite products and
	pullbacks. \cite[Lemma 6.4]{porta2016derived} implies that $\cC$ is closed under finite products.
	General pullback diagrams can be constructed as pullbacks along
	along closed immersions as in the proof of \cite[Theorem 6.5]{porta2016derived}. Thanks to \cref{rig_cl},
	$\rigg$ commutes with finite products of ordinary
	formal schemes and preserves closed immersions by \cref{cl:lem}, the assertion follows.
	\item $\cC$ is closed under retracts: let $X \in \cC$ and let
		\[
		\begin{tikzcd}
		\sfY \ar{r}{j} & \sfX \ar{r}{p} & Y,
		\end{tikzcd}
		\]
	be a retract diagram in the \infcat $\dfDM_{\Ok}$. Assume further that $\sfY$ is affine. By assumption, $\sfX^\rig \in
	\dAn$ and $\trun_{\leq 0} (\sfY)^\rig \in \dAn$, see \cref{rig_cl}. It suffices to prove that for each $i > 0$, the
	homotopy sheaf $\pi_i \big( \cO_{\sfY}^\rig \big) $ is a coherent sheaf over $\pi_0 \big( \cO_{\sfY}^\rig \big)$.
	The former is a retract of $\pi_i \big( \cO_{\sfX}^\rig \big)$, which is a coherent sheaf over $\pi_0\big(\cO_{\sfX}^\rig
	\big)$. In this way, it follows that $\pi_i \big( \cO_{\sfY}^\rig \big)$ is coherent over $\pi_0 \big( \cO_{\sfX}^\rig \big)$.
	As $\pi_0 \big( \cO_{\sfY}^\rig \big)$ is a retract of $\pi_0 \big( \cO_{\sfX}^\rig \big)$, we deduce that
	$\pi_i \big( \cO_{\sfY}^\rig \big)$ is coherent over $\pi_0 \big( \cO_{\sfY}^\rig \big)$, as desired.
\end{enumerate}
Suppose now that we are given an affine object $\sfX  \in \dfDM^\taft_{\Ok}$. Write $\sfX \simeq \Spf (A)$. We wish to prove that $\sfX \in \cC$. \cref{rig_cl}
guarantees that $\trun_{\leq 0 } \left( \sfX^\rig \right)$ is a $k$-analytic space. We are thus reduced to show
that $\pi_i \big( \cO^\rig_{\sfX} \big)$ is a coherent sheaf over $\pi_0 \big( \cO^\rig_{\sfX} \big)$.
For every $n \geq 0$, the algebra $\tau_{\leq n}(A)$ is a compact object in the \infcat $
\big( \adCAlg \big)^{\leq n}$,
of $n$-truncated derived adic $\Ok$-algebras. We can thus find a \emph{finite} diagram of free simplicial
$\Ok$-algebras
	\[
		g \colon I \to  \CAlg_{\Ok},
	\]
such that $\tau_{\leq n} (A)$ is a retract of $\tau_{\leq n} (B)$, where
	\[
		B \coloneqq \colim_I \left( g \right)^\wedge_t \in \adCAlg.
	\]
We have denoted $\left( g\right)^\wedge_t$ the $(t)$-completion of the diagram $g \colon I \to \CAlg_{\Ok}$. As
the
$(t)$-completion functor commutes with finite colimits it follows that 
	\[
		B \simeq B^\wedge_t,
	\]
and in particular $B$ is $(t)$-complete. As $\cC$ is closed under finite limits and it contains all
objects in the pregeometry $\Tad$, we conclude that
$\Spf ( B) \in \cC$. In particular $\Spf \left( \tau_{\leq n} B \right) \in \cC$. As $\cC$ is moreover closed under
retracts, it follows that $\Spf \left( \tau_{\leq n} A \right) \in \cC$ as well. It follows, that for each $0 \leq i \leq
n$, $\pi_n \big(\cO_{\sfX}^\rig \big)$ is coherent over $\pi_0 \big( \cO_{\sfX}^\rig \big)$. Repeating the
argument, for every $n \geq 0$, we conclude.
\end{proof}

As a consequence of the previous results we obtain the following statement:

\begin{corollary} \label{cor:derived_rig_functor}
	The rigidification functor $\rigg \colon \adTop \to \anTop$ restricts to a well defined functor $\rigg \colon \dfSch^\taft \to \dAn$, which agrees with the classical Raynaud's rigidification functor. Moreover, the functor
	$\rigg \colon \dfSch^\taft \to \dAn$ commutes with finite limits.
\end{corollary}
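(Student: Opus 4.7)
The plan is to verify each of the three assertions in turn, in each case reducing to a result already established in \S 3 and \S 4.

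For the restriction: since $\dfSch^\taft$ sits as a full subcategory of $\dfDM_{\Ok}^\taft$, it suffices to invoke \cref{prop:rig_sends_formal_DMs_to_Ansp}, which guarantees that any topologically almost of finite presentation derived $\Ok$-adic Deligne-Mumford stack is sent to a derived $k$-analytic space. For the comparison with the classical Raynaud construction, I would apply \cref{rig_cl} to any $\sfX \in \dfSch^\taft$ whose $0$-truncation is an ordinary admissible formal scheme over $\Ok$; the corollary says precisely that $\trun_{\leq 0}(\sfX)^\rig$ is the classical generic fiber, and combined with \cref{tr} this identifies the restriction of $\rigg$ to the ordinary category with Raynaud's functor.

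The remaining, and main, content is preservation of finite limits. The rigidification functor $\rigg \colon \adTop \to \anTop$ is a right adjoint by \cref{prop:rig_functor}, hence commutes with arbitrary limits computed in $\adTop$. It therefore suffices to show that a finite limit in $\dfSch^\taft$ is computed in $\adTop$, and that its image under $\rigg$ actually lies in $\dAn$ and agrees with the finite limit computed there. I would imitate the strategy of the proof of \cref{prop:rig_sends_formal_DMs_to_Ansp}, reducing every finite limit to a combination of (a) finite products and (b) pullbacks along closed immersions.

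For pullbacks along closed immersions, unramifiedness of $\Tad$ (\cref{unr:ad}) guarantees that the pullback in $\dfSch^\taft$ coincides with the pullback in $\adTop$; applying $\rigg$, which preserves limits, produces a pullback in $\anTop$, whose objects lie in $\dAn$ because $\rigg$ sends closed immersions to closed immersions by \cref{cl:lem} and the category $\dAn$ is stable under pullbacks along closed immersions by \cite[Proposition~6.2]{porta2016derived}. For finite products I would invoke \cite[Lemma~6.4]{porta2016derived} on the analytic side and the corresponding closure of $\dfSch^\taft$ under finite products on the formal side, whereupon the right-adjoint property of $\rigg$ again gives the compatibility. The only subtle point — and the one I expect to require the most care — is checking that these two building blocks genuinely generate all finite limits within $\dfSch^\taft$, i.e.\ that the general pullback of affine topologically almost of finite presentation objects can be expressed as a pullback of a closed immersion inside a finite product of formally smooth ambient objects; this is handled exactly as in step~(iii) of the proof of \cref{prop:rig_sends_formal_DMs_to_Ansp}, modeled on \cite[Theorem~6.5]{porta2016derived}.
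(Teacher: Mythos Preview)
Your proposal is correct and follows essentially the same approach as the paper: the first assertion is reduced to \cref{prop:rig_sends_formal_DMs_to_Ansp}, and the preservation of finite limits is deduced from the arguments in items (ii) and (iii) of that proposition's proof, via unramifiedness of $\Tad$, \cref{cl:lem}, and the closure properties of $\dAn$ from \cite{porta2016derived}. You have spelled out a few more details (in particular the role of \cref{rig_cl} and \cref{tr} for the comparison with the classical functor), but the strategy and key inputs coincide with the paper's.
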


\begin{proof}
	The first assertion is an immediate consequence of \cref{prop:rig_sends_formal_DMs_to_Ansp}. The second claim follows from the arguments provided in (ii) and (iii) in the proof of \cref{prop:rig_sends_formal_DMs_to_Ansp}.
\end{proof}

\subsection{Rigidification of structures}

\begin{construction} \label{const2}
Let $X  = (\cX, \cO) \in \anTop$ be a $\Tan$-structured $\infty$-topos. Suppose further that there exists
$\sfX = (\mathfrak X,\cO_{\sfX}) \in \adTop$ such that we have an equivalence $\sfX^\rig \simeq X$ in $\anTop$. Precomposition along
the transformation of pregeometries
	\[	
		\rigg \colon \Tad \to \Tan
	\]
induces a functor at the level of \infcats of structures
	\[
		\functor^+ \colon \AnRing(\cX)_{/ \cO} \to \fCAlg( \cX)_{/ \cO^+}
	\]
given on objects by the formula
	\[
		\cA \in \AnRing(\cX)_{/ \cO}  \mapsto \cA^+ \coloneqq \cA \circ \rigg \in \fCAlg (\cX)_{/ \cO^+}.
	\]
The functor of presentable \infcats $\functor^+ \colon \AnRing(\cX)_{/ \cO} \to \fCAlg( \cX)_{/ \cO^+}$ preserves limits and
filtered colimits. Thanks to the Adjoint functor theorem it follows that there exists a left adjoint
	\begin{equation} \label{com:rig}
		\functor^{\rig, \circ} \colon \fCAlg(\cX)_{/ \cO^+} \to \AnRing(\cX)_{/ \cO}.
	\end{equation}
The counit of the adjunction $\left( \functor^+, \ \rigg \right) \colon \adTop \to \anTop$ produces a well defined, up to
contractible indeterminacy, morphism
	\begin{equation} \label{coun}
		f \colon X^+ = (\cX, \cO^+ ) \to (\mathfrak X,\cO_{\sfX}) = \sfX,
	\end{equation}
in the \infcat $\adTop$. Let $(f^{-1}, f_*) \colon \cX \rightleftarrows \fX $ denote the underlying geometric morphism associated to
$f$. Then $f^{-1} \colon \fX \to \cX$ induces a well defined functor
	\begin{equation} \label{com:f}
		f^{-1} \colon \fCAlg(\fX)_{/ \cO_{\sfX}} \to \fCAlg(\cX)_{/ f^{-1} \cO_{\sfX}}.
	\end{equation}
Moreover, the morphism \eqref{coun} induces a morphism at the level of structures
	\[
		\theta \colon f^{-1} \cO_{\sfX} \to \cO^+.
	\]
The latter induces a well defined functor at the level of \infcats of structures
	\begin{equation} \label{com:theta}
		\theta \colon	\fCAlg(\cX)_{/  f^{-1} \cO_{\sfX}} \to \fCAlg(\cX)_{/ \cO^+},
	\end{equation}
which is given on objects by the formula
	\[
		\left( \cA \to f^{-1} \cO_{\sfX} \right) \in \fCAlg(\cX)_{/ f^{-1} \cO_{\sfX}} \mapsto 
		\left( \cA \to \cO^+ \right) \in \fCAlg(\cX)_{/ \cO^+}.
	\]
Therefore the composite $\rigg \coloneqq \functor^{\rig, \circ} \circ \theta \circ f^{-1} $ induces a functor
	\[
		\rigg \colon \fCAlg(\fX)_{/ \cO_{\sfX}} \to  \AnRing(\cX)_{/ \cO},
	\]
which we refer to as \emph{the rigidification functor at the level of structures}.
\end{construction}

\begin{definition}
Let $X \in \dAn$ and $\sfX \in \dfDM$. We say that $\sfX$ is a \emph{formal model for $X$} if $\sfX^\rig \simeq X$, in $\dAn$.
\end{definition}

\begin{remark} Let $X = (\cX, \cO_X)$ be a derived $k$-analytic space and $\sfX = (\mathfrak X, \cO_\sfX) \in 
\dfDM_{\Ok}^\taft$ a formal model for $X$.
Thanks to \cref{rig_cl},
the geometric morphism of underlying $\infty$-topoi $f \colon \cX \to \mathfrak X$ agrees with the classical \emph{specialization morphism},
in the $\infty$-categorical setting.
\end{remark}

\begin{notation}
We will denote the geometric morphism introduced in \cref{const2} $(f^{-1}, f_*) \colon \cX \to \fX$ by
$\spe = (\spe^{-1}, \spe_*)$.
\end{notation}

\begin{construction} \label{const3}
Notations as in \cref{const2}. Consider the following square of pregeometries
	\begin{equation} \label{all}
	\begin{tikzcd}
		\Tdisc(\Ok) \ar{r}{- \otimes_{\Ok} k} \ar{d}[swap]{\functor^\wedge_t} & \Tdisc(k) \ar{d}{\functor^\an} \\
		\Tad \ar{r}[swap]{\rigg} & \Tan 
	\end{tikzcd}
	\end{equation}
Notice that \eqref{all} is not commutative. The lower composite sends
	\[
	 	\bA^1_{\Ok} \in \Tdisc \mapsto \anA^1 \in \Tan
	\]
whereas the top composite sends
	\[
		\bA^1_{\Ok} \in \Tdisc \mapsto \anB^1 \in \Tan.
	\]
Let $\cA \in \fCAlg(\fX)_{/ \cO_{\sfX}}$. The
counit of the adjunction $(\functor^+, \rigg)$ induces a natural morphism at the level of $\Tad$-structures on $\cX$
	\[
		\theta_{\cA} \colon \spe^{-1} \cA \to  \cA^{\rig, + } \coloneqq \big( \cA^\rig \big)^+.
	\]
Applying the underlying algebra functor $\functor^\alg \colon \fCAlg(\cX)_{/ \cO^+} \to 
\CAlg_{\Ok}(\cX)_{/ \cO^{+, \alg}}$ to the morphism $\theta_{\cA}$ we obtain a morphism
	\begin{equation} \label{uau}
		\theta^\alg_{\cA} \colon ( \spe^{-1} \cA)^{\alg} \to \cA^{\rig, +, \alg},
	\end{equation}
in the \infcat $\CAlg_{\Ok}(\cX)_{/ \cO^{+, \alg}}$. As $\cA^{\rig, + , \alg}$ is a
$k$-linear object, we obtain by adjunction a morphism
	\[
		\overline{\theta}^\alg_{\cA} \colon (\spe^{-1} \cA)^{\alg } \otimes_{\Ok} k \to \cA^{\rig, +, \alg} \coloneqq \big( \cA^{\rig, +} \big)^\alg,
	\]
in the \infcat $\CAlg_k(\cX)_{\cO^{+, \alg}}$. We can identify $\cA^{\rig, +, \alg} \simeq \cA( \anB^1)$. There is a natural
inclusion of $k$-analytic spaces $\anB^1 \to \anA^1$. We obtain thus a canonical morphism
	\begin{equation} \label{wow}
		\cA(\anB^1) \to \cA(\anA^1),
	\end{equation}
in the \infcat $\CAlg_k(\cX)_{/ \cO(\anA^1)}.$ Composing both \eqref{uau} with \eqref{wow} we obtain a natural morphism
	\begin{equation} \label{atlast}
		\overline{\theta}_{\cA} \colon (\spe^{-1} \cA)^{\alg} \otimes_{\Ok} k \to \cA^{\rig} (\anA^{1}),
	\end{equation}
in the \infcat $\CAlg(\cX)_{/ \cO(\anA^1)}$. We will take as (a probably confusing) convention to denote
precomposition with $\functor^{\an}$ in \eqref{all} by 
	\[
		\functor^\alg \colon \AnRing(\cX)_{/ \cO} \to 
		\CAlg_{k}(\cX)_{/ \cO^\alg}.
	\]
In this case, we might as well write \eqref{atlast} as
	\[
		\overline{\theta}_{\cA} \colon (\spe^{-1} \cA)^{\alg} \otimes_{\Ok} k \to  \cA^{\rig, \alg} \coloneqq \big( \cA^{\rig} \big)^\alg.
	\]
\end{construction}

\begin{proposition} \label{loc_t}
Let $X = (\cX, \cO_X) \in \dAn$ and $\sfX = (\mathfrak X,\cO_{\sfX}) \in \dfDM_{\Ok}$ be a formal model for $X$. Then for every $\cA \in \fCAlg(\fX)_{/ \cO_\sfX}$ the natural morphism
	\[
		\overline{\theta}_{\cA} \colon (\spe^{-1} \cA)^{\alg} \otimes_{\Ok} k \to \cA^{\rig, \alg},
	\]
constructed above, is an equivalence in the \infcat $\CAlg_k(\cX)_{/ \cO^{\alg}}$.	
\end{proposition}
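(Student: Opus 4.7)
The plan is to reduce the claim to a direct computation on free generators of the \infcat $\fCAlg(\fX)_{/\cO_\sfX}$, in three steps: pass to stalks, reduce to free objects by sifted-colimit arguments, and then compute.

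Since $X$ is a derived $k$-analytic space, the underlying $\infty$-topos $\cX$ is hypercomplete, so it admits enough geometric points; the same holds for $\fX$ because $\sfX \in \dfDM_{\Ok}$. The construction of $\overline\theta_\cA$ is compatible with inverse images along geometric morphisms: $\spe^{-1}$, $\functor^\alg$ and $-\otimes_{\Ok} k$ all commute with pullback, while $\functor^{\rig,\circ}$, being a left adjoint, also commutes with stalks by the same reasoning as in the proof of \cref{t_comp} (via \cite[Theorem 1.12]{porta_square_zero}). Therefore it suffices to check that $\overline\theta_\cA$ is an equivalence at every geometric point, reducing the problem to the case $\cX = \fX = \cS$.

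Next, both functors $\cA \mapsto (\spe^{-1}\cA)^\alg \otimes_{\Ok} k$ and $\cA \mapsto \cA^{\rig,\alg}$ from $\fCAlg(\cS)_{/\cO_\sfX}$ to $\CAlg_k$ commute with sifted colimits: $\functor^{\rig,\circ}$ is a left adjoint so preserves all colimits, $\spe^{-1}$ and $-\otimes_{\Ok} k$ preserve colimits, and evaluating a structure on a fixed object commutes with sifted colimits by \cite[Lemma 3.13]{porta2016derived}. Combining \cref{mauro_result} with \cref{t_comp}, the \infcat $\fCAlg(\cS)_{/\cO_\sfX}$ is generated under sifted colimits by the family $\{\Psi(\Ok[T_1,\ldots,T_m])\}_{m\ge 0}$. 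Thus it is enough to verify the equivalence on each such free object.

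Fix $m \ge 0$ and set $\cA = \Psi(\Ok[T_1,\ldots,T_m])$. By \cref{t_comp}, $(\spe^{-1}\cA)^\alg$ is the $(t)$-adic completion of the strict henselianization of $\Ok[T_1,\ldots,T_m]$, whence $(\spe^{-1}\cA)^\alg \otimes_{\Ok} k \simeq k\langle T_1,\ldots,T_m\rangle^{\sh}$. On the other hand, the universal property of the left adjoint $\functor^{\rig,\circ}$, together with the fact that the right adjoint $\functor^+$ sends an analytic structure $\cC$ to the $\Tad$-structure whose underlying algebra is $\cC(\anB^1)$, identifies $\cA^\rig$ with the analytic structure corepresentable at $\anB^m$ over $\cO$. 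Evaluating on $\anA^1 \in \Tan$ then yields precisely $k\langle T_1,\ldots,T_m\rangle^{\sh}$, and a direct inspection of the composite $\overline\theta_\cA$ in \cref{const3} shows that under these identifications it becomes the canonical equivalence.

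The main obstacle lies in the last step, namely establishing the identification $\cA^{\rig,\alg} \simeq k\langle T_1,\ldots,T_m\rangle^{\sh}$ on a free adic generator. This requires unpacking how the left adjoint $\functor^{\rig,\circ}$ acts on free objects and exploiting the commutation $\rigg(\bfA^m_{\Ok}) \simeq \anB^m$; concretely, one must show that composing the map $\cA^{\rig}(\anB^1) \to \cA^{\rig}(\anA^1)$ of \eqref{wow} with $\overline{\theta}^{\alg}_\cA$ compensates for the mismatch between the two columns of the noncommutative square \eqref{all}. Once this is carried out for free objects, the reductions above promote the equivalence to arbitrary $\cA$.
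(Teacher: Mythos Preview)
Your proposal is correct and follows essentially the same three-step strategy as the paper's proof: reduce to stalks via \cite[Theorem 1.12]{porta_square_zero}, reduce to the free generators $\Psi(\Ok[T_1,\ldots,T_m])$ using that both sides commute with sifted colimits, and then compute directly on those generators by identifying $\Psi(\Ok[T_1,\ldots,T_m])$ with a germ of $\bfA^m$ whose rigidification is $\anB^m$. Your write-up is in fact slightly more careful than the paper's in two respects: you spell out why each constituent functor preserves sifted colimits, and you flag explicitly that the final identification $\cA^{\rig,\alg}\simeq k\langle T_1,\ldots,T_m\rangle^{\sh}$ requires unpacking how $\functor^{\rig,\circ}$ acts on free objects and tracking the passage through the noncommutative square \eqref{all}, whereas the paper simply asserts this equivalence.
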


\begin{proof}
Both the underlying $\infty$-topoi of $X$ and $\sfX$ have enough points, as these are hypercomplete and $1$-localic.
Therefore, thanks to \cite[Theorem 1.12]{porta_square_zero}, we are reduced to check the statement of the proposition on stalks, (notice that
given a geometric point $x_* \colon \cS \to \cX$ the composite $\spe_* \circ x_* \colon \cS \to \fX$ is also a geometric point).

By doing so, we might assume from
the start that $\cX = \cS = \fX$. Both composites $\functor^\alg \circ \rigg $ and $\left(\functor^\alg \circ \spe^{-1}\right)
\otimes_{\Ok} k$ commute with sifted colimits. The proof of
\cref{t_comp} implies that the \infcat $\fCAlg(\cS)_{/ \cO_{\sfX}}$ is generated under sifted colimits
by the family $\{ \Psi \left( \Ok [ T_1, \dots , T_m ]\right) \}_m$, where the $T_i$'s sit in homological degree $0$.
It thus suffices to show that 
	\[
		\overline{\theta}_{\cA} \colon (\spe^{-1} \cA)^{\alg} \otimes_{\Ok} k \to \cA^{\rig, \alg},
	\]	
is an equivalence whenever $\cA \simeq \Psi \big( \Ok [ T_1, \dots, T_m ] \big)$. In this case, we have natural equivalences
	\[
		 (\spe^{-1} \Psi (\Ok [T_1, \dots , T_m ])^{\alg} \otimes_{\Ok} k \simeq k \langle T_1, \dots, T_m \rangle.
	\]
Since $\Psi \left(  \Ok [T_1, \dots , T_m ] \right)$ can be identified with (a germ) of $\bfA^m \in \adTop$, it follows that
	\[
		\Psi \left( \Ok [T_1, \dots , T_m ] \right)^{\rig, \alg} \simeq k \langle T_1, \dots, T_m \rangle,
	\]
in the \infcat $\CAlg_k (\cX)_{/ \cO^\alg}$. The result now follows.
\end{proof}

\subsection{Rigidification of modules} We start by recalling \cite[Definition 4.3]{porta2017representability}:

\begin{definition}
Let $X = (\cX, \cO_X) \in \dAn$ be a derived $k$-analytic space. The \infcat of \emph{$\cO_X$-modules on $X$} is defined as
	\[
		\Mod_{\cO_X} \coloneqq \Sp \left( \Ab \left( \AnRing(\cX)_{/ \cO_X} \right) \right).
	\]
Similarly, the \infcat $\Mod_{\cO_X^\alg}$ is defined as the \infcat of $\cO_X$-module objects in $\Shv_{\cD(\Ab)}(\cX)$.
\end{definition}

One has the following result:

\begin{proposition}{\emph{\cite[Theorem 4.5]{porta2017representability}}}
There exists a canonical equivalence of \infcats
	\[
		\Mod_{\cO_X} \simeq \Mod_{\cO_X^\alg}.
	\] 
\end{proposition}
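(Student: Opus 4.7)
The plan is to mirror, step by step, the strategy used in \cref{equiv} above for the $\Ok$-adic case, transported to the $k$-analytic setting. Precomposition along the transformation of pregeometries $\Tdisc(k) \to \Tan$, sending $\bA^n_k$ to its analytification, induces a functor
\[
\functor^\alg \colon \AnRing(\cX)_{/\cO_X} \to \CAlg_k(\cX)_{/\cO_X^\alg},
\]
which preserves limits and sifted colimits, and which is conservative by the $k$-analytic analogue of \cite[Lemma 3.13]{porta2016derived}. The adjoint functor theorem then produces a left adjoint $\Psi_\an \colon \CAlg_k(\cX)_{/\cO_X^\alg} \to \AnRing(\cX)_{/\cO_X}$. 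The first substantive step is to check that $\Psi_\an$ preserves finite limits, by exactly the argument of \cref{equiv}: reduce to almost of finite presentation objects, on which $\functor^\alg \circ \Psi_\an$ admits an explicit description as analytification (sending the free $k$-algebra $k[T_1,\dots,T_m]$ to the Tate algebra $k\langle T_1,\dots,T_m\rangle$), and then extend to arbitrary objects by expressing them as filtered colimits of almost of finite presentation ones and exchanging finite limits with filtered colimits.

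Once $\Psi_\an$ is shown to be left exact, stabilizing the adjunction $(\Psi_\an, \functor^\alg)$ yields an exact adjunction $(f^\an, g^\alg) \colon \Mod_{\cO_X^\alg} \rightleftarrows \Mod_{\cO_X}$, and the right adjoint $g^\alg$ remains conservative. It then suffices to prove that the unit $\eta \colon \mathrm{id} \to g^\alg \circ f^\an$ is an equivalence. The underlying $\infty$-topos of a derived $k$-analytic space is hypercomplete and $1$-localic, hence has enough geometric points, and one reduces to the case $\cX = \cS$ by appealing to the analytic analogue of \cite[Theorem 1.12]{porta_square_zero} (compatibility of $\Psi_\an$ with inverse images). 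Over $\cS$, the stable $\infty$-category $\Mod_{\cO_X^\alg}$ is compactly generated by the single object $\cO_X^\alg$; both $f^\an$ and $g^\alg$ commute with colimits (the former as a left adjoint, the latter because $\functor^\alg$ commutes with sifted colimits and $g^\alg$ is an exact functor between stable $\infty$-categories), so we are reduced to verifying that $\eta_{\cO_X^\alg}$ is an equivalence.

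The main obstacle is precisely this final step, which is the $k$-analytic counterpart of \cref{t_comp}: one must establish that $\Psi_\an(\cO_X^\alg)^\alg \simeq \cO_X^\alg$ whenever $X = (\cX, \cO_X) \in \dAn$. The argument is again by induction along a free resolution $k[T_1,\dots,T_m] \to \cO_X^\alg$ built out of generators and relations $k[S^n]$, using unramifiedness of $\Tan$ (cf. \cite[Corollary 3.11]{porta2016derived}) to propagate pushouts through $\functor^\alg$, combined with the local description of $\cO_X^\alg$ as the analytification of an almost of finite presentation $k$-affinoid derived algebra. This is precisely the content of \cite[Theorem 4.5]{porta2017representability}.
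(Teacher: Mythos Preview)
The paper does not supply its own proof of this proposition: it is stated with the citation \cite[Theorem 4.5]{porta2017representability} and used as a black box. There is therefore nothing in the present text to compare your argument against.

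Your outline is a reasonable transposition of the adic argument in \cref{equiv} to the analytic setting, and this is indeed close in spirit to how the result is established in the cited reference. Two remarks, however. First, your final sentence is circular as written: you isolate the remaining obstacle (the analytic analogue of \cref{t_comp}, i.e.\ that $\Psi_\an(\cO_X^\alg)^\alg \simeq \cO_X^\alg$) and then say ``This is precisely the content of \cite[Theorem 4.5]{porta2017representability}.'' But that theorem \emph{is} the statement you are proving. If you mean that the verification of this step is carried out in the course of the proof in \cite{porta2017representability}, say so explicitly; otherwise you have deferred the entire substance of the argument back to the result you claim to be establishing.

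Second, a technical slip: you assert that $\Psi_\an$ sends $k[T_1,\dots,T_m]$ to the Tate algebra $k\langle T_1,\dots,T_m\rangle$. The transformation of pregeometries $\Tdisc(k)\to\Tan$ underlying $\functor^\alg$ here is analytification, sending $\bA^n_k$ to $\anA^n$, not to the closed polydisk $\anB^n$ (cf.\ the discussion in \cref{const3}, where the paper is careful to distinguish the two composites). Hence $\Psi_\an(k[T_1,\dots,T_m])^\alg$ is the germ of functions on analytic affine space, not the Tate algebra. This does not break the overall strategy, but the identification you need at the inductive base step is not the one you wrote down.
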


\begin{remark}
	In particular, the \infcat $\Mod_{\cO_X}$ inherits a natural t-structure induced from the t-structure on $\Mod_{\cO_X^\alg}$, where the latter is defined in \cite[Proposition 1.7]{Lurie_Spectral_Schemes}.
\end{remark}

\begin{lemma}
Let $X = (\cX, \cO_X) \in \anTop$ and $\sfX = (\mathfrak X,\cO_\sfX) \in \adTop$ such that $\sfX^\rig \simeq X$, in $\anTop$.
The rigidification functor $\rigg \colon \fCAlg(\fX)_{/ \cO_\sfX} \to \AnRing(\cX)_{/ \cO_X}$ induces a well defined functor
	\[
		\rigg \colon \Mod_{\cO_\sfX} \to \Mod_{\cO_X}.
	\]
We shall refer it as the \emph{rigidification of modules functor}.
\end{lemma}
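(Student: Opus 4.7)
The plan is to leverage the equivalences between the $\infty$-categories of modules and their underlying-algebra counterparts on both sides. \cref{equiv} provides $\Mod_{\cO_\sfX} \simeq \Mod_{\cO_\sfX^\alg}$ on the formal $\Ok$-adic side (applicable since $\fX$ is hypercomplete and $1$-localic, hence has enough points, and since $\cO_\sfX$ is topologically almost of finite presentation, so that $\Psi(\cO_\sfX^\alg)^\alg \simeq \cO_\sfX^\alg$ by \cref{t_comp}). On the analytic side, \cite[Theorem 4.5]{porta2017representability} gives $\Mod_{\cO_X} \simeq \Mod_{\cO_X^\alg}$.

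Under these identifications, I would define the rigidification on modules as the composite
\[
\Mod_{\cO_\sfX^\alg} \xrightarrow{\spe^{-1}} \Mod_{\spe^{-1}\cO_\sfX^\alg} \xrightarrow{-\otimes_{\spe^{-1}\cO_\sfX^\alg} \cO_X^\alg} \Mod_{\cO_X^\alg},
\]
where the second arrow uses the canonical morphism $\spe^{-1}\cO_\sfX^\alg \to \cO_X^\alg$ obtained from \cref{loc_t} applied to $\cA = \cO_\sfX$ (which in fact identifies $\cO_X^\alg$ with $\spe^{-1}\cO_\sfX^\alg \otimes_{\Ok} k$). Each step is a well-defined functor between stable $\infty$-categories: $\spe^{-1}$ comes from a geometric morphism of $\infty$-topoi and so preserves both finite limits and colimits, while base change of modules along a morphism of derived algebra sheaves is a standard operation.

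To justify that this construction genuinely deserves the name \emph{rigidification of modules}, I would verify compatibility with the trivial square-zero extension functors: for every $M \in \Mod_{\cO_\sfX}$, one needs a natural equivalence $\rigg(\cO_\sfX \oplus M) \simeq \cO_X \oplus \rigg(M)$. Applying the underlying algebra functor and invoking \cref{loc_t} with $\cA = \cO_\sfX \oplus M$ yields
\[
\rigg(\cO_\sfX \oplus M)^\alg \simeq \spe^{-1}(\cO_\sfX^\alg \oplus M) \otimes_{\Ok} k \simeq \cO_X^\alg \oplus (\spe^{-1}M \otimes_{\Ok} k),
\]
which coincides with the underlying algebra of $\cO_X \oplus \rigg(M)$ per the definition of $\rigg$ on modules given above.

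The hard part will be ensuring that all these functorialities cohere naturally — specifically, that the equivalences $\Mod_{\cO_\sfX} \simeq \Mod_{\cO_\sfX^\alg}$ and $\Mod_{\cO_X} \simeq \Mod_{\cO_X^\alg}$ are sufficiently natural in the structure sheaves for the composite functor to assemble into a genuine functor of $\infty$-categories, and that this functor agrees with any stabilization of the abelianization of $\rigg$ taken over the over-categories of structures. After tracking through the adjunctions of \cref{equiv} on one side and of \cite[Theorem 4.5]{porta2017representability} on the other, this reduces to the fact that $\spe^{-1}$ and base change along $\Ok \to k$ are exact functors between stable $\infty$-categories, together with \cref{loc_t}, which provides the necessary naturality at the level of underlying algebras.
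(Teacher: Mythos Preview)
Your proposal is not wrong in spirit, but it takes a substantially more elaborate route than necessary and somewhat misreads what the lemma is asking. The lemma asserts that the functor $\rigg$ already defined on overcategories of structures \emph{induces} a functor on modules; since $\Mod_{\cO_\sfX} = \Sp(\Ab(\fCAlg(\fX)_{/\cO_\sfX}))$ and $\Mod_{\cO_X} = \Sp(\Ab(\AnRing(\cX)_{/\cO_X}))$, the content is simply that $\rigg$ on structures preserves enough structure --- namely finite limits --- for the construction $\Sp(\Ab(-))$ to be functorial along it. The paper's proof is exactly this two-line argument: by \cref{loc_t} the composite $(-)^{\alg}\circ\rigg$ is localization at $(t)$, hence left exact; since $(-)^{\alg}$ is a conservative right adjoint, $\rigg$ itself is left exact, and one is done.

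By contrast, you first pass through the equivalences $\Mod_{\cO_\sfX}\simeq\Mod_{\cO_\sfX^\alg}$ and $\Mod_{\cO_X}\simeq\Mod_{\cO_X^\alg}$, construct a functor by hand as $\spe^{-1}$ followed by base change, and then face the task of checking that this handmade functor really agrees with what stabilization of $\rigg$ would produce. The ``hard part'' you identify --- tracking coherences and showing your composite matches the stabilized abelianization of $\rigg$ --- is precisely where the paper's argument is needed anyway: you cannot avoid showing $\rigg$ is left exact, and once you have that, the induced functor on modules exists directly and your detour through the underlying-algebra module categories becomes unnecessary. Your verification of compatibility with trivial square-zero extensions is suggestive but not sufficient on its own to pin down an exact functor between stable $\infty$-categories. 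In short: your argument can be made to work, but the shortest path through it is the paper's, and the extra machinery you invoke (\cref{equiv}, the explicit base-change description) buys you nothing for this lemma --- though it does foreshadow the later identification of $\rigg$ on modules with $\spe^{-1}(-)\otimes_{\Ok} k$, which the paper uses elsewhere.
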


\begin{proof}
It suffices to show that the functor $\rigg \colon \fCAlg(\fX)_{/ \cO_\sfX} \to \AnRing(\cX)_{/ \cO_X}$ commutes with finite limits.
Thanks to \cref{loc_t} the composite functor $\functor^\alg \circ \rigg$ agrees with localization at $(t)$ and therefore it commutes
with finite limits. As $\functor^\alg$ is a conservative right adjoint it follows that \[\rigg \colon  \fCAlg(\fX)_{/ \cO_\sfX} \to
\AnRing(\cX)_{/ \cO_X}\] commutes with finite limits as well, and the proof is finished.
\end{proof}

By construction, we have a natural projection functor \[\Omega^\infty_\an \colon \Mod_{\cO_X} \to \AnRing(\cX)_{/ \cO_X}.\] 

\begin{definition}Let $M \in \Mod_{\cO_X}$, we shall
denote $\cO_X \oplus M \coloneqq \Omega^\infty_\an(M)$ and refer to it as the \emph{analytic split square zero extension
of $\cO_X$ by $M$}.
\end{definition}

Following the discussion in \cite[\S 5.1]{porta2017representability} prior to \cite[Definition 5.4]{porta2017representability}
we conclude that the functor $\Omega^\infty_\an$ admits a left adjoint $\Sigma^\infty_\an \colon \AnRing(\cX)_{/ \cO_X} \to \Mod_{\cO_X}$.

\begin{definition}
Suppose we are given $\cA \in \AnRing(\cX)_{/ \cO_X}$ and consider the \infcat $\AnRing(\cX)_{\cA / / \cO_X}$. We can
consider the \emph{analytic $\cA$-linear derivations functor} $\Der^\an_{\cA} \left( \cO_X, - \right) \colon \Mod_{\cO_X} \to \cS$ given
on objects by the formula
	\[
		M \in \Mod_{\cO_X} \mapsto \Map_{\AnRing(\cX)_{\cA / / \cO_X} } \left( \cO_X, \cO_X \oplus M \right).
	\]
\end{definition}

Thanks to \cite[Proposition 5.18.]{porta2017representability}
such functor is corepresentable by the \emph{analytic cotangent complex relative to} $\cA \to \cO_X$, which we denote
by $\bL^\an_{\cO_X / \cA}$. Explicitly, one has a natural equivalence of mapping spaces	
	\[
		\Map_{\Mod_{\cO_X}} \left( \bL^\an_{\cO_X / \cA} , M \right) \simeq \Der^\an_{\cA} \left( \cO_X, M \right).
	\]
We can explicitly describe $\bL^\an_{\cO_X / \cA} \simeq \Sigma^\infty_\an \left( \cO_X \otimes_{\cA} \cO_X \right) \in \Mod_{\cO_X}$.

\begin{lemma} \label{comm:ad:an}
Let $X  = (\cX, \cO_X) \in \anTop$ and $\sfX = (\cZ , \cO_\sfX) \in \adTop$ such that $\sfX^\rig \simeq X $, in \infcat $\anTop$.
Then the diagram
	\[
	\begin{tikzcd}
		\Mod_{\cO_\sfX} \ar{r}{\rigg}  & \Mod_{\cO_X} \\
		\fCAlg(\fX)_{ / \cO_\sfX} \ar{u}{\Sigma^\infty_\ad} \ar{r}{\rigg} & \AnRing(\cX)_{/ \cO_X} \ar{u}{\Sigma^\infty_\an}
	\end{tikzcd}
	\]
is commutative up to coherent homotopy.
\end{lemma}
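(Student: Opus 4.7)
The plan is to recognize the rigidification functor on modules as the stabilization of the rigidification functor on structures, and then invoke the compatibility of $\Sp$ with $\Sigma^\infty$ for left adjoints. Observe that the structure-level $\rigg \colon \fCAlg(\fX)_{/\cO_\sfX} \to \AnRing(\cX)_{/\cO_X}$ is a pointed left adjoint that preserves finite limits: it is a composite of left adjoints by \cref{const2}, preserves finite limits by the preceding lemma, and sends the terminal object to the terminal object (since $\cO_\sfX^\rig \simeq \cO_X$). The module-level $\rigg$, as defined in the preceding lemma, is precisely the stabilization of this structure-level functor (via the equivalences $\Mod_{\cO_\sfX} \simeq \Sp(\Ab(\fCAlg(\fX)_{/\cO_\sfX}))$ and $\Mod_{\cO_X} \simeq \Sp(\Ab(\AnRing(\cX)_{/\cO_X}))$); in particular, by construction, $\Omega^\infty_\an \circ \rigg \simeq \rigg \circ \Omega^\infty_\ad$ at the level of modules.

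Next, I take the mate of this $\Omega^\infty$-commutativity with respect to the vertical adjunctions $\Sigma^\infty \dashv \Omega^\infty$, producing a canonical natural transformation
\[
\alpha \colon \Sigma^\infty_\an \circ \rigg \longrightarrow \rigg \circ \Sigma^\infty_\ad.
\]
Since the structure-level $\rigg$ is a left adjoint, so is its stabilization; hence both sides of $\alpha$ preserve small colimits. By presentability, it suffices to verify that $\alpha$ is an equivalence on a set of compact generators of $\fCAlg(\fX)_{/\cO_\sfX}$; a natural choice is the family $\big\{\Psi(\Ok[T_1, \dots, T_m])\big\}_m$ used in the proof of \cref{t_comp}, which are compact and generate under sifted colimits.

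The main obstacle will be the explicit verification of $\alpha$ on such generators. I plan to reduce this verification to the algebraic level via the equivalences $\Mod_{\cO_\sfX} \simeq \Mod_{\cO_\sfX^\alg}$ and $\Mod_{\cO_X} \simeq \Mod_{\cO_X^\alg}$, under which the module rigidification is identified with extension of scalars $- \otimes_{\Ok} k$ (by \cref{loc_t}) and the $\Sigma^\infty$ constructions become the usual $\cO \otimes_{\cA} \bL_{\cA/\Ok}$ formulas. The commutativity then reduces to the standard compatibility of the cotangent complex with flat base change at the algebraic level, which is formal and can be established by \cref{loc_t} together with the fact that $\cA^\alg \otimes_\Ok k$ agrees with $\cA^{\rig, \alg}$.
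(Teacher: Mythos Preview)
Your approach is correct but takes an unnecessary detour compared to the paper's one-line argument. The paper simply passes to right adjoints on \emph{all four} sides of the square: the vertical right adjoints are $\Omega^\infty_\ad$ and $\Omega^\infty_\an$, and the horizontal right adjoints are the $\functor^+$-type functors (precomposition along $\rigg \colon \Tad \to \Tan$ at the structure level, and its stabilization at the module level). The resulting right-adjoint square commutes by direct inspection, exactly as in \cref{com:} (and \cite[Lemma~5.15]{porta2017representability}), since $\Omega^\infty$ is evaluation at $(S^0,*)$ and $\functor^+$ is precomposition. Commutativity of the left-adjoint square then follows formally.

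Your route instead passes to right adjoints only vertically, obtaining $\Omega^\infty_\an \circ \rigg \simeq \rigg \circ \Omega^\infty_\ad$ from the definition of module-level $\rigg$ as a stabilization, and then takes the vertical mate to produce the Beck--Chevalley transformation $\alpha$. This is fine, but your plan to verify that $\alpha$ is an equivalence by checking on generators and reducing to algebraic base change is superfluous work: since the horizontal functors $\rigg$ are themselves left adjoints (as you note), the mate $\alpha$ is \emph{automatically} an equivalence. Indeed, taking horizontal mates of your $\Omega^\infty$-square yields the full right-adjoint square, whose commutativity is immediate; and a square of left adjoints commutes if and only if the corresponding square of right adjoints does. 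Equivalently, this is the general fact that for a left-exact left adjoint $F$ between presentable \infcats, the stabilization $\Sp(F)$ satisfies $\Sp(F)\circ\Sigma^\infty \simeq \Sigma^\infty \circ F$ (obtained by taking left adjoints of $\Omega^\infty \circ \Sp(G) \simeq G \circ \Omega^\infty$ for the right adjoint $G$). So your steps 3--4 can be replaced by this observation, recovering the paper's argument.
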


\begin{proof}
It suffices to prove that the corresponding diagram of right adjoints
	\[
	\begin{tikzcd}
		\Mod_{\cO_\sfX}  \ar{d}{\Omega^\infty_\ad}  & \Mod_{\cO_X} \ar{l}[swap]{\functor^+} \ar{d}{\Omega^\infty_\ad} \\
		\fCAlg(\fX)_{ / \cO_\sfX}   & \AnRing(\cX)_{/ \cO_X} \ar{l}[swap]{\functor^+}
	\end{tikzcd}
	\]
is commutative. But this follows by an immediate verification as done in \cite[Lemma 5.15]{porta2017representability}. The result follows.
\end{proof}
	
\begin{corollary} \label{cor:adic_cotangent_complex_is_compatible_with_analytic_via_rig}
We have a natural equivalence
	\[
		\big( \bL^\ad_{\cO_X / \cA} \big)^\rig \simeq \bL^\an_{\cO_\sfX^\rig / \cA^\rig}
	\]
in the \infcat $\Mod_{\cO_X}$.
\end{corollary}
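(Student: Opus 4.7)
The plan is to reduce the statement to the explicit description of the cotangent complexes as suspension spectra of relative tensor products, and then to exploit the fact that the rigidification functor at the level of structures is a composite of left adjoints. First, I would recall that by construction of the adic and analytic cotangent complexes one has the identifications
\[
\bL^\ad_{\cO_\sfX/\cA} \simeq \Sigma^\infty_\ad\!\left(\cO_\sfX \otimes_\cA \cO_\sfX\right) \in \Mod_{\cO_\sfX}, \qquad \bL^\an_{\cO_\sfX^\rig / \cA^\rig} \simeq \Sigma^\infty_\an\!\left(\cO_\sfX^\rig \otimes_{\cA^\rig} \cO_\sfX^\rig\right) \in \Mod_{\cO_X},
\]
where the relative tensor products are computed in the appropriate comma \infcats of local structures (adic on the left, analytic on the right).

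Next, I would apply the rigidification-of-modules functor $\rigg \colon \Mod_{\cO_\sfX} \to \Mod_{\cO_X}$ to the adic cotangent complex. By \cref{comm:ad:an}, the diagram formed by $\Sigma^\infty_\ad$, $\Sigma^\infty_\an$ and the two rigidification functors commutes up to coherent homotopy, so one obtains a canonical equivalence
\[
\rigg\!\left(\Sigma^\infty_\ad\!\left(\cO_\sfX \otimes_\cA \cO_\sfX\right)\right) \simeq \Sigma^\infty_\an\!\left(\rigg\!\left(\cO_\sfX \otimes_\cA \cO_\sfX\right)\right)
\]
in $\Mod_{\cO_X}$. It therefore suffices to identify $\rigg\!\left(\cO_\sfX \otimes_\cA \cO_\sfX\right)$ with the analytic pushout $\cO_\sfX^\rig \otimes_{\cA^\rig} \cO_\sfX^\rig$.

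The remaining point is that the structure-level rigidification functor
\[
\rigg \colon \fCAlg(\fX)_{/\cO_\sfX} \to \AnRing(\cX)_{/\cO_X},
\]
from \cref{const2}, preserves pushouts. This is immediate from its construction as the composite $\functor^{\rig,\circ} \circ \theta \circ \spe^{-1}$: the inverse image $\spe^{-1}$ of the specialization geometric morphism preserves all colimits; the change-of-slice functor $\theta$ is a left adjoint (post-composition along $\spe^{-1}\cO_\sfX \to \cO^+$); and $\functor^{\rig,\circ}$ is a left adjoint by construction. Hence the composite preserves colimits, and in particular pushouts, giving
\[
\rigg\!\left(\cO_\sfX \otimes_\cA \cO_\sfX\right) \simeq \cO_\sfX^\rig \otimes_{\cA^\rig} \cO_\sfX^\rig.
\]
Chaining this with the previous equivalence produces the desired natural equivalence. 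I do not foresee a substantive obstacle: \cref{comm:ad:an} has already performed the nontrivial identification of the left adjoints $\Sigma^\infty_\ad$ and $\Sigma^\infty_\an$ through $\rigg$, so what remains is a routine verification that rigidification on structures is cocontinuous, which follows formally from its presentation as a composite of left adjoints.
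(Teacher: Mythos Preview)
Your proof is correct and follows precisely the route the paper intends: the paper's proof is the single line ``It is an immediate consequence of \cref{comm:ad:an} above,'' and you have simply unpacked this by recalling the formula $\bL = \Sigma^\infty(\cO \otimes_\cA \cO)$, applying the commutativity of the $\Sigma^\infty$/$\rigg$ square from \cref{comm:ad:an}, and noting that structure-level rigidification preserves pushouts as a composite of left adjoints. There is nothing to add; your expansion is exactly what the one-line proof suppresses.
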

	
\begin{proof}
It is an immediate consequence of \cref{comm:ad:an} above.
\end{proof}

\begin{definition}
Let $M \in \Coh(X)$. We say that $M$ admits a formal model if there exists an $\cO_\sfX$-module $\mathfrak M \in \Coh(\cO_\sfX)$ such
that 
	\[
		\mathfrak M^\rig \simeq M \in \Mod_{\cO_X}.
	\]
\end{definition}
	
\begin{proposition}
Let $\sfX = (\mathfrak X,\cO_\sfX) \in \dfDM_{\Ok}$ and $X = (\cX, \cO_X) \coloneqq \sfX^\rig $, denote its rigidification. Then the functor
$\rigg \colon \Mod_{\cO_\sfX} \to \Mod_{\cO}$ is $t$-exact.
\end{proposition}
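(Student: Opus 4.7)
The plan is to reduce the $t$-exactness assertion to the purely algebraic side, where the rigidification functor on modules can be identified with the composite of pullback along the specialization morphism and localization at $t$. Recall that the $t$-structure on $\Mod_{\cO_\sfX}$ is defined, via the equivalence $g^\alg \colon \Mod_{\cO_\sfX} \simeq \Mod_{\cO_\sfX^\alg}$ of \cref{equiv}, by declaring $\cF \in \Mod_{\cO_\sfX}$ to be connective precisely when $g^\alg(\cF) \in \Mod_{\cO_\sfX^\alg}$ is connective for the standard $t$-structure on sheaves of modules. Similarly, the $t$-structure on $\Mod_{\cO_X}$ is pulled back from $\Mod_{\cO_X^\alg}$ via \cite[Theorem 4.5]{porta2017representability}. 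It therefore suffices to show that the induced functor $\Mod_{\cO_\sfX^\alg} \to \Mod_{\cO_X^\alg}$ is $t$-exact.

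To identify this induced functor, I would first note that the rigidification of modules is the left adjoint of the pullback $\functor^+ \colon \Mod_{\cO_X} \to \Mod_{\cO_\sfX}$, since both sides are obtained by stabilizing and abelianizing the adjunction $(\functor^{\rig, \circ}, \functor^+)$ constructed in \cref{const2}. Passing to the algebraic side and invoking \cref{loc_t}, which provides a canonical equivalence $(\spe^{-1}\cA)^\alg \otimes_{\Ok} k \simeq \cA^{\rig, \alg}$, one checks that the corresponding left adjoint is naturally given by
\[
\mathfrak{M} \in \Mod_{\cO_\sfX^\alg} \mapsto (\spe^{-1}\mathfrak{M}) \otimes_{\spe^{-1}\cO_\sfX^\alg} \cO_X^\alg \simeq (\spe^{-1}\mathfrak{M}) \otimes_{\Ok} k.
\]
More precisely, one compares the two candidate left adjoints by producing a natural transformation and checking it is an equivalence on a family of compact generators, combining \cref{comm:ad:an} with the algebraic identification just recalled.

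The final step is to verify that the composite $\mathfrak{M} \mapsto (\spe^{-1}\mathfrak{M}) \otimes_{\Ok} k$ is $t$-exact. The inverse image along the geometric morphism $\spe \colon \cX \to \fX$ preserves both finite limits and colimits, and is therefore $t$-exact for the canonical $t$-structures on $\Shv_{\cD(\Ab)}$. Tensoring with $k$ over $\Ok$ is localization at the multiplicative set generated by $t$, and is therefore exact because $k$ is a filtered colimit of free $\Ok$-modules, hence a flat $\Ok$-module. As the composite of two $t$-exact functors is $t$-exact, the proposition follows.

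The main obstacle is the precise identification of the rigidification functor on algebraic modules carried out in the second paragraph. One must verify coherently that the stabilized rigidification, transported under the equivalence of \cref{equiv}, coincides with the pullback-then-localize functor; this requires combining \cref{comm:ad:an} at the level of $\Sigma^\infty$ with \cref{loc_t} at the level of underlying algebras, and checking the resulting natural transformation of left adjoints on a generating family such as the free objects $\Psi(\Ok[T_1, \dots, T_m])$ used in the proof of \cref{t_comp}.
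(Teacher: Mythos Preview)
Your argument is correct, and it takes a genuinely different route from the paper. The paper's proof is a one-liner: it invokes \cref{t_ex:ver}, which shows that $\Coh(Z) \to \Coh(Z^\rig)$ is a $t$-exact Verdier quotient, together with \cite[Corollary 2.9]{hennion2016formal}. In other words, the paper handles $t$-exactness via the abstract Verdier quotient structure on coherent modules and an external reference, whereas you work directly on the level of all modules by identifying the algebraic incarnation of $\rigg$ as $\spe^{-1}(-)\otimes_{\Ok} k$ and checking each piece of the composite is $t$-exact.

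Your approach is more elementary and self-contained: it uses only \cref{equiv}, \cref{loc_t}, and the flatness of $k$ over $\Ok$, and it applies directly to $\Mod$ rather than passing through $\Coh$. The paper's approach, by contrast, is terse and leans on machinery developed elsewhere (the Verdier quotient framework and the cited corollary), which is efficient if one has that machinery in hand but less transparent in isolation. The obstacle you flag---making the identification of the stabilized rigidification with $\spe^{-1}(-)\otimes_{\Ok} k$ rigorous---is real but tractable: since $\spe^{-1}$ commutes with $\functor^\alg$ (both being given by precomposition and inverse image, which commute), \cref{loc_t} transports cleanly through $g^\alg$, and the two candidate left adjoints agree on the generators $\Psi(\Ok[T_1,\dots,T_m])$ exactly as you outline.
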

	
\begin{proof}
The statement follows readily from \cref{{t_ex:ver}} combined with \cite[Corollary 2.9]{hennion2016formal}.
\end{proof}
	
\subsection{Main results} In this \S, we state our two main results. The first one
concerns the existence of formal models for quasi-paracompact and quasi-separated derived $k$-analytic spaces.
The second is a direct generalization of Raynaud's localization theorem.

\begin{definition}
Let $A \in \adCAlg$. We say that $A$ is an \emph{derived admissible
$\Ok$-adic algebra} if $A$ is $(t)$-complete and topologically almost of finite presentation. We further required that,
for every $i \geq 0$, the homotopy sheaf $\pi_i(A)$ is $(t)$-torsion free.
We denote $\admCAlg$ the full subcategory of $\adCAlg$ spanned by
admissible adic derived $\Ok$-algebras.
\end{definition}

\begin{definition}
Let $\sfX \in \dfDM_{\Ok}^\taft$. We say that $\sfX$ is a \emph{derived admissible $\Ok$-adic
Deligne-Mumford stack} (resp. \emph{derived admissible $\Ok$-adic
scheme}) if there exists 
	\[
		\coprod_i \Spf (A_i) \to \sfX,
	\]
such that, for each index $i$, $A_i \in \admCAlg$. We denote by $\dfDM_{\Ok}^\adm$ (resp.,
$\dfSch^\adm$) the \infcat of derived admissible $\Ok$-adic
Deligne-Mumford stacks (resp. derived admissible $\Ok$-adic
schemes).
\end{definition}

\begin{definition}
Let $X = (\cX, \cO)$ be a derived $k$-analytic space. We say that $X$ is \emph{quasi-paracompact and quasi-separated} if the $0$-th truncation $\trun_{\leq 0}( X)$ is equivalent to a quasi-paracompact and quasi-separated ordinary $k$-analytic space.
\end{definition}

Thanks to \cite[Theorem 3, page 204]{bosch2005lectures}, it follows that any (ordinary)
quasi-paracompact and quasi-separated $k$-analytic space, $X$, admits a classical formal model. We generalize this
result to the derived setting:

\begin{theorem} \label{main1}
Let $X = (\cX, \cO)$ be a quasi-paracompact and quasi-separated derived $k$-analytic space. Then $X$ admits a derived
formal model $\sfX = (\mathfrak X,\cO_{\sfX}) \in \dfSch$.
\end{theorem}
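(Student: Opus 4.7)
The plan is to construct $\sfX$ by induction on the Postnikov tower of $X$, using classical Raynaud/Lamjoun (\cref{Raynaud}) as the base case and the adic cotangent complex machinery from \S 3 to propagate at each stage. Write $\cO \simeq \lim_n \tau_{\leq n} \cO$ since $\cX$ is hypercomplete. Thanks to \cref{rig_cl}, it suffices to produce a compatible tower $\{\sfX_n\}_{n \geq 0}$ in $\dfSch$ together with equivalences $\sfX_n^\rig \simeq \trun_{\leq n}(X)$, and then take the limit $\sfX \coloneqq (\mathfrak X, \lim_n \cO_{\sfX_n})$. Because each transition will be an $\Ok$-adic square zero extension by a module concentrated in strictly positive homological degree, the underlying geometric $\infty$-topos $\mathfrak X$ and the $0$-truncation $\pi_0(\cO_\sfX^\ad)$ remain unchanged, so $\sfX$ lands in $\dfSch$ provided $\sfX_0$ does.

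For the base case, apply \cref{Raynaud} to the ordinary quasi-paracompact and quasi-separated $k$-analytic space $\trun_{\leq 0}(X)$ to obtain an ordinary admissible formal scheme $\sfX_0 \in \fSch$ with $\sfX_0^\rig \simeq \trun_{\leq 0}(X)$; view $\sfX_0$ as an object of $\dfSch$. For the inductive step, assume $\sfX_n \in \dfSch$ is a formal model of $\trun_{\leq n}(X)$. By Postnikov square-zero decomposition in derived $k$-analytic geometry, $\trun_{\leq n+1}(X)$ is the $k$-analytic square zero extension of $\trun_{\leq n}(X)$ by the module $\pi_{n+1}(\cO)[n+1]$, classified by an analytic derivation
\[
d^\an \colon \bL^\an_{\trun_{\leq n}(X)} \longrightarrow \pi_{n+1}(\cO)[n+2]
\]
in $\Mod_{\cO_{\trun_{\leq n}(X)}}$. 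Using the refined existence results for formal models of almost perfect/coherent modules (cited in the introduction and Appendix A), produce a coherent module $\mathfrak M_{n+1} \in \Coh(\sfX_n)$ together with an equivalence $\mathfrak M_{n+1}^\rig \simeq \pi_{n+1}(\cO)[n+1]$. Now invoke the key comparison \cref{cor:adic_cotangent_complex_is_compatible_with_analytic_via_rig}, i.e.\ $(\bL^\ad_{\sfX_n})^\rig \simeq \bL^\an_{\sfX_n^\rig}$, combined with the rigidification-of-modules adjunction, to lift $d^\an$ to an adic derivation
\[
d^\ad \colon \bL^\ad_{\sfX_n} \longrightarrow \mathfrak M_{n+1}[1].
\]
Define $\sfX_{n+1}$ as the $\Ok$-adic square zero extension of $\sfX_n$ classified by $d^\ad$, using the pullback description of adic square zero extensions from \S 3.6.

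It remains to check two things. First, that $\sfX_{n+1}^\rig \simeq \trun_{\leq n+1}(X)$: this follows because $\rigg$ is a right adjoint and hence preserves the defining pullback square of a square zero extension, while $\rigg$ sends $d^\ad$ to $d^\an$ and $\mathfrak M_{n+1}[1]$ to $\pi_{n+1}(\cO)[n+2]$ by construction. Second, that $\sfX_{n+1} \in \dfSch$: by construction the underlying $\infty$-topos is unchanged and $\trun_{\leq 0}(\sfX_{n+1}) \simeq \trun_{\leq 0}(\sfX_n) \simeq \sfX_0$ is an ordinary admissible formal scheme. Passing to the limit yields the desired $\sfX \in \dfSch$ with $\sfX^\rig \simeq X$.

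The main obstacle is the lifting step for the derivation. Producing a formal model $\mathfrak M_{n+1}$ of $\pi_{n+1}(\cO)$ requires genuine input—the existence theorem for formal models of coherent sheaves on quasi-paracompact quasi-separated analytic spaces—and then one must match up universal properties between the adic and analytic cotangent complexes to promote $d^\an$ to $d^\ad$. The comparison \cref{cor:adic_cotangent_complex_is_compatible_with_analytic_via_rig} is exactly what makes this possible, but verifying the coherences, especially that the lifted derivations assemble compatibly along $n$ to give a single formal model in the limit, requires the rectification statement \cref{rect} and hypercompleteness of $\mathfrak X$ to guarantee $\cO_\sfX \simeq \lim_n \cO_{\sfX_n}$ still defines a local $\Tad$-structure.
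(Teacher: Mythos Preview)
Your approach is essentially the paper's own: Postnikov induction starting from classical Raynaud, lifting the analytic derivation to an adic one via \cref{cor:adic_cotangent_complex_is_compatible_with_analytic_via_rig} and the formal-model results for $\Coh$, building $\sfX_{n+1}$ as the adic square-zero extension, and passing to the limit in structures. One point needs correction: you justify $\sfX_{n+1}^\rig \simeq \trun_{\leq n+1}(X)$ by saying ``$\rigg$ is a right adjoint and hence preserves the defining pullback square,'' but the relevant pullback lives in $\fCAlg(\fX)_{/\cO_{\sfX_n}}$, and on structures the rigidification functor of \cref{const2} is built from a \emph{left} adjoint $\functor^{\rig,\circ}$, not a right one (dually, in $\adTop$ the square-zero extension is a pushout, which a right adjoint need not preserve). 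The paper instead uses \cref{loc_t}: after applying $\functor^\alg$, rigidification of structures becomes $\spe^{-1}(-)\otimes_{\Ok} k$, which is exact, and conservativity of $\functor^\alg$ then transports the pullback. With that replacement your argument goes through.
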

	
\begin{proof}
Let $X_0 \coloneqq \trun_{\leq 0} (X)$ denote the $0$-truncation of $X$. Thanks to
\cite[Theorem 3, page 204]{bosch2005lectures}, it follows that $X_0$ admits an admissible formal
model $\sfX_0 \in \fSch$. We will construct inductively
a sequence of derived admissible $\Ok$-adic schemes
	\[
		\sfX_0 \to \sfX_1 \to \sfX_2 \to \dots,
	\]
such that we have equivalences
	\[
		(\sfX_n)^\rig \simeq \trun_{\leq n} (X),
	\]
for each $n \geq 0$.The case $n=0$ being already dealt, we proceed with the inductive
step. Suppose that $\sfX_{n} = (\mathfrak X,\cO_{\sfX_n})
$ has already been constructed, for $n \geq 0$. As $X$ is a
derived $k$-analytic space, for each $n \geq 0$, the homotopy sheaf $\pi_n(\cO_X)$ is
a coherent module over $\pi_0(\cO_X)$. Thanks to
\cite[Corollary 5.42]{porta2017representability} there exists an analytic derivation
$d \colon \bL^\an_{\trun_{\leq n} X} \to \pi_{n+1} (\cO_X)[ n+ 2]$ together with a pullback
diagram
	\begin{equation}
	\begin{tikzcd}
		\tau_{\leq n+1} \cO_X \ar{r} \ar{d} & \tau_{\leq n} \cO_X \ar{d}{d} \\
		\tau_{\leq n} \cO_X \ar{r}{d_0} & \tau_{\leq n} \cO_X \oplus \pi_{n+1} ( \cO_X)[n+2],
	\end{tikzcd}
	\end{equation}
in the \infcat $\AnRing(\cX)_{/ \tau_{\leq n} \cO_X}$. Here $d_0$ denotes the trivial
analytic derivation. \cref{O'Neill} and its proof imply that we can find a formal model
for $d $ in the stable \infcat $\Coh (\sfX_{n})$, namely
	\[
		\delta \colon \bL^\ad_{\sfX_n} \to M_{n+1} [n+2],
	\]
where $M_{n+1} \in \Coh(\sfX_n)$ is a formal model for $\pi_{n+1}(\cO_X)$. We can assume further that $M_{n+1} \in \Coh(\sfX_n)^\heartsuit$ does not admit non-trivial $(t)$-torsion.
We define $\cO_{n+1} \in \fCAlg(\fX)_{/ \cO_{\sfX_n}}$ as the pullback of the diagram
	\begin{equation} \label{form:ext}
	\begin{tikzcd}
		\cO_{n+1} \ar{r} \ar{d} & \cO_{\sfX_n} \ar{d}{\delta} \\
		\cO_{\sfX_n} \ar{r}{d_0} & \cO_{\sfX_n} \oplus M_{n+1} [n+2]
	\end{tikzcd}
	\end{equation}
in $\fCAlg(\fX)_{/ \cO_{\sfX_n}}$. Define $\sfX_{n+1} \coloneqq (\mathfrak X,\cO_{n+1} )$.
By construction, $\sfX_{n+1}$ is a derived admissible $\Ok$-adic Deligne-Mumford stack.
Both $\sfX^\rig$ and $\trun_{\leq n+1} (X)$ have equivalent underlying $\infty$-topoi.
Thus, we have a rigidification functor
	\[
		\rigg \colon \fCAlg(\fX)_{/ \cO_{n+1}} \to \AnRing(\cX)_{/ \cO_X},
	\]
which
commutes with finite limits. Thus the diagram \eqref{form:ext} remains a pullback
diagram after rigidification. For this reason, we obtain a canonical morphism
	\[
		\alpha_{n+1} \colon (\cO_{n+1} )^\rig \to \tau_{\leq n+1} \cO_X
	\]
in the \infcat $\AnRing(\cX)_{/ \tau_{\leq n+1} \cO_X}$. The morphism of $\Tan$-structures produces a morphism
	\[
		\theta_{n+1} \colon \trun_{\leq n+1} (X) \to \sfX^\rig_{n+1}.
	\]
We claim that $\theta_{n+1}$ is an equivalence in the \infcat $\anTop$. Since the underlying $\infty$-topoi of both $\sfX_{n+1}$ and of $\sfX_0$ are equivalent,
we are reduced to
show that $\alpha_{n+1}$ is an equivalence of structures.
Thanks to \cref{loc_t} we have an
equivalence
	\[
		\left( \cO_{ n+1}^\rig \right)^\alg \simeq \left(
		\spe^{-1} \cO_{ n+1} \right)^\alg \otimes_{\Ok} k.
	\]
By the inductive hypothesis together with the pullback diagrams \eqref{form:ext}
it follows that the natural morphism
	\[
		(\spe^{-1} \cO_{ n+1})^\alg \otimes_{\Ok} k \to (\tau_{ \leq n+1} \cO_X )^\alg,
	\]
is an equivalence. By conservativity of $\functor^\alg$ it follows that
	\[
		\alpha_{n+1} \colon \cO_{ n+1}^\rig \to \tau_{\leq n+1} \cO_X,
	\]
is also an equivalence the \infcat $\AnRing(\cX)_{/ \tau_{\leq n+1} \cO_X}$.
We conclude that 
	\[
		\theta_{n+1} \colon \sfX^\rig_{n+1} \simeq \trun_{\leq n+1} X,
	\]
is an equivalence in $\anTop$. Define
	\[
		\sfX \coloneqq \colim_{n \geq 0} \sfX_n.
	\]
We claim that $\sfX$ is again an admissible derived $\Ok$-adic Deligne-Mumford
stack: the question being local on $\sfX$ we reduce ourselves to the case $\sfX = \Spf A$
and $\sfX_n \simeq \Spf A_n$, for suitable $A, \ A_n \in \admCAlg$.
By construction, $\tau_{\leq n-1} (A_n) \simeq A_{n-1}$, for each $n \geq 1$.
We have moreover an identification
	\[
		\sfX \simeq \Spf \left( \lim_{n \geq 0} A_n \right).
	\]
As $A_n$ is admissible, for every $n \ge 0$, we conclude that $\lim_{n \geq 0}A_n$ is again admissible.
This concludes the proof that $\sfX$ is a derived admissible $\Ok$-adic Deligne-Mumford
stack.

It only remains to show that $\sfX^\rig \simeq X$.
We have a sequence of equivalences	
	\[
		\trun_{\leq n} (\sfX^\rig) \simeq (\trun_{\leq n} \sfX)^\rig \simeq
		\sfX_n^\rig \simeq \trun_{\leq n } (X),
	\]
which is a consequence of convergence for derived $k$-analytic stacks, cf. \cite[\S 7]{porta2017representability}. Assembling these equivalences together, we produce a map
	\[
		f \colon \sfX^\rig \to X,
	\]
in the \infcat $\dAn$. The underlying morphism of $\infty$-topoi is an equivalence, since the underlying $\infty$-topoi are equivalent in every step of the inductive argument.
Furthermore, $f$ induces equivalences, for each $i \geq 0$,
	\[
		\pi_i (\cO_\sfX^\rig) \simeq \pi_i(\cO_X),
	\]
where $\cO_\sfX \coloneqq \lim_{n \geq 0} \cO_{\sfX_n}$. By hypercompletion of the $\infty$-topos
$\cX$, it follows that
	\[
		\cO^\rig_\sfX \simeq \cO_X.
	\]
This shows that $f$ is an equivalence, and the proof is complete.
\end{proof}

We now deal with our main result. We start with a useful lemma:

\begin{lemma} \label{lem_for_main}
Let $F \colon \cC \to \cD $ be a functor between \infcats. Suppose that for any $D
\in \cD$ the following assertions are satisfied:
\begin{enumerate}
	\item The \infcat $\cC_{/ D} \coloneqq \cC \times_{\cD} \cD_{/ D}$ is contractible;
	\item let $\cC_{/ D}'$ denote the full subcategory of $\cC_{/ D}$ spanned
	by those objects $(C, \psi \colon F(C) \to D)$ such that $\psi$ is an equivalence
	in $\cD$. Suppose further that $\cC_{/ D}'$ is non-empty and the
	inclusion $\cC_{/ D}' \to \cC_{/ D}$ is cofinal.
\end{enumerate}
Then $F \colon \cC \to \cD$ induces an equivalence of \infcats $\cC[S^{-1}] \to \cD$,
where $S$ denotes the class of morphisms $f \in \cC^{\Delta^1}$ such that $F(f)$
is an equivalence.
\end{lemma}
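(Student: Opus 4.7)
The plan is to use the universal property of localization. Since every $f \in S$ satisfies $F(f) \in W_\cD$ (the class of equivalences) by definition, $F$ factors canonically as $\cC \xrightarrow{L} \cC[S^{-1}] \xrightarrow{\bar F} \cD$. The goal reduces to showing $\bar F$ is an equivalence of \infcats, which I would verify by checking essential surjectivity and fully faithfulness separately.

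Essential surjectivity is immediate from (ii): for each $D \in \cD$, the non-emptiness of $\cC'_{/D}$ produces a pair $(C, \psi)$ with $\psi \colon F(C) \simeq D$, so $\bar F(L(C)) \simeq D$. The bulk of the argument is fully faithfulness. Here I would invoke the dual universal property: $\bar F$ is an equivalence iff, for every \infcat $\cE$, the restriction functor
\[
	\bar F^{*} \colon \Fun(\cD, \cE) \to \Fun^{S}(\cC, \cE)
\]
is an equivalence, where the right-hand side denotes functors sending $S$ to equivalences. The key observation is that (i) and (ii) together imply $\cC'_{/D}$ is itself weakly contractible (cofinal inclusion into a weakly contractible \infcat), and moreover \emph{every} morphism $(C_1, \psi_1) \to (C_2, \psi_2)$ in $\cC'_{/D}$ is represented by some $f \colon C_1 \to C_2$ with $F(f)$ an equivalence (since both $\psi_i$ are), so $f \in S$.

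Given $G \in \Fun^{S}(\cC, \cE)$, I would construct a quasi-inverse $\tilde G \colon \cD \to \cE$ as a right Kan extension. On objects the formula is $\tilde G(D) \coloneqq \lim_{(C, \psi) \in \cC'_{/D}} G(C)$, which reduces to $G(C)$ for any chosen $(C, \psi) \in \cC'_{/D}$ because $\cC'_{/D}$ is weakly contractible and $G$ inverts every morphism of $\cC'_{/D}$, so the diagram $G \vert_{\cC'_{/D}} \colon \cC'_{/D} \to \cE$ factors through the underlying $\infty$-groupoid, which is contractible. Functoriality in $D$ is obtained from pullback of comma categories along morphisms of $\cD$ together with the same contractibility argument applied to relative comma categories. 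The natural transformation $\tilde G \circ F \simeq G$ is produced by the canonical morphism from $G(C)$ to the limit indexed by $\cC'_{/F(C)}$, using that $(C, \mathrm{id}_{F(C)})$ is a final object (in the weak sense) of this contractible \infcat.

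The main obstacle, as usual in such statements, is producing the functor $\tilde G$ and the equivalence $\tilde G \circ F \simeq G$ with full $\infty$-categorical coherence rather than just on objects. I would handle this by working with a concrete model of $\cC[S^{-1}]$ (marked simplicial sets, or the Dwyer--Kan hammock localization) in which the Kan-extension formula above can be implemented as a fibrantly-replaced limit; the weak contractibility of $\cC'_{/D}$ from (i) and (ii) supplies exactly the higher homotopies needed to replace each choice of $(C, \psi)$ by a contractible space of choices. The resulting inverse $\tilde G \colon \cD \to \cE$ verifies the universal property of $\cC[S^{-1}]$, completing the proof.
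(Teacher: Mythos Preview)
Your overall strategy matches the paper's: reduce to showing that precomposition $F^* \colon \Fun(\cD,\cE) \to \Fun^S(\cC,\cE)$ is an equivalence, and build an inverse via Kan extension using the contractibility of the slice categories. The key computational observation you isolate---that $\cC'_{/D}$ is weakly contractible and that $G$ restricted to it is essentially constant---is exactly what drives the paper's argument too.

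However, there is a conceptual slip that creates unnecessary work. You write ``right Kan extension'' and use the limit formula $\lim_{(C,\psi) \in \cC'_{/D}} G(C)$, but the comma category $\cC_{/D} = \cC \times_{\cD} \cD_{/D}$ (objects $(C, \psi \colon F(C) \to D)$) is the indexing category for the \emph{left} Kan extension $F_! G$, with the \emph{colimit} formula. Hypothesis (ii) says precisely that $\cC'_{/D} \hookrightarrow \cC_{/D}$ is cofinal, and cofinality is the condition for computing \emph{colimits} over the smaller category. So the natural move is to set $\tilde G \coloneqq F_! G$, observe that $(F_! G)(D) \simeq \colim_{\cC_{/D}} G \simeq \colim_{\cC'_{/D}} G$ by (ii), and then use your constancy argument to identify this with $G(C)$ for any $(C,\psi) \in \cC'_{/D}$.

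The payoff of getting this right is that your entire final paragraph about coherence evaporates. The left Kan extension $F_! G$ is a genuine functor $\cD \to \cE$ produced by the general machinery (e.g.\ \cite[\S4.3.2]{lurie2009higher}); you never have to build functoriality by hand or pass to a concrete model of the localization. The identities $F^* F_! \simeq \mathrm{id}$ and $F_! F^* \simeq \mathrm{id}$ then follow from the pointwise computation you already sketched, applied at $D = F(C)$ using that $(C,\mathrm{id}_{F(C)}) \in \cC'_{/F(C)}$. This is exactly what the paper does, in about four sentences.
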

	
\begin{proof}
Let $\cE$ be an \infcat. We have to prove that precomposition along $F$ induces
a fully faithful embedding of \infcats
	\[
		F^* \colon \Fun \left( \cD, \cE \right) \to \Fun \left( \cC, \cE \right),
	\]
whose essential image consists of those functors $G \colon \cC \to \cE$ which
send morphisms in $S$ to equivalences in $\cD$. Given any functor $G \colon
\cD \to \cE$, the composite $G \circ F \colon \cC \to \cE$ sends each morphism
in $S$ to an equivalence $\cE$, as $F$ does. Both hypothesis (i) and (ii) combined with the colimit
formula for left Kan extensions imply that for every 
	\[
		G \colon \cC \to \cE,
	\]
such that every
morphism in $S$ is sent to an equivalence, the left Kan extension 
	\[
		F_! (G) \in 
		\Fun \left(\cD, \cE \right),
	\]
exists. Furthermore, we have natural equivalences
	\[
		F_! \circ F^* \simeq \mathrm{id}, \quad \textrm{and }
		F^* \circ F_! \simeq \mathrm{id}.
	\]
The
result now follows from the fact that $F_!$ is an inverse to $F^*$, when restricted
to the full subcategory of $\Fun \left( \cC, \cE \right)$ spanned by those functors
sending every morphism in $S$ to equivalences in $\cE$.
\end{proof}

\begin{remark}
\cref{lem_for_main} implies that the localization functor of classical Raynaud theorem is $\infty$-categorical, i.e. the usual category $\An^{\mathrm{qpcqs}}$ of quasi-paracompact and quasi-separated $k$-analytic spaces
is the $\infty$-categorical localization of $\fSch$. This is not a common phenomenon: if $\cC$ is a
$1$-category and $S$ a collection of morphisms in $\cC$ then the $\infty$-categorical localization $\cC[S^{-1}]$ is typically a genuine $\infty$-category.

\end{remark}
	
\begin{definition}
Let $\dAn^{\mathrm{qpcqs}} \subseteq \dAn$ denote the full subcategory of $\dAn$ spanned by quasi-paracompact and quasi-separated derived $k$-analytic spaces $X \in \dAn$.
\end{definition}	

\begin{definition}
Let $f \colon \sfX \to \sfY$ be a morphism between derived $\Ok$-adic schemes. We say that $f$ is \emph{rig-strong} if, for each $i> 0$, the induced morphism
	\[
		\pi_i \left((f^\rig)^{-1}\cO_{\sfY^\rig} \right) \otimes_{\pi_0((f^{\rig})^{-1}\cO_{\sfY^\rig})} \pi_0( \cO_{\sfX^\rig}) \to \pi_i \left( \cO_{\sfX^\rig} \right),
	\]
is an equivalence in the \infcat $\Mod_{\pi_0(\cO_{\sfX^\rig})}$.
\end{definition}

\begin{theorem}[Derived Raynaud Localization Theorem] \label{main}
Let $S$ denote the saturated class of morphisms in $\dfSch^\adm$ generated by rig-strong morphisms $f \colon \sfX \to \sfY$ whose $\trun_{\leq 0} (f)$ is an admissible blow-up of ordinary $\Ok$-adic
schemes.
Then the rigidification functor
	\[
		\rigg \colon \dfSch^\adm \to \dAn^{\mathrm{qpcqs}},
	\]
induces an equivalence of \infcats
	\[
		\dfSch^\adm[S^{-1}] \simeq \dAn^{\mathrm{qpcqs}}.
	\]
\end{theorem}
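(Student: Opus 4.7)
My plan is to apply \cref{lem_for_main} with $F \coloneqq \rigg \colon \dfSch^\adm \to \dAn^{\mathrm{qpcqs}}$ and $S$ the class described in the statement. For each $X \in \dAn^{\mathrm{qpcqs}}$ I must check that the comma \infcat
\[
\cC_X \coloneqq \dfSch^\adm \times_{\dAn^{\mathrm{qpcqs}}} (\dAn^{\mathrm{qpcqs}})_{/X}
\]
is contractible, and that the full subcategory $\cC_X' \subseteq \cC_X$ spanned by pairs $(\sfX, \psi)$ with $\psi \colon \sfX^\rig \to X$ an equivalence is non-empty and the inclusion $\cC_X' \hookrightarrow \cC_X$ is cofinal. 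Non-emptiness of $\cC_X'$ is precisely \cref{main1}. I would in fact establish the stronger statement that $\cC_X$ is filtered, which implies contractibility.

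The main technical ingredient is a Postnikov-tower lifting procedure, which I first sketch on a single morphism $f \colon X \to Y$ in $\dAn^{\mathrm{qpcqs}}$. The goal is to produce $f^+ \colon \sfX \to \sfY$ in $\dfSch^\adm$ with $(f^+)^\rig \simeq f$, built inductively as $f_n^+ \colon \sfX_n \to \sfY_n$ satisfying $(f_n^+)^\rig \simeq \trun_{\leq n}(f)$. The base case $n = 0$ follows from classical Raynaud (\cref{Raynaud}) combined with \cref{rig_cl}. For the inductive step, the transition $\trun_{\leq n}(f) \to \trun_{\leq n+1}(f)$ is an analytic square-zero extension by $\pi_{n+1}(\cO_X)$, classified by an analytic derivation out of $\bL^\an_{\trun_{\leq n} X}$ compatible with the one for $Y$. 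Using the equivalence $(\bL^\ad_{\sfX_n})^\rig \simeq \bL^\an_{\trun_{\leq n} X}$ of \cref{cor:adic_cotangent_complex_is_compatible_with_analytic_via_rig}, together with the existence of formal models for coherent analytic sheaves invoked in the proof of \cref{main1} via \cite{antonio_Hilbert} and Appendix~A, I lift this analytic derivation to an adic derivation $\delta$, and the resulting adic square-zero extension gives $f_{n+1}^+$. Passing to the limit over $n$ as in the proof of \cref{main1} produces the desired $f^+$.

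To upgrade this to filteredness of $\cC_X$ I would apply the same procedure to finite diagrams $K \to \cC_X$. The induction step for a $K$-indexed family is formalized by taking the pushout in $\Fun(K, \dfSch^\adm)$ of the $\Lambda^2_0$-shaped diagram
\[
f_n^+ \longleftarrow f_n^+\bigl[\pi_{n+1}(f)^+[n+2]\bigr] \longrightarrow \trun_{\leq n} f_n^+ ,
\]
which realizes $f_{n+1}^+$ as an adic square-zero extension of $\trun_{\leq n} f_n^+$. Cofinality of $\cC_X' \hookrightarrow \cC_X$ then follows because, for any $(\sfX, \psi) \in \cC_X$, the inductive lifting applied to the morphism $\sfX^\rig \to X$ produces a formal model $\sfX' \in \cC_X'$ together with a morphism $\sfX \to \sfX'$ that is rig-strong and whose $\trun_{\leq 0}$ is an admissible blow-up, hence belongs to $S$.

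The main obstacle I anticipate is the coherent lifting of finite $K$-indexed diagrams rather than individual edges: a single-step lift is essentially unique up to admissible blow-ups and rig-strong modifications, but making compatible choices across a diagram demands genuine homotopy-coherent functoriality of the whole construction. I would handle this by exploiting that on topologically almost of finite presentation objects the adic cotangent complex, the functor $\Psi$ of \cref{const1}, and $(t)$-adic completion all commute with the relevant finite limits (\cref{t_comp}, \cref{rect}), so the required coherences descend from the analytic side through the rectification equivalence of \cref{rect}. Combining the resulting filteredness of $\cC_X$ with non-emptiness from \cref{main1} and \cref{lem_for_main} yields the claimed equivalence $\dfSch^\adm[S^{-1}] \simeq \dAn^{\mathrm{qpcqs}}$.
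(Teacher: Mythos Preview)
Your overall strategy matches the paper's exactly: apply \cref{lem_for_main} (in its dual form, with $\cC_X \coloneqq (\dfSch^\adm)_{X/}$), prove this comma category is cofiltered via Postnikov induction, use \cref{cor:adic_cotangent_complex_is_compatible_with_analytic_via_rig} to compare adic and analytic derivations, and lift formal models of coherent sheaves using Appendix~A. The paper also arranges that the cone point of the extended diagram is itself a formal model for $X$, which gives cofinality of $\cC_X'$ and cofilteredness in one stroke rather than as separate verifications.

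However, your proposed resolution of the key technical difficulty --- lifting finite $K$-indexed diagrams coherently --- is not quite right. You suggest that the coherences ``descend from the analytic side through the rectification equivalence of \cref{rect}'', but \cref{rect} only compares two descriptions of \emph{formal} structures over a fixed $\infty$-topos; it provides no mechanism for lifting analytic data to the formal side. The paper's actual mechanism is more specific: the Postnikov transition for each object of the diagram is encoded by an $\Ok$-adic derivation, and the universal property of the adic cotangent complex converts the whole diagram of derivations into a finite diagram in $\Coh(\sfX_n)_{\bL^\ad_{\sfX_n}/}$. The lifting problem then becomes one of extending a finite diagram of almost perfect modules from $\Coh(\trun_{\leq n}X)$ to $\Coh(\sfX_n)$, which is exactly what \cref{O'Neill} (filteredness of the \infcat of formal models of a fixed analytic coherent sheaf) handles. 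The translation back from modules to $\Tad$-structures is managed via an explicit coCartesian fibration $\cD \to \fCAlg(\cX_n)_{/\cO_{\sfX_n}}$ whose fibers are the variable over-and-under categories $\fCAlg(\cX_n)_{\cO//\cO}$, together with an $(F,G)$ adjunction (the paper's Steps~3--9). Your $\Lambda^2_0$ pushout description, borrowed from the introduction, is adequate for a single edge but does not by itself organize the higher coherences for a general finite $K$; that is precisely where the coCartesian-fibration bookkeeping is needed.
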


\cref{main} is an immediate consequence of \cref{lem_for_main} combined with the following Proposition:

\begin{proposition}
The rigidification functor $\rigg \colon \dfSch^\adm \to \dAn^{\mathrm{qpcqs}}$ satisfies the dual assumptions of the statement in \cref{lem_for_main}. 
\end{proposition}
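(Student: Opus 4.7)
The plan is to verify the dual of hypotheses (i)--(ii) of \cref{lem_for_main} for $F = \rigg \colon \dfSch^\adm \to \dAn^{\mathrm{qpcqs}}$. Explicitly, for every $X \in \dAn^{\mathrm{qpcqs}}$ we must check that (i) the comma \infcat $\cC_X \coloneqq \dfSch^\adm \times_{\dAn^{\mathrm{qpcqs}}} (\dAn^{\mathrm{qpcqs}})_{X/}$ is contractible, and (ii) the full subcategory $\cC_X' \subseteq \cC_X$ spanned by pairs $(\sfX, \psi)$ with $\psi \colon X \xrightarrow{\sim} \sfX^\rig$ is non-empty and the inclusion $\cC_X' \hookrightarrow \cC_X$ is cofinal. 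Non-emptiness of $\cC_X'$ is precisely the existence statement \cref{main1}.

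For (i) I would in fact prove the stronger statement that $\cC_X$ is filtered, which suffices since filtered \infcats are contractible; this amounts to producing, for every finite diagram $p \colon K \to \cC_X$, a cocone over $p$. The argument proceeds by induction on the Postnikov level of $X$. The base case $X \simeq \trun_{\leq 0}(X)$ reduces, via classical Raynaud-Lamjoun (\cref{Raynaud}), to lifting finite diagrams in $\An^{\mathrm{qpcqs}}_{X/}$ to $\fSch_{\Ok}$ up to admissible blow-ups, where filteredness of $\cC_X$ is classical. For the inductive step, assume a compatible lift of the $n$-truncation of $p$ to $\dfSch^\adm$. The Postnikov attaching pushout
\[
\trun_{\leq n} X [\pi_{n+1}(\cO_X)[n+2]] \to \trun_{\leq n} X \to \trun_{\leq n+1} X
\]
in $\dAn$ is lifted to $\dfSch^\adm$ as follows: first use the existence of formal models for almost perfect modules (Appendix~A and \cite{antonio_Hilbert}) to choose a formal model $M \in \Coh(\sfX_0)$ of $\pi_{n+1}(\cO_X)$; then use the key comparison $(\bL^\ad_{\sfX_n})^\rig \simeq \bL^\an_{\trun_{\leq n} X}$ of \cref{cor:adic_cotangent_complex_is_compatible_with_analytic_via_rig}, together with the universal property of the adic cotangent complex, to transport the analytic derivation classifying $\trun_{\leq n+1} X$ to an adic derivation $\bL^\ad_{\sfX_n} \to M[n+2]$. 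The associated $\Ok$-adic square-zero extension yields $\sfX_{n+1}$ with $\sfX_{n+1}^{\rig} \simeq \trun_{\leq n+1} X$, and the construction is carried out functorially in $K$ by working in $\Fun(K, -)$ throughout.

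Cofinality of $\cC_X' \hookrightarrow \cC_X$ is handled by essentially the same recipe: given $(\sfX, \psi) \in \cC_X$ we must build a morphism $\sfX \to \sfY$ in $\dfSch^\adm$ with $\sfY^\rig \simeq X$ compatibly with $\psi$. On $0$-truncations one produces an admissible blow-up of $\trun_{\leq 0}(\sfX)$ by Raynaud-Lamjoun; the higher Postnikov levels are constructed by the same adic square-zero extension procedure, and the resulting $\sfX \to \sfY$ is automatically rig-strong because by construction its higher homotopy sheaves are pulled back from formal models of the analytic homotopy sheaves of $\cO_X$, which rigidify to the analytic extensions already present on $X$.

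The main obstacle is the coherent lifting of an entire finite diagram $K \to \cC_X$ through the Postnikov induction, rather than of a single morphism as sketched in the introduction: at each level one must choose, compatibly over every simplex of $K$, formal models of homotopy sheaves, classifying adic derivations, and resulting square-zero pushouts. I would encode this as a lifting problem along a coCartesian fibration of \infcats of Postnikov data whose fibers are contractible by \cref{cor:adic_cotangent_complex_is_compatible_with_analytic_via_rig}; contractibility of the space of sections then yields the required coherent lift and completes the verification of both (i) and (ii).
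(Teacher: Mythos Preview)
Your proposal is correct and follows essentially the same approach as the paper: both verify the dual hypotheses of \cref{lem_for_main} simultaneously by showing that every finite diagram $K \to \cC_X$ extends to a cone whose apex is a genuine formal model, proceeding by induction on the Postnikov tower of $X$ with the base case supplied by classical Raynaud--Lamjoun and the inductive step driven by the comparison $(\bL^\ad_{\sfX_n})^\rig \simeq \bL^\an_{\trun_{\leq n} X}$ together with the existence of formal models for almost perfect modules from Appendix~A. The paper's execution of the coherence step is somewhat more concrete than your ``coCartesian fibration with contractible fibers'' packaging---it unwinds the inductive step into nine explicit substeps, using the specific coCartesian fibration $\cD \to \fCAlg(\cX_n)_{/\cO_{\sfX_n}}$ with fibers $\fCAlg(\cX_n)_{\cO//\cO}$ and the associated adjunction $(F,G)$ to transport the diagram of adic derivations back and forth---but the underlying mechanism is the same one you identify.
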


\begin{proof}
The verification of the assumptions of \cref{lem_for_main} are made simultaneously: Let $X \in \dAn^{\mathrm{qpcqs}}$ and define
	\[
		\cC_X \coloneqq \big( \dfSch^\adm \big)_{X / }.
	\]
Denote by 
	\[
		p_0 \colon \cC_X \to \dfSch, \quad \textrm{ } p_1 \colon \cC_X \to \dAn^{\mathrm{qpcqs}},
	\] 
the canonical projections functors. We will
show that for every finite space $K$ and every functor $f \colon K \to \cC_X$, $f$ can be extended to a (cone) functor $f^\lhd \colon K^\lhd \to \cC_X$ in such a way that $f^\lhd(\infty) $ is a formal model for $X \in \dAn^{\mathrm{qpcqs}}$. We denote $\infty \in K^\lhd$ the
cone point. This will imply that $\cC_X$ is a cofiltered \infcat, hence its homotopy type is weakly contractible. It also follows, that the inclusion of the full subcategory spanned by formal models, for $X$, in $\cC_X$, is final. 

Let us first sketch the rough idea of proof: By induction on the Postnikov tower we are allowed to lift commutative diagrams of derived $k$-analytic spaces to the formal level. This is done, by reducing questions concerning liftings of $\Tad$-structures,
to analogue lifting questions at the level of the stable \infcats of coherent modules. This is achieved using the universal property of the adic cotangent complex. Furthermore, the corresponding questions for coherent modules can be dealt effectively using the results proved in Appendix A. The main
technical difficulty is to keep track of higher coherences, involved in commutative diagrams of derived $k$-analytic spaces, when lifting these to the $\Ok$-adic setting.

We will construct a sequence 
	\[
		\{ (\sfX_n, \trun_{\leq n} X \to \sfX^\rig)  \in \cC_{\trun_{\leq n} X} \}_{n \in \mathbb N}, \quad \textrm{with } \sfX_n \coloneqq (\cX_n , \cO_{\sfX_n}) \in \dfSch^\adm,
	\]
satisfiying the following
conditions:
\begin{enumerate}
\item For each $n  \geq 0$, $\sfX_n$ is $n$-truncated.
\item For each $n \geq 0$, we have an equivalence
	\[
		\big( \sfX_n \big)^\rig \simeq \trun_{\leq n} \sfX.
	\]
\item For each $n \geq 0$, there exists a canonical equivalence
	\[
		\sfX_n \to \trun_{\leq n } \sfX_{n+1} ,
	\]
in $\dfSch^\adm$. This implies, in particular, that the underlying $\infty$-topoi $\cX_n \in \Top$ are all equivalent.
\item For each $n \geq 0$, there is a functor $f_n^\lhd \in \Fun \big( K^\lhd, \cC_{\trun_{\le n} X} \big)$ whose restriction $(f_n^\lhd)_{| K}$ is naturally equivalent to $\trun_{\leq n} f $, in the \infcat
$\Fun \left( K , \cC_{\trun_{\leq n} X} \right)$. Additionally, $p_0 \left( f_n^\lhd(\infty) \right) \simeq \sfX_n$.
\end{enumerate}

Assume that we have constructed such a sequence $\{(\sfX_n, \trun_{\leq n} X \to \sfX^\rig)  \in \cC_{\trun_{\leq n} X} \}_{n \in \mathbb N}$ satisfying conditions (i) through (iv). Define $\sfX \coloneqq \colim_{n \geq 0} \sfX_n$ and notice that
the morphisms $\trun_{\leq n } X \to \sfX_n^\rig$ assemble to produce a morphism
	\[
		X \to \sfX^\rig,
	\]
in the \infcat $\dAn$. Moreover, by the universal property of filtered colimits, the diagrams $f^\lhd_n \in \Fun \left( K^\lhd, \cC_{\trun_{\leq n} X} \right)$ assemble. There exists thus a well defined extension $f^\lhd \in \Fun \left( 
K^\lhd, \cC_X \right)$, of $f \colon K \to \cC_X$. As the rigidification functor is compatible with $n$-th truncations, it follows that the functor $f^\lhd$, constructed in this way, when evaluated on $\infty$
	\[
		p_1 \big( f^\lhd(\infty) \big) \in \big( \dAn^{\mathrm{qpcqs}} \big)_{X/ }
	\]
agrees with the morphism
	\[
		X \to \sfX^\rig.
	\]
This finishes the proof of the claim. For this reason, we are reduced to prove the existence of a sequence $\{ (\sfX_n, \trun_{\leq n} X \to \sfX_n^\rig)  \in \cC_{\trun_{\leq n} X} \}_{n \in \mathbb N}$ satisfying conditions
(i) through (iv) above. 

\addtocontents{toc}{\SkipTocEntry}
\subsection*{Step 1}{(Case $n = 0$)}
Let $X_0 \coloneqq \trun_{\leq 0} X.$ By the universal property of $n$-truncation we can assume, without loss of generality, that for each vertex $x \in K$, the component $\big( \sfY_x,
\psi_x \colon X_0 \to \sfY_x^\rig \big) \coloneqq 
f(x) \in \cC_{X_0}$ is actually discrete. More concretely, each $\sfY_X$ is equivalent to an ordinary $\Ok$-adic formal scheme. In this case, the result is now a direct consequence of \cite[Theorem 3, page 204]{bosch2005lectures}. 

\addtocontents{toc}{\SkipTocEntry}
\subsection*{Step 2}{(Inductive datum)}
Assume that, for a given $n \geq 0$,
we have constructed a diagram $f_n^\lhd \in \Fun \left( K^\lhd, \cC_{ \trun_{\leq n} X } \right)$ satisfying conditions (i) through (iv), above. Denote by $\alpha_{n, x} \colon \sfX_n \to \sfY_{n, x} $ the morphism associated to
$\infty \to x$ in $K^\lhd$, where $\sfY_{n, x} \coloneqq \trun_{\leq n} \sfY_x = \big( \cY_{n, x}, \cO_{n, x} \big).$ The functor $f_n^\lhd \in \Fun \left( K^\lhd, \cC_{\trun_{\leq n} X} \right)$ corresponds to the datum:
\begin{enumerate}
\item A diagram $f^\lhd_{n, *}  \colon K^\lhd \to \Top$ such that $f(\infty) \simeq \cX_n$. Furthermore, for each $x \in K$, we have a morphism $\alpha_{n, x, *} \colon \cX_{n} \to \cY_{n, x}$ in $\Top$. We remark that this data is constant for $ 0 \leq m \leq n$.
\item A diagram $f_n^{\lhd, -1} \colon K^{\lhd, \op} \to \fCAlg(\cX_n)_{/ \cO_{\sfX_n} } $ such that $f^{\lhd, -1} (\infty) \simeq \mathrm{id}_{\cO_{\sfX_n}}$ and $f^{\lhd, -1} (x) $ corresponds to a (structural) morphism $h_{n, x} \colon \alpha_{n, x}^{-1} \cO_{\sfY_{n, x}} \to
\cO_{\sfX_n}$, in the \infcat $\fCAlg(\cX_n)_{/  \cO_{\sfX_n}}$.
\end{enumerate}

A similar analysis for the diagram $\trun_{\leq n+1} f \colon K \to \cC_{\trun_{\leq n+1} X}$ together with the Postnikov decomposition imply that we have a functor $f^{-1}_{n+1} \colon K^\op \times \big( \Delta_1 \big)^2 \to \fCAlg(\cX_n)_{/ \cO_{\sfX_n}}$,
such that, for each $ x \in K$, the induced morphism
	\[
		f^{-1}_{n + 1, x} \colon \big( \Delta_1 \big)^2 \to \fCAlg(\cX_n )_{/ \cO_{\sfX_n}},
	\]
corresponds to a pullback diagram of the form
	\begin{equation} \label{form_d}
	\begin{tikzcd}
		\tau_{\leq n + 1} (\alpha_{ x}^{-1} \cO_{\sfY_{ x}} ) \ar{r} \ar{d} & \tau_{\leq n}( \alpha^{-1}_{ x} \cO_{\sfY_{ x}} ) \ar{d}{d_{n, x}} \\
		\tau_{\leq n} ( \alpha_{ x}^{-1} \cO_{\sfY_{ x}}) \ar{r}{d_{n, x}^0} & \tau_{\leq n} ( \alpha_{ x}^{-1} \cO_{\sfY_{ x}}) \oplus \alpha^{-1}_{ x} \pi_{n+1} \big( \cO_{\sfY_x} \big) [ n+2]
	\end{tikzcd}
	\end{equation}
in $\fCAlg(\cX_n)_{/ \cO_{\sfX_n}}$. Here $d_{n, x}$ denotes a suitable $\Ok$-adic derivation and $d^0_{n, x}$ the trivial adic derivation.

\addtocontents{toc}{\SkipTocEntry}
\subsection*{Step 3}{(Functoriality of the construction $\fCAlg(\cX)_{\cO / / \cO}$)} Consider the functor $I \colon \fCAlg(\cX_n)_{/ \cO_{\sfX_n}} \to \Cat$ given on objects by the formula
	\[
		\big( \cO \to \cO_{\sfX_n} \big)  \in \fCAlg(\cX_n)_{/ \cO_{\sfX_n}} \mapsto \fCAlg(\cX_n)_{ \cO / / \cO} \in \Cat.
	\]
The transition morphisms correspond to (suitable) base change functors. Let $\cD \to \fCAlg(\cX_n)_{/ \cO_{\sfX_n}}$ denote the corresponding coCartesian fibration obtained via the unstraightening construction. Notice that pullback along $\cO \to \cO_{\sfX_n}
$ induces a functor $g_\cO \colon \fCAlg(\cX_n )_{\cO_{\sfX_n} // \cO_{\sfX_n}} \to \fCAlg(\cX_n)_{ \cO // \cO}$, which admits a left adjoint $f_\cO \colon  \fCAlg(\cX_n)_{ \cO // \cO} \to  \fCAlg(\cX_n )_{\cO_{\sfX_n} // \cO_{\sfX_n}}$. The latter is given by
base change along $\cO \to \cO_{\sfX_n}$. Therefore, applying the unstraightening construction, we obtain a well defined functor
	\[
		G \colon \fCAlg(\cX_n)_{\cO_{\sfX_n} // \cO_{\sfX_n}} \times \fCAlg(\cX_n)_{/ \cO_{\sfX_n}} \to \cD,
	\]
over $ \fCAlg(\cX_n)_{/ \cO_{\sfX_n}} $. Moreover, its fiber at $(\cO \to \cO_{\sfX_n} ) \in  \fCAlg(\cX_n)_{/ \cO_{\sfX_n}} $ coincides with the functor $g_\cO$, introduced above. Thanks to the (dual) discussion proceding \cite[Corollary 8.6]{porta2016higher}, it follows that
$G$ admits a left adjoint $F \colon  \cD \to  \fCAlg(\cX_n)_{\cO_{\sfX_n} // \cO_{\sfX_n}} \times \fCAlg(\cX_n)_{/ \cO_{\sfX_n}}$.

\addtocontents{toc}{\SkipTocEntry}
\subsection*{Step 4}(Base change of \eqref{form_d} along the morphism $\tau_{\leq n}( \alpha_{ x}^{-1} \cO_{\sfY_{ x}}) \to \cO_{\sfX_n}$)
The zero derivations $d^0_{n, x}$ in \eqref{form_d} assemble to give a a well defined functor $d^0_n \colon K^{\op} \to \cD$. Similarly the $d_{n, x}$ induce a well defined functor $d_{n} \colon K^{\op} \to \cD$. Denote 
	\[
		\Delta_0 \coloneqq F \circ d^0_n, \quad \textrm{and } 
		\Delta \coloneqq F \circ d_n \colon K^{\op} \to \fCAlg(\cX_n)_{ \cO_{\sfX_n}  / / \cO_{\sfX_n}}.
	\] 
Notice that $\Delta_0 \colon K^\op \to \fCAlg(\cX_n)_{ \cO_{\sfX_n} // \cO_{\sfX_n}}$ is given on objects by the formula
	\[
		x \in K^\op \mapsto \left( \cO_{\sfX_n} \to \cO_{\sfX_n} \oplus \alpha_{x}^* \pi_{n+1} \big( \cO_{\sfY_{ x}} \big) [n+2] 
		 \xrightarrow{d^0_{n,x}} \cO_{\sfX_n} \right) \in  \fCAlg(\cX_n)_{ \cO_{\sfX_n} // \cO_{\sfX_n}}.
	\]
Similary the functor $\Delta \colon K^\op \to \fCAlg(\cX_n)_{ \cO_{\sfX_n} // \cO_{\sfX_n}}$ satisfies
	\[
		x \in K^\op \mapsto  \left( \cO_{\sfX_n} \to  \cO_{\sfX_n}  \oplus \alpha_{ x}^* \pi_{n+1} \big( \cO_{\sfY_{ x}} \big) [n+2] 
		 \xrightarrow{d_{n,x}} \cO_{\sfX_n} \right) \in \fCAlg(\cX_n)_{ \cO_{\sfX_n} // \cO_{\sfX_n}}.
	\]
By construction, both functor $\Delta_0$ and $\Delta$ factor through the full subcategory
	\[
		\fCAlg^\der(\cX_n)_{\cO_{\sfX_n} // \cO_{\sfX_n}} \subseteq \fCAlg(\cX_n)_{\cO_{\sfX_n} // \cO_{\sfX_n}}
	\]
spanned by those objects $\cO_{\sfX_n} \to \cA \to \cO_{\sfX_n}$
which correspond to $\Ok$-adic derivations. 

\addtocontents{toc}{\SkipTocEntry}
\subsection*{Step 5}({Reduction of the above diagrams to diagrams of modules}) The universal property of the $\Ok$-adic cotangent complex implies that we have an equivalence of \infcats
	\[
		\functor^\der \colon \fCAlg^\der(\cX_n)_{\cO_{\sfX_n} // \cO_{\sfX_n}} \simeq \big( \Mod_{\cO_{\sfX_n}} \big)_{\bL^\ad_{\cO_{\sfX_n} / }}. 
	\]
Therefore, the functors $\Delta_0$ and $\Delta$ correspond, under the equivalence $\functor^\der$, to functors \[\Delta_0 , \ \Delta \colon K^\op \to  \big( \Mod_{\cO_{\sfX_n}} \big)_{\bL^\ad_{\cO_{\sfX_n} }},\] given on objects by the formulas
	\[
		x \in K^\op \mapsto \big( d_{n, x}^0 \colon \bL^\ad_{\cO_{\sfX_n}  } \to \alpha^*_{x} \pi_{n+1} \big(\cO_{\sfY_{ x}} \big) [n+2] \big) \in \big(  \Mod_{\cO_{\sfX_n}} \big)_{\bL^\ad_{\cO_{\sfX_n} / }},
	\]
and	
	\[
		x \in K^\op \mapsto \big( d_{n, x} \colon \bL^\ad_{\cO_{\sfX_n}  } \to \alpha^*_x \pi_{n+1} \big( \cO_{\sfY_{ x}}\big) [n+2] \big) \in \big(  \Mod_{\cO_{\sfX_n}} \big)_{\bL^\ad_{\cO_{\sfX_n} / }},
	\]
respectively. Thanks to the proofs of both \cite[Lemma 5.35 and Corollary 5.38]{porta2017representability} the $\Ok$-adic cotangent complex $\bL^\ad_{\cO_{\sfX_n}}$ is coherent and connective. Therefore the functors $\Delta_0, \ \Delta \colon K^{\op} \to 
 \big(  \Mod_{\cO_{\sfX_n}} \big)_{\bL^\ad_{\cO_{\sfX_n} / }}$ factor through the full subcategory $\Coh(\cO_{\sfX_n} )_{ \bL^\ad_{\cO_{\sfX_n}} /} \subseteq  \big(  \Mod_{\cO_{\sfX_n}} \big)_{\bL^\ad_{\cO_{\sfX_n} / }}$. 
 
 \addtocontents{toc}{\SkipTocEntry}
 \subsection*{Step 6}{(Rigidification of the corresponding diagrams of modules)} Consider now the composites
	\[
		\Delta_0^\rig \coloneqq \rigg \circ \Delta_0, \quad
		\Delta^\rig \coloneqq \rigg \circ \Delta \colon K^\op \to \Coh (\cO_{\sfX_n}^\rig )_{\bL^\an_{\sfX_n^\rig} / }.
	\]
The same reasoning as above, applied to the rigidification of the diagram $\trun_{\leq n+ 1} f \colon K \to \cC_{\trun_{\leq n+ 1} X}$, produces well-defined extensions
	\[
		\widetilde{\Delta}^\rig_0, \quad \widetilde{\Delta}^\rig \colon K^{\lhd, \op} \to \Coh(\cO^\rig_{\sfX_n} )_{ \bL^\an_{\sfX^\rig_n}},
	\]
of both $\rigg \circ \Delta_0$
and $\rigg \circ \Delta$, respectively. Moreover, these satisfy:
\begin{enumerate}
\item We have equivalences 
	\[
	 	\big( \widetilde{\Delta_0} \big)_{\vert K^\op} \simeq \rigg \circ \Delta_0 , \quad
		 \widetilde{\Delta}_{\vert K^\op} \simeq \rigg \circ \Delta ,
	\]
in the \infcat $\Fun \big( K^\op, \Coh (\cO_{\sfX_n}^\rig )_{\bL^\an_{\sfX_n^\rig} / } \big)$.
\item We have further equivalences
	\[
		\widetilde{\Delta_0}^\rig (\infty) \simeq \left( d_0 \colon \bL^\an_{\cO^\rig_{\sfX_n}} \to \pi_{n+1} \big( \cO_X \big) [n + 2] \right),
	\]
and 
	\[
		\widetilde{\Delta}^\rig (\infty) \simeq \left( d \colon \bL^\an_{\cO^\rig_{\sfX_n}} \to \pi_{n+1} \big( \cO_X \big) [n + 2] \right),
	\]
in the \infcat $\Coh(\trun_{\leq n } (X) )_{\bL^\an_{\trun_{\leq n} X}/}$.
\end{enumerate}
Moreover, the derivations $d_0$ and $d$ considered above are induced by the pullback diagram
	\begin{equation} \label{an_der}
	\begin{tikzcd}
		\tau_{\leq n+1}  \cO_X \ar{d} \ar{r} & \tau_{\leq n} \cO_X \ar{d}{d} \\
		\tau_{\leq n} \cO_X \ar{r}{d_0} & \tau_{\leq n} \cO_X \oplus \pi_{n+1} \big( \cO_X \big) [ n  + 2]
	\end{tikzcd},
	\end{equation}
computed in the \infcat $\AnRing(X)_{/ \cO_X}$. 

\addtocontents{toc}{\SkipTocEntry}
\subsection*{Step 7}{(Lifting of $ \widetilde{\Delta_0}^\rig$ and $\widetilde{\Delta}^\rig$ to diagrams in $\Coh (\sfX_n)$.)}
Thanks to \cref{O'Neill} and its proof, we can lift both diagrams $\widetilde{\Delta}^\rig_0 $ and $\widetilde{\Delta}^\rig$ to (formal model) diagrams $\underline{\Delta}_0, \ \underline{\Delta} \colon K^\op \to 
\Coh( \cO_{\sfX_n} )_{
\bL^\ad_{\cO_{\sfX_n} / }}$, respectively. We have natural equivalences 
	\[
		\underline{\Delta}_{0 \vert K^\op} \simeq \Delta_0 , \quad \underline{\Delta}_{\vert K^\op} \simeq \Delta.
	\]
We further have equivalences
	\begin{gather}
		\label{11} \underline{\Delta}_0  (\infty) \simeq  \left( \delta_0  \colon \bL^\ad_{\cO_{\sfX_n}} \to N[n+2] \right) \\
		\label{12} \underline{\Delta} (\infty) \simeq \left(\delta \colon \bL^\ad_{\cO_{\sfX_n}} \to N[n+2] \right).
	\end{gather}
Where $N \in \Coh(\cO_{\sfX_n})$ denotes a $(t)$-torsion free formal model of $\pi_{n+1} (\cO_{\sfX_n})$, concentrated in degree 0. The choice of such $N \in \Coh(\cO_{\sfX_n})$ can be realized as follows: 

First choose a given formal model $N \in 
\Coh(\cO_{\sfX_n})$ for $\pi_{n+1} (\cO_X)$. As the rigidification functor $\rigg$ is compatible with $n$-truncations, we can replace $N $ with $\tau_{\leq n} (N)$ and thus suppose that $N$ is truncated to begin with. We can kill the $(t)$-torsion on $N$
by multiplying it by a sufficiently large power of $t$. More precisely, we can consider $t^m N$, for $m > 0$ sufficiently large, such that $t^m N$ is $(t)$-torsion free. The conclusion is now a consequence of the fact that the canonical map $t^m N \to N$ induces an equivalence
$\big( t^m N \big)^\rig \simeq N^\rig$.

\addtocontents{toc}{\SkipTocEntry}
\subsection*{Step 8}{(Recovering the extension of the original diagram $ f_{n+1}^{-1}$  by means of the right adjoint $G$ above)}
Notice that the rigidication of both \eqref{11} and \eqref{12} concides with the derivations $d_0$ and $d$ displayed in \eqref{an_der}, respectively. We can also consider the diagrams $\underline{\Delta}_0$ and $\underline{\Delta}$ as morphisms
$\Delta_0 \to \delta_0$ and $\Delta \to \delta $, living in the \infcat $\Fun \big( K^\op, \fCAlg(\cX_n)_{\cO_{\sfX_n } / / \cO_{\sfX_n}} \big)$. Thanks to \cite[Proposition 3.3.3.2]{lurie2009higher} we can lift both diagrams $\underline{\Delta}_0$ and $\underline{\Delta}$
to functors 
	\[
		K^{\lhd, \op} \to \fCAlg(\cX_n)_{ \cO_{\sfX_n} / / \cO_{\sfX_n}} \times \fCAlg(\cX_n)_{/\cO_{\sfX_n} },
	\]
whose projection along the first component agrees with $\underline{\Delta}_0$ and $\underline{\Delta}$, respectively. Furthermore projection along
the second component agrees with the composition $F \circ f^{ \lhd, -1}$. By adjunction, we obtain thus diagrams 
	\[
		D_0, \ D \colon K^{ \lhd, \op} \to \cD,
	\]
inducing $D_0', D' \colon K^{\lhd, \op} \times \Delta^2 \to \fCAlg(\cX_n)_{\cO_{\sfX_n} / }$. Notice that, evaluation
on vertices $x \in K$ gives us assignments
	\begin{align*}
		x \in K^{\op}  & \\ & \mapsto \left( \tau_{\leq n} (\alpha^{-1}_x \cO_{\sfY_x }) \xrightarrow{d_{0, n}} \tau_{\leq n} (\alpha^{-1}_x \cO_{\sfY_x })  \oplus \pi_{n+1} \big( \cO_{\sfY_x} \big)  [n+2] \to \tau_{\leq n} ( \alpha^{-1}_x \cO_{\sfY_x })  \right) \in 
		\fCAlg(\cX_n)_{\cO_{\sfX_n} / } \\
		x \in K^{\op}  & \\ & \mapsto \left( \tau_{\leq n} (\alpha^{-1}_x \cO_{\sfY_x }) \xrightarrow{d_{ n}} \tau_{\leq n}( \alpha^{-1}_x \cO_{\sfY_x } ) \oplus \pi_{n+1} \big( \cO_{\sfY_x} \big)  [n+2] \to \tau_{\leq n} (\alpha^{-1}_x \cO_{\sfY_x })  \right) \in 
		\fCAlg(\cX_n)_{\cO_{\sfX_n} / } ,
	\end{align*}
respectively. Moreover, their value at $\infty$ agrees with
	\[
		\cO_{\sfX_n} \xrightarrow{d_0} \cO_{\sfX_n} \oplus N [n+ 2] \to \cO_{\sfX_n} , \quad \cO_{\sfX_n} \xrightarrow{d} \cO_{\sfX_n} \oplus N [n+ 2] \to \cO_{\sfX_n} ,
	\]
respectively.
\addtocontents{toc}{\SkipTocEntry}
\subsection*{Step 9}{(Obtaining an extension, $f_{n+1}^\lhd$, of the diagram $\trun_{\leq n+1} f$)}
By taking fiber products, along each $\{ x \} \times \Lambda_2^2$, we thus obtain a diagram $ \mathsf f^\lhd_{n+1} \colon K^{\lhd, \op} \to \fCAlg(\cX_{n})_{\cO_{\sfX_n}}$. Evaluation on each $x \in K$ agrees with 
	\[
		\mathsf f^\lhd_{n+1} (x) \simeq \tau_{\leq n+1} (\alpha^{-1}_{x} \cO_{\sfY_x}).
	\]
More precisely,  we have a canonical equivalence $\big( \mathsf f^\lhd_{n+1} \big)^\rig_{\vert K} \simeq \tau_{\leq n+1} (f^{-1})$ in $\AnRing(X)_{/ \tau_{\le n+1} \cO_X}$. Evaluation at $\infty$, $f^\lhd_{n+1}(\infty)  \simeq \cO_{n+1} \in \fCAlg(\cX_n)$ satisfies
	\[
		\cO_{n+1}^\rig \simeq \tau_{\leq n+1} (\cO_X).
	\]
Let $\sfX_{n+1} \coloneqq (\cX_n, \cO_{n+1} )$. We obtain thus a well defined functor
	\[
		f^{\lhd}_{n+1} \colon K^\lhd \to \dfSch^\adm ,
	\]
whose rigidification coincides with 
	\[
		\tau_{\leq n+1} f \colon K \to  \big( \dAn^{\mathrm{qpcqs}} \big)_{ \trun_{\leq n + 1} X/ }.
	\]
Assembling these diagrams together, we obtain a functor $f^\lhd_{n+1} \colon K^\lhd  \to \cC_X$ satisfying requirements (i) through (iv) above. The proof is thus concluded.
\end{proof}

The proof of \cref{main1} also implies:

\begin{corollary}
Let $f \colon X \to Y$ be a morphism between quasi-paracompact and quasi-separated derived
$k$-analytic spaces. Then $f$ admits a formal model, i.e.
there exists a morphism $\mathsf{f} \colon \sfX \to \sfY$ in $\dfSch^\adm$ such that
$\mathsf{f}^\rig \simeq f$ in the \infcat $\dAn$.
\end{corollary}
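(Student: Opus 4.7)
The plan is to reduce the claim to the construction performed in the proof of Theorem~\ref{main}. Given $f \colon X \to Y$ in $\dAn^{\mathrm{qpcqs}}$, the first step is to apply Theorem~\ref{main1} to the target to obtain a formal model $\sfY \in \dfSch^\adm$ together with an equivalence $\iota_Y \colon \sfY^\rig \xrightarrow{\sim} Y$. Composing with $\iota_Y^{-1}$, we may view $f$ as a morphism $X \to \sfY^\rig$, and so the pair $(\sfY, f \colon X \to \sfY^\rig)$ defines an object of the comma \infcat $\cC_X \coloneqq (\dfSch^\adm)_{X /}$ introduced in the proof of Theorem~\ref{main}.

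Next, I take $K = \Delta^0$ and let $g \colon K \to \cC_X$ be the diagram classifying the object $(\sfY, f)$. The proof of Theorem~\ref{main} shows that $\cC_X$ is cofiltered and, more precisely, that any diagram $K \to \cC_X$ indexed by a finite simplicial set extends to a cone $g^\lhd \colon K^\lhd \to \cC_X$ whose value at the cone point $\infty$ is a pair $(\sfX, X \xrightarrow{\sim} \sfX^\rig)$ with $\sfX \in \dfSch^\adm$ a formal model of $X$. Applied to $g$, this produces such a formal model $\sfX$ together with the image under $g^\lhd$ of the unique edge $\infty \to 0$ in $K^\lhd$, which by definition of $\cC_X$ is a morphism $\mathsf{f} \colon \sfX \to \sfY$ in $\dfSch^\adm$ fitting into a commutative triangle
\[
\begin{tikzcd}
& X \ar{dl}[swap]{\sim} \ar{dr}{f} & \\
\sfX^\rig \ar{rr}{\mathsf{f}^\rig} & & \sfY^\rig \simeq Y
\end{tikzcd}
\]
in $\dAn$. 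Under the structural equivalence $X \simeq \sfX^\rig$, this triangle identifies $\mathsf{f}^\rig$ with $f$, so $\mathsf{f}$ is the desired formal model of $f$.

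In summary, the only work is to recognize that the inductive Postnikov construction carried out in the proof of Theorem~\ref{main}, when applied to $K = \Delta^0$ with initial datum $(\sfY, f)$, simultaneously produces a formal model $\sfX$ of $X$ and a lift $\mathsf{f} \colon \sfX \to \sfY$ with $\mathsf{f}^\rig \simeq f$. No new ingredients are required beyond Theorem~\ref{main1} and the cone-extension argument of Theorem~\ref{main}; in particular, there is no genuine obstacle to overcome, since the construction already tracks the higher coherences needed to lift an arbitrary finite diagram of derived $k$-analytic spaces to the formal level.
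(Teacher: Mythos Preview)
Your proof is correct and follows essentially the same strategy as the paper: both rely on the inductive Postnikov-tower construction that lifts finite diagrams in $\dAn^{\mathrm{qpcqs}}$ to $\dfSch^\adm$, specialized here to the case $K=\Delta^0$. The paper attributes the corollary to the proof of Theorem~\ref{main1}, while you invoke the more general cone-extension argument from the Proposition underlying Theorem~\ref{main}; your citation is in fact the more precise one, since the morphism case genuinely requires the relative version of the construction (tracking compatibility with a given formal model $\sfY$ of the target), which is exactly what that Proposition provides.
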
	

\begin{corollary}
Let $S$ be the saturated class generated by those morphisms $f \colon A \to B$ in $\admCAlg$ such that the induced map
	\[
		\big( \Spf f \big)^\rig \colon \Spf (B)^\rig \to \Spf (A)^\rig
	\]
is an equivalence in $\dAfd$. Then the rigidification functor \break $\rigg \colon \big( \admCAlg \big)^\op \to \dAfd$ factors via a canonical functor	
	\[
		\big( \admCAlg \big)^\op[S^{-1}] \to \dAfd.
	\]
Moreover, the latter is an equivalence of \infcats.
\end{corollary}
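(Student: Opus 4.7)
The plan is to apply \cref{lem_for_main} to the rigidification functor
\[
\rigg \colon \big( \admCAlg \big)^{\op} \to \dAfd,
\]
which is well-defined since, by \cref{cor:derived_rig_functor} together with \cref{prop:Spf_construction_is_f_f}, $\Spf(A)^\rig$ is a derived $k$-affinoid space whenever $A \in \admCAlg$, and morphisms in $\admCAlg$ are sent to morphisms in $\dAfd$. The two hypotheses of \cref{lem_for_main} amount to showing that, for every $X \in \dAfd$, the comma \infcat $\cC_X \coloneqq \big( \admCAlg \big)^{\op}_{/X}$ is cofiltered (hence contractible) and that its full subcategory $\cC_X'$ spanned by pairs $(A, \psi)$ with $\psi \colon \Spf(A)^\rig \xrightarrow{\simeq} X$ is non-empty and the inclusion $\cC_X' \hookrightarrow \cC_X$ is cofinal.

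First, I would verify both conditions by transcribing the Postnikov-tower induction of the proof of \cref{main} to the affine setting. The base case $n = 0$ is covered by the classical Raynaud theorem applied to an ordinary $k$-affinoid space, which admits an ordinary affine admissible formal model. In the inductive step, starting from an affine model $\sfX_n = \Spf(A_n)$, one builds $\sfX_{n+1}$ as an $\Ok$-adic square-zero extension of $\sfX_n$ governed by a $(t)$-torsion free formal model in $\Coh(\sfX_n)^{\heartsuit}$ of $\pi_{n+1}(\cO_X)$, together with an $\Ok$-adic derivation produced from its analytic counterpart via the compatibility
\[
\big( \bL^\ad_{\sfX_n} \big)^\rig \simeq \bL^\an_{\sfX_n^\rig}
\]
of \cref{cor:adic_cotangent_complex_is_compatible_with_analytic_via_rig}. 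Because the underlying $\infty$-topos is preserved by square-zero extensions, $\sfX_{n+1}$ stays affine and admissible, and the limit $\Spf(\lim_n A_n)$ provides an object of $\cC_X'$.

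The main obstacle, as in the proof of \cref{main}, is coherently lifting finite diagrams in $\dAfd$ to diagrams in $\admCAlg$. The strategy of Steps 3--9 of that proof carries over: one uses functoriality of the comma \infcats $\fCAlg(\cX)_{\cO // \cO}$, the equivalence $\functor^\der$ with pointed modules under $\bL^\ad$, the coherence and connectivity of the adic cotangent complex, and the existence of formal models for almost perfect modules (\cref{O'Neill} and its proof) to replace the problem of lifting a finite diagram of $\Tad$-structures with the corresponding problem in $\Coh(\sfX_n)_{\bL^\ad_{\sfX_n}/}$, where it can be solved directly. All of these inputs are preserved under restriction to affine $\sfX_n$, and the resulting extension $\sf f_{n+1}^\lhd$ assembles to a cone in $\cC_X$ whose tip is an object of $\cC_X'$, giving simultaneously cofilteredness of $\cC_X$ and cofinality of $\cC_X'$.

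Finally, I would identify the saturated class $S$ of the statement with the class of morphisms inverted by $\rigg$. One inclusion follows because any rig-strong morphism $f \colon \Spf(B) \to \Spf(A)$ in $\admCAlg$ whose truncation $\trun_{\leq 0}(f)$ is a classical admissible blow-up is sent by $\rigg$ to a morphism which is an equivalence on $\pi_0$ by the classical theorem and an equivalence on all higher $\pi_i$ by the rig-strong condition; by hypercompleteness it is an equivalence in $\dAfd$. The reverse inclusion is a consequence of the inductive construction above, which expresses every formal-model comparison as a composition of rig-strong square-zero extensions over classical admissible blow-ups. Applying \cref{lem_for_main} then yields the equivalence
\[
\big( \admCAlg \big)^{\op}[S^{-1}] \xrightarrow{\simeq} \dAfd,
\]
as desired.
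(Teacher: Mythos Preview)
Your approach is exactly the paper's: apply the proof of \cref{main} verbatim in the affine case. The first three paragraphs carry this out correctly.

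Your final paragraph, however, is unnecessary and slightly confused. In this corollary the class $S$ is \emph{defined} as the saturated class generated by morphisms that $\rigg$ sends to equivalences, and that is precisely the class that \cref{lem_for_main} localizes at. There is nothing to identify. You appear to be conflating this $S$ with the class in \cref{main}, which is generated by rig-strong morphisms over admissible blow-ups. Trying to transport that description to $\admCAlg$ does not work cleanly: an admissible blow-up of an affine formal scheme is typically not affine, so the generators you describe do not even live in $\big(\admCAlg\big)^{\op}$. Simply drop the last paragraph; once the hypotheses of \cref{lem_for_main} are verified (your first three paragraphs), the conclusion follows immediately with $S$ as stated.
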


\begin{proof}
The result is a direct application of the proof of \cref{main} when $X \in \dAfd$.
\end{proof}

\appendix
\section{Verdier quotients and Lemma on $\Coh$}
The results in this section where first proved in \cite{antonio_Hilbert}. We present them here, for the sake of completeness. We also present different proofs of the ones in \cite{antonio_Hilbert}.

\subsection{Verdier Quotients} Let $X$ be a quasi-compact and quasi-separated scheme and $Z$ denote the formal completion of $X$ along the $(t)$-locus. Consider the rigidification functor
	\[
		\rigg \colon \Coh \big( Z \big) \to \Coh \big(Z^\rig \big).
	\]
	
\begin{notation}
Let $\Cat^\mathrm{Ex}$ denote the \infcat of small stable \infcats and exact functors between them.
\end{notation}

\begin{proposition}{\cite[Theorem B.2]{hennion2016formal}} \label{exist_verdier}
Let $\cC$ be a stable \infcat and $\cA \hookrightarrow \cC$ a full stable subcategory. Then the pushout diagram
	\[
	\begin{tikzcd}
		\cA \ar{r} \ar{d} & \cC \ar{d} \\
		0 \ar{r} & \cD
	\end{tikzcd}
	\]
exists in the \infcat $\Cat^\mathrm{Ex}$.
\end{proposition}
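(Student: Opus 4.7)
The plan is to construct $\cD$ as a Verdier quotient of $\cC$ by $\cA$, realized concretely via Ind-completion. First, I would apply the Ind-completion functor $\mathrm{Ind} \colon \Cat^{\mathrm{Ex}} \to \mathrm{Pr}^{\mathrm{L}, \mathrm{st}}$ (fully faithful onto compactly generated stable $\infty$-categories) to obtain an inclusion $\mathrm{Ind}(\cA) \hookrightarrow \mathrm{Ind}(\cC)$ of presentable stable $\infty$-categories. Since $\mathrm{Pr}^{\mathrm{L}, \mathrm{st}}$ admits all small colimits (a standard consequence of the adjoint functor theorem combined with stability), the pushout
\[
\widehat{\cD} \coloneqq \mathrm{Ind}(\cC) \sqcup_{\mathrm{Ind}(\cA)} 0
\]
exists and can be described explicitly as the Bousfield localization of $\mathrm{Ind}(\cC)$ along the class of morphisms whose fiber lies in $\mathrm{Ind}(\cA)$. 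Equivalently, $\widehat{\cD}$ is the right orthogonal $\mathrm{Ind}(\cA)^{\perp}$, with localization functor $L \colon \mathrm{Ind}(\cC) \to \widehat{\cD}$ left adjoint to this inclusion.

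Next, I would define $\cD$ as the smallest stable full subcategory of $\widehat{\cD}$ containing the essential image of $\cC$ under the composite $\cC \hookrightarrow \mathrm{Ind}(\cC) \xrightarrow{L} \widehat{\cD}$. By construction $\cD$ is stable, and it is essentially small since it is generated under finite (co)limits and shifts by the essentially small set $\{L(X)\}_{X \in \cC}$. The resulting exact functor $Q \colon \cC \to \cD$ fits into a commutative square with $0$ and the inclusion $\cA \hookrightarrow \cC$, because $L$ annihilates every object of $\mathrm{Ind}(\cA)$.

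The main step is the verification of the universal property in $\Cat^{\mathrm{Ex}}$. Given an exact functor $F \colon \cC \to \cE$ with $F|_\cA \simeq 0$, I would extend $F$ by Ind-completion to a colimit-preserving $\widetilde{F} \colon \mathrm{Ind}(\cC) \to \mathrm{Ind}(\cE)$ which annihilates $\mathrm{Ind}(\cA)$. By the universal property of the pushout in $\mathrm{Pr}^{\mathrm{L}, \mathrm{st}}$, this factors uniquely as $\widetilde{F} \simeq \overline{F} \circ L$ for some colimit-preserving functor $\overline{F} \colon \widehat{\cD} \to \mathrm{Ind}(\cE)$. Restricting $\overline{F}$ to $\cD$ produces the desired factorization $\cD \to \cE$: indeed, $\overline{F}$ sends $L(X)$ to $F(X) \in \cE$ for each $X \in \cC$, and since $\cD$ is generated by such objects under finite colimits which are preserved by $\overline{F}$ and by the fully faithful embedding $\cE \hookrightarrow \mathrm{Ind}(\cE)$, the restriction lands in $\cE$. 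Uniqueness up to contractible choice follows from the corresponding uniqueness at the level of $\mathrm{Pr}^{\mathrm{L}, \mathrm{st}}$ combined with the full faithfulness of $\cC \hookrightarrow \mathrm{Ind}(\cC)$.

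The main obstacle is the bookkeeping around essential smallness of $\cD$ and the verification that the restriction $\overline{F}|_\cD$ genuinely lands in $\cE \subseteq \mathrm{Ind}(\cE)$; both are handled by the finite-(co)limit-generation argument described above. A subtlety worth noting is the distinction between working in $\Cat^{\mathrm{Ex}}$ (as we do here) versus its idempotent-complete variant: in the latter setting one would further pass to the idempotent completion of $\cD$, but this is not required for the statement as formulated.
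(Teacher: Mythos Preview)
The paper does not supply its own proof of this proposition; it simply records the statement with a citation to \cite[Theorem B.2]{hennion2016formal}. Your sketch is therefore not competing with any argument in the paper itself. The route you take---pass to $\mathrm{Ind}$-completions, form the pushout in $\mathrm{Pr}^{\mathrm L,\mathrm{st}}$ as the Bousfield localization $\widehat{\cD}=\mathrm{Ind}(\cA)^{\perp}$, and then cut back down to the small stable subcategory generated by the image of $\cC$---is one of the standard constructions of the Verdier quotient in the $\infty$-categorical setting, and is essentially the strategy used in the cited reference and in Blumberg--Gepner--Tabuada.

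Two small points of precision are worth flagging. First, $\mathrm{Ind}\colon\Cat^{\mathrm{Ex}}\to\mathrm{Pr}^{\mathrm L,\mathrm{st}}$ is \emph{not} fully faithful on all of $\Cat^{\mathrm{Ex}}$, only on the idempotent-complete objects (since $\mathrm{Ind}(\cC)\simeq\mathrm{Ind}(\cC^{\mathrm{idem}})$); fortunately your argument never actually uses full faithfulness of $\mathrm{Ind}$, only that exact functors extend along it. Second, your uniqueness step tacitly relies on the identification $\mathrm{Ind}(\cD)\simeq\widehat{\cD}$. This holds, but it requires checking that the localization $L\colon\mathrm{Ind}(\cC)\to\widehat{\cD}$ preserves compact objects, equivalently that its fully faithful right adjoint (the inclusion of $\mathrm{Ind}(\cA)^{\perp}$) preserves filtered colimits. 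The latter follows because $\mathrm{Ind}(\cA)$ is compactly generated by $\cA$, so the orthogonality condition is tested against compact objects and is therefore stable under filtered colimits. With that in hand, the universal property at the level of mapping spaces transfers from $\mathrm{Pr}^{\mathrm L,\mathrm{st}}$ to $\Cat^{\mathrm{Ex}}$ as you indicate.
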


\begin{definition}
Let $\cA, \ \cC$ and $\cD$ as in \cref{exist_verdier}. We refer to $\cD$ as the \emph{Verdier quotient} of $\cC$ by $\cA$.
\end{definition}

\begin{proposition} \label{t_ex:ver}
Let $X$ be a quasi-compact quasi-separated derived scheme almost of finite type over $\Ok$. We denote $Z$ its formal $(t)$-completion. Then there exists a cofiber sequence
	\begin{equation} \label{ref}
		\cK(Z) \hookrightarrow \Coh ( Z ) \to \Coh \big( Z^\rig \big),
	\end{equation}
in the \infcat $\Cat^\mathrm{Ex}$. Moreover, the functors in \eqref{ref} are t-exact. In particular, the rigidification functor
	\[
		\rigg \colon \Coh ( Z ) \to \Coh \big( Z^\rig \big)
	\]
exhibits $\Coh \big( Z^\rig \big)$ as a (t-exact) Verdier quotient of $\Coh(Z)$.
\end{proposition}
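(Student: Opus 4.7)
The plan is to define $\cK(Z)$ as the full stable subcategory of $\Coh(Z)$ spanned by coherent complexes $\cF$ such that $\cF^\rig \simeq 0$ in $\Coh(Z^\rig)$; since the rigidification functor is exact (it is induced by a left adjoint between stable $\infty$-categories of modules), $\cK(Z)$ is indeed a stable full subcategory, and \cref{exist_verdier} produces the Verdier quotient $\Coh(Z)/\cK(Z)$ together with a canonical factorization $\overline{\rigg}\colon \Coh(Z)/\cK(Z) \to \Coh(Z^\rig)$. The goal is then to prove that $\overline{\rigg}$ is an equivalence and that both $\cK(Z) \hookrightarrow \Coh(Z)$ and $\rigg$ are t-exact.

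First I would establish t-exactness of $\rigg$. By \cref{loc_t}, applied locally on $Z$, the composite $\functor^\alg \circ \rigg$ is identified with localization at $t$, i.e., tensoring $\spe^{-1}(-)$ with $k$ over $\Ok$. Since $\Ok \to k$ is flat, this operation preserves both $\pi_i \geq 0$ and $\pi_i \leq 0$ on coherent modules, hence $\rigg$ is t-exact; the inclusion $\cK(Z) \hookrightarrow \Coh(Z)$ is t-exact for formal reasons, and the heart of $\cK(Z)$ consists of discrete coherent sheaves annihilated by some power of $t$. This simultaneously justifies the description of $\cK(Z)$ as coherent sheaves of bounded $(t)$-torsion.

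Next I would prove essential surjectivity of $\overline{\rigg}$ by induction along the Postnikov tower of an object $\cG \in \Coh(Z^\rig)$. In homological degree zero, classical Raynaud theory (\cite[Theorem 3, p. 204]{bosch2005lectures} applied to coherent sheaves via \cite{1998coherent}) furnishes a $(t)$-torsion-free formal model $\cF_0 \in \Coh(Z)^\heartsuit$ of $\pi_0(\cG)$. Assuming a formal model $\cF_n \in \Coh(Z)$ has been built for $\tau_{\leq n} \cG$ with $\cF_n^\rig \simeq \tau_{\leq n} \cG$, choose a $(t)$-torsion-free formal model $N_{n+1}$ of $\pi_{n+1}(\cG)[n+2]$ (obtained by multiplying by a high power of $t$), lift the analytic $k$-transition class $\tau_{\leq n} \cG \to \pi_{n+1}(\cG)[n+2]$ to an adic class $\cF_n \to N_{n+1}$ using the same kind of argument that appears in the proof of \cref{main}, and form the pullback; passing to the limit yields a formal model $\cF$ of $\cG$. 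The resulting object is coherent because $\rigg$ is conservative modulo $\cK(Z)$ on connective $\taft$ objects and $\cF$ is t-bounded below by construction.

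For full faithfulness one uses the pointwise formula for mapping spaces in a Verdier quotient: $\Map_{\Coh(Z)/\cK(Z)}(\cF, \cG) \simeq \colim_{\cF' \to \cF} \Map_{\Coh(Z)}(\cF', \cG)$, where the colimit runs over arrows whose cofiber lies in $\cK(Z)$, which cofinally corresponds to the tower $\{t^n \cdot \textup{id}_\cF\}_{n \geq 0}$. The colimit therefore computes $\Map_{\Coh(Z)}(\cF, \cG)[t^{-1}]$, and by \cref{loc_t} (plus compatibility of $\rigg$ with $\Map$-spaces via adjunction with $\functor^+$ and the identification of analytic $\Hom$'s as $t$-inversions of adic $\Hom$'s) this matches $\Map_{\Coh(Z^\rig)}(\cF^\rig, \cG^\rig)$. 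Combining essential surjectivity and full faithfulness gives the equivalence $\overline{\rigg}\colon \Coh(Z)/\cK(Z) \simeq \Coh(Z^\rig)$, and the cofiber sequence in \eqref{ref} follows. The main obstacle will be controlling higher homotopical coherences in the Postnikov induction — specifically, showing that the lifts of the $k$-invariants and their rigidifications are compatible enough to produce a well-defined formal model up to objects of $\cK(Z)$ — which is where the Postnikov techniques developed in \S 3.6 and the formal-models-for-modules statements of Appendix A are essential.
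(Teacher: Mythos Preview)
Your approach is genuinely different from the paper's. The paper does not attempt a global Postnikov induction or a direct computation of mapping spaces in the quotient. Instead it sheafifies both sides over the \'etale site of $X$: the assignment $U \mapsto \Coh(U^\wedge_t)/\cK(U^\wedge_t)$ is an \'etale sheaf by \cite[Theorem 7.3]{hennion2016formal}, and $U \mapsto \Coh((U^\wedge_t)^\rig)$ is one by Conrad's descent for coherent sheaves. The comparison morphism between these sheaves is then checked on affines, where it reduces to the elementary fact that $\Coh(A_0 \otimes_{\Ok} k)$ is the Verdier quotient of $\Coh(A_0)$ by $(t)$-torsion, together with the identification $\Gamma((\Spf A_0)^\rig) \simeq A_0 \otimes_{\Ok} k$ from \cite{porta2018derived}. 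This buys a much shorter argument with no Postnikov bookkeeping.

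Your route can be made to work, but as written it has a logical ordering problem and a circularity risk. You invoke ``the formal-models-for-modules statements of Appendix A'' and ``the same kind of argument that appears in the proof of \cref{main}'' to lift the $k$-invariants in your essential surjectivity step. But \cref{O'Neill}, \cref{map_con_comp} and \cref{lift_p_maps} all rely on the existence of a fully faithful right adjoint $\Phi \colon \ind(\Coh(Z^\rig)) \to \ind(\Coh(Z))$ to $\rigg$, and that full faithfulness is exactly a consequence of the Verdier quotient statement you are trying to prove here. The fix is to reorder: prove full faithfulness of $\overline{\rigg}$ \emph{first} via your $\{t^n\cdot\mathrm{id}_\cF\}$-cofinality argument (which is self-contained, at least for $\cG$ bounded so that the torsion in any cofiber is uniformly annihilated by some $t^N$), deduce from it the ability to lift morphisms up to a power of $t$, and only then run the Postnikov induction for essential surjectivity. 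You should also note that the cofinality claim needs the uniform-torsion hypothesis, which is automatic in $\bCoh$ but needs a word for $\Coh^+$.
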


\begin{proof}
Let $\cK(Z)$ denote the full subcategory of $\Coh(Z)$ spanned by $(t)$-torsion almost perfect modules on $Z$. Recall that $M \in \Coh(Z)$ is of $t$-torsion if $\pi_*(M)$ is of $(t)$-torsion. Consider the (quasi-compact) \'etale site $X_\et$ of $X$. We define
a functor 
	\[
		\fCoh(-) / \cK (-) \colon X_\et \to \Cat^\mathrm{Ex}
	\]
given on objects by the formula
	\[
		(U \to X) \textrm{ quasi-compact and \'etale} \mapsto \Coh(U^\wedge_t)/ \cK(U^\wedge_t) \in \Cat^\Ex,
	\]
where $U^\wedge_t$ denotes the formal completion of $U$ along the $(t)$-locus. Thanks to \cite[Theorem 7.3]{hennion2016formal} this defines a unique, up to contractible indeterminacy, $\Cat^\Ex$-valued sheaf for the \'etale topology.

We will also need the following ingredient: define a functor
	\[
		\fCoh_{\rig} \colon X_\et \to \Cat^\Ex,
	\]
given on objects by the formula
	\[
		(U \to X) \textrm{ quasi-compact and \'etale} \mapsto \Coh \big( (U^\wedge_t)^\rig \big) \in \Cat^\Ex.
	\]
We remark that $\Coh \colon \An \to \Cat^\Ex$ satisfies fpqc descent for $k$-analytic spaces. Indeed, this follows by the main theorem in \cite{conrad2003descent} combined with the usual reasoning by induction on the Postnikov tower, for almost perfect modules. Moreover, the formal completion and rigidification functors are morphisms of sites. As a consequence we conclude that the assignment
$\fCoh_\rig \colon X_\et \to \Cat^\Ex$ is a sheaf for the \'etale topology on $X$. 

The universal property of pushout induces a canonical morphism of sheaves $\Psi \colon \fCoh (-) / \cK (-) \to \fCoh_{\rig}$ in the \infcat $\Shv_{\et} ( X, \Cat^\Ex )$. We affirm that $\Psi$ is an equivalence in $\Shv_\et(X, \Cat^\Ex)$. By descent, it suffices to
prove the statement on affine objects of $X_\et$. In such case, the result follows readily from the observation that for a derived $\Ok$-algebra $A_0$ the \infcat $\Coh \big( A_0 \otimes_{\Ok} k \big) \in \Cat^\Ex$ is obtained from $\Coh(A_0)$ by ''modding out''
the full subcategory spanned by $(t)$-torsion almost perfect modules. Moreover, thanks to \cite[Theorem 3.1]{porta2018derived} we have a canonical equivalence 
	\[
		 \Coh \big( (\Spf A_0)^\rig \big) \simeq \Coh \big( \Gamma \big( (\Spf A_0)^\rig \big) \big) ,
	\]
in the \infcat $\Cat^\Ex$. Where $\Gamma \big( (\Spf A_0)^\rig \big) \in \CAlg_k$ denotes the derived global sections of $\Spf A_0^\rig$. On the other hand $\Gamma \big( (\Spf A_0)^\rig \big) \simeq A_0 \otimes_{\Ok} k $ and the result follows.
\end{proof}

\subsection{Existence of formal models for modules} In this \S, we prove some results concerning the existence of formal models with respect to the functor $\rigg \colon \Coh ( \sfX ) \to \Coh(X)$ which prove to be fundamental in the proof of \cref{main}.

\begin{proposition} \label{O'Neill}
	Let $X \in \dAn$ be a derived $k$-analytic stack which admits a formal model $\sfX \in \dfSch$.
	Let $\cF \in \Coh(X)$ be concentrated in finitely many cohomological degrees.
	Then $\cF$ admits a formal model, i.e. there exists $\cG \in \Coh(\sfX)$ such that $\cG^\rig \simeq \cF$ in $\Coh(X)$. Moreover, the \infcat of formal models of $\cF$ is a
	filtered \infcat.
\end{proposition}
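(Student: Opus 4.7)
The plan is to exploit the Verdier quotient description from \cref{t_ex:ver}, namely that $\rigg\colon \Coh(\sfX)\to \Coh(X)$ is a $t$-exact Verdier quotient by the subcategory $\cK(\sfX)$ of $(t)$-torsion almost perfect modules, and then proceed by induction on the cohomological amplitude of $\cF$. The key consequence of \cref{t_ex:ver} I want to use is that, for any $\cG_1,\cG_2\in \Coh(\sfX)$, mapping spaces in the quotient are computed by the filtered colimit
\[
\Map_{\Coh(X)}\bigl(\cG_1^\rig,\cG_2^\rig\bigr)\simeq \colim_{s\colon \cG_1'\to \cG_1\,,\ \mathrm{cofib}(s)\in \cK(\sfX)} \Map_{\Coh(\sfX)}\bigl(\cG_1',\cG_2\bigr),
\]
which is the standard calculus-of-fractions presentation for a Verdier quotient.

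For the base case, when $\cF\in\Coh^\heartsuit(X)$, the $t$-exactness of $\rigg$ reduces the question to ordinary coherent sheaves on $\trun_{\leq 0}(X)$, so I would directly invoke the classical Raynaud theory, cf. \cite[\S 8]{bosch2005lectures}, to produce an ordinary formal model $\cG_0\in \Coh^\heartsuit(\trun_{\leq 0}\sfX)$, then replace $\cG_0$ by $t^m\cG_0$ for $m\gg 0$ to arrange $(t)$-torsion freeness, and view the result as an object of $\Coh(\sfX)$ via the canonical inclusion of the heart. For the inductive step, assuming the statement for smaller amplitude, I would consider the Postnikov fiber sequence
\[
\pi_b(\cF)[b]\to \tau_{\geq a}\cF\to \tau_{\geq a,\leq b-1}\cF
\]
and choose, by induction, formal models $\cG'$ of the truncation $\tau_{\geq a,\leq b-1}\cF$ and $M[b]$ of $\pi_b(\cF)[b]$ (with $M$ $(t)$-torsion free, coherent, in degree $0$ on $\trun_{\leq 0}\sfX$). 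The remaining task is to lift the classifying map $\tau_{\geq a,\leq b-1}\cF\to \pi_b(\cF)[b+1]$ in $\Coh(X)$ to a morphism $\cG''\to M[b+1]$ in $\Coh(\sfX)$ whose rigidification recovers it; the displayed colimit formula guarantees the existence of such a lift after replacing $\cG'$ by a suitable $\cG''$ with $(t)$-torsion cofiber, which does not affect the rigidification. Taking the fiber of the lifted map produces the desired formal model $\cG$.

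For filteredness of the \infcat of formal models, given two formal models $\cG_1,\cG_2$, the equivalence $\cG_1^\rig\simeq \cG_2^\rig\simeq \cF$ in $\Coh(X)$ is, by the Verdier quotient description, represented by a roof $\cG_1\xleftarrow{s}\cG\xrightarrow{u}\cG_2$ with $\mathrm{cofib}(s)\in \cK(\sfX)$. Then $\cG^\rig\simeq \cG_1^\rig\simeq \cF$, so $\cG$ is itself a formal model dominating both $\cG_1$ and $\cG_2$, which is precisely the filteredness condition. The analogous statement for diagrams of formal models follows by iterating this reasoning together with the universal property of the Verdier quotient on mapping spaces.

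The main technical obstacle will be the inductive lifting of the attaching map in the Postnikov filtration: one must verify that the replacement $\cG'\leadsto \cG''$ afforded by the calculus of fractions can be carried out while keeping $\cG''$ coherent and with the correct rigidification, and one must check that the resulting fiber $\cG$ still lies in $\Coh(\sfX)$ (which uses that $\cK(\sfX)$ is closed under extensions and shifts within the bounded range we work in, ensured by the finite amplitude hypothesis on $\cF$). Once this step is set up correctly, the filteredness claim is a direct formal consequence of the Verdier quotient structure and does not require any further analytic input.
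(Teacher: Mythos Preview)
Your proposal is correct and follows essentially the same strategy as the paper: induction on the Postnikov tower, lift the attaching map to a formal level, take the fiber. The only difference is packaging. You lift maps via the calculus-of-fractions formula for the Verdier quotient of \cref{t_ex:ver}; the paper instead passes to $\ind(\Coh(\sfX))$, uses the fully faithful right adjoint $\Phi$ of $\rigg$, writes $\Phi(\cF)$ as a filtered colimit of objects of $\Coh(\sfX)$, and exploits compactness of a chosen formal model of $\tau_{\leq n-1}\cF$ to factor the attaching map through some $\cG[n+1]$. These two devices are equivalent.

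For filteredness, your roof argument only produces binary upper bounds, and the phrase ``iterating this reasoning'' does not immediately dispatch coequalization of parallel arrows or higher coherences. The paper avoids this by showing that the category of formal models sits cofinally inside the filtered \infcat $\Coh(\sfX)_{/\Phi(\cF)}$: concretely, it proves that every object $(\cG,\phi\colon \cG^\rig\to\cF)$ of $\Coh(\sfX)_{/\cF}$ admits a map to an actual formal model, again by induction on amplitude. That cofinality argument gives filteredness for free and is the cleaner way to finish; you could plug it in unchanged.
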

	
\begin{proof}
	Let $\cF \in \Coh(X)$, be as in the stament of the \cref{O'Neill}. Assume moreover that $\cF$ is connective, i.e. its non-zero cohomology lives in non-positive degrees. Notice that, by definition of ind-completion, $\cF \in \ind \left( \Coh(X) \right)$
	is a compact object.
	
	Let 
	$\Phi \colon \ind (\Coh(X)) \to \ind (\Coh(\sfX))$ denote a fully faithful right adjoint to $\rigg$. It follows from the construction of Ind-completion that we have a canonical equivalence
	\begin{equation} \label{ind_cat}
		\Phi \left( \cF \right) \simeq \colim_{\cG \in \Coh  \left( \sfX \right)_{/ \Phi(\cF)} } \cG ,
	\end{equation}
	in $\ind \left( \Coh(X) \right)$. Notice that, by construction, the limit indexing \infcat appearing on the right hand side of \eqref{ind_cat} is filtered. As $\Phi$ is a fully faithful functor, the counit of the 
adjunction $\left( \rigg, \Phi \right)$ is an equivalence. Our argument now follows by an inductive reasoning using the Postnikov tower as we now detail:

Suppose first that $\cF \in \Coh(X)$ has cohomology concentrated in degree $0$, then it is well known that $\cF$ admits a formal model $\widetilde{\cF} \in \mathrm{Coh}^{+, \heartsuit} ( \sfX)
$, which we can moreover choose to be of no $(t)$-torsion. Moreover, we can choose $\widetilde{\cF}$ in such a way that we have a monomorphism
$\widetilde{\cF} \hookrightarrow \cF$, in the heart $\mathrm{Coh}^{+, \heartsuit}(X)$). We are also allowed to chose in such a way that its rigidification becomes an equivalence, in 
the (heart of) $\ind \left( \Coh \left( \sfX \right) \right)$. We are then dealt with the base of our inductive reasoning.

Suppose now that $\cF$ lives in cohomological degrees $[0, n]$.
By the inductive hypothesis $\tau_{\leq n-1} \cF \in \Coh(X)$ admits a formal model $\widetilde{ \tau_{\leq n-1 } \cF } \in \Coh( \sfX)$, which we can assume to live in homological degrees $[0, n+1 ]$. We can also assume its associated homotopy sheaves are $(t)$-torsion free and we have a map $\widetilde{\tau_{\leq 
n-1}
\cF} \to \tau_{\leq n-1} \cF$, in $\ind \left( \Coh(\sfX) \right)$. Moreover, by construction its rigidification becomes an equivalence. We have a fiber sequence 
	\[
	\begin{tikzcd}
		 \cF \ar[r] & \tau_{\leq n-1} \cF \ar[r] & \pi_n \left( \cF \right) [n+1],
	\end{tikzcd}
	\]
in the \infcat $\Coh(X)$. By applying the exact functor $\Phi$ we obtain a fiber sequence in the \infcat $\Coh(\sfX)$.

As $\widetilde{\tau_{\leq n-1} \cF } \in \ind \left( \Coh( \sfX) \right)$ is a compact object, the composite $\widetilde{\tau_{\leq n-1}
\cF} \to \tau_{\leq n-1} \cF \to  \pi_n \left( \cF \right) [n+1]$ factors through $\cG[n+1]$, for a certain almost perfect complex $\cG \in \Coh(\sfX)^{\heartsuit}$. Moreover, such $\cG$ can be chosen in order to satisfy
	\[
		\cG^\rig \simeq \pi_n \left( \cF \right).
	\]
By the base step, we can further assume that it is $(t)
$-torsion free and admits a monomorphism $\cG \to \pi_n  \left( \cF \right)$, in the heart of the \infcat $\ind
\left( \Coh(\sfX) \right)$. 

Using the fact that $\Phi$ is a right adjoint and the the counit is an equivalence, we deduce that the rigidification of the constructed map $\widetilde{ \tau_{\leq n-1} \cF } \to \cG[n+1]$ is equivalent
to $\tau_{\leq n-1} \cF \to \pi_n( \cF) [n+1]$. 

Therefore $
		\widetilde{\cF}:=\mathrm{fib} \left( \widetilde{ \tau_{\leq n-1} \cF } \to \cG [n+1] \right),
	$
is a formal model for $\cF$, which lives in homological degrees $[0,n]$, of no $t$-torsion. Moreover, it admits 
a map $\widetilde{\cF} \to \cF$, in \infcat $\ind \left( \Coh ( \sfX ) \right)$, which become an equivalence after rigidification. The first part of \cref{O'Neill} now follows.

We are now left to prove that the full subcategory $\cC_{\cF}$, of the \infcat $\Coh(\sfX)_{/ \cF}$, spanned by those objects $\left( \widetilde{\cF}, \psi \colon \widetilde{\cF}^\rig \to \cF \right)$ such that $\psi$ is an equivalence, is
also filtered.

By construction, the \infcat $\Coh(\sfX)_{/ \cF}$ is filtered. In order to prove that $\cC_{\cF}$ is filtered, it suffices to show that every $\left( \cG, \phi \colon \cG^\rig \to \cF \right) \in \Coh( \sfX)_{/ \cF}$ admits a morphism to an object in $\cC_{\cF}$.

We first treat the case where $\cF  \in \Coh(X)$ lies in the heart so then we can write $\cF  \simeq \colim_{i \in I} \cG_i$ in $\ind \left( \Coh( \sfX ) \right)^{\heartsuit}$, where $I$ is filtered. Moreover, we can assume that the $\cG_i \in
\Coh( \sfX)^\heartsuit$ are $(t)$-torsion free and for each $i \in I$ they admit monomorphisms $\cG_i \to \cF$, whose rigidification $\cG_i^{\rig} \simeq \cF$ in $\ind \left( \Coh(X) \right)^{\heartsuit}$.
The structural morphism $\psi \colon \cG^\rig \to \cF$ corresponds by adjunction to a morphism $\cG \to \Phi( \cF) \simeq \colim_{i \in I} \Phi( \cG_i)$.
By compactness of $\cG \in \Coh(\sfX)$, it follows that the later factors through one of the $\cG_i$. To summarize, we have obtained a morphism $\cG \to \cG_i$ which induces a morphism in $\Coh(\sfX)_{/ \cF}$ whose source corresponds to
$\left( \cG, \phi \colon \cG^\rig \to \cF \right)$ and the target is an object lying in $\cC_{\cF}$, as desired.

Suppose now that $\cF \in \Coh(\sfX)$ is connective whose non-zero homology lives in degress $[0, n]$. Given $\left( \cG ,  \phi \colon \cG^\rig \to \cF \right) \in \Coh(\sfX)_{/ \cF}$ we know by induction that $\left( \cG,  \cG^\rig \to 
\cF \to \tau_{\leq n-1} \cF \right) \in \Coh(\sfX)_{/ \tau_{\leq n-1} \cF}$ admits a factorization through one object 
	\[
		\left( \widetilde{\tau_{\leq n-1} \cF }, \widetilde{ \tau_{\leq n-1} \cF^\rig} \to \tau_{\leq n-1} \cF \right) \in \Coh(\sfX)_{/ \tau_{\leq n-1} \cF},
	\]
as before. We
have a commutative
diagram
	\[
	\begin{tikzcd}
		\tau_{\leq n} \cG \ar[r]  \ar[d] & \tau_{\leq n-1 } \cG \ar[r] \ar[d] & \pi_n( \cG) [n+1] \ar[d] \\
		\cF \ar[r] & \tau_{\leq n-1} \cF \ar[r] & \pi_n ( \cF ) [n+1],
	\end{tikzcd}
	\]
where the horizontal maps form fiber sequences in the \infcat $\ind \Coh( \sfX)$. Moreover, there exists a sufficiently large formal model $\cH_n \in \Coh(\sfX)^\heartsuit$ for $\pi_n(\cF)$, which is $(t)$-torsion free, together with a monomorphism $\cH_n \to \pi_n( \cF)$ in $
\ind \left( \Coh(\sfX) \right)^\heartsuit$. Additionally, both composites 
	\[
		\tau_{\leq n-1} \cG \to \widetilde{\tau_{\leq n-1} \cF} \to \tau_{\leq n-1} \cF \to \pi_n ( \cF) [n+1],
	\]
and
	\[
		 \tau_{\leq n-1} \cG \to \pi_n( \cG) [n+1] \to \pi_n(\cF) [n+1],
	\]
factor through $\cH_n[n+1]$.
Thus we have a commutative diagram of fiber sequences in the \infcat $\ind \left( \Coh(\sfX) \right)$
	\[
	\begin{tikzcd}
		\tau_{\leq n} \cG \ar[r] \ar[d] & \tau_{\leq n-1} \cG \ar[r] \ar[d] & \pi_n ( \cG) [n+1] \ar[d] \\
		\widetilde{\cF} \ar[r] \ar[d]& \widetilde{ \tau_{\leq n-1}  \cF} \ar[r] \ar[d] & \cH_n[n+1] \ar[d] \\
		\cF \ar[r] & \tau_{\leq n-1} \cF \ar[r] & \pi_n(\cF) [n+1]
	\end{tikzcd}
	\]
which provides a factorization $\left( \cG , \phi \colon \cG^\rig \to \cF \right) \to \left(  \widetilde{\cF} , \psi \colon \widetilde{\cF}^\rig \to \cF \right) $ in the \infcat $\Coh(\sfX)_{/ \cF}$ where $\left(  \widetilde{\cF} , \psi \colon \widetilde{\cF}^\rig \to \cF \right) 
\in \cC_{\cF}$, this concludes the proof.
\end{proof}

\begin{corollary} \label{map_con_comp}
	Let $X \in \dAn$ and $f \colon \cF \to \cG$ be a morphism $\Coh(X)$, where $\cG$ is of bounded cohomology, i.e. $\cG \in \bCoh(X)$. Suppose we are given a formal model $\sfX \in \mathrm{
	dfSch}_{\Ok}^{\mathrm{taft}}$ of $X$. 
	
	Then we can find a morphism $\ff \colon \cF' \to \cG'$ 
	in $\Coh(\sfX)$ such that $\ff^{\rig}$ lies in the same connected component of $f$ in the mapping space $\Map_{\Coh(X)}( \cF, \cG)$.
\end{corollary}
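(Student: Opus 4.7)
The plan is to reduce to \cref{O'Neill}, which supplies formal models of coherent sheaves of bounded cohomology and shows that the \infcat of such formal models is filtered, and then to extract the desired $\ff$ via a compactness-plus-filteredness factorization inside the ind-completion of $\Coh(\sfX)$.

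First, I would use the boundedness hypothesis on $\cG$ to truncate $\cF$. Since $\cG \in \bCoh(X)$, say $\cG \simeq \tau_{\leq n} \cG$, and $\cF \in \Coh^+(X)$ is bounded below, the natural map $\Map_{\Coh(X)}(\tau_{\leq n}\cF, \cG) \to \Map_{\Coh(X)}(\cF, \cG)$ is an equivalence, and $\tau_{\leq n}\cF \in \bCoh(X)$. So for the purpose of producing $\ff$ whose rigidification sits in the connected component of $f$, we may replace $\cF$ by its truncation and assume that $\cF \in \bCoh(X)$ from the outset.

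Next, by \cref{O'Neill} applied to $\cF$ and $\cG$, both admit formal models $\cF', \cG' \in \Coh(\sfX)$, and the \infcat $\cC_{\cG}$ of formal models of $\cG$ is filtered. Let $\Phi \colon \ind(\Coh(X)) \to \ind(\Coh(\sfX))$ denote the fully faithful right adjoint to the ind-extension of $\rigg$. From the proof of \cref{O'Neill}, the inclusion $\cC_{\cG} \hookrightarrow \Coh(\sfX)_{/\Phi(\cG)}$ is cofinal, so in $\ind(\Coh(\sfX))$ one has
\[
\Phi(\cG) \simeq \colim_{\cH \in \cC_{\cG}} \cH.
\]
Using the equivalence $\cF'^\rig \simeq \cF$ given by $\cF'$ being a formal model, the map $f \colon \cF \to \cG$ corresponds under the adjunction $(\rigg, \Phi)$ to a morphism $\widetilde{f} \colon \cF' \to \Phi(\cG)$ in $\ind(\Coh(\sfX))$. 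Since $\cF' \in \Coh(\sfX)$ is compact in $\ind(\Coh(\sfX))$ and the diagram defining $\Phi(\cG)$ is filtered, $\widetilde{f}$ factors through some $\cG' \in \cC_{\cG}$, yielding a morphism $\ff \colon \cF' \to \cG'$ in $\Coh(\sfX)$. Applying $\rigg$ and composing with the structural equivalences $\cF'^\rig \simeq \cF$ and $\cG'^\rig \simeq \cG$ recovers $f$ up to homotopy, by the triangle identities of the adjunction.

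The main obstacle, and the step that needs the most care, is the tracking of the identifications $\cF'^\rig \simeq \cF$ and $\cG'^\rig \simeq \cG$ through the adjunction and the filtered factorization, in order to conclude that the resulting $\ff^\rig$ lies in the \emph{same} connected component as $f$, and not merely in some (a priori different) component of $\Map_{\Coh(X)}(\cF, \cG)$. This is a formal manipulation of the unit/counit, but one must invoke that $\cC_{\cG}$ is cofinal (not merely non-empty) and that the counit $\Phi(\cG)^\rig \to \cG$ is an equivalence, as is established in \cref{O'Neill}. Once this bookkeeping is done, the conclusion follows immediately.
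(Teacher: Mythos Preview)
Your proposal is correct and follows essentially the same approach as the paper: fix a formal model $\cF'$ for $\cF$, write $\Phi(\cG)$ as a filtered colimit over formal models of $\cG$, and use compactness of $\cF'$ in $\ind(\Coh(\sfX))$ to factor the adjoint map $\cF' \to \Phi(\cG)$ through some $\cG'$. The paper phrases this via internal Hom-sheaves $\cHom(\cF',\cG')$ and then takes global sections to obtain $\colim_{\cG'} \Map(\cF',\cG') \simeq \Map_{\Coh(X)}(\cF,\cG)$, but the content is the same; your explicit truncation of $\cF$ is a clarification the paper leaves implicit when invoking \cref{O'Neill}.
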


\begin{proof}
	We will actually prove more: Fix $\cF' \in \Coh( \sfX)$ a formal model for $\cF$, whose existence is guaranteed by \cref{O'Neill}. Then we can find a formal model $\cG' \in \Coh(\sfX)$ for $\cG$ such that the morphism
	\[
		f \colon \cF \to \cG, 
	\]
	lifts to a morphism, 
	\[
		\ff \colon \cF' \to \cG',
	\]
	in the \infcat $\Coh(\sfX)$. Assume thus $\cF' \in \Coh(\sfX)$ fixed. Given a generic $\cG' \in \Coh(\sfX)$, denote by $\cHom \left( \cF', \cG' \right) \in \mathrm{QCoh}(\left( \sfX) \right)$ the Hom-sheaf of (coherent) $\cO_{\sfX}$-modules. Notice that if $
	\cG' \in \bCoh  \left( \sfX \right) $ then the Hom-sheaf $\cHom \left( \cF' , \cG' \right)$ is still an object lying in the \infcat $\Coh \left( \sfX \right)$.
	
	By our assumption on $\cG \in \Coh(X)$, we can find a cohomogically bounded formal model $\cG' \in \bCoh \left( \sfX \right)$ for $\cG$, and thus $\cHom \left( \cF', \cG' \right) \in \Coh  \left( \sfX \right)$. Consider the colimit,
	\begin{align} \label{Pedro&Ines}
		\colim_{\cG' \in \cC} \cHom \left( \cF', \cG' \right) & \simeq \cHom \left( \cF', G \left( \cG \right) \right) \\
		& \simeq  \cHom \left( \left( \cF' \right)^\rig, \cG \right)  \simeq G \left( \cHom \left( \cF, \cG \right) \right),
	\end{align}
	where $\cC$ denotes the \infcat of (cohomological bounded) formal models for $\cG$. The first equivalence in \eqref{Pedro&Ines}
	follows from the fact that $\cF' \in \Coh(\sfX)$ is a compact object in $\ind( \Coh( \sfX))$, thus the Hom-sheaf, with source $\cF'$, commutes with
	filtered 
	colimits. The second equivalence follows from adjunction. By applying the global sections functor on both sides of \eqref{Pedro&Ines} we obtain an equivalence of spaces (notice that $\Phi$ being a right adjoint commutes with global sections)
		\[	
			\colim_{\cG' \in \cC} \Map( \cF', \cG') \simeq \Map_{\Coh(X)}( \cF, \cG).
		\]
	We conclude thus that there exists $\cG'  \in \cC$ and $\ff \colon \cF' \to \cG'$ such that $(\ff)^\rig$ and $f$ lie in the same connected component of $\Map_{\Coh(X)}( \cF, \cG)$, as desired.
\end{proof}

\begin{corollary} \label{lift_p_maps}
	Let $X \in \dAn$ and $f \colon \cF \to \cG$ be a morphism in $\Coh(X)$, where $\cG$ is of bounded homology. Suppose we are given a formal model $\sfX$ for $X$ together with formal models $\cF', \ \cG' \in \Coh(\sfX)$ for 
	$
	\cF$ and $\cG$, respectively. Assume further that $\cG' \in\bCoh(\sfX)$. Then given an arbitrary $f \colon \cF \to \cG $ in $\Coh(X)$, we can find $\ff \colon \cF' \to \cG' $ in $\Coh(\sfX)$ lifting $t^n f \colon \cF \to \cG$, for sufficiently
	large
	$n > 0 $.
\end{corollary}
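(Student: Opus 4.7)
The plan is to combine \cref{map_con_comp} with a transfer argument that moves a formal lift between any two bounded formal models of $\cG$. The central technical input is that the cofiber of a morphism between bounded formal models of the same sheaf (i.e.\ a morphism which is an equivalence after rigidification) is a bounded $(t)$-torsion object in $\Coh(\sfX)$, and is therefore annihilated by some power of $t$ as an object (not merely on homotopy sheaves).

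First, I would factor $f$ through \emph{some} bounded formal model of $\cG$ with source $\cF'$. For this, I exploit the adjunction $(\rigg,\Phi)$ from the proof of \cref{O'Neill} together with the filtered colimit presentation $\Phi(\cG)\simeq\colim_{\cG''\in\cC^{\mathrm{b}}_\cG}\cG''$, where $\cC^{\mathrm{b}}_\cG$ denotes the category of bounded formal models of $\cG$. The hypothesis $\cG\in\bCoh(X)$ ensures this is a cofinal filtered subcategory of the full category of formal models (truncating a formal model by $\tau_{\leq N}$ for $N$ large preserves the rigidification). Since $\cF'\in\Coh(\sfX)$ is compact in $\ind(\Coh(\sfX))$, the morphism $\cF'\to\Phi(\cG)$ adjoint to $f$ factors through some $\cG''\to\Phi(\cG)$ with $\cG''\in\cC^{\mathrm{b}}_\cG$, producing a map $\ff_{0}\colon\cF'\to\cG''$ whose rigidification lies in the same connected component as $f$ in $\pi_0\Map_{\Coh(X)}(\cF,\cG)$.

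Second, I transfer the target from $\cG''$ to the prescribed $\cG'$. Using filteredness of $\cC^{\mathrm{b}}_\cG$, I pick $\widetilde\cG\in\cC^{\mathrm{b}}_\cG$ receiving maps $\iota\colon\cG'\to\widetilde\cG$ and $v\colon\cG''\to\widetilde\cG$ whose rigidifications are equivalences. The cofiber $C\coloneqq\mathrm{cofib}(\iota)$ then lies in $\bCoh(\sfX)$ and satisfies $C^\rig\simeq 0$, so by \cref{t_ex:ver} it is $(t)$-torsion. A Postnikov induction on the (finite) range of non-vanishing homotopy sheaves, using that each $\pi_i(C)$ is a finitely generated $(t)$-torsion $\pi_0(\cO_\sfX)$-module hence killed by a power of $t$, produces $n\geq 0$ with $t^n\cdot\mathrm{id}_C\simeq 0$. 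Consequently $t^n\cdot\mathrm{id}_{\widetilde\cG}$ becomes null after composition with $\widetilde\cG\to C$, so by the fiber sequence $\cG'\xrightarrow{\iota}\widetilde\cG\to C$ it lifts to a map $\delta\colon\widetilde\cG\to\cG'$ with $\iota\circ\delta\simeq t^n\cdot\mathrm{id}_{\widetilde\cG}$.

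Finally, setting $\ff\coloneqq\delta\circ v\circ\ff_{0}\colon\cF'\to\cG'$ and rigidifying, after identifying $\cG',\cG'',\widetilde\cG$ canonically with $\cG$ via the given structural equivalences, gives $\iota^\rig$ and $v^\rig$ equal to the identity and $\delta^\rig=t^n\cdot\mathrm{id}_\cG$. Hence $\ff^\rig$ lies in the same connected component as $t^n f$ in $\pi_0\Map_{\Coh(X)}(\cF,\cG)$. The main obstacle is the Postnikov step: showing that a \emph{bounded} coherent $(t)$-torsion module is annihilated by a single power of $t$ as an object of $\Coh(\sfX)$ (not merely on each homotopy sheaf). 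Boundedness is indispensable here, which is precisely why the hypothesis $\cG'\in\bCoh(\sfX)$ (and hence the availability of $\widetilde\cG\in\bCoh(\sfX)$) is used.
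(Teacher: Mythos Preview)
Your proof is correct but takes a genuinely different route from the paper's. The paper argues directly that the internal Hom is compatible with rigidification, i.e.\ $\cHom(\cF',\cG')^\rig\simeq\cHom(\cF,\cG)$ in $\Coh(X)$, and then takes global sections to obtain
\[
\Map_{\Coh(\sfX)}(\cF',\cG')[t^{-1}]\;\simeq\;\Map_{\Coh(X)}(\cF,\cG),
\]
where the left-hand side is the filtered colimit along multiplication by $t$. The existence of a lift of $t^n f$ for large $n$ then follows immediately from the description of this colimit. This is a two-line argument once the Hom-sheaf compatibility is granted (which the paper deduces from the machinery in \cref{map_con_comp} and \cref{O'Neill}).

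Your approach instead first lands in some auxiliary bounded formal model $\cG''$ via \cref{map_con_comp}, then transfers to the prescribed $\cG'$ by exploiting that the cofiber of two bounded formal models is bounded $(t)$-torsion and hence annihilated (as an object, not just on homotopy) by a power of $t$. The Postnikov argument you sketch for this annihilation is correct: if $t^N$ kills each $\pi_i(C)$ and there are $m$ nonvanishing homotopy sheaves, then $t^{Nm}\cdot\mathrm{id}_C\simeq 0$ by peeling off one homotopy sheaf at a time. Your route is longer but has the virtue of making the source of the power of $t$ completely explicit, and it avoids appealing to the compatibility of internal Hom with rigidification (which the paper invokes somewhat tersely). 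The paper's route, on the other hand, is more conceptual and immediately generalizes: it identifies the entire mapping space after rigidification, not just a single connected component.
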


\begin{proof}
	Consider the sequence of equivalences in \eqref{Pedro&Ines}. Then by applying the same argument as in the proof of \cref{O'Neill} we obtain an equivalence,
	\[ 
	(\cHom( \cF' , \cG') )^\rig \simeq \cHom( \cF, \cG),
	\]
	in the \infcat $\Coh(X)$. Therefore, by taking global sections we obtain
	\[ 
		\Map_{\Coh(\sfX)}( \cF', \cG')[t^{-1}] \simeq \Map_{\Coh(X)}( \cF, \cG),
	\]
	where the left hand side term denotes the colimit $\colim_{\text{mult by } t} \Map_{\Coh(\sfX)}( \cF', \cG')$. 
	Therefore, by multiplying $f \in \Map_{\Coh(X)}( \cF, \cG)$ by a sufficiently large power of $t$, say $t^n$, then $t^n f$ lies in a connected component of $\Map_{\Coh(\sfX)}( \cF', \cG')$, as desired. 
\end{proof}

\nocite{*}
\bibliography{Raynaud}
\bibliographystyle{alpha}
\adress
\end{document}